	\def\MR#1{}
\newtheorem{theorem}{Theorem}[section]
\newtheorem{headthm}{Theorem}
\newaliascnt{headcor}{headthm}
\newaliascnt{headconj}{headthm}
\newaliascnt{corollary}{theorem}
\newtheorem{corollary}[corollary]{Corollary}
\newaliascnt{claim}{theorem}
\newaliascnt{lemma}{theorem}
\newtheorem{lemma}[lemma]{Lemma}
\newaliascnt{conjecture}{theorem}
\newaliascnt{proposition}{theorem}
\newtheorem{proposition}[proposition]{Proposition}
\theoremstyle{definition}
\newaliascnt{definition}{theorem}
\newtheorem{definition}[definition]{Definition}
\newaliascnt{notation}{theorem}
\newaliascnt{example}{theorem}
\newtheorem{example}[example]{Example}
\newaliascnt{examples}{theorem}
\newaliascnt{remark}{theorem}
\newtheorem{remark}[remark]{Remark}
\newaliascnt{question}{theorem}
\newtheorem{question}[question]{Question}
\newaliascnt{questions}{theorem}
\newaliascnt{problem}{theorem}
\newaliascnt{construction}{theorem}
\newaliascnt{setup}{theorem}
\newtheorem{setup}[setup]{Setup}
\newaliascnt{setupdef}{theorem}
\newaliascnt{algorithm}{theorem}
\newaliascnt{observation}{theorem}
\newaliascnt{defprop}{theorem}
\DeclareMathOperator{\Cone}{Cone}
\DeclareMathOperator{\dist}{dist}
\DeclareMathOperator{\Hom}{Hom}
\DeclareMathOperator{\rank}{rank}
\DeclareMathOperator{\reg}{reg}
\DeclareMathOperator{\vol}{vol}
\DeclareMathOperator{\nil}{nil}
\DeclareMathOperator{\Supp}{Supp}
\DeclareMathOperator{\Spec}{Spec}
\DeclareMathOperator{\Min}{Min}
\def \ba{\backslash}
\def \bu{\bullet}
\def \f0{\mathbf 0}
\def \fa{\mathbf a}
\def \fb{\mathbf b}
\def \fu{\mathbf u}
\def \fv{\mathbf v}
\def \fx{\mathbf x}
\def \fy{\mathbf y}
\def \fz{\mathbf z}
\def \FF{\mathbb F}
\def \kk{\Bbbk}
\def \NN{\mathbb N}
\def \QQ{\mathbb Q}
\def \RR{\mathbb R}
\def \ZZ{\mathbb Z}
\def \fm{\mathfrak{m}}
\def\equationautorefname~#1\null{(#1)\null}
\def\sectionautorefname~#1\null{Section #1\null}
\def\subsectionautorefname~#1\null{\S #1\null}
\numberwithin{equation}{section}
\begin{document}
\title{Asymptotic colengths for families of ideals: an analytic approach}
\author{Sudipta Das and Cheng Meng}

\address{Sudipta Das\\School of Mathematical and Statistical Sciences, Arizona State University, P.O. Box 871804, Tempe, AZ 85287-18041, USA. \emph{Email:} {\rm sdas137@asu.edu}}

\address{Cheng Meng\\ Yau Mathematical Sciences Center, Tsinghua University, Beijing 100084, China. \emph{Email:} {\rm cheng319000@tsinghua.edu.cn}}

\begin{abstract}
This article focuses on the existence of asymptotic colengths for families of $\fm_{R}$-primary ideals in a Noetherian local ring $(R,\fm)$. In any characteristic, we generalize graded families to weakly graded families of ideals, and in prime characteristic, we explore various families such as weakly $p$-families and weakly inverse $p$-families. The main contribution of this paper is providing a unified analytic method to prove the existence of limits. Additionally, we establish Brunn-Minkowski type inequalities, positivity results, and volume = multiplicity formulas for these families of ideals.
\end{abstract}
\maketitle

\tableofcontents

\section{Introduction} \label{intro}

This article investigates the asymptotic colengths of specific families of ideals indexed by $\NN$, in the context of rings of general characteristic, and by powers of a prime $p > 0$, when the ambient ring is of characteristic $p > 0$.

The most classical example is the following limit:

\begin{equation} \label{eq_1.1}
\lim_{n \to \infty}d! \cdot \dfrac{\ell(R/I^{n})}{n^d}\end{equation}
for a fixed $\fm$-primary ideal $I$ in the Noetherian local ring $(R, \fm)$. In this situation $\ell_R(R/I^n)$ is a polynomial of degree $d$ for large $n$ and the limit is a positive integer called Hilbert-Samuel multiplicity, denoted by $e(I,R)$. This multiplicity is a fundamental invariant in commutative ring theory and singularity theory. Classically, the Hilbert-Samuel multiplicity is used to define intersection numbers for varieties. Another significant application is a Rees' criterion for integral dependence (\cite{rees1978theorem,sally1983cohen,sally1977associated}).

If we consider a \emph{graded family} i.e. $I_{\bu}=\{I_n\}_{n \in \NN}$, with the property  $I_{n}I_{m} \subset I_{n+m}$, the polynomial behavior described earlier does not necessarily hold for a general graded  family of $\fm$-primary ideals. This is primarily because the associated Rees ring $\bigoplus_{n\ge 0}I_n$ is often not a finitely generated $R$-algebra. Consequently, we cannot generally expect  $\ell_R(R/I_n)$ to  be polynomial  for large $n$ in general. Moreover in  \cite{cutkosky1993problem} Cutkosky and Srinivasan showed that there exists a graded family of ideals such that the limit in \autoref{eq_1.1} with $I^{n}$ replaced by $I_{n}$ can be an irrational number. Despite this, it is remarkable that under very general conditions, this length asymptotically approaches a polynomial of degree 
$d$, confirming that the limit exists \cite{cutkosky2014asymptotic}.

  This type of limit was first considered in the work of Ein, Lazarsfeld and Smith \cite{ein2003uniform} and Musta\c{t}\u{a} \cite{mustactǎ2002multiplicities} . In their work Lazarsfeld and Musta\c{t}\u{a} \cite{lazarsfeld2009convex} established that
the limit exists for all graded families of $m_R$-primary ideals in $R$ if $R$ is a domain which is essentially of finite type over an algebraically closed field $\kk$ with $R/\fm=\kk$. Each of these conditions is essential for their proof. Their approach involves  reducing the problem to one on graded linear series on a projective variety, and then using a method introduced by Okounkov  \cite{okounkov2003would} to reduce the problem to one of counting points in an integral semigroup.

Later Cutkosky in \cite{cutkosky2013multiplicities} first showed that
the limit exists for all graded families of $m_R$-primary ideals in $R$ if $R$ is analytically unramified, i.e., $\hat R$ is reduced (where $\hat R$ is the completion of $R$ with respect to the maximal ideal $\fm$), equicharacteristic and $R/\fm$ is perfect. Finally he settled the problem in \cite{cutkosky2014asymptotic} for any graded family of $\fm$ primary ideals with the condition that $\dim \left(\nil(\hat{R})\right)<\dim R$, where the nilradical $N(R)$ of a ring $R$ is defined by
$$
N(R)=\{x\in R\mid x^n=0 \mbox{ for some positive integer $n$}\}.
$$


\textbf{Limits in positive characteristic:} In the context of Noetherian local rings of  characteristic $p >0$, a  central area of study involves the Frobenius endomorphism $F : R \to R$ \phantom{}defined by $r \mapsto r^p$ for all $r \in R$. Kunz's result \cite{kunz1969characterizations} established a link between the flatness of the Frobenius and the regularity of the ring, revealing that the behavior of the Frobenius endomorphism plays a crucial role in understanding the singularities of these rings.

Two key numerical invariants that measure the failure of flatness under iterated Frobenius actions are the Hilbert-Kunz multiplicity and the $F$-signature.

The \emph{Hilbert-Kunz multiplicity} \cite{monsky1983hilbert} is defined as follows: $$ e_{HK}(I,R)= \lim_{e \to \infty}  \dfrac{\ell(R/I^{[p^e]})}{p^{ed}},$$ where $\ell(R/I^{[p^e]})$ denotes the $R$-module length of the quotient $R/I^{[p^e]}$.

 The \emph{$F$-signature} \cite{smith1997simplicity} can be defined by
$$ s(R)= \lim_{e \to \infty}  \dfrac{\ell(R/I_{p^e})}{p^{ed}},$$ using the sequence of ideals $I_{p^e}=\{r \in R \mid \phi(F_{*}^er) \in \fm \quad  \text{for all} \quad \phi \in Hom_{R}(F_{*}^eR,R)\}$, where $F_{*}^{e}$ is the  \emph{$e$-th iterated Frobenius}, providing an $R$-module structure on $F_{*}^{e}R$.

Over the past two decades, these invariants have been the subject of extensive research, addressing various questions regarding their existence \cite{monsky1983hilbert, tucker2012f}, semicontinuity \cite{polstra2018uniform, smirnov2016upper}, positivity \cite{hochster1994f, aberbach2002f}, and irrationality \cite{brenner2013irrational}.

These two numerical invariants come as limits of $p$-families of ideals, i.e.,  a sequence of ideals $ I_{\bullet}=\{I_{q}\}_{q=1}^{\infty}$ with $I_{q}^{[p]} \subseteq I_{pq}$ for all $q=p^{e}$ as shown in \cite{tucker2012f}. Moreover in \cite{polstra2018f} Polstra and Tucker showed that the limit exists for any  $p$-family when $R$ is an $F$-finite local domain. In the subsequent work,  Hernández and Jeffries \cite{hernandez2018local} showed the limit exists if $\dim \left(\nil(\hat{R})\right)<\dim R$.

\subsection{Results in present work}

\phantom{}\\

Throughout this subsection, we assume that every family of ideals indexed by 
$\NN$ satisfies the condition $\fm^{cn} \subset I_{n}$. For families of ideals indexed by powers of 
$p$, we require the condition $\fm^{cq} \subset I_{q}$, for all $q=p^e$. Families of ideals that meet these criteria are referred to as being bounded below linearly (BBL for short). Similar condition for a pair of families of ideals has been considered in \cite{swanson1997powers,cid2022mixed,hernandez2018local,cutkosky2024epsilon}.

We denote $R^\circ=R\ba (\cup_{P \in \Min(R)}P)$, where $\Min(R)=\{P \in \Spec(R) \mid \dim R/P=\dim R\}$. First we consider the general characteristic case. In this case, we consider the following families of ideals $I_\bu=\{I_n\}_{n \in \NN}$ indexed by $\NN$. 
\begin{definition} \label{intro_weakly_graded_defn}
  We say $I_\bu$ is a \emph{weakly graded family} of ideals if there exists $c \in R^\circ$ independent of $m,n$ such that $cI_mI_n \subset I_{m+n}$ for all $m,n \in \NN$.

\end{definition}
\phantom{}\\
For rings of characteristic $p$, we will consider the following families of ideals $I_\bu=\{I_{p^e}\}_{e \in \NN}$ indexed by powers of $p$.

\begin{definition} \label{intro_several_types_p_families}
 We say:
\begin{enumerate}
\item $I_\bu$ is a \emph{weakly $p$-family} of ideals, if there exists $c \in R^\circ$ such that for any $q=p^e$, $cI^{[p]}_q \subset I_{pq}$.
\item $I_\bu$ is a \emph{weakly inverse $p$-family} of ideals, if there exists $c \in R^\circ$ such that for any $q=p^e$, $cI_{pq} \subset I^{[p]}_q$.

\end{enumerate}
\end{definition}

\begin{remark}
Note that,
\begin{enumerate}

\item By applying \autoref{graded_p_all_BBL}, we can conclude that if $I_{1}$ is $\fm$-primary then all the families previously discussed can be classified as either BBL weakly graded families or BBL weakly 
$p$-families.

\item Additionally, several other interesting classes of examples are presented in  \autoref{weakly_graded_example_1}, \autoref{weakly_graded_example_2},  \autoref{weakly_inverse_p_example}, \autoref{weakly_p_example}, \autoref{F_graded_is_weakly_p_family}.
\end{enumerate}
\end{remark}

In this paper, we demonstrate the existence of limits for the families discussed in \autoref{intro_weakly_graded_defn} and \autoref{intro_several_types_p_families} by employing a suitable valuation on the ring 
$R$, referred to as the OK valuation (see \autoref{OK_val_def}). 

Note that Cutkosky's existence proof \cite{cutkosky2014asymptotic} for graded families and Hernández, Jeffries'  proof \cite{hernandez2018local} for $p$-families can be summarized as follows: Reduce the problem to a complete local domain, then establish a suitable valuation on the ring 
$R$. Recognize that values in a graded family form a semigroup structure and for $p$-families it form a $p$-system. Calculate the $R$-module lengths by counting points within this structure, and show that the growth rate of these points aligns with the Euclidean volume of Okounkov bodies(for graded families) and Euclidean volume of $p$-bodies (for $p$-families).

Therefore, it becomes apparent that as the properties of the family are modified, it is essential to adapt new combinatorial structures to establish the existence of asymptotic colengths. In this paper, we address this challenge by providing a unified approach through analysis to demonstrate the existence of limits for more general classes of ideals. \emph{The primary contribution of this paper is this comprehensive, one-for-all method to prove limit existence.} To achieve this, we analyze specific characteristic functions derived from the conditions $\fm^{cn} \subset I_{n}$ or $\fm^{cp^e} \subset I_{p^e}$. We establish that the limit exists if the volume of a specific region in Euclidean space is zero. The core technical part of this paper lie in this analysis, elaborated in \autoref{section_3}, \autoref{section_5}, \autoref{section_6}, and \autoref{section_7}. The reduction to the case of complete local domains follows a methodology similar to Cutkosky's arguments.

The underlying idea of the proof in the context of weakly graded families and weakly 
$p$-families is based on simple analysis. However, establishing the existence of the corresponding limit associated with weakly inverse 
$p$-families is more subtle and constrained, reflecting deeper insights  into the limiting behavior of the OK valuation. The challenge of using valuations arises from the following observation: for any valuation $\nu$, we have
$$p\nu(I)+S \subset \nu(I^{[p]}).$$
However, the reverse containment does not necessarily hold, as the sum of two elements from $I^{[p]}$ may yield a smaller valuation. For inverse 
$p$-families, we observe the following relationships: $$p\nu(I_q) \subset \nu(I^{[p]}_q), \nu(I_{pq}) \subset \nu(I^{[p]}_q).$$
Importantly, there are no containment relations between the sets $p\nu(I_q),\nu(I_{pq})$. To address this issue, we introduce the concept of an OK basis (see \autoref{OK_basis_defn}) and establish the existence of limits when 
$R$ is an $F$-finite local domain with a perfect residue field. Note that in this setup $R$ always has an OK basis, and we get the required containment from the other side ( see \autoref{OK_basis_existence}, \autoref{OK_basis_gives_containment_from_the_other_side}, and \autoref{OK_basis_gives_containment_from_the_other_side2}).\\

We are now ready to state the first main result of this paper.

\begin{headthm}\label{Thm_A}
Let $R$ be a Noetherian local ring with $\dim \left(\nil(\hat{R})\right)<\dim R$. 

\begin{enumerate} 
    \item [{\rm (a)}](\autoref{general_weakly_graded_limit_exist}) For any BBL weakly graded families $I_\bu$ of $R$-ideals in any characteristic,

$$\lim_{n \to \infty}\frac{\ell(R/I_n)}{n^d}$$

exists.

\item [{\rm (b)}](\autoref{general_weakly_p_and_weakly_inverse_p_exists}) For any BBL families $I_\bu$ of $R$-ideals in positive characteristic,which is either weakly $p$-family, or weakly inverse $p$-family when $R$ is $F$-finite with perfect residue field, then

$$\lim_{q \to \infty}\frac{\ell(R/I_q)}{q^d}$$
exists.

\end{enumerate}
\end{headthm}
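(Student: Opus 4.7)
The plan is to follow the two-stage strategy sketched in the introduction: first reduce to the case where $R$ is a complete local domain, then reduce the existence of each limit to a statement about Lebesgue volumes of certain regions in Euclidean space via an OK valuation (and, for the inverse $p$-family case, an OK basis).

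\smallskip
\noindent\textbf{Reduction step.} Exploiting $\dim(\nil(\hat R))<\dim R$, I would first pass to the completion $\hat R$, which preserves all lengths $\ell(R/I_n)$, $\ell(R/I_q)$, and then reduce modulo each minimal prime $P$ with $\dim(\hat R/P)=\dim R$. The crucial point is that the defining inclusions of weakly graded, weakly $p$- and weakly inverse $p$-families all involve a multiplier $c\in R^\circ$: since $R^\circ$ avoids every minimal prime of maximal dimension, these inclusions descend to every relevant $R/P$, and so do the BBL conditions. Summing the resulting quotient lengths over such $P$ and using that the remaining minimal primes and the nilradical contribute modules of Krull dimension strictly less than $d$ (hence errors of order $n^{d-1}$ or $q^{d-1}$ in length), the normalized limits over $R$ and over the several $R/P$ coincide. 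This mirrors the reductions of Cutkosky and of Hern\'andez--Jeffries.

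\smallskip
\noindent\textbf{Analytic core for weakly graded and weakly $p$-families.} Assuming $R$ is now a complete local domain of dimension $d$, fix an OK valuation $\nu$ with value group in $\mathbb Z^d$. Standard counting arguments identify $\ell(R/I_n)$, up to an error of order $n^{d-1}$, with $\#(\mathbb Z^d_{\ge 0}\setminus\nu(I_n))\cap B_n$, where $B_n$ is a box of side $O(n)$ dictated by the BBL condition $\fm^{cn}\subset I_n$. Rescaling by $1/n$ converts this into the rescaled counting measure of the complement of $\tfrac1n\nu(I_n)$ inside a fixed compact set $K\subset\mathbb R^d_{\ge 0}$. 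Let $f_n$ be the indicator of that complement. The weakly graded condition $cI_m I_n\subset I_{m+n}$ produces, after absorbing the vector $\nu(c)$ as a shift of size $O(1/(m+n))$, an approximate super-additivity of the sets $\nu(I_n)$; likewise, $cI_q^{[p]}\subset I_{pq}$ yields an approximate $p$-scaling relating $\tfrac1{pq}\nu(I_{pq})$ to $\tfrac1q\nu(I_q)$. From either, one extracts a pointwise almost-everywhere limit $f_\infty$ of $f_n$: the set of points in $K$ where convergence fails can be trapped in a nested family of thickened boundaries whose widths tend to $0$, hence has Lebesgue measure zero. Dominated convergence then gives $\ell(R/I_n)/n^d\to\int_K f_\infty\,d\lambda$, which is the desired limit.

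\smallskip
\noindent\textbf{The inverse $p$-family case and the main obstacle.} The primary difficulty is in part (b) for weakly inverse $p$-families, because for a mere valuation $\nu$ one only has the two one-way inclusions $p\nu(I_q)\subset\nu(I_q^{[p]})$ and $\nu(I_{pq})\subset\nu(I_q^{[p]})$, with no direct comparison between $p\nu(I_q)$ and $\nu(I_{pq})$. Without such a comparison the approximate scaling relation used in the previous paragraph fails, so the $f_n$ need not converge almost everywhere. This is exactly the gap that forces the $F$-finite, perfect residue field hypothesis: in that setting an OK basis exists (by the referenced existence lemma), and its defining properties upgrade the one-sided containments above into containments in the opposite direction, up to a uniformly bounded multiplicative error absorbed by the element $c$. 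Once this reverse containment is in place, the $p$-scaling of the sets $\tfrac1q\nu(I_q)$ holds up to errors vanishing in the limit, and the a.e.\ convergence argument together with dominated convergence delivers existence of $\lim_{q\to\infty}\ell(R/I_q)/q^d$ exactly as in the weakly $p$-family case. The hard part, and the genuinely new technical contribution, is thus the OK basis machinery that supplies the missing containment; everything else is a uniform analytic wrapper around it.
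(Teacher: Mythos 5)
Your proposal follows the same two-stage strategy as the paper: reduce via completion and minimal primes to the complete local domain (OK domain) case, then use an OK valuation to express normalized colengths as integrals of rescaled characteristic functions whose a.e.\ limit exists because the exceptional set lies in a measure-zero boundary, with the OK basis supplying the missing reverse containment for weakly inverse $p$-families. The only differences are cosmetic: the paper formalizes your ``nested thickened boundaries'' intuition through a height-function characterization of $C^\circ$-ideals (so that $\Delta^{\mathrm{up}}$ and $\Delta^{\mathrm{low}}$ share a Lipschitz graph as common boundary), and it invokes Fatou and reverse Fatou rather than dominated convergence directly.
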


\begin{remark}

Note that:

\begin{enumerate}
\item The assumption $\dim \left( \nil(\hat{R})\right)<\dim R$ is the most general assumption we can add. Counterexamples can be found in (\cite{cutkosky2014asymptotic},\cite{hernandez2018local}) for rings $R$ that violates this condition, i.e., we can always construct graded families and $p$-families whose corresponding limits do not exist.

\item  In \autoref{alternate_limit_existence_proof_for_weakly_inverse_p_family}, we provide an alternative proof of limit existence for weakly inverse 
$p$-families that does not rely on the assumption that 
$R$ has a perfect residue field. This approach avoids the analytic techniques developed in this article and instead utilizes methods from \cite{polstra2018f}, this method can also be applied to show limit existence for weakly $p$-family in a $F$-finite local domain. Nonetheless, the approach used in \autoref{Thm_A} is instrumental for obtaining several important applications, including the Volume = Multiplicity formula and the Minkowski inequality.

\item When considering inverse graded families defined by $I_{m}I_{n} \supset I_{m+n}$, we showed in \autoref{inverse_graded_limit_may_not_exist} that the limit in \autoref{Thm_A}, part $(a)$ may not exist.
\end{enumerate}
\end{remark}

\textbf{Positivity}: A natural question arises: under what conditions are the limits mentioned in \autoref{Thm_A} positive?

To address this inquiry, we focus on the BAL (bounded above linearly) family of ideals discussed in \autoref{Positivity subsection}. The consideration of this type of family is motivated by the work of Mustață \cite{mustactǎ2002multiplicities} and Hernández and Jeffries \cite{hernandez2018local}.

\begin{definition}
Assume $I_\bu$ is a family of $R$-ideals indexed either by $\NN$ or powers of $p$. We say $I_\bu$ is bounded above linearly, or BAL for short, if:
\begin{enumerate}
\item When $I_\bu$ is a family indexed by $\NN$, there exists $c \in \NN$ such that $I_{n} \subset \fm^{\lfloor n/c \rfloor}$;
\item When $I_\bu$ is a family indexed by powers of $p$, there exists $q_0 \in \NN$ such that $I_{qq_0} \subset \fm^{[q]}$.
\end{enumerate}
\end{definition}

In this article, we present the following theorem that addresses the question of positivity.

\begin{headthm} (\autoref{positivity_for_BAL_in_OK_domain})\label{Thm_B}
Let $R$ be an OK domain.
\begin{enumerate} 
\item [{\rm (a)}] Let $I_\bu$ be a BBL weakly graded family of ideals. Then $I_\bu$ is BAL if and only if
$$\lim_{n \to \infty}\frac{\ell(R/I_n)}{n^d}>0.$$
\item [{\rm (b)}] 
Assume $R$ has characteristic $p>0$. Assume either $I_\bu$ is a BBL weakly $p$-family of ideals, or a BBL weakly inverse $p$-family of ideals when $R$ is $F$-finite with perfect residue field. Then $I_\bu$ is BAL if and only if
$$\lim_{q \to \infty}\frac{\ell(R/I_q)}{q^d}>0.$$
\end{enumerate}
\end{headthm}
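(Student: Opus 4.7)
The proof splits naturally into the easy direction (BAL $\Rightarrow$ positive limit) and the harder converse; I would tackle them in that order, using the existence of the limit from \autoref{Thm_A} throughout.

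\emph{Easy direction.} In (a), BAL supplies $c$ with $I_n\subset\fm^{\lfloor n/c\rfloor}$, producing a surjection $R/I_n\twoheadrightarrow R/\fm^{\lfloor n/c\rfloor}$ and hence
$$\ell(R/I_n)\;\geq\;\ell\bigl(R/\fm^{\lfloor n/c\rfloor}\bigr)\;=\;\frac{e(\fm,R)}{d!}\lfloor n/c\rfloor^{d}+O(n^{d-1})$$
by Hilbert--Samuel, so $\lim \ell(R/I_n)/n^{d}\geq e(\fm,R)/(c^{d}\,d!)>0$. The parallel argument for (b) uses $I_{qq_{0}}\subset\fm^{[q]}$ together with $\ell(R/\fm^{[q]})\sim e_{HK}(\fm,R)\,q^{d}$ and positivity of the Hilbert--Kunz multiplicity of $\fm$.

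\emph{Hard direction, graded case.} Let $\nu$ be the OK valuation on $R$, and let $L:\ZZ^{d}\to\ZZ$ be the functional in the OK structure that encodes $\operatorname{ord}_{\fm}$, so that $L\circ\nu$ is commensurable with $\operatorname{ord}_{\fm}$ up to a multiplicative constant $C$. Set $a_{n}:=\min\{L(\nu(x)):0\neq x\in I_{n}\}$. If $x\in I_{m}$, $y\in I_{n}$ realise these minima and $\widetilde c\in R^{\circ}$ is the weakly-graded constant, then $\widetilde c\,x\,y\in I_{m+n}$ and valuation additivity gives $L(\nu(\widetilde c xy))=L(\nu(\widetilde c))+a_{m}+a_{n}$, whence $a_{m+n}\leq a_{m}+a_{n}+L(\nu(\widetilde c))$ and Fekete's lemma produces $\alpha:=\lim a_{n}/n$. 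The key claim is $\alpha>0$: if instead $\alpha=0$, then $\nu(I_{n})$ contains elements whose rescaled $L$-value tends to $0$, and the analytic volume formula of \autoref{section_3} — which expresses $\lim \ell(R/I_{n})/n^{d}$ as the Euclidean volume of a region in $\RR^{d}$ — forces that region to collapse in a codimension, contradicting positivity. Once $\alpha>0$, commensurability of $L\circ\nu$ with $\operatorname{ord}_{\fm}$ yields $\operatorname{ord}_{\fm}(I_{n})\geq \alpha n/(2C)$ for large $n$, which is exactly BAL.

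\emph{Positive characteristic.} For (b) the same blueprint runs with $a_{q}:=\min\{L(\nu(x)):0\neq x\in I_{q}\}$; the weakly $p$-family inclusion $cI_{q}^{[p]}\subset I_{pq}$ gives $a_{pq}\leq p\,a_{q}+L(\nu(c))$, from which convergence of $a_{q}/q$ and the degeneracy step are extracted using the $p$-volume analysis of \autoref{section_5} and \autoref{section_6}. For weakly inverse $p$-families the asymmetric containment between $\nu(I_{pq})$ and $p\,\nu(I_{q})$ must be handled through the OK basis machinery of \autoref{section_7}, which is why we restrict to $F$-finite local domains with perfect residue field there.

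\emph{Main obstacle.} The technical heart is the implication ``$\alpha=0\ \Rightarrow\ \lim \ell(R/I_n)/n^d=0$''. A direct Newton--Okounkov-body volume argument does not apply in the weakly (not strictly) graded setting, so the paper's analytic characteristic-function framework is essential here; combined with the asymmetric valuation containments in the inverse $p$-family case, this is where I expect most of the effort to go.
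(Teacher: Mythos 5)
Your easy direction is correct and more elementary than the paper's: comparing $R/I_n$ directly with $R/\fm^{\lfloor n/c\rfloor}$ (resp.\ $R/\fm^{[q]}$) and invoking $e(\fm,R)>0$ (resp.\ $e_{HK}(\fm,R)>0$) avoids the $\Delta^{low}$ machinery the paper uses for this implication in \autoref{BAL_criteria}. In the hard direction your Fekete scalar $\alpha=\lim a_n/n$, with $a_n=\min_{0\neq x\in I_n}\langle\nu(x),\fa\rangle$, is essentially a repackaging of the paper's height value $\varphi_{\Delta^{low}}(\f0)$; your step ``$\alpha>0\Rightarrow$ BAL'' is sound and uses only axiom (5) of the OK valuation, and it is exactly the direction ``$\Delta^{low}\neq C^\circ\Rightarrow$ BAL'' of \autoref{BAL_criteria}, which the paper proves for \emph{any} BBL family. (The Fekete lemma is a nice touch but logically unnecessary; $\liminf a_n/n$ does the same job.)

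The genuine gap is the implication ``$\alpha=0\Rightarrow\lim\ell(R/I_n)/n^d=0$''. Your stated intuition that the region ``collapses in a codimension'' is not the correct mechanism, and the remark that ``a direct Newton--Okounkov-body volume argument does not apply'' is beside the point. What actually happens is simpler: $\alpha=0$ gives $\fy_n\in 1/n\nu(I_n)$ with $\|\fy_n\|\to 0$ (pointedness of $C$ converts $\langle\fy_n,\fa\rangle\to 0$ into norm convergence), and \autoref{K_K_appl2} then yields $[\epsilon\fb]_n-\fy_n\in 1/nS$ for $n\gg 0$, hence $[\epsilon\fb]_n\in 1/n\nu(I_n)$ and $\epsilon\fb\in\Delta^{up}$ for every $\epsilon>0$. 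Because $\Delta^{up}$ is a $C^\circ$-ideal this forces $\Delta^{up}=C^\circ$ entirely, so $\nabla^{up}=\emptyset$, and the upper bound $\limsup\ell(R/I_n)/n^d\leq[\kk_\nu:\kk]\vol(\nabla^{up})$ from \autoref{volume_formula_for_BBL_family} kills the limit. In the paper this is exactly the combination of \autoref{BAL_characterization_result} and \autoref{BAL_criteria}, and it is worth noting that the positivity criterion there is family-agnostic: the weakly graded / weakly $p$ / weakly inverse $p$ hypotheses enter the paper's proof of \autoref{Thm_B} only through the existence of the limit and the equality $\vol(\nabla^{up})=\vol(\nabla^{low})$, which is cleaner than having the family axiom appear inside a Fekete-type estimate.
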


\subsection{Applications}

\phantom{}\\

\textbf{F-Graded System}: Assume $R$ is $F$-finite of characteristic $p$. An example of a weakly $p$-family which is not necessarily a $p$-family is an $F$-graded system of ideals (see \autoref{defn_F_graded_system}, \autoref{F_graded_is_weakly_p_family}). This system is relevant in the study of Cartier subalgebras of 
$R$ and characterizes all Cartier subalgebras of Gorenstein rings. The existence of limits associated with such families is established by the result of Polstra and Tucker in \cite{polstra2018f}.
However, in Brosowsky's thesis \cite{thesis}, a conjecture is proposed regarding the description of this limit, suggesting that it can be realized as the volume of certain subsets in $\RR^d$. The original conjecture assumes that $R$ is a polynomial ring. As a corollary of \autoref{limit_existence_weakly_p_family_OK_domain}, we have proven a generalized version of the conjecture (see \autoref{main_result_for_BBL_F_graded_system}), demonstrating that if 
$R$ is an 
$F$-finite OK domain, the conjecture still holds true.\\

This new analytic approach not only gives us a uniform way to show limit existence for several different classes of families of ideals but also managed to provide new simplified proof of the following important results.\\

\textbf{Volume = Multiplicity formula}: The following \emph{Volume = Multiplicity} formula has been proven for valuation ideals associated to an Abhyankar valuation in a regular local ring which is essentially of finite type over a field in \cite{ein2003uniform}; when $R$ is a local domain which is essentially of finite type over an algebraically closed field $\kk$ with $R/\mathfrak{m} = \kk$ in \cite{lazarsfeld2009convex}; it is proven when $R$ is analytically unramified with perfect residue field in \cite{cutkosky2014asymptotic}, and for any $d$-dimensional Noetherian local ring $R$ with $\dim N(\hat{R}) < d$ in \cite{cutkosky2015general}. For $p$-families of ideals in a ring of positive characteristic with $\dim N(\hat{R}) < d$,  this has been proven by the first author in \cite{das2023volume}.

 \begin{headthm}\label{Thm_C}
Let $R$ be a Noetherian local ring of dimension $d$ with $\dim \left(\nil(\hat{R})\right)<\dim R$. 

\begin{enumerate} 
\item [{\rm (a)}](\autoref{general_volume_multiplicity_formula_for_weakly_graded_family}) Let $I_\bullet$ be a BBL weakly graded family, then Volume =  Multiplicity formula holds. That is,
\begin{equation*} 
\lim_{n \to \infty} d!\frac{\ell(R/I_n)}{n^d}=\lim_{n \to \infty}\frac{e(I_n,R)}{n^d}
\end{equation*}

\item [{\rm (b)}](\autoref{general_volume_multiplicity_formula_for_weakly_p_weakly_inverse_p_family}) Assume $R$ has positive characteristic. Then for a BBL family $I_\bullet$, if either $I_\bu$ is weak $p$-family, or $I_\bu$ is weak inverse $p$-family and $R$ is $F$-finite with perfect residue field, then  volume = multiplicity formula holds. That is,
\begin{equation*} 
\lim_{q \to \infty}\frac{\ell(R/I_q)}{q^d}=\lim_{q \to \infty}\frac{e_{HK}(I_q,R)}{q^d}.
\end{equation*}
\end{enumerate}
\end{headthm}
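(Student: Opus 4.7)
The plan is to combine a reduction to the complete local domain setting, an iteration of the defining inclusions to obtain one half of the desired equality, and the analytic framework from the proof of \autoref{Thm_A} to obtain the other half.

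I would first reduce to the case where $R$ is a complete local domain (for part (a)) or an $F$-finite complete local domain with perfect residue field (for the weakly inverse $p$-family case of part (b)). The hypothesis $\dim\nil(\hat R)<d$ allows passage to $\hat R$ and subsequent quotienting by a minimal prime of maximal dimension; both $\ell(R/I_n)$ and $e(I_n,R)$ (resp.\ $\ell(R/I_q)$ and $e_{HK}(I_q,R)$) are additive on short exact sequences of top-dimensional modules and absorb contributions of strictly smaller dimension into lower-order error, so this is the same reduction already used to prove \autoref{Thm_A}.

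One direction is extracted by iterating the defining inclusions. Induction gives
\[
c^{k-1}I_n^k\subseteq I_{nk},\qquad c^{(q'-1)/(p-1)}I_q^{[q']}\subseteq I_{qq'},\qquad c^{(q'-1)/(p-1)}I_{qq'}\subseteq I_q^{[q']}
\]
in the weakly graded, weakly $p$-, and weakly inverse $p$-family cases respectively. Applying the short exact sequence $0\to R/(J:c^N)\xrightarrow{\cdot c^N}R/J\to R/(c^N,J)\to 0$ to $J=I_{nk}$ (or $I_{qq'}$ or $I_q^{[q']}$), together with $\dim R/(c)=d-1$ (since $c\in R^\circ$ is regular in a domain) and the BBL estimate, shows that $\ell(R/(c^N,J))$ grows at most polynomially of degree $d-1$ in the relevant parameter, and is therefore negligible after dividing by the $d$-th power and letting $k\to\infty$ (resp.\ $q'\to\infty$). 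Writing $L$ for the limit supplied by \autoref{Thm_A}, this yields $\liminf_n e(I_n,R)/n^d\ge d!L$ in the weakly graded case, $\liminf_q e_{HK}(I_q,R)/q^d\ge L$ in the weakly $p$-family case, and $\limsup_q e_{HK}(I_q,R)/q^d\le L$ in the weakly inverse $p$-family case.

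The reverse inequality uses the analytic machinery developed in \autoref{section_3}, \autoref{section_5}, \autoref{section_6}, \autoref{section_7}. For each fixed $n$ (resp.\ $q$) the inner family $\{I_n^k\}_k$ (resp.\ $\{I_q^{[q']}\}_{q'}$) is itself a graded family (resp.\ a $p$-family), so \autoref{Thm_A} applied to it presents $e(I_n,R)/d!$ (resp.\ $e_{HK}(I_q,R)$) as a Lebesgue integral over $\RR^d$ of a characteristic function $\chi_n$ built from an OK valuation of $R$. The plan is to prove pointwise almost-everywhere convergence of the rescaling $y\mapsto\chi_n(ny)$ to the characteristic function $\chi$ attached to the outer family $\{I_m\}$, with a common dominating function furnished by BBL; dominated convergence then gives $\lim_n \int\chi_n/n^d=\int\chi$, which is the remaining inequality. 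The principal obstacle is this pointwise convergence: for weakly graded and weakly $p$-families it follows fairly directly from the containments above together with the subadditivity of the OK valuation on powers, but for weakly inverse $p$-families the sets $p\nu(I_q)$ and $\nu(I_{pq})$ are incomparable, and one must exploit an OK basis together with the perfect residue field assumption, as in the proof of \autoref{Thm_A}(b) (cf.\ \autoref{OK_basis_gives_containment_from_the_other_side}, \autoref{OK_basis_gives_containment_from_the_other_side2}), in order to recover enough comparability to run the dominated convergence argument.
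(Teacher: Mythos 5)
Your overall strategy matches the paper's argument in \autoref{section_9} and \autoref{section_10}: reduce to an OK domain via completion and minimal primes; for each fixed $n$ (resp.\ $q$) apply the limit theory to the inner family $\{I_n^k\}_k$ (resp.\ $\{I_q^{[q']}\}_{q'}$) to realize $e(I_n,R)/d!$ (resp.\ $e_{HK}(I_q,R)$) as $[\kk_\nu:\kk]\vol(\nabla^{up}_{I_n})$; then show that the rescaled regions $\tfrac{1}{n}\nabla^{up}_{I_n}$ converge in measure to the region $\nabla^{up}$ attached to the outer family. The paper achieves this last step with a Fatou/reverse-Fatou squeeze between $\nabla^{low}_\infty$ and $\nabla^{up}_\infty$ (\autoref{volume_multiplicity_formula_for_OK_domain}), which is exactly your dominated-convergence plan in a slightly different dress; the content of your "pointwise a.e.\ convergence" is \autoref{delta_infty_ht_fn_reln_with_delta} and \autoref{volume_equality_for_delta_and_delta_infinity}, and your identification of the two ingredients (semigroup structure of $\nu$ on powers for one half, the defining containments with $c$ for the other, plus the OK basis in the inverse $p$-family case) is accurate. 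The main structural difference is that you also offer a separate, purely length-theoretic argument for one of the two inequalities; this is a genuinely more elementary route for that half, though it is logically redundant once the dominated-convergence argument is carried out, since the latter already yields equality.

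There is however a quantitative slip in the length-theoretic step that you should repair. With $J=I_{nk}$ and $N=k-1$, you assert that $\ell(R/(c^N,J))$ "grows at most polynomially of degree $d-1$ in the relevant parameter and is therefore negligible after dividing by the $d$-th power and letting $k\to\infty$." Because the exponent of $c$ grows linearly in $k$, one must filter by the powers $(c^i)$; since $c$ is a nonzerodivisor in the domain $R$ one gets
\[
\ell\bigl(R/(c^{k-1},I_{nk})\bigr)\;\le\;(k-1)\,\ell\bigl(R/((c)+I_{nk})\bigr)\;\le\;\alpha\,(k-1)(nk)^{d-1}
\]
using \autoref{BBL_length_bound} for the module $R/(c)$ of dimension $d-1$. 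This is of order $n^{d-1}k^d$, not $(nk)^{d-1}$: after dividing by $(nk)^d$ it tends to $O(1/n)$ as $k\to\infty$, which does \emph{not} vanish in $k$ for fixed $n$; the error only disappears in the subsequent limit $n\to\infty$. Your conclusion $\liminf_n e(I_n,R)/n^d\ge d!L$ survives this correction, but the stated degree bound is wrong. The same correction is needed in the $p$-power versions, and there one must additionally verify that $\ell(R/((c)+I_q^{[q']}))$ is $O((qq')^{d-1})$ uniformly in both $q$ and $q'$ (using $J_0^{q}\subset I_q$ from BBL together with $(J_0^{[q']})^q\subset I_q^{[q']}$ and a pigeonhole bound) so that the double limit $q'\to\infty$ then $q\to\infty$ goes through.
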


\phantom{Then for a BBL family $I_\bullet$, assume either $I_\bu$ is weak $p$-family, or $I_\bu$ is weak inverse $p$-family and $R$ is $F$-finite with perfect residue field, then  volume = multiplicity formula holds. That is,
}

\phantom{}

\textbf{Minkowski inequality}: The Minkowski inequality for powers of $\fm$-primary ideals $I$ and $J$ was first studied  by Teissier \cite{teissier1977inegalite} and Rees and Sharp \cite{rees1978theorem}.  \autoref{Thm_C} has been proven for arbitrary graded families of $\fm$-primary ideals in a regular local ring with algebraically closed residue field in \cite{mustactǎ2002multiplicities, kaveh2012newton}. Later Cutkosky in \cite{cutkosky2015asymptotic} showed this for any graded family in any characteristic with $\dim N(\hat{R}) < d$, and later Hernández and Jeffries \cite{hernandez2018local} showed this for any $p$-family in a positive characteristic ring $R$ with $\dim N(\hat{R}) < d$.   In \phantom{}\autoref{Minkoswki section} we prove \autoref{Thm_C} which recovers all the above results, and give positive answers to new types of families of ideals.

\begin{headthm}\label{Thm_D}
Let $R$ be a Noetherian local ring of dimension $d$ with $\dim \left(\nil(\hat{R})\right)<\dim R$. 

\begin{enumerate} 
    \item [{\rm (a)}](\autoref{Minkowski_general_for_weakly_graded}) If  $I_\bullet$ and $J_\bullet$ are two BBL weakly graded families, then so is $I_\bullet J_\bullet$, and
$$\left(\lim_{n \to \infty}\frac{\ell(R/I_n)}{n^d}\right)^{1/d}+\left(\lim_{n \to \infty}\frac{\ell(R/J_n)}{n^d}\right)^{1/d} \geqslant \left(\lim_{n \to \infty}\frac{\ell(R/I_nJ_n)}{n^d}\right)^{1/d}.$$

\item [{\rm (b)}](\autoref{Minkowski_general_weakly_p_weakly_inverse_p})  
   If  $I_\bullet$ and $J_\bullet$ are two BBL weakly $p$-families, then so is $I_\bullet J_\bullet$. Moreover if $R$ is $F$-finite with perfect residue field, and $I_\bullet$ and $J_\bullet$ are two BBL weakly inverse $p$-families, then so is $I_\bullet J_\bullet$, and in either case,
$$\left(\lim_{q \to \infty}\frac{\ell(R/I_q)}{q^d}\right)^{1/d}+\left(\lim_{q \to \infty}\frac{\ell(R/J_q)}{q^d}\right)^{1/d} \geqslant \left(\lim_{q \to \infty}\frac{\ell(R/I_qJ_q)}{q^d}\right)^{1/d}.$$

\end{enumerate}
\end{headthm}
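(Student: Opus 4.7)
The plan is to derive \autoref{Thm_D} by combining the Volume = Multiplicity formula of \autoref{Thm_C} with the classical Minkowski inequalities for Hilbert--Samuel and Hilbert--Kunz multiplicities applied level by level. First I would verify that the product $I_\bullet J_\bullet$ remains a BBL family of the same type. The BBL property follows from $\fm^{(c_1+c_2)n}\subset\fm^{c_1n}\fm^{c_2n}\subset I_nJ_n$, and similarly with $q=p^e$. The structural condition is preserved by multiplying the two defining inclusions: for weakly graded families $(c_1c_2)(I_mJ_m)(I_nJ_n)\subset I_{m+n}J_{m+n}$ with $c_1c_2\in R^\circ$; for weakly $p$-families $(c_1c_2)(I_qJ_q)^{[p]}\subset I_{pq}J_{pq}$; and for weakly inverse $p$-families $(c_1c_2)I_{pq}J_{pq}\subset I_q^{[p]}J_q^{[p]}=(I_qJ_q)^{[p]}$. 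Hence $I_\bullet J_\bullet$ lies within the scope of \autoref{Thm_A}, and the corresponding asymptotic colength exists.

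By \autoref{Thm_C} applied to each of the three families, the colength limit coincides with the corresponding asymptotic multiplicity: in part (a),
\begin{equation*}
d!\lim_{n\to\infty}\frac{\ell(R/I_n)}{n^d}=\lim_{n\to\infty}\frac{e(I_n,R)}{n^d},
\end{equation*}
and analogously for $J_\bullet$ and $I_\bullet J_\bullet$; in part (b) the right-hand side becomes $\lim_{q\to\infty}e_{HK}(I_q,R)/q^d$ (without the $d!$ factor), with the same statement for $J_\bullet$ and $I_\bullet J_\bullet$.

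For each fixed $n$ (respectively $q$) the ideals $I_n,J_n,I_nJ_n$ are honest $\fm$-primary ideals, so one may invoke the classical Teissier--Rees--Sharp Minkowski inequality
\begin{equation*}
e(I_nJ_n,R)^{1/d}\leq e(I_n,R)^{1/d}+e(J_n,R)^{1/d}
\end{equation*}
in part (a), and the analogous Minkowski-type inequality for Hilbert--Kunz multiplicity in part (b). Dividing both sides by $n$ (respectively $q$) and letting the index go to infinity --- each term converges by \autoref{Thm_A} --- the inequality passes to the limit by continuity of $x\mapsto x^{1/d}$. Substituting back via the identification of the previous paragraph (the common factor $(d!)^{1/d}$ cancels in part (a)) yields the asserted Minkowski inequality.

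The main obstacle is the validity of the classical Minkowski inequalities in the weak generality $\dim\nil(\hat R)<d$, rather than under the formally equidimensional hypothesis under which they are usually formulated. For Hilbert--Samuel multiplicity this is handled by passing to the quotient of $\hat R$ by the top-dimensional part of the nilradical: this quotient is formally equidimensional, none of the relevant multiplicities change, and Teissier--Rees--Sharp applies directly. For Hilbert--Kunz multiplicity the corresponding known Minkowski-type inequality for $e_{HK}$ is used, after the same reduction; since the generalized Volume = Multiplicity formula has already been proved under precisely this hypothesis in \autoref{Thm_C}, no further generality is required.
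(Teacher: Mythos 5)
Your proposal takes a genuinely different route from the paper. The paper's proof of the Minkowski inequality does not invoke any classical Teissier--Rees--Sharp type result for multiplicities of a single pair of ideals; instead, in Theorems \ref{Minkowski_weakly_graded} and \ref{Minkowski_inequality_for_ weakly_p-families_and_weakly_inverse_p-families} it establishes the containment $\Delta_{I_\bullet}^{up\circ}+\Delta_{J_\bullet}^{up\circ}\subset\overline{\Delta^{low}_{I_\bullet J_\bullet}}$ between the $C^\circ$-ideals attached to the three families, and then applies the Brunn--Minkowski inequality in Euclidean space to the complements $C^\circ\setminus\Delta^{up\circ}$. The reduction from general $R$ with $\dim\nil(\hat R)<d$ to OK domains (Theorems \ref{Minkowski_general_for_weakly_graded}, \ref{Minkowski_general_weakly_p_weakly_inverse_p}) is then done by summing the domain-level inequalities over $\Min(\hat R)$ and applying the discrete Minkowski inequality for sums of $d$-th powers. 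Your route instead feeds Theorem \ref{Thm_C} into a levelwise application of the Minkowski inequality for $e$ (resp. $e_{HK}$) and then passes to the limit. Given Theorem \ref{Thm_C}, this is a shorter and entirely valid argument for part (a): the Teissier--Rees--Sharp inequality for Hilbert--Samuel multiplicity is genuinely classical and requires no equidimensionality, so your reduction paragraph is not strictly needed there. What each approach buys: yours is more economical and transparently exhibits Theorem \ref{Thm_D} as a corollary of Theorem \ref{Thm_C} plus a classical fact; the paper's stays entirely inside its own analytic/convex-geometric framework and uses only the geometric Brunn--Minkowski inequality, which is part of the paper's stated goal of a "one-for-all" method.

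The one point in your proposal that deserves flagging is in part (b). You call the Minkowski inequality for Hilbert--Kunz multiplicity an "analogous" classical result, but it is not classical in the same sense: in the generality you need (arbitrary Noetherian local $R$ of positive characteristic with $\dim\nil(\hat R)<d$), it is precisely the Hern\'andez--Jeffries $p$-family Minkowski inequality applied to the $p$-families $\{I^{[q]}\}$ and $\{J^{[q]}\}$. So your proof of part (b) is correct as a logical matter (that result is in the literature and predates this paper), but it rests on the very combinatorial $p$-body machinery this paper aims to supersede, which weakens it as an \emph{independent} proof. If you wanted to keep part (b) internal, you could instead derive the levelwise $e_{HK}$ Minkowski inequality from the paper's own Theorem \ref{Minkowski_inequality_for_ weakly_p-families_and_weakly_inverse_p-families} applied to the constant-in-$q_0$ families $\{I_q^{[q_0]}\}_{q_0}$ --- but that is exactly the argument being proposed, so nothing would be saved. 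In short: part (a) is a clean alternative proof; part (b) is correct but imports the hardest ingredient from the prior literature rather than re-deriving it within the present framework.
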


\phantom{OUtline}\\

\textbf{Outline.} The structure of the paper is as follows. In \autoref{section_2}, we fix the notation and recall some results that are needed throughout the rest of the paper. \autoref{section_3} contents properties, characterization and applications of height function (see \autoref{ht_fun_existence}, \autoref{ht_func_definition}). In \autoref{section_4}, we list all the conditions on the family of ideals we consider, and provide some examples. In \autoref{section_5} we discuss bounds for the valuation of BBL families. \autoref{section_6} and \autoref{section_7} are the technical heart of this paper, where we prove the convergence of the limit for weakly graded families, and weakly $p$-families when the ring $R$ is an OK domain (see \autoref{OK_val_def}, \phantom{d}\autoref{OK_domain_defn}). In \autoref{section_8} we introduce the notion of $OK$ basis, and prove the existence of limit for weakly inverse $p$-families, when the ring is an $F$-finite domain with perfect residue field. \autoref{section_9} contains the proof of volume = multiplicity formula, Minkowski inequality, and positivity result when $R$ is an OK domain. The most general way as it is stated in \autoref{Thm_A}, \autoref{Thm_C} and \autoref{Thm_D} has been proven in \autoref{section_10}.\\

\begin{center}
 {\textbf{ Acknowledgments}}
\end{center} 
The authors are grateful to Jonathan Montaño for his valuable comments, which significantly improved the presentation of this paper. This material is based upon work supported by the National Science Foundation under Grant No. DMS-1928930 and by the Alfred P. Sloan Foundation under grant G-2021-16778, while the authors were in residence at the Simons Laufer Mathematical Sciences Institute (formerly MSRI) in Berkeley, California, during the Spring 2024 semester.

\section{Conventions and basic notions} \label{section_2}
In this section, we will introduce some settings and notations that will be used in the following sections.
\subsection{Notations in Euclidean space}

Let \( d \geq 1 \) be a positive integer, and let \( \mathbb{R}^d \) represent the \( d \)-dimensional Euclidean space with origin denoted as \( \mathbf{0} \). We utilize \( \langle \cdot, \cdot \rangle \) for the standard inner product, \( \|\cdot\| \) for the standard norm, and \( \dist(\cdot, \cdot) \) for the standard distance in \( \mathbb{R}^d \).

For two sets \( U, V \subset \mathbb{R}^d \) and a scalar \( \alpha \in \mathbb{R} \), we define the Minkowski sum as
\[
U + V = \{ \mathbf{u} + \mathbf{v} : \mathbf{u} \in U, \mathbf{v} \in V \}
\]
and the scalar multiplication as
\[
\alpha U = \{ \alpha \mathbf{u} : \mathbf{u} \in U \}.
\]
If \( U \subset \mathbb{R}^d \) and \( \mathbf{v} \in \mathbb{R}^d \), then \( U + \mathbf{v} = U + \{\mathbf{v}\} \) represents the translation of \( U \). We define \( U - V = U + (-1)V \) and \( U - \mathbf{v} = U + (-\mathbf{v}) \).

For \( U, V \subset \mathbb{R}^d \), we denote the complement of \( V \) in \( U \) as
\[
U \setminus V = \{ \mathbf{u} \in \mathbb{R}^d : \mathbf{u} \in U, \mathbf{u} \notin V \}.
\]
The cardinality of \( U \) is denoted by \( \#(U) \). The interior, closure, and boundary of \( U \) in the Euclidean topology are represented by \( U^\circ \), \( \overline{U} \), and \( \partial U = \overline{U} \setminus U^\circ \), respectively. If \( U \) is measurable, its Lebesgue measure is denoted by \( \vol(U) \). All integrals in this paper will be Lebesgue integrals.

For \( \mathbf{u} \in \mathbb{R}^d \) and \( \delta > 0 \), the open ball centered at \( \mathbf{u} \) with radius \( \delta \) is denoted \( B(\mathbf{u}, \delta) \), and the closed ball is denoted \( \overline{B(\mathbf{u}, \delta)} \). For a set \( U \subset \mathbb{R}^d \), we define the distance from a point \( \mathbf{v} \) to \( U \) as
\[
\dist(\mathbf{v}, U) = \inf \{ \dist(\mathbf{v}, \mathbf{u}) : \mathbf{u} \in U \}.
\]
The set \( B(U, d) \) is defined as \( U + B(\mathbf{0}, d) \), and \( \overline{B(U, d)} = U + \overline{B(\mathbf{0}, d)} \). Notably, if \( K \) is compact, then for any \( \mathbf{x} \in \mathbb{R}^d \), we have \( \dist(\mathbf{x}, K) = \dist(\mathbf{x}, \mathbf{u}) \) for some \( \mathbf{u} \in K \), leading to
\[
\overline{B(K, d)} = \{ \mathbf{x} : \dist(\mathbf{x}, K) \leq d \}.
\]

We define a set \( C \subset \mathbb{R}^d \) as a convex cone if it is closed under linear combinations of its elements with nonnegative coefficients. Specifically, for any \( \alpha_1, \alpha_2 \geq 0 \), it follows that \( \alpha_1 C + \alpha_2 C \subset C \). For \( U \subset \mathbb{R}^d \), the minimal closed convex cone containing \( U \) is denoted \( \Cone(U) \), representing the closure of the set formed by all nonnegative linear combinations of elements in \( U \). A cone \( C \) is termed full-dimensional if it spans \( \mathbb{R}^d \) as a vector space, which is equivalent to stating \( C^\circ \neq \emptyset \).

A circular cone with vertex \( \mathbf{0} \) and axis \( \mathbf{a} \) (or parallel to \( \mathbf{a} \)) is defined as follows:
\[
C_{\mathbf{a}, c} = \{ \mathbf{x} : \langle \mathbf{x}, \mathbf{a} \rangle \geq c \|\mathbf{x}\| \}
\]
for some \( 0 < c < \|\mathbf{a}\| \). If \( c = \|\mathbf{a}\| \), the set reduces to a ray parallel to \( \mathbf{a} \); if \( c > \|\mathbf{a}\| \), the set is just $ \{\textbf{0}\} $. We fix \( \mathbf{0} \neq \mathbf{a} \in \mathbb{R}^d \) and \( \alpha \in \mathbb{R} \). We define the following sets:
\[
H_{\mathbf{a}, = \alpha} = \{ \mathbf{x} \in \mathbb{R}^d : \langle \mathbf{x}, \mathbf{a} \rangle = \alpha \},
\]
\[
H_{\mathbf{a}, < \alpha} = \{ \mathbf{x} \in \mathbb{R}^d : \langle \mathbf{x}, \mathbf{a} \rangle < \alpha \},
\]
\[
H_{\mathbf{a}, \leq \alpha} = \{ \mathbf{x} \in \mathbb{R}^d : \langle \mathbf{x}, \mathbf{a} \rangle \leq \alpha \}.
\]
Thus, \( H_{\mathbf{a}, = \alpha} \) describes a hyperplane that may not pass through \( \mathbf{0} \), \( H_{\mathbf{a}, < \alpha} \) denotes an open half-space, and \( H_{\mathbf{a}, \leq \alpha} \) represents a closed half-space. For \( \mathbf{u} \in \mathbb{R}^d \), we define:
\[
H_{\mathbf{a}, = \mathbf{u}} = H_{\mathbf{a}, = \langle \mathbf{u}, \mathbf{a} \rangle} = \{ \mathbf{x} \in \mathbb{R}^d : \langle \mathbf{x}, \mathbf{a} \rangle = \langle \mathbf{u}, \mathbf{a} \rangle \},
\]
\[
H_{\mathbf{a}, < \mathbf{u}} = H_{\mathbf{a}, < \langle \mathbf{u}, \mathbf{a} \rangle} = \{ \mathbf{x} \in \mathbb{R}^d : \langle \mathbf{x}, \mathbf{a} \rangle < \langle \mathbf{u}, \mathbf{a} \rangle \},
\]
\[
H_{\mathbf{a}, \leq \mathbf{u}} = H_{\mathbf{a}, \leq \langle \mathbf{u}, \mathbf{a} \rangle} = \{ \mathbf{x} \in \mathbb{R}^d : \langle \mathbf{x}, \mathbf{a} \rangle \leq \langle \mathbf{u}, \mathbf{a} \rangle \}.
\]
We will omit the reference to \( \mathbf{a} \) when the context makes it clear.

\subsection{Pointed cones and upward cones}

\begin{definition}
Let \( C \subset \mathbb{R}^d \) be a cone and let \( \mathbf{0} \neq \mathbf{a} \in \mathbb{R}^d \). We say that \( C \) is pointed at the direction of \( \mathbf{a} \) if it is contained within a circular cone with vertex at \( \mathbf{0} \) and axis along \( \mathbf{a} \). Furthermore, we say that \( C \) is upward to the direction of \( \mathbf{a} \) if it is pointed and also contains a circular cone with the same vertex and axis.
\end{definition}

More formally, a cone \( C \) is pointed if there exists a constant \( 0 < c_1 < \|\mathbf{a}\| \) such that
\[
C \subset \{ \mathbf{x} : \langle \mathbf{x}, \mathbf{a} \rangle \geq c_1 \|\mathbf{x}\| \}.
\]

A cone \( C \) is considered upward if there exist constants \( 0 < c_1 < c_2 < \|\mathbf{a}\| \) such that
\[
\{ \mathbf{x} : \langle \mathbf{x}, \mathbf{a} \rangle \geq c_2 \|\mathbf{x}\| \} \subset C \subset \{ \mathbf{x} : \langle \mathbf{x}, \mathbf{a} \rangle \geq c_1 \|\mathbf{x}\| \}.
\]

It is important to note that  by inducing a metric on the hyperplane defined by \( H = H_{=0} \)  we guarantee the existence of at least one linear isometry \( \sigma_0 : H \to \mathbb{R}^{d-1} \).

Moreover, by fixing any base point \( \mathbf{x} \in H \), we can define the isometry \( \sigma_{\alpha,\mathbf{x}} = \sigma_0(\cdot) - \sigma_0(\mathbf{x}) : H \to \mathbb{R}^{d-1} \). This isometry preserves the convexity of sets, mapping convex sets in \( H \) to convex sets in \( \mathbb{R}^{d-1} \). While the choice of \( \sigma_0 \) or \( \sigma_{\alpha,\mathbf{x}} \) is not unique, we will select one specific choice for our proof later on.

\begin{remark} \label{truncation_correspondence}

Fix any $\alpha>0$. Then there is a $1-1$ correspondence
$$\{\textup{Convex cones inside } H_{>0}\cup\{\f0\}\}\Longleftrightarrow\{\textup{Convex sets in } H_{=\alpha}\}.$$
For every convex cone $C$, it corresponds to $C \cap H_{=\alpha}$ which is a convex set in $H_{=\alpha}$; for a convex set $C_1$ in $H_{=\alpha}$, the set $\{\lambda\fu:\lambda \geqslant 0,\fu \in C_1\}$ is a convex cone in $H_{>0}\cup\{\f0\}$. Thus the intersection of such a cone with $H_{=\alpha}$ contains enough information of the cone. For any $\alpha>0$, we will call $H_{=\alpha}$ a truncating hyperplane, and this $1-1$ correspondence the \textit{truncation correspondence}.
\end{remark}

We fix a truncating hyperplane $H$. Under the truncation correspondence, closed convex cones in $H_{>0}\cup\{\f0\}$ corresponds to compact convex sets in $H$, and circular cone corresponds to closed balls with center the unique point in $\RR\fa \cap H$. In particular, all closed convex cones in $H_{>0}\cup\{\f0\}$ are pointed.

\begin{proposition} \label{trunc_bound}
Let $C$ be a closed convex cone pointed at direction $\fa$ in $H_{\fa,>0}\cup\{\f0\}$. For any $\alpha>0$, $C \cap H_{\fa,<\alpha}$ is a bounded set; there exists $c_1,c_2>0$ that only depends on $C$ and does not depend on $\alpha$ such that $C \cap B(\f0,c_1\alpha) \subset C \cap H_{\fa,<\alpha} \subset C \cap B(\f0,c_2\alpha)$.   
\end{proposition}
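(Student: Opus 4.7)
The plan is to extract the single constant furnished by pointedness and couple it with Cauchy--Schwarz to handle both containments simultaneously. By the definition of $C$ being pointed at direction $\fa$, there is a constant $0 < \kappa < \|\fa\|$, depending only on $C$, such that
$$\langle \fx, \fa \rangle \geqslant \kappa \|\fx\| \quad \text{for every } \fx \in C.$$
In the opposite direction, Cauchy--Schwarz gives $\langle \fx, \fa \rangle \leqslant \|\fx\|\,\|\fa\|$ for every $\fx \in \RR^d$. These two inequalities are the only ingredients I will need, and crucially neither constant depends on $\alpha$.

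For the outer inclusion, suppose $\fx \in C \cap H_{\fa,<\alpha}$. Chaining the two inequalities above gives $\kappa \|\fx\| \leqslant \langle \fx,\fa\rangle < \alpha$, hence $\|\fx\| < \alpha/\kappa$. Setting $c_2 := 1/\kappa$, which depends only on $C$, one obtains $C \cap H_{\fa,<\alpha} \subset C \cap B(\f0, c_2 \alpha)$. In particular, $C \cap H_{\fa,<\alpha}$ is contained in a ball of radius $c_2\alpha$ and is therefore bounded, which is the first assertion of the proposition.

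For the inner inclusion, set $c_1 := 1/\|\fa\|$, which again depends only on the fixed direction $\fa$. Given $\fx \in C \cap B(\f0, c_1\alpha)$, Cauchy--Schwarz yields
$$\langle \fx, \fa \rangle \leqslant \|\fx\|\,\|\fa\| < c_1\alpha \|\fa\| = \alpha,$$
so $\fx \in H_{\fa, <\alpha}$ and the inclusion $C \cap B(\f0, c_1\alpha) \subset C \cap H_{\fa,<\alpha}$ follows. The only minor subtlety is ensuring that the inequalities end up strict, which is automatic since the ball $B(\f0,c_1\alpha)$ and the half-space $H_{\fa,<\alpha}$ are both defined by strict inequalities. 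I do not anticipate any genuine obstacle here: once the pointedness constant $\kappa$ is in hand, the remainder is a pair of one-line Cauchy--Schwarz estimates.
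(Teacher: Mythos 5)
Your proof is correct, and it takes a genuinely different route from the paper. The paper's proof is a one-liner: it invokes the truncation correspondence (\autoref{truncation_correspondence}), which identifies a closed convex cone in the upper half-space with the compact convex set it cuts out on the hyperplane $H_{\fa,=1}$, and then uses the homothety $\alpha H_{\fa,<1} = H_{\fa,<\alpha}$ to propagate the bound from $\alpha = 1$ to arbitrary $\alpha$. That argument is short but leans implicitly on the compactness assertion in the remark preceding the proposition, and on the reader filling in how a bound on the slice $C \cap H_{\fa,=1}$ yields a bound on the truncated cone. Your argument bypasses all of that: you extract the pointedness constant $\kappa$ directly from the definition, and then the outer inclusion is just $\kappa\|\fx\| \leqslant \langle\fx,\fa\rangle < \alpha$ and the inner one is Cauchy--Schwarz, with $c_1 = 1/\|\fa\|$, $c_2 = 1/\kappa$ manifestly independent of $\alpha$. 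This is more elementary and self-contained, and it has the minor additional virtue of producing explicit constants. The paper's approach, on the other hand, buys consistency with the surrounding geometric framework (the same truncation-correspondence picture is used repeatedly later on), which is presumably why the authors preferred it. Both are correct; yours is the cleaner standalone argument.
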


\begin{proof}
Using \autoref{truncation_correspondence} and the fact that $\alpha H_{\fa, <1}= H_{\fa, < \alpha}$, we get the result.
\end{proof}
Fix the pointed direction $\fa$, then for any $\alpha>0$, we will call $H_{<\alpha}$ a truncating half space.
\begin{definition}
Let $C$ be a pointed full-dimensional cone. Define $C^\bu=C^\circ\cup\{\f0\}$.  
\end{definition}
\begin{remark}
If $C$ is a cone, then $C^\bu$ is also a cone which is not necessarily closed.    
\end{remark}
\begin{proposition}
Let $C$ be a cone pointed at direction $\fa$. Then $C$ is upward to direction $\fa$ if and only if $\fa \in C^\circ$, if and only if $\lambda\fa \in C^\circ$ for all $\lambda>0$.   
\end{proposition}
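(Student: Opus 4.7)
The plan is to establish the three conditions are equivalent via (iii) $\Leftrightarrow$ (ii) $\Leftrightarrow$ (i). The equivalence (ii) $\Leftrightarrow$ (iii) is the easiest piece: setting $\lambda=1$ in (iii) yields (ii), and for the converse I would use that $C$ is a cone, so $\lambda C = C$ for every $\lambda>0$ (since $C$ is closed under multiplication by both $\lambda$ and $1/\lambda$). Since $x\mapsto \lambda x$ is a homeomorphism of $\RR^d$, it sends $C^\circ$ to itself, so $\fa \in C^\circ$ immediately forces $\lambda\fa \in C^\circ$.

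For (i) $\Rightarrow$ (ii), I would unpack the upward hypothesis: there exists $c_2$ with $0 < c_2 < \|\fa\|$ such that $\{x : \langle x, \fa\rangle \geqslant c_2 \|x\|\} \subset C$. The continuous function $f(x)=\langle x,\fa\rangle - c_2\|x\|$ satisfies $f(\fa)=\|\fa\|(\|\fa\|-c_2)>0$, so $f>0$ on an open ball around $\fa$. That ball lies inside the circular cone above, hence inside $C$, which shows $\fa \in C^\circ$.

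The main work lies in (ii) $\Rightarrow$ (i). Fix $\delta>0$ with $B(\fa,\delta)\subset C$. Because $C$ is a cone, $\lambda B(\fa,\delta)=B(\lambda\fa,\lambda\delta)\subset C$ for every $\lambda\geqslant 0$, and therefore the whole conical union $U=\bigcup_{\lambda\geqslant 0}B(\lambda\fa,\lambda\delta)$ is contained in $C$. The plan is then to verify that $U$ itself contains an open circular cone around the axis $\fa$: for any $x$ with $\langle x,\fa\rangle>0$, the minimum of $\|x-\lambda\fa\|$ over $\lambda\geqslant 0$ is attained at $\lambda=\langle x,\fa\rangle/\|\fa\|^2$, and comparing this minimum with the radius $\lambda\delta$ gives, after a short quadratic manipulation, the equivalence
\[
x \in U \quad\Longleftrightarrow\quad \langle x,\fa\rangle > c^*\|x\|, \qquad c^* := \frac{\|\fa\|^2}{\sqrt{\|\fa\|^2+\delta^2}}.
\]
Since $c^*<\|\fa\|$, any $c_2\in(c^*,\|\fa\|)$ produces a closed circular cone $\{x:\langle x,\fa\rangle\geqslant c_2\|x\|\}$ contained in $U\cup\{\f0\}\subset C$. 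Together with the original pointedness of $C$ this realizes $C$ as upward to direction $\fa$.

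The only mildly delicate step is the quadratic estimate identifying $U$ with an open circular cone and extracting the explicit aperture $c^*$; once this is in hand, everything else is formal, and in particular the assumption $0<c^*<\|\fa\|$ drops out for free from $\delta>0$. No further ingredients beyond the definitions of pointed/upward cones and the cone property $\lambda C=C$ for $\lambda>0$ are needed.
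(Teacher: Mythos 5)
Your argument is correct and proceeds by the same basic idea as the paper: take a ball around $\fa$ inside $C$ and cone over it to obtain a circular cone contained in $C$. The two routes differ in one technical respect. The paper works through the truncation correspondence: it picks a $(d-1)$-dimensional ball $B_H(\fa,c_1)$ inside $C\cap H$, where $H$ is the truncating hyperplane through $\fa$ perpendicular to $\fa$, and observes that the cone over such a ball is \emph{by definition} a circular cone (this is exactly how circular cones were characterized under the truncation correspondence), so no further estimate is needed. You instead cone over a full $d$-dimensional ball $B(\fa,\delta)\subset C$, obtaining the conical union $U=\bigcup_{\lambda>0}B(\lambda\fa,\lambda\delta)$, and then must verify by hand that $U$ contains a circular cone; this is what forces the quadratic manipulation.

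One small overstatement in the quadratic step is worth flagging. You compare $\|x-\lambda\fa\|$ with $\lambda\delta$ only at the minimizer $\lambda^*=\langle x,\fa\rangle/\|\fa\|^2$ of $\lambda\mapsto\|x-\lambda\fa\|$, which yields the condition $\langle x,\fa\rangle>c^*\|x\|$ with $c^*=\|\fa\|^2/\sqrt{\|\fa\|^2+\delta^2}$. This is a \emph{sufficient} condition for $x\in U$ (you have exhibited one $\lambda$ that works), not a characterization: the exact aperture of $U$ is $\sqrt{\|\fa\|^2-\delta^2}$, obtained by minimizing $\|x-\lambda\fa\|-\lambda\delta$ rather than $\|x-\lambda\fa\|$ alone, and one checks $\sqrt{\|\fa\|^2-\delta^2}<c^*<\|\fa\|$. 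So the displayed ``$\Longleftrightarrow$'' should be ``$\Longleftarrow$''. This does not damage the proof: you only use the inclusion $\{x:\langle x,\fa\rangle>c^*\|x\|\}\subset U$ together with $c^*<\|\fa\|$, and both of these do hold. The remaining implications (ii)$\Leftrightarrow$(iii) via $\lambda C=C$, and (i)$\Rightarrow$(ii) by evaluating $\langle\cdot,\fa\rangle-c_2\|\cdot\|$ at $\fa$, match the paper's argument and are fine.
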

\begin{proof}
Since $\fa \neq \f0$ and $C^\bu=C^\circ\cup \{\f0\}$ is a cone, $\fa \in C^\circ$ if and only if $\lambda\fa \in C^\circ$ for $\lambda>0$. If the circular cone $C_{\fa,c} \subset C$, then $\fa \in C_{\fa,c}^\circ \subset C^\circ$. On the other hand, suppose $\fa \in C^\circ$. Choose the truncating hyperplane $H$ passing through $\fa$, then $\fa \in (C \cap H)^{\circ H}$, where $\circ H$ denotes the interior in $H \cong \RR^{d-1}$. Thus there is a ball of dimension $d-1$, say $B_H(\fa,c_1) \subset C \cap H$. Therefore $\Cone(B_H(\fa,c_1)) \subset C$, and $\Cone(B_H(\fa,c_1))$ is a circular cone, so we are done.  
\end{proof}

\subsection{Semigroups, semigroup ideals, and cones}
A semigroup  $ S \subset \RR^d$ is defined as a set containing the zero vector $\f0 \in S$ and satisfying $S+S \subset S$. We say that $S_1$ is a subsemigroup of $S_2$, if it is a subset of $S_2$, and the addition operations of both semigroups are compatible through the embedding map.

Let $S_1 \subset S_2$ be two semigroups in $\RR^d$. We say $T \subset S_2$ is an $S_1$-ideal in $S_2$ if $S_1+T \subset T$. Each semigroup $S$ generates a cone $\Cone(S)$. Conversely, any cone $C$ in $\RR^d$ can also be viewed as a semigroup. Note that $C^\bu \subset C$ is a cone is a cone and serves as a subsemigroup of $C$. Therefore,  we can discuss  $C^\bu$-ideals within $C$. It is important to note that $C^\bu=C^\circ\cup \{\f0\}$; thus, for any  $T \subset C$, $C^\bu+T \subset T$ if and only if $C^\circ+T \subset T$. Consequently, we will say that $T$ is a $C^\circ$-ideal in $C$, if it is a $C^\bu$-ideal in $C$.

We denote the group generated by a semigroup $S$ by $\ZZ S$. If $S$ is a subsemigroup of $\ZZ^d$, then $\ZZ S$ is a free abelian group of rank at most $d$. If  it has rank $d$, then there exist a linear transformation $\sigma$ with integer coefficients such that $\sigma(\ZZ^d)=\ZZ S$. Therefore, we can assume, up to a linear transformation, that $\ZZ S=\ZZ^d$. We define $S$ as a standard semigroup if $\ZZ S=\ZZ^d$.

For a cone $C \subset \RR^d$, denote $\reg(C)=C\cap \ZZ^d$. If $S \subset \ZZ^d$ is a semigroup, then $S \subset \reg(\Cone(S))$. By definition, for any $n \in \NN$, $1/n\reg(C)=C\cap 1/n\ZZ^d$.

The following sequences of approximation lemmas deal with a semigroup $S$ and the lattice points inside $\Cone(S)$. Roughly speaking, if $S$ is standard, then up to finitely many points, $S$ fills up all the lattice points that are inside $\Cone(S)$ and not so close to $\partial(\Cone(S))$.
\begin{lemma} \label{union_interior_closed_upward_cone}
Let $C$ be a full-dimensional pointed cone. Then $C^\bu$ is the union of the interior of all closed pointed subcone in $C^\bu$. Moreover if $C$ is upward, then $C^\bu$ is also the union of the interior of all closed upward subcone in $C^\bu$.   
\end{lemma}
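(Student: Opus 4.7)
The plan is to reduce, in both the pointed and the upward versions, to a local construction around any point $\fx\in C^\bu$. Since $\f0$ is contained in every subcone, the essential content is: for every $\fx\in C^\circ$, exhibit a closed pointed (respectively, upward) full-dimensional subcone $C'\subset C^\bu$ with $\fx\in (C')^\circ$.

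For the pointed case, given $\fx\in C^\circ$ I would fix $\epsilon>0$ so small that $\overline{B(\fx,\epsilon)}\subset C^\circ$ and $\epsilon<\|\fx\|$, and set $C'_\fx:=\Cone\bigl(\overline{B(\fx,\epsilon)}\bigr)$. Then $C'_\fx$ is closed (cone over a compact set missing $\f0$), convex (cone over a convex set), and full-dimensional. Moreover $C'_\fx\setminus\{\f0\}\subset C^\circ$ because $C^\circ$ is itself a cone containing the generating ball, so $C'_\fx\subset C^\bu$; and $C'_\fx$ is pointed at $\fa$ since it lies in the pointed cone $C$. Finally $\fx\in B(\fx,\epsilon)\subset (C'_\fx)^\circ$.

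For the upward case, the preceding proposition gives $\fa\in C^\circ$, so I can fix $\delta>0$ with $\overline{B(\fa,\delta)}\subset C^\circ$, $\delta<\|\fa\|$, and form the closed circular cone $K:=\Cone\bigl(\overline{B(\fa,\delta)}\bigr)\subset C^\bu$ with axis $\fa$. Given $\fx\in C^\circ$ I would take
\[C':= C'_\fx + K,\]
the Minkowski sum of two closed convex cones. This is a convex cone, pointed at $\fa$ because $C'\subset C+C=C$, and contains the circular cone $K$, hence is upward. For $C'\subset C^\bu$, write any nonzero element as $\fy_1+\fy_2$ with $\fy_i\in C^\bu$: if one summand is $\f0$ the sum lies in the other cone, and if both are nonzero then both lie in $C^\circ$, so convexity of $C^\circ$ places $(\fy_1+\fy_2)/2$ in $C^\circ$, and the cone property upgrades this to $\fy_1+\fy_2\in C^\circ$. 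Finally $\fx=\fx+\f0\in (C'_\fx)^\circ+K\subset (C')^\circ$, since the sum of any open set with any set is open.

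The main obstacle I anticipate is the closedness of $C'$. I would handle it by the standard fact that for closed convex cones $C_1,C_2$, the Minkowski sum $C_1+C_2$ is closed whenever $C_1\cap(-C_2)=\{\f0\}$. In our setting $C'_\fx,K\subset C$, and the pointedness of $C$ forces $C\cap(-C)=\{\f0\}$, so the hypothesis is satisfied and $C'$ is closed; this completes both halves.
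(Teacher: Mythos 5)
Your proof is correct, and it takes a genuinely different (though morally parallel) route from the paper's. The paper's argument invokes the truncation correspondence to reduce both claims to statements about compact convex sets $K$ with nonempty interior in a truncating hyperplane: for the pointed case one observes a closed ball around any $\fx \in K^\circ$ lies in $K^\circ$, and for the upward case one takes the convex hull of two closed balls (one around the given $\fx$, one around a fixed interior point $\fx_0$). Since a cone over a compact convex set missing the origin is automatically closed, the paper never has to address closedness of the constructed subcone. You instead work entirely upstairs in the cone picture, replacing the ball by $\Cone(\overline{B(\fx,\epsilon)})$ and replacing the convex hull of two balls by the Minkowski sum $\Cone(\overline{B(\fx,\epsilon)}) + \Cone(\overline{B(\fa,\delta)})$. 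These constructions are actually equivalent (the Minkowski sum of the two cones equals the cone over the convex hull of the two balls), but your presentation forces you to prove closedness separately, which you do via the standard fact that $C_1 + C_2$ is closed when $C_1,C_2$ are closed convex cones with $C_1 \cap (-C_2) = \{\f0\}$, a hypothesis that pointedness of $C$ guarantees. Everything checks out: the inclusion $C' \subset C^\bu$ via convexity of $C^\circ$, pointedness via $C' \subset C$, upwardness via $K \subset C'$ being a circular cone with axis $\fa$, and $\fx \in (C')^\circ$ via openness of $(C'_\fx)^\circ + K$. What the paper's route buys is that closedness is free; what your route buys is that it avoids introducing the truncation machinery and states everything intrinsically in terms of cones, at the cost of needing the Minkowski-sum closedness lemma (which you could also have avoided by observing your $C'$ is itself the cone over a compact convex set inside $C^\circ$).
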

\begin{proof}
Choose a truncating hyperplane $H$. According to the truncation correspondence, the first statement is equivalent to the following: for every convex compact set $K$ with $K^\circ \neq \emptyset$, the interior $K^\circ$ can be expressed as the union of the interiors of all compact convex subsets of $K^\circ$. This holds true because, for any point  $\fx \in K^\circ$, there exists a closed ball centered at 
$\fx$ that lies entirely within $K^\circ$.

The second statement is equivalent to the following: for every convex compact set $K$ with $K^\circ \neq \emptyset$ and for any point $\fx_0 \in K^\circ$, $K^\circ$ is the union of the interior of all compact 
convex subsets of $K^\circ$ that contains $\fx_0$ in its interior. This is true because, given any $\fx \in K^\circ$, we can find two closed balls $B_1$ and $B_2$ contained in $K^\circ$- one centered at $\fx$ and the other centered at $\fx_0$. The convex hull of $B_1\cup B_2$ then forms a closed convex set within $K^\circ$ that contains both $\fx$ and $\fx_0$ in its interior.   
\end{proof}
\begin{lemma}\cite[Theorem 1]{kaveh2012newton} \label{K_K_imp_lemma}
Let $S \subset \ZZ^d$ be a semigroup with $\ZZ S =\ZZ^d$. Assume $C=\Cone(S)$ is pointed. Fix a full-dimensional closed subcone $C' \subset C^\bu$. Then there exists $N>0$ such that whenever $\fx \in \reg(C')$ and $\|\fx\| \geqslant N$, $\fx \in S$. 
\end{lemma}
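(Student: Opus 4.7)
I would follow the classical Khovanskii strategy: construct a good finite subset $T = \{s_1, \dots, s_k\} \subset S$, expand each $\fx \in \reg(C')$ as a nonnegative real combination of the $s_i$ with coefficients growing linearly in $\|\fx\|$, and then round this to a nonnegative integer combination by absorbing a bounded remainder using the fact that $T$ generates $\ZZ^d$ as a group. Since $\NN\{s_1, \dots, s_k\} \subset S$ automatically, this forces $\fx \in S$ for $\|\fx\|$ large enough.

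The first step is to produce $T$ simultaneously satisfying (i) $\ZZ T = \ZZ^d$ and (ii) $C' \setminus \{\f0\} \subset \Cone(T)^\circ$. Fix a truncating hyperplane $H$ (\autoref{trunc_bound}), so that the slice $C' \cap H$ is compact and, because $C' \subset C^\bu$, sits inside the interior of $C \cap H$ relative to $H$. Since $\Cone(S) = C$, each point of $C' \cap H$ lies in the relative interior of some $\Cone(s_1^{(\fx)}, \dots, s_{m_\fx}^{(\fx)})$ with $s_j^{(\fx)} \in S$; extracting a finite subcover by compactness yields a finite $T_0 \subset S$ with $C' \setminus \{\f0\} \subset \Cone(T_0)^\circ$. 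Enlarging $T_0$ by finitely many further elements of $S$ (available because $\ZZ S = \ZZ^d$) produces $T$ with $\ZZ T = \ZZ^d$; enlargement only enlarges $\Cone(T)$, so property (ii) is preserved.

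Next, I would show that every $\fx \in C'$ admits a representation $\fx = \sum_{i=1}^k r_i s_i$ with $r_i \geq 0$ and $r_i \geq c_0 \|\fx\|$ for a constant $c_0 > 0$ depending only on $T$ and $C'$. By homogeneity this reduces to the assertion that the function $\fy \mapsto \max\{\min_i r_i : r_i \geq 0,\; \sum r_i s_i = \fy\}$ is bounded below by a positive constant on the compact slice $C' \cap H$. Positivity at each $\fy$ is immediate from $\fy \in (\Cone(T) \cap H)^\circ$, and upper semi-continuity of this function together with compactness of $C' \cap H$ upgrades pointwise positivity to a uniform lower bound $c_0 > 0$.

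Finally, I perform the rounding trick. Write $\fx = \sum \lfloor r_i \rfloor s_i + \fz$ where $\fz := \sum (r_i - \lfloor r_i \rfloor) s_i$ lies in the bounded half-open parallelepiped $F := \{\sum t_i s_i : t_i \in [0,1)\}$. Since $\fx$ and each $s_i$ lie in $\ZZ^d$, also $\fz \in \ZZ^d \cap F$, which is a finite set. Using $\ZZ T = \ZZ^d$, fix once and for all an integer expression $\fz = \sum b_i^{(\fz)} s_i$ for each such $\fz$, and set $B := \max_{\fz, i} |b_i^{(\fz)}|$. Then $\fx = \sum (\lfloor r_i \rfloor + b_i^{(\fz)}) s_i$, and as soon as $\|\fx\| \geq N := (B+1)/c_0$, each $\lfloor r_i \rfloor \geq c_0 \|\fx\| - 1 \geq B$, so every coefficient is a nonnegative integer and $\fx \in \NN T \subset S$. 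The main obstacle is the uniform bound $r_i \geq c_0 \|\fx\|$ in the representation step, and this is exactly where the hypothesis $C' \subset C^\bu$ (rather than merely $C' \subset C$) is essentially used: it prevents $C'$ from approaching $\partial \Cone(T)$, keeping the minimum coefficient in any positive representation uniformly bounded away from zero.
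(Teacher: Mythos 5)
The paper does not prove this lemma itself; it cites \cite[Theorem 1]{kaveh2012newton}, and your proposal supplies a self-contained argument along exactly the lines used there (the classical Khovanskii ``subtract a uniform multiple of a fixed interior vector and round'' strategy), so there is nothing to compare against the paper's text.

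Your argument is essentially correct, but one step is misstated. In the representation step you define
\[
g(\fy)=\max\bigl\{\min_i r_i : r_i\geq 0,\ \textstyle\sum_i r_i s_i=\fy\bigr\}
\]
and claim that \emph{upper} semi-continuity of $g$ together with compactness of $C'\cap H$ yields a uniform positive lower bound. That inference is false as stated: a strictly positive upper semi-continuous function on a compact set can have infimum zero (e.g.\ $g(0)=1$, $g(x)=x$ for $x\in(0,1]$ is USC, positive, with $\inf=0$). What you need is \emph{lower} semi-continuity. Fortunately the function is in fact concave: for $\fy=t\fy_1+(1-t)\fy_2$, if $\fr^{(1)},\fr^{(2)}$ are optimal for $\fy_1,\fy_2$ then $t\fr^{(1)}+(1-t)\fr^{(2)}$ is feasible for $\fy$ and $\min_i$ is a concave function, giving $g(\fy)\geq t g(\fy_1)+(1-t)g(\fy_2)$. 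A finite concave function on an open convex set is continuous, and $C'\cap H$ is a compact subset of $\Cone(T)^\circ$ where $g$ is finite and positive, so $\min_{C'\cap H} g>0$. (Alternatively, and perhaps more simply, take $\fv=\sum_i s_i\in\Cone(T)^\circ$ and minimize the concave function $\fy\mapsto\max\{\lambda\geq 0:\fy-\lambda\fv\in\Cone(T)\}$ over $C'\cap H$; adding the resulting $\lambda_0>0$ to every coordinate of a nonnegative representation of $\fy-\lambda_0\fv$ gives the desired uniform bound directly.) With this correction the remaining steps (finite subcover to build $T$, rounding, and absorbing the bounded lattice remainder via $\ZZ T=\ZZ^d$) are sound.
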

\begin{corollary} \label{K_K_appl}
Let $S \subset \ZZ^d$ be a semigroup with $\ZZ S =\ZZ^d$. Assume $C=\Cone(S)$ is pointed. Fix a full-dimensional closed subcone $C' \subset C^\bu$ and $\delta>0$. Then there exists $N>0$ such that whenever $n \geqslant N$, $\fx \in 1/n\reg(C')$ and $\|\fx\| \geqslant \delta$, $\fx \in 1/nS$. 
\end{corollary}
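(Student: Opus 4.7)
The plan is to derive this corollary directly from \autoref{K_K_imp_lemma} by a rescaling argument. First, I would apply \autoref{K_K_imp_lemma} to the given data $S$ and $C'$, obtaining some constant $N_0>0$ such that every $\fy \in \reg(C')$ with $\|\fy\| \geqslant N_0$ automatically lies in $S$.

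Next I would set $N = N_0/\delta$ (or its ceiling) and verify that this choice works. Suppose $n \geqslant N$, $\fx \in \tfrac{1}{n}\reg(C')$, and $\|\fx\| \geqslant \delta$. Multiplying by $n$, since $C'$ is a cone we have $nC' = C'$, and therefore $n\fx \in n \cdot \tfrac{1}{n}\reg(C') = C' \cap \ZZ^d = \reg(C')$. Moreover $\|n\fx\| = n\|\fx\| \geqslant n\delta \geqslant N\delta = N_0$. Hence \autoref{K_K_imp_lemma} applies to $n\fx$, giving $n\fx \in S$, i.e., $\fx \in \tfrac{1}{n}S$, as required.

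The only thing to double check is the compatibility of the scaling with the lattice condition: $\fx \in \tfrac{1}{n}\reg(C')$ means by definition that $n\fx \in \reg(C') = C' \cap \ZZ^d$, so no subtlety arises. There is no genuine obstacle here since all the combinatorial work has already been done in \autoref{K_K_imp_lemma}; this corollary is really a rescaled restatement where the threshold $N_0$ on the norm is converted, via the hypothesis $\|\fx\| \geqslant \delta$, into a threshold on $n$.
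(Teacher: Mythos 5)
Your proposal is correct and is essentially the same rescaling argument as the paper's one-line proof ("Apply \autoref{K_K_imp_lemma} to $n\fx$ and note that $n\delta \to \infty$ as $n \to \infty$"), just with the threshold $N = N_0/\delta$ made explicit. The verification that $\fx \in \tfrac{1}{n}\reg(C')$ means $n\fx \in \reg(C')$ matches the paper's definition $\tfrac{1}{n}\reg(C') = C' \cap \tfrac{1}{n}\ZZ^d$, so no gap.
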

\begin{proof}
Apply \autoref{K_K_imp_lemma} to $n\fx$ and note that $n\delta \to \infty$ as $n \to \infty$.    
\end{proof}

\begin{corollary}\label{K_K_appl2}
Let $S \subset \ZZ^d$ be a semigroup with $\ZZ S =\ZZ^d$. Assume $C=\Cone(S)$ is pointed. Fix a point $0 \neq \fx \in C^\circ$ and a full-dimensional closed subcone $C' \subset C^\bu$ such that $\fx \in C'^\circ$. 
\begin{enumerate}
\item We choose a sequence of elements $\fy_k$ such that $\fy_k \to \fx$ as $k \to \infty$, and an increasing sequence of positive integers $m_k \to \infty$. Then there exists $N>0$ such that whenever $k \geqslant N$, $[\fy_k]_{m_k} \in 1/m_kS\cap C'^\circ$.
\item We choose a bisequence of elements $\fy_{k,l}$ such that $\fy_{k,l} \to \fx$ as $k \to \infty$ and this convergence is uniform with respect to $l$, and choose an increasing sequence of positive integers $m_k \to \infty$. Then there exists $N>0$ independent of $l$ such that whenever $k \geqslant N$, $[\fy_{k,l}]_{m_k} \in 1/m_kS\cap C'^\circ$.
\end{enumerate}
\end{corollary}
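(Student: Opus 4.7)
The plan is to deduce both parts from \autoref{K_K_appl} by a direct continuity argument. The rounding operator $[\cdot]_{m_k}$ sends a point in $\RR^d$ to the nearest point of $(1/m_k)\ZZ^d$, so $\|[\fz]_{m_k}-\fz\|\leqslant \sqrt{d}/m_k$ for every $\fz$; this uniform approximation bound is the only analytic input needed about the bracket notation.

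For part (1), I would first exploit the hypothesis $\fx\in C'^\circ$ to pick $\epsilon>0$ with $\overline{B(\fx,2\epsilon)}\subset C'^\circ$, and also set $\delta=\tfrac{1}{2}\|\fx\|>0$, which is legitimate since $\fx\neq \f0$. Using $\fy_k\to\fx$ and $m_k\to\infty$, choose $N_1$ so that for all $k\geqslant N_1$ one has simultaneously $\|\fy_k-\fx\|<\epsilon$ and $\sqrt{d}/m_k<\epsilon$. The triangle inequality then gives $\|[\fy_k]_{m_k}-\fx\|<2\epsilon$, so $[\fy_k]_{m_k}\in \overline{B(\fx,2\epsilon)}\subset C'^\circ$, and moreover $\|[\fy_k]_{m_k}\|\geqslant\|\fx\|-2\epsilon\geqslant\delta$ (after shrinking $\epsilon$ if needed). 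By construction $[\fy_k]_{m_k}\in (1/m_k)\ZZ^d\cap C' = (1/m_k)\reg(C')$, so \autoref{K_K_appl} applied to the closed full-dimensional subcone $C'\subset C^\bu$ and the constant $\delta$ produces an integer $N_2$ with $[\fy_k]_{m_k}\in (1/m_k)S$ for all $m_k\geqslant N_2$. Taking $N$ to be the least index $k$ with $k\geqslant N_1$ and $m_k\geqslant N_2$ (which exists since $m_k\to\infty$) completes the argument, and the conclusion $[\fy_k]_{m_k}\in (1/m_k)S\cap C'^\circ$ follows from combining the two memberships.

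For part (2), the only modification is that the choice of $N_1$ must be made uniformly in $l$. This is exactly where the \emph{uniform} convergence $\fy_{k,l}\to\fx$ (in $k$, uniformly in $l$) is used: the condition $\|\fy_{k,l}-\fx\|<\epsilon$ can be ensured for all $k\geqslant N_1$ and all $l$ simultaneously. The bound $\sqrt{d}/m_k<\epsilon$ is already independent of $l$, and \autoref{K_K_appl} provides $N_2$ depending only on $C'$ and $\delta$, not on the particular lattice point. Hence the same $N=\max(N_1,N_2)$ (after the translation from $m_k\geqslant N_2$ to $k\geqslant$ some index) works for every $l$, as required.

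No step here is a serious obstacle; the only subtle point is arranging the two $\epsilon$-estimates so that the limit point $[\fy_k]_{m_k}$ lands in $C'^\circ$ (not merely in $C'$), and tracking that in part (2) the rate of convergence of $\fy_{k,l}\to\fx$ is uniform in $l$ so that the integer $N$ produced does not secretly depend on $l$.
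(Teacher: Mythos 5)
Your proposal is correct and takes essentially the same route as the paper: both show that $[\fy_k]_{m_k}\to\fx$, deduce that for large $k$ it lies in $C'^\circ$ with $\|[\fy_k]_{m_k}\|\geqslant \tfrac12\|\fx\|$, and then invoke \autoref{K_K_appl} with $\delta=\tfrac12\|\fx\|$; the uniformity observation for part (2) is likewise identical. One small inaccuracy worth flagging: $[\cdot]_{m_k}$ is the componentwise floor scaled by $1/m_k$ (per \autoref{des_of_L}), not nearest-point rounding, though the distance bound you use remains valid and the argument is unaffected.
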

\begin{proof}
(1): we see $\|\fy_k-[\fy_k]_{m_k}\| \leqslant d/m_k \to 0$ and $\|\fx-\fy_k\| \to 0$, so $[\fy_k]_{m_k} \to \fx$ as $k \to \infty$. First, this implies $[\fy_k]_{m_k} \in C'^\circ$ for sufficiently large $k$, say $k \geqslant N_1$. Second, this also implies for sufficiently large $k$, say $k \geqslant N_2$, $\|[\fy_k]_{m_k}\| \geq 1/2\|\fx\|>0$. We take $\delta=1/2\|\fx\|$ in \autoref{K_K_appl}, then for some sufficiently large $N_3$, $m_k \geq N_3$ implies $[\fy_k]_{m_k} \in 1/m_kS$. We assume whenever $k \geq N_4$, $m_k \geq N_3$. Then for $N=\max\{N_1,N_2,N_4\}$, $k \geq N$ implies $[\fy_k]_{m_k} \in 1/m_kS\cap C'^\circ$. 

For (2): just replace $\fy_{k}$ with $\fy_{k,l}$ in the argument above and note that all the inequalities are independent of $l$.
\end{proof}
We will usually use the above corollary in two cases: $m_k=k=n$ or $m_k=p^k,k=e$.

\subsection{OK valuation}
Fix $\fa \in \RR^d$, and assume the components of $\fa$ are $\QQ$-linearly independent elements in $\RR$. Then there is an embedding $\langle \bu,\fa \rangle:\ZZ^d \to \RR$. Since $\RR$ is totally ordered, this embedding induces a
total order on $\ZZ^d$, denoted by $\leqslant_{\fa}$, or $\leqslant$ when the context is clear. If, moreover, the components of $\fa$ are all positive, then for every $\fu \in \NN^d$, there are only finitely many $\fv \in \NN^d$ with $\fv \leqslant \fu$.

Now, suppose $\FF$ is a field. A valuation on $\FF$ is a map $\nu:\FF^\times \to G$ where $(G,+,\leqslant)$ is an ordered abelian group, satisfying the properties  $\nu(xy)=\nu(x)+\nu(y)$ and $\nu(x+y)\geqslant \min\{\nu(x),\nu(y)\}$. For any subset $N \subset \FF$, we denote $N^\times=N\ba \{0\}$, and define $\nu(N)=\nu(N^\times)$. In particular, $\nu(\FF)$ is a subgroup of $G$, called the value group of $\nu$. If $R$ is a domain and $\FF$ is its field of fraction, then $\nu(\FF)$ is the subgroup of $G$ generated by the semigroup $\nu(R)$.

For $g \in G$, denote $\FF_{\geqslant g}=\{x \in \FF^\times,\nu(x)\geqslant g\}\cup\{0\}$ and $\FF_{>g}=\{x \in \FF^\times,\nu(x)> g\}\cup\{0\}$. The valuation ring of $\nu$ is $(V_\nu,\fm_\nu,\kk_\nu)$. That is, $V_\nu=\FF_{\geqslant 0}$, $\fm_\nu=\FF_{>0}$, and $\kk_\nu=V_\nu/\fm_\nu$. For any $g \in G$, $\FF_{\geqslant g}$ and $\FF_{>g}$ are $V_\nu$-modules. For a local ring $(R,\fm,\kk) \subset \FF$, we say $R$ is dominated by $\nu$ if $R \subset V_\nu$ and $\fm \subset \fm_\nu$. In this case $\kk \to \kk_\nu$ is a field extension.

Now we discuss the concept of OK valuation c.f. \cite[Definition 3.1]{hernandez2018local}.
\begin{definition} \label{OK_val_def}
Let $(R,\fm,\kk)$ be a Noetherian local domain of dimension $d$ with fraction field $\FF$, and fix $\fa \in \RR^d$ which gives an embedding $\ZZ^d \subset \RR$ and an order  on $\ZZ^d$. A valuation $\nu:\FF^\times \to \ZZ^d$ is said to be \emph{OK relative to $R$} if it satisfies the following properties:
\begin{enumerate}
\item $\ZZ\nu(R)=\nu(\FF)=\ZZ^d$;
\item $R$ is dominated by $\nu$;
\item $\Cone(\nu(R))$ is pointed at direction $\fa$;
\item $[\kk_\nu:\kk]<\infty$; 
\item There exists $\fv \in \ZZ^d$ such that $R \cap \FF_{\geqslant n\fv} \subset \fm^n$ for any $n \in \NN$.
\end{enumerate}
\begin{definition} \label{OK_domain_defn}
    We say $R$ is an \emph{OK domain} if there exists a valuation $\nu$ on $\FF$ that is OK relative to $R$.
\end{definition}
\end{definition}
\begin{remark}
Condition (2) already implies that $\nu(R) \subset H_{\fa,>0} \cup \{0\}$. Furthermore, if $\nu(R)$ is finitely generated as a semigroup, then condition (2) also implies condition (3). However, it is important to note that $\nu(R)$ is not necessarily finitely generated in general.    
\end{remark}
\begin{definition}
Let $R$ be a domain, $\FF$ be its fraction field, $\nu$ be a valuation on $\FF$. For an $R$-ideal $I$, denote $\nu^c(I)=\nu(R)\ba\nu(I)$. If the value group of $\nu$ is $\ZZ^d$, then $\nu^c(I) \subset \ZZ^d$.   
\end{definition}

\subsection{Lattice points and cubes in \texorpdfstring{$\RR^n$}{Lg}} \label{des_of_L}
For $\fx=(x_1,\ldots,x_d) \in \RR^d$ and $\epsilon>0$, define $\Pi_{\fx,\epsilon}=\Pi_{1 \leqslant i \leqslant d}[x_i,x_i+\epsilon)$. It is a cube of edge length $\epsilon$. For  $U \subset \RR^d$, the characteristic function of $U$ is denoted by $\chi(U)$, i.e., $\chi(U)(\fx)=1$ if $\fx \in U$ and $\chi(U)(\fx)=0$ if $\fx \notin U$. Denote $\pi_{\fx,\epsilon}=\chi(\Pi_{\fx,\epsilon})$. For $n \in \NN$ and $L \subset (1/n)\ZZ^d \subset \RR^d$, define $\nabla_{L,1/n}=\cup_{\fx \in L}\Pi_{\fx,1/n}$, which is a disjoint union of cubes. Define $F_{L,1/n}=\chi(\nabla_{L,1/n})=\sum_{\fx \in L}\pi_{\fx,1/n}$. For $\fx=(x_1,\ldots,x_d) \in \mathbb{R}^d$, denote $\lfloor \fx \rfloor=(\lfloor x_1 \rfloor,\ldots,\lfloor x_d \rfloor)$. For $n \in \NN$, denote $[\fx]_n=1/n\lfloor n\fx \rfloor$. 

\begin{proposition} \label{prop_of_[]_and_nabla}
For any $\fx \in \RR^d$ and $n \in \NN$, we have:
\begin{enumerate}
\item $\|\fx-[\fx]_n\| \leqslant d/n$;
\item $[\fx]_n \in 1/n\mathbb{Z}^d$;
\item For $L \subset 1/n\ZZ^d$, $\fx \in \nabla_{L,1/n}$ if and only if $[\fx]_n \in \nabla_{L,1/n}$, if and only if $[\fx]_n \in L$.
\end{enumerate}
\end{proposition}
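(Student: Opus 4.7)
The proposition is really a triple of checks on the cube decomposition of $\RR^d$ induced by the lattice $(1/n)\ZZ^d$. My plan is to tackle the three parts in order, since each one is a short direct computation, and then to reduce (3) to the single key fact that the half-open cubes $\Pi_{\fy,1/n}$ for $\fy \in (1/n)\ZZ^d$ tile $\RR^d$, with $[\fx]_n$ being the lower corner of the unique cube that contains $\fx$.

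For (1), I would write out $[\fx]_n = (\lfloor n x_1\rfloor/n,\dots,\lfloor n x_d \rfloor/n)$ and note that each coordinate satisfies $0 \leqslant x_i - \lfloor n x_i\rfloor/n < 1/n$ directly from the definition of the floor function. Then the triangle inequality applied to the standard basis decomposition gives $\|\fx - [\fx]_n\| \leqslant \sum_{i=1}^d |x_i - \lfloor n x_i\rfloor/n| < d/n$, which yields the stated bound. For (2), there is nothing to do beyond observing that $\lfloor n\fx \rfloor \in \ZZ^d$ by the componentwise definition of $\lfloor \cdot \rfloor$, so dividing by $n$ lands in $(1/n)\ZZ^d$.

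Part (3) is the substantive statement, and I would prove both equivalences by exploiting the tile structure. First I would verify the base fact that $\fx \in \Pi_{[\fx]_n,1/n}$: unwinding the product definition of $\Pi$, this amounts to $\lfloor n x_i\rfloor \leqslant n x_i < \lfloor n x_i \rfloor + 1$ for each $i$, which is again the defining property of the floor. Then for the equivalence $\fx \in \nabla_{L,1/n} \iff [\fx]_n \in L$: the $(\Leftarrow)$ direction is immediate because $\fx \in \Pi_{[\fx]_n,1/n} \subset \nabla_{L,1/n}$ as soon as $[\fx]_n \in L$. For $(\Rightarrow)$, if $\fx \in \Pi_{\fy,1/n}$ for some $\fy \in L$, write $\fy = \fm/n$ with $\fm \in \ZZ^d$; then $m_i \leqslant n x_i < m_i + 1$ forces $\lfloor n x_i \rfloor = m_i$, so $\fy = [\fx]_n \in L$. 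Finally, $[\fx]_n \in \nabla_{L,1/n} \iff [\fx]_n \in L$ is just the specialization of the preceding equivalence to the point $[\fx]_n$ itself, using that $[\,[\fx]_n\,]_n = [\fx]_n$ for any $\fx$ since $[\fx]_n$ already lies in $(1/n)\ZZ^d$.

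There is no real obstacle here; the whole proposition is bookkeeping, and if anything the only mild subtlety is being careful that the cubes $\Pi$ are half-open so that every point of $\RR^d$ sits in exactly one cube of the lattice tiling (this is what makes the biconditional in (3) clean rather than requiring a case split on boundary points). I would state this tiling fact once at the beginning of (3) and then quote it for both equivalences.
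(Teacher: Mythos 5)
Your proof is correct and is exactly the direct unwinding of the definitions that the paper has in mind (the paper itself only says the claim ``easily follows using the description mentioned in \autoref{des_of_L}''). The one point worth flagging is that part (1) also follows from the sharper estimate $\|\fx-[\fx]_n\|=\bigl(\sum_i (x_i-\lfloor nx_i\rfloor/n)^2\bigr)^{1/2}<\sqrt{d}/n\leqslant d/n$, but your $\ell_1$-norm bound is equally fine.
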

\begin{proof}
The proof easily follows using the description mentioned in \autoref{des_of_L}.
\end{proof}
\subsection{Some theorems in real analysis}
We introduce some theorems in real analysis that will be used in the following parts of the paper. They can be found in a typical textbook on real analysis. The first one is Fatou's lemma:
\begin{lemma}[Fatou's lemma] \label{Fatou's lemma}
Let $F_n(x)$ be a sequence of nonnegative measurable functions on $\RR^d$, then
$$\int_{\RR^d} \displaystyle \liminf_{n \to \infty}F_n(x)dx \leqslant \displaystyle \liminf_{n \to \infty}\int_{\RR^d}F_n(x)dx. $$
\end{lemma}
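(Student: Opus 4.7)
The plan is to prove Fatou's lemma via the standard reduction to the Monotone Convergence Theorem (MCT), which I will assume as a known result from real analysis (available in any textbook, and presumably already in the paper's background or about to be introduced).

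First, I would define the auxiliary sequence $G_n(x) = \inf_{k \geqslant n} F_k(x)$. Since each $F_k$ is nonnegative and measurable, $G_n$ is again nonnegative and measurable (the pointwise infimum of countably many measurable functions is measurable). Moreover, by construction $G_n(x) \leqslant G_{n+1}(x)$ for every $x$, so $\{G_n\}$ is a pointwise nondecreasing sequence. By the very definition of $\liminf$, we have
\[
\lim_{n \to \infty} G_n(x) = \sup_{n \geqslant 1} \inf_{k \geqslant n} F_k(x) = \liminf_{n \to \infty} F_n(x)
\]
for every $x \in \RR^d$.

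Next, I would apply MCT to the sequence $\{G_n\}$ to obtain
\[
\int_{\RR^d} \liminf_{n \to \infty} F_n(x)\,dx \;=\; \lim_{n \to \infty} \int_{\RR^d} G_n(x)\,dx.
\]
On the other hand, $G_n(x) \leqslant F_n(x)$ pointwise, so monotonicity of the Lebesgue integral yields $\int G_n \leqslant \int F_n$ for each $n$. Taking $\liminf$ of both sides, and using that the limit on the left side exists (so equals its $\liminf$), gives
\[
\int_{\RR^d} \liminf_{n \to \infty} F_n(x)\,dx \;=\; \liminf_{n \to \infty} \int_{\RR^d} G_n(x)\,dx \;\leqslant\; \liminf_{n \to \infty} \int_{\RR^d} F_n(x)\,dx,
\]
which is exactly the claimed inequality.

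There is essentially no obstacle here: this is textbook material. The only subtlety worth flagging is that one must allow the value $+\infty$ in the integrals (since $F_n \geqslant 0$ but not necessarily integrable), and the MCT and monotonicity of the integral both hold in that extended sense. Since the paper is using Fatou's lemma as a black-box analytic tool, I would in practice simply cite a standard reference (e.g. Folland's \emph{Real Analysis}) rather than include the proof in the body of the paper.
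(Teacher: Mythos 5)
Your proof is correct and is the standard argument: define $G_n = \inf_{k \geqslant n} F_k$, note $G_n \uparrow \liminf F_n$ pointwise, apply the Monotone Convergence Theorem, and use $G_n \leqslant F_n$. The paper itself offers no proof of this lemma — it is stated as background material from real analysis ("They can be found in a typical textbook"), so there is no in-paper argument to compare against, and your instinct at the end (that in context one would simply cite a reference) matches exactly how the authors treat it.
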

Under some more assumptions on the sequence $F_n$, we have an inequality in the other direction.
\begin{lemma}[Reverse Fatou's lemma] \label{reverse Fatou's lemma}
Let $F_n(x)$ be a sequence of nonnegative measurable functions on $\RR^d$. Suppose there exists a bounded set $K$ such that $F_n(x)=0$ for $x \notin K$ and all $n \in \NN$, and there exists a constant $C$ such that $F_n(x) \leqslant C$ for $x \in K$. then
$$\int_{\RR^d} \displaystyle \limsup_{n \to \infty}F_n(x)dx \geqslant \displaystyle \limsup_{n \to \infty}\int_{\RR^d}F_n(x)dx. $$    
\end{lemma}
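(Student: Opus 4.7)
The plan is to reduce the statement to the standard Fatou's lemma (\autoref{Fatou's lemma}) by passing to the complementary sequence. Since the $F_n$ are uniformly bounded by $C$ on $K$ and vanish outside $K$, the functions
$$G_n(x) = C\,\chi(K)(x) - F_n(x)$$
are nonnegative measurable functions on $\RR^d$ (here we may, without loss of generality, replace $K$ by its closure so that $K$ is measurable and has finite volume, noting that this does not affect the hypotheses on $F_n$). The key observation is that this transformation swaps $\limsup$ and $\liminf$.

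Next, I would apply Fatou's lemma to the sequence $G_n$. The pointwise identity
$$\liminf_{n \to \infty} G_n(x) = C\,\chi(K)(x) - \limsup_{n \to \infty} F_n(x)$$
together with the corresponding identity for integrals,
$$\int_{\RR^d} G_n(x)\,dx = C\,\vol(K) - \int_{\RR^d} F_n(x)\,dx,$$
converts \autoref{Fatou's lemma} applied to $\{G_n\}$ into the inequality
$$\int_{\RR^d} \left( C\,\chi(K)(x) - \limsup_{n \to \infty} F_n(x) \right) dx \leqslant \liminf_{n \to \infty} \left( C\,\vol(K) - \int_{\RR^d} F_n(x)\,dx \right).$$
Because $K$ is bounded, $C\,\vol(K) < \infty$, so it can be cancelled from both sides, and the identity $\liminf(-a_n) = -\limsup(a_n)$ transforms the right-hand side into $-\limsup_{n\to\infty}\int F_n$. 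Rearranging yields the desired inequality.

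The only subtle point is ensuring finiteness so that subtraction is legitimate; this is exactly where the hypotheses that $K$ is bounded and $F_n \leqslant C$ on $K$ enter, as they guarantee $\int F_n \leqslant C\,\vol(K) < \infty$ uniformly in $n$. No step is really an obstacle here—the proof is a one-line reduction to Fatou—but the care needed in handling measurability of $K$ and the finiteness of $\vol(K)$ is the only place where the argument could go wrong if the hypotheses were weakened.
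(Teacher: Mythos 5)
Your proof is correct and follows essentially the same route as the paper: replace $K$ by a measurable bounded superset (the paper uses a closed ball, you use the closure — both work), apply Fatou's lemma to $C\,\chi(K) - F_n$, and cancel the finite quantity $C\,\vol(K)$.
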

\begin{proof}
We may replace $K$ by a closed ball to assume $K$ is a measurable set. Then apply Fatou's lemma to the sequence of functions $C\chi(K)-F_n$, and notice that $\int_{\RR^d}C\chi(K)(x)dx=C\vol(K)<\infty$.    
\end{proof}
We also have the bounded convergence lemma.

\begin{lemma}[Bounded convergence lemma]
Let $F_n(x)$ be a sequence of nonnegative measurable functions on $\RR^d$. Suppose there exists a bounded set $K$ such that $F_n(x)=0$ for $x \notin K$ and all $n \in \NN$, and there exists a constant $C$ such that $F_n(x) \leqslant C$ for $x \in K$. Also, suppose $F(x)= \displaystyle\lim_{n \to \infty}F_n(x)$ exists almost everywhere. Then
$$\int_{\RR^d}\lim_{n \to \infty}F_n(x)dx=\lim_{n \to \infty}\int_{\RR^d}F_n(x)dx. $$    
\end{lemma}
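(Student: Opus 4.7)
The plan is to deduce the bounded convergence lemma from the two Fatou-type lemmas (\autoref{Fatou's lemma} and \autoref{reverse Fatou's lemma}) that were just stated, sandwiching the limit of the integrals from both sides.

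First I would observe that since $F(x)=\lim_{n\to\infty}F_n(x)$ exists almost everywhere, we have the almost-everywhere equalities
$$F(x)=\liminf_{n\to\infty}F_n(x)=\limsup_{n\to\infty}F_n(x).$$
In particular, integrals against $\liminf F_n$ and $\limsup F_n$ both coincide with $\int_{\RR^d} F(x)\,dx$.

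Next I would apply Fatou's lemma (\autoref{Fatou's lemma}) directly to the sequence $F_n$ to obtain
$$\int_{\RR^d} F(x)\,dx=\int_{\RR^d}\liminf_{n\to\infty}F_n(x)\,dx \leqslant \liminf_{n\to\infty}\int_{\RR^d}F_n(x)\,dx.$$
Then, noting that the hypotheses of \autoref{reverse Fatou's lemma} (uniform boundedness by $C$ on the common bounded support $K$) are exactly those assumed here, I would apply the reverse Fatou's lemma to get
$$\limsup_{n\to\infty}\int_{\RR^d}F_n(x)\,dx\leqslant \int_{\RR^d}\limsup_{n\to\infty}F_n(x)\,dx=\int_{\RR^d}F(x)\,dx.$$

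Finally, combining the two displayed inequalities with the trivial bound $\liminf \leqslant \limsup$ yields
$$\limsup_{n\to\infty}\int_{\RR^d}F_n(x)\,dx\leqslant\int_{\RR^d}F(x)\,dx\leqslant\liminf_{n\to\infty}\int_{\RR^d}F_n(x)\,dx\leqslant\limsup_{n\to\infty}\int_{\RR^d}F_n(x)\,dx,$$
forcing all four quantities to be equal. Hence $\lim_{n\to\infty}\int F_n(x)\,dx$ exists and equals $\int F(x)\,dx=\int \lim_{n\to\infty}F_n(x)\,dx$, as desired. There is no real obstacle here; the main (minor) point is simply to verify that the reverse Fatou's lemma applies verbatim under the stated hypotheses, which it does since the support condition and uniform pointwise bound are identical.
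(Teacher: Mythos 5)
Your proof is correct and is exactly the argument the paper intends: the paper's proof is the one-line remark that the result ``follows from \autoref{Fatou's lemma} and \autoref{reverse Fatou's lemma},'' and your write-up simply spells out the sandwiching of $\liminf$ and $\limsup$ that this entails.
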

\begin{proof}
Follows from \autoref{Fatou's lemma} and \autoref{reverse Fatou's lemma}.
\end{proof}

\section{Pointed cone, upward cone, and height function} \label{section_3}
In this section, we will examine the concepts of pointed cones and upward cones. We will establish that every pointed cone can be viewed as an upward cone in the direction of a carefully selected vector. Additionally, we will characterize the interior of every upward cone through a function defined on a hyperplane, known as the \emph{height function}. Furthermore, we will utilize the height function to describe important properties of the cone ideals within an upward cone.
\begin{lemma}
Let $C$ be a convex cone in $\RR^d$. Assume $C$ is full-dimensional. Then $\partial C=\partial C^\circ$.    
\end{lemma}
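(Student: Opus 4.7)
The plan is to reduce the claim to the classical fact that, for a full-dimensional convex set in $\RR^d$, the closure of the interior equals the closure of the set itself. Recalling the definitions,
\[
\partial C = \overline{C}\setminus C^\circ, \qquad \partial C^\circ = \overline{C^\circ}\setminus (C^\circ)^\circ = \overline{C^\circ}\setminus C^\circ,
\]
where the last equality uses that $C^\circ$ is open, so $(C^\circ)^\circ=C^\circ$. Hence it suffices to prove $\overline{C}=\overline{C^\circ}$.

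The inclusion $\overline{C^\circ}\subset \overline{C}$ is immediate from $C^\circ\subset C$. For the reverse, I would establish the following standard line-segment lemma: if $\fy \in C^\circ$ and $\fx \in \overline{C}$, then every point on the half-open segment $(\fy,\fx]$ of the form $t\fy+(1-t)\fx$ with $0<t\leqslant 1$ actually lies in $C^\circ$ (only the endpoint $\fx$ may escape the interior). Granting this, for any $\fx \in \overline{C}$ and any fixed interior point $\fy \in C^\circ$ (which exists since $C$ is full-dimensional), the sequence
\[
\fz_k = \tfrac{1}{k}\fy+\bigl(1-\tfrac{1}{k}\bigr)\fx \in C^\circ
\]
converges to $\fx$, so $\fx \in \overline{C^\circ}$. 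This gives $\overline{C}\subset \overline{C^\circ}$ and completes the argument.

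For the line-segment lemma itself, I would fix $\fy \in C^\circ$ with an open ball $B(\fy,\varepsilon)\subset C$ and $\fx \in \overline{C}$. Given $\fz = t\fy+(1-t)\fx$ with $0<t\leqslant 1$, take any $\fx'\in C$ close enough to $\fx$ so that the translated ball $t B(\fy,\varepsilon)+(1-t)\fx'$ is a neighborhood of $\fz$; its membership in $C$ follows from convexity, since each point of it is a convex combination $t\fy'+(1-t)\fx'$ with $\fy'\in B(\fy,\varepsilon)\subset C$ and $\fx'\in C$. Thus $\fz \in C^\circ$.

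The main (and only) obstacle is the line-segment lemma, which is the standard convex-analytic ingredient; the cone structure of $C$ plays no role, only its convexity and full-dimensionality do. Once this lemma is in hand, the chain of equalities above yields $\partial C=\partial C^\circ$ directly.
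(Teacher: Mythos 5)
Your proof is correct and uses essentially the same convexity argument as the paper: both exploit full-dimensionality to obtain a ball inside $C^\circ$ and then combine it convexly with a nearby point of $C$ to produce interior points approaching an arbitrary point of $\overline{C}$. The paper phrases this as a direct $\epsilon$-argument showing that $\fx\in\partial C$ implies $\fx\in\overline{C^\circ}$, whereas you factor it through the cleaner intermediate identity $\overline{C}=\overline{C^\circ}$ and the standard accessibility (line-segment) lemma; the packaging differs slightly but the substance does not.
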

\begin{proof}
Since $(C^\circ)^\circ=C^\circ$, $\partial C^\circ=\overline{C^\circ}\ba C^\circ \subset \overline{C}\ba C^\circ=\partial C$. If $\fx \in \partial C$, choose any $\epsilon>0$. There exists a closed ball $B \subset C^\circ$, and there exists $\fy \in C$ with $\dist(\fx,\fy)<\epsilon/2$. Note that the convex hull $K$ of $B \cup y$ lies in $C$, so $K^\circ \subset C^\circ$. But it contains a right circular cone with vertex $\fy$, so there exists $\fz \in K^\circ \subset C^\circ$ with $\dist(\fy,\fz)<\epsilon/2$. Thus we find $\fz \in C^\circ$ with $\dist(\fx,\fz)<\epsilon$. Since $\epsilon$ is arbitrary, $\fx \in \partial C^\circ$.   
\end{proof}
\begin{lemma} \label{pointed_implies_upward}
Let $C$ be a full-dimensional closed cone pointed at the direction of $\fa$. Then there exists another vector $\mathbf{b}$ such that $C$ is upward to the direction of $\mathbf{b}$.     
\end{lemma}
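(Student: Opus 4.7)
The plan is to exhibit a vector $\mathbf{b}$ for which $C$ is simultaneously pointed at $\mathbf{b}$ and such that $\mathbf{b}\in C^{\circ}$; the second condition furnishes a Euclidean ball $B(\mathbf{b},\delta)\subset C$ whose conical hull is an inscribed circular cone with axis $\mathbf{b}$, so both containments in the definition of ``upward'' hold and suitable constants $c_{1}<c_{2}<\|\mathbf{b}\|$ can then be chosen.

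I would first produce a provisional $\mathbf{b}_{0}$ at which $C$ is pointed, via a closest-point construction. Set $K=\mathrm{conv}(C\cap S^{d-1})$, which is compact and convex. Since $C$ is pointed at $\fa$ there is some $c>0$ with $\langle \fz,\fa\rangle\geqslant c$ for every $\fz\in C\cap S^{d-1}$; this bound is preserved by convex combinations, so $\f0\notin K$. Let $\mathbf{b}_{0}$ be the unique point of $K$ nearest to $\f0$. The standard variational inequality for the nearest-point projection onto a closed convex set yields $\langle \fy,\mathbf{b}_{0}\rangle\geqslant\|\mathbf{b}_{0}\|^{2}$ for every $\fy\in K$, and scaling by $\|\fx\|$ gives $\langle \fx,\mathbf{b}_{0}\rangle\geqslant\|\mathbf{b}_{0}\|^{2}\,\|\fx\|$ for every $\fx\in C\setminus\{\f0\}$. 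Because $C$ is full-dimensional and $d\geqslant 2$, $C$ is not a single ray (the convex hull of two distinct unit vectors in $C$ contains points of norm strictly less than $1$), which forces $\|\mathbf{b}_{0}\|<1$; the displayed inequality then exhibits $C\subset\{\fx:\langle \fx,\mathbf{b}_{0}\rangle\geqslant\|\mathbf{b}_{0}\|^{2}\,\|\fx\|\}$ with $\|\mathbf{b}_{0}\|^{2}<\|\mathbf{b}_{0}\|$, i.e., $C$ is pointed at $\mathbf{b}_{0}$.

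The main obstacle is that $\mathbf{b}_{0}$ is only guaranteed to lie in $K\subset C$ and can land on $\partial C$ (for instance, when the foot of $\f0$ on $K$ sits on a face of $K$ contained in a face of $C$). To push it into $C^{\circ}$ I would fix any $\mathbf{q}\in C^{\circ}$ (available by full-dimensionality) and slide along the segment $\mathbf{b}_{t}=(1-t)\mathbf{b}_{0}+t\mathbf{q}$. The ball-translation $B(\mathbf{b}_{t},t\epsilon)=(1-t)\mathbf{b}_{0}+tB(\mathbf{q},\epsilon)\subset(1-t)C+tC\subset C$ (valid whenever $B(\mathbf{q},\epsilon)\subset C$ and $t\in[0,1]$) places $\mathbf{b}_{t}\in C^{\circ}$ for every $t\in(0,1]$. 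Pointedness at $\mathbf{b}_{t}$ survives small perturbations, because $\min_{\fx\in C\cap S^{d-1}}\langle \fx,\mathbf{b}\rangle$ depends continuously on $\mathbf{b}$ and equals $\|\mathbf{b}_{0}\|^{2}>0$ at $\mathbf{b}=\mathbf{b}_{0}$; hence for all sufficiently small $t>0$, $C$ is still pointed at $\mathbf{b}_{t}$. Any such $\mathbf{b}:=\mathbf{b}_{t}$ simultaneously lies in $C^{\circ}$ and is a pointing direction of $C$, completing the argument.
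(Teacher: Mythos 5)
Your proof is correct, and it takes a genuinely different and arguably cleaner route than the paper's. The paper proceeds by induction on dimension: it handles $\RR^1$ and $\RR^2$ by hand, then for $d>2$ splits into three cases ($\fa\in C^\circ$, $\fa\in\partial C$, $\fa\notin C$), with the third case requiring a truncating hyperplane, a linear isometry onto $\RR^{d-1}$, and the inductive hypothesis, followed by a careful passage back to $\RR^d$. You instead argue directly: the closest-point projection of $\f0$ onto $K=\mathrm{conv}(C\cap S^{d-1})$ produces, via the standard variational inequality, a vector $\fb_0\in K\subset C$ at which $C$ is pointed (and here the strict convexity of the Euclidean ball, used to get $\|\fb_0\|<1$, is the small geometric input replacing the paper's compactness argument on a spherical shell). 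You then push $\fb_0$ into $C^\circ$ along a segment to an interior point, using $(1-t)C+tC\subset C$ to see $\fb_t\in C^\circ$ and a continuity/linearity estimate to preserve pointedness for small $t$; the passage from ``pointed direction in $C^\circ$'' to ``upward'' is the preceding proposition in the paper, which you correctly identify (the conical hull of a ball $B(\fb,\delta)\subset C$ is a circular cone with axis $\fb$). The paper's induction is avoided entirely. Two small remarks: (i) your argument explicitly requires $d\ge 2$ when deriving $\|\fb_0\|<1$, so the trivial one-dimensional case should be disposed of separately (as the paper does in one sentence); (ii) when concluding pointedness at $\fb_t$ you should note that the minimum $m=\min_{\fx\in C\cap S^{d-1}}\langle\fx,\fb_t\rangle$, once positive, automatically satisfies $m<\|\fb_t\|$ because $C$ is not a single ray, so a valid constant $c_1$ with $0<c_1<\|\fb_t\|$ can be chosen; this is implicit in your continuity argument but worth spelling out since the definition of pointed demands the strict upper bound on $c_1$.
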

\begin{proof}
In $\RR^1$, the only cones are the positive or negative rays, and the statement is obviously true. We start with $\mathbb{R}^2$. It is easy to see any full-dimensional closed convex cone $C$ in $\mathbb{R}^2$ is the interior and the boundary of an angle of radian $\theta$. Since $C$ is pointed, $C-\f0$ lies in the open half plane $ \langle \fx,\fa \rangle >0$. Therefore $\theta<\pi$. Thus we can choose a vector $\mathbf{b}$ on the bisector of this angle, therefore $\mathbf{b} \in C^\circ$, and $C-\f0$ still lies in the open half plane $ \langle\fx,\mathbf{b}\rangle>0$. 

Now assume we proved the claim in dimension $d-1 \geqslant 2$. Let $C$ be a closed convex cone in $\RR^d$ pointed at the direction of $\fa$. If $\fa \in C^\circ$, then $C$ is already upward to the direction of $\fa$ and there is nothing to prove. If $\fa \in \partial C=\partial C^\circ$, then we can choose $\mathbf{b} \in C^\circ$ sufficiently close to $\fa$. We fix two balls of finite radius $B(\f0,r_1) \subset B(\f0,r_2)$ where $0<r_1<r_2$, then the following two conditions are equivalent since $C$ is a cone:
\begin{enumerate}
\item For any $\fx \in C-\f0$, $\langle \fx,\fa \rangle>c_1\|\fx\|;$
\item For any $\fx \in C-\f0$ and $r_1\leqslant \|\fx\|\leqslant r_2$, $\langle\fx,\fa\rangle>c_1\|\fx\|.$
\end{enumerate}
That is, we only need to verify the upward condition on a spherical shell inside the convex cone. However, the set $K=C \cap (\overline{B(\f0,r_2)}-B(\f0,r_1))$ is a compact set. So any continuous function on $K$ must achieve a minimum. So there exist a $\fy \in C \cap (\overline{B(\f0,r_2)}-B(\f0,r_1))$ depending on $\mathbf{b}$ such that $\frac{\langle\fx,\mathbf{b}\rangle}{\|\fx\|} \geqslant \frac{\langle \fy,\mathbf{b}\rangle}{\|\fy\|}$. We see $|\frac{\langle\fy,\mathbf{b}\rangle}{\|\fy\|}-\frac{\langle\fy,\fa\rangle}{\|\fy\|}|=\frac{|\langle\fy,\mathbf{b}-\fa\rangle|}{\|\fy\|}\leqslant \|\mathbf{b}-\fa\|$ and this upper bound is independent of $\fy$. So let $\|\mathbf{b}-\fa\|<c_1$ and $c_3=c_1-\|\mathbf{b}-\fa\|>0$, we have $\langle\fx,\mathbf{b}\rangle\geqslant c_3\|\fx\|$ on $K$, hence on all of $C$. Therefore $C$ is pointed at the direction of $\mathbf{b}$. But by the choice of $\mathbf{b}$ it lies in $C^\circ$, so $C$ is upward to the direction of $\mathbf{b}$.

Now we assume $\fa \notin \overline{C}=C$. After rescaling if necessary we may assume $\|\fa\|=1$. We choose a truncating hyperplane $H=H_{\fa,=1}$, and fix a map $\sigma:H \to \mathbb{R}^{d-1}$ which is linear, isometric and maps $\fa$ to the origin $\f0$. The set $C \cap H$ is a compact convex set, so $C_1=\sigma(C\cap H)$ is a compact convex set in $\mathbb{R}^{d-1}$ with $\f0 \notin C_1$. And since $C$ is full-dimensional, $C_1$ has a nonempty interior. The convex cone $C'$ generated by $C_1$ is full-dimensional and pointed because $\f0 \notin C_1$ and $C_1$ is full-dimensional and compact. Therefore by induction, we can choose $\mathbf{a'} \in C'^\circ$ such that $C'$ is pointed at the direction of $\mathbf{a'}$. After rescaling, we may assume $\mathbf{a'} \in C_1^\circ$. So for any $\fy' \in C'$, $\langle\fy',\mathbf{a'}\rangle\geqslant c_1\|\fy'\|$. Let $\mathbf{b}=\sigma^{-1}\mathbf{a'}$. We see for any $\fy \in C\cap H$,
$\|(\fy-\fa)-(\fb-\fa)\|=\|\fy-\fb\|=\|\sigma(\fy)-\sigma(\fb)\|$. This implies $\langle \fy - \fa, \fb-\fa \rangle=\langle \sigma(\fy), \sigma(\fb)\rangle \geqslant c_{1} \|\sigma(\fy)\|= c_{1} \| \fy -\fa\|$, as $\sigma(\fa)=0$. Therefore we have, $\langle\fy-\fa,\mathbf{b}-\fa\rangle \geqslant c_1\|\fy-\fa\|$. Note that $\fy-\fa$ and $\mathbf{b}-\fa$ are both perpendicular to $\fa$, which implies
$$\langle\fy,\mathbf{b}\rangle=\langle\fy-\fa+\fa,\mathbf{b}-\fa+\fa\rangle=\langle\fy-\fa,\mathbf{b}-\fa\rangle+\|\fa\|^2.$$
So $\langle\fy,\mathbf{b}\rangle \geqslant c_1\|\fy-\fa\|+\|\fa\|^2$. Now the function $\fy \to (c_1\|\fy-\fa\|+\|\fa\|^2)/\|\fy\|$ is continuous and positive everywhere on the compact set $C \cap H$, so it has a positive minimum. Therefore, there exists $c_2>0$ such that $c_1\|\fy-\fa\|+\|\fa\|^2\geqslant c_2\|\fy\|$. Thus on $C \cap H$, $\langle\fy,\mathbf{b}\rangle\geqslant c_2\|\fy\|$. Since this is a linear inequality and $C$ is a cone, this is true on all of $C$, so $C$ is pointed at the direction of $\mathbf{b}$. Finally by our choice $\mathbf{a'} \in C_1^\circ$, so $\mathbf{b}=\sigma^{-1}\mathbf{a'} \in C^\circ$. So $C$ is upward to the direction of $\mathbf{b}$.
\end{proof}
\begin{remark}
Note that \autoref{pointed_implies_upward} implies that if we define an OK valuation such that the cone generated by the valuation of ring elements is pointed at the direction of $\fa$, then it is upward to direction $\mathbf{b}$ for some $\mathbf{b}$. Note that in this case, the order on $\nu(R)$ is still given by $\fa$.   
\end{remark}
Next we will see that we can describe an ideal in the convex cone up to interior using a single function on a hyperplane. \begin{setup} \label{convex_upward_cone_setup}

Let $C$ is a convex cone upward to the direction of $\fb$ which is not necessarily closed.
\end{setup} 
\begin{proposition} \label{ht_fun_existence}
\sloppy Adopt \autoref{convex_upward_cone_setup}. Let $\Delta$ be a nonzero $C^\circ$-ideal in $C$. Let $H$ be the hyperplane $\{\fx:\langle\fx,\fb\rangle=0\}$. Then for every $\fx \in H$, there exists a unique $\varphi(\fx)\geqslant 0$ such that $\fx+t\fb \in \Delta$ for $t>\varphi(\fx)$ and $\fx+t\fb \notin \Delta$ for $t<\varphi(\fx)$.    
\end{proposition}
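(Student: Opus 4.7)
The plan is to define $\varphi(\fx) := \inf T_\fx$, where $T_\fx = \{t \in \RR : \fx + t\fb \in \Delta\}$, and verify that $T_\fx$ is nonempty, bounded below by $0$, and upward-closed. These three facts will force $T_\fx$ to equal either $[\varphi(\fx), \infty)$ or $(\varphi(\fx), \infty)$ for a unique nonnegative real number $\varphi(\fx)$, which is precisely the content of the proposition.

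First I would establish that $T_\fx \subseteq [0,\infty)$. Since $C$ is upward to $\fb$, it is in particular pointed at $\fb$, so there exists $c_1 > 0$ with $\langle \fy, \fb\rangle \geqslant c_1 \|\fy\|$ for every $\fy \in C$. Because $\Delta \subseteq C$ and $\fx \in H$ means $\langle \fx, \fb\rangle = 0$, for any $t \in T_\fx$ we have
\[
t\|\fb\|^2 \;=\; \langle \fx + t\fb, \fb\rangle \;\geqslant\; c_1\|\fx + t\fb\| \;\geqslant\; 0,
\]
which forces $t \geqslant 0$ and hence $\varphi(\fx) \geqslant 0$.

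Next I would show $T_\fx \neq \emptyset$. Pick any $\fy \in \Delta$, which exists because $\Delta$ is nonzero. By the earlier proposition characterizing upward cones, $\fb \in C^\circ$, and $C^\circ$ is an open cone. Writing
\[
\fx + t\fb - \fy \;=\; t\bigl(\fb + (\fx-\fy)/t\bigr),
\]
the factor in parentheses tends to $\fb \in C^\circ$ as $t \to \infty$; since $C^\circ$ is open it lies in $C^\circ$ for all sufficiently large $t$, and since $C^\circ$ is a cone, multiplication by $t > 0$ keeps it in $C^\circ$. Therefore $\fx + t\fb \in \fy + C^\circ \subseteq \Delta$ for large $t$, so $T_\fx$ is nonempty.

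Finally I would verify upward-closure: if $t_0 \in T_\fx$ and $s > 0$ then $s\fb \in C^\circ$ (again because $\fb \in C^\circ$ and $C^\circ$ is a cone), giving
\[
\fx + (t_0+s)\fb \;=\; (\fx + t_0\fb) + s\fb \;\in\; \Delta + C^\circ \;\subseteq\; \Delta,
\]
so $t_0 + s \in T_\fx$. Combining nonemptiness, the lower bound by $0$, and upward-closure yields the required description with $\varphi(\fx) = \inf T_\fx$; uniqueness is automatic from this characterization. I do not expect a substantial technical obstacle. The one delicate step is nonemptiness of $T_\fx$, which is exactly where the hypothesis that $C$ is \emph{upward} (and not merely pointed) is used, via the resulting membership $\fb \in C^\circ$ and the openness of $C^\circ$.
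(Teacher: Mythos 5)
Your proof is correct and follows essentially the same route as the paper's: both establish nonemptiness via $\fx+t\fb\in\fy+C^\circ$ for large $t$ using $\fb\in C^\circ$, both derive the lower bound $t\geqslant 0$ from $\langle\fx+t\fb,\fb\rangle=t\|\fb\|^2$ and pointedness, and both use the $C^\circ$-ideal property for upward closure. The only cosmetic difference is that you define $\varphi(\fx)=\inf T_\fx$ directly, whereas the paper phrases the final step as a Dedekind cut of $\RR$ by $R_1$ and $R_2$.
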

\begin{proof}
\sloppy First we claim for any $\fx$ there exist $t$ with $\fx+t\fb \in \Delta$ and $\fx+t\fb \notin \Delta$. Let $t<0$, then  $\langle\fx+t\fb,\fb\rangle=t\|\fb\|^2<0$, so $\fx+t\fb \notin C$ since $C$ is pointed, so $\fx+t\fb \notin \Delta$. Choose $0 \neq \fy \in \Delta$. It suffices to prove there exist $t$ with $\fx+t\fb \in \fy+C^\circ$, or $\fx-\fy+t\fb \in C^\circ$. Since $\fb \in C^\circ$, for large enough $t$, we have $B(\fb,2\|\fx-\fy\|/t) \in C^\circ$. So $1/t(\fx-\fy)+\fb \in C^\circ$, so $\fx-\fy+t\fb \in C^\circ$ and $\fx+t\fb \in \fy+C^\circ \in \Delta$. Now consider two sets $R_1=\{t:\fx+t\fb \in \Delta\}$, and $R_2=\{t:\fx+t\fb \notin \Delta\}$. Since $\Delta$ is a $C^\circ$-ideal, for $t \in R_1$, $s>0$, $t+s \in R_1$. And $R_2=\mathbb{R}\ba R_1$, so for $t \in R_2$, $s>0$, $t-s \in R_2$. So $R_1,R_2$ forms a Dedekind cut of the real line $\mathbb{R}$, so there exist a real number $\varphi(\fx)$ depending on $\fx$ such that $R_1=[\varphi(\fx),\infty),R_2=(-\infty,\varphi(\fx))$ or $R_1=(\varphi(\fx),\infty),R_2=(-\infty,\varphi(\fx)]$. Since $t<0$ implies $\fx+t\fb \notin \Delta$, $(-\infty, t) \subset (-\infty,\varphi(\fx)]$, so $\varphi(\fx) \geq 0$ and we are done.
\end{proof}
\begin{definition} \label{ht_func_definition}

We can consider the function $\varphi(\fx)$ as mentioned in \autoref{ht_fun_existence} on $H$ depending on the $C^\circ$-ideal $\Delta$. We call it the \textbf{height function} of $\Delta$, denoted by $\varphi_\Delta(\fx)$.
\end{definition}
\begin{proposition} \label{ht_func_bound}
Adopt \autoref{convex_upward_cone_setup}. Assume  $0<c_2<\|\fb\|$ satisfies $\{\fx:\langle\fx,\fb\rangle \geqslant c_2\|\fx\|\} \subset C^\bu$. Then $0\leqslant\varphi_C(\fy)\leqslant \varphi_{C^\circ}(\fy)\leqslant c_3\|\fy\|$ for some constant $c_3$ and any $\fy \in H$.   
\end{proposition}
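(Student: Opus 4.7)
The plan is to prove the three inequalities in sequence, with only the last one requiring real work. The lower bound $\varphi_C(\fy) \geqslant 0$ is immediate from \autoref{ht_fun_existence}, since by construction every height function takes nonnegative values. For the middle inequality $\varphi_C(\fy) \leqslant \varphi_{C^\circ}(\fy)$, I first check that both $C$ and $C^\circ$ are genuine $C^\circ$-ideals in $C$ (the former trivially, the latter because $C^\circ+C^\circ\subset C^\circ$, so the height functions are well defined). Then the inequality follows from monotonicity: whenever $t>\varphi_{C^\circ}(\fy)$ we have $\fy+t\fb\in C^\circ\subset C$, which forces $t\geqslant \varphi_C(\fy)$ by the defining property of $\varphi_C$.

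The substantive step is the upper bound $\varphi_{C^\circ}(\fy)\leqslant c_3\|\fy\|$. Here I would exploit the explicit circular subcone $C_{\fb,c_2}:=\{\fx:\langle \fx,\fb\rangle\geqslant c_2\|\fx\|\}\subset C^\bu$ guaranteed by the hypothesis, and produce a $t$ proportional to $\|\fy\|$ for which $\fy+t\fb\in C_{\fb,c_2}\setminus\{\f0\}\subset C^\circ$. Using $\langle \fy,\fb\rangle=0$ (because $\fy\in H$) together with the Pythagorean identity $\|\fy+t\fb\|^2=\|\fy\|^2+t^2\|\fb\|^2$, the membership condition $\langle \fy+t\fb,\fb\rangle\geqslant c_2\|\fy+t\fb\|$ becomes, after squaring,
\[
t^2\|\fb\|^2\bigl(\|\fb\|^2-c_2^2\bigr)\geqslant c_2^2\|\fy\|^2,
\]
which (since $c_2<\|\fb\|$ makes the coefficient positive) is satisfied as soon as
\[
t\geqslant \frac{c_2}{\|\fb\|\sqrt{\|\fb\|^2-c_2^2}}\,\|\fy\|.
\]
Setting $c_3:=c_2/\bigl(\|\fb\|\sqrt{\|\fb\|^2-c_2^2}\bigr)$ and noting that $\fy+t\fb\neq \f0$ whenever $\fy\neq\f0$ (since $\fy\perp\fb$ forces linear independence), the chosen $t=c_3\|\fy\|$ puts $\fy+t\fb$ inside $C^\circ$, giving $\varphi_{C^\circ}(\fy)\leqslant c_3\|\fy\|$. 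The edge case $\fy=\f0$ is handled separately: $\fb\in C^\circ$ implies $t\fb\in C^\circ$ for all $t>0$, so $\varphi_{C^\circ}(\f0)=0$, and the bound is vacuous.

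I do not anticipate a real obstacle here; the whole content is the elementary calculation above, which rests on the orthogonality $\fy\perp\fb$ and on the hypothesis $c_2<\|\fb\|$ that keeps the constant $c_3$ finite. The only conceptual point to highlight in the write-up is that the constant $c_3$ depends on $\fb$ and $c_2$ but not on $\fy$ — this is what turns a pointwise existence statement into the uniform linear bound the proposition asserts.
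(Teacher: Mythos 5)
Your proof is correct and takes essentially the same approach as the paper: the lower bound and middle inequality are dispatched by observing $C^\circ\subset C$, and the main bound comes from the same explicit computation using the orthogonality $\fy\perp\fb$ to solve the circular-cone membership inequality, yielding the identical constant $c_3=c_2/\sqrt{\|\fb\|^4-c_2^2\|\fb\|^2}=c_2/\bigl(\|\fb\|\sqrt{\|\fb\|^2-c_2^2}\bigr)$. You add a few small justifications the paper leaves implicit (verifying $C$ and $C^\circ$ are $C^\circ$-ideals so the height functions are defined, excluding the origin since $C_{\fb,c_2}\subset C^\bu=C^\circ\cup\{\f0\}$, and the $\fy=\f0$ case), all of which are sound.
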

\begin{proof} The first inequality is trivial because $C^\circ \subset C$.
For $t \in \mathbb{R}$ and $\fy \in H$, we have
$$\frac{\langle\fy+t\fb,\fb\rangle}{\|\fy+t\fb\|}=\frac{t\|\fb\|^2}{\sqrt{\|\fy\|^2+t^2\|\fb\|^2}}.$$
For $0<c_2<\|\fb\|$, solving the inequality in $t$
$$\frac{t\|\fb\|^2}{\sqrt{\|\fy\|^2+t^2\|\fb\|^2}}>c_2$$
is equivalent to solving
$$t>0,t^2(\|\fb\|^4-c^2_2\|\fb\|^2)>c^2_2\|\fy\|^2.$$
Since we assume $c_2<\|\fb\|$, the above inequality has solution
$$t>\frac{c_2}{\sqrt{\|\fb\|^4-c^2_2\|\fb\|^2}}\|\fy\|,$$
and whenever $t$ satisfies this inequality, $\langle\fy+t\fb,\fb\rangle > c_2\|\fy+t\fb\|$, so $\fy+t\fb \in C^\circ$ and $t \geqslant \varphi_{C^\circ}(\fy)$. Setting $c_3=\frac{c_2}{\sqrt{\|\fb\|^4-c^2_2\|\fb\|^2}}$, we are done.
\end{proof}
\begin{proposition} \label{ht_func_char}
Adopt \autoref{convex_upward_cone_setup}. A function $\varphi:H \to \RR$ can be realized as the height function of some $C^\circ$-ideal in $C$ if and only if it satisfies:
\begin{enumerate}
\item $\varphi(\fx) \geqslant \varphi_C(\fx)$ for all $\fx \in H$.
\item $\varphi(\fx)+\varphi_{C^\circ}(\fy) \geqslant \varphi(\fx+\fy)$ for all $\fx,\fy \in H$.
\end{enumerate}
\end{proposition}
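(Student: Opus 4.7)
The plan is to verify both directions by unwinding the definition of the height function from \autoref{ht_fun_existence}, using the fact that every vector decomposes uniquely as $\fx + t\fb$ with $\fx \in H$ and $t \in \RR$, since $\fb \notin H$.

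For the necessity direction, I would start with $\varphi = \varphi_\Delta$ for some nonzero $C^\circ$-ideal $\Delta$ in $C$. Condition (1) follows immediately from the inclusion $\Delta \subset C$: if $t > \varphi_\Delta(\fx)$ then $\fx + t\fb \in \Delta \subset C$, so $t \geqslant \varphi_C(\fx)$, and taking infimum over such $t$ gives $\varphi_\Delta(\fx) \geqslant \varphi_C(\fx)$. Condition (2) is simply the $C^\circ$-ideal property translated through the height function: pick $t > \varphi_\Delta(\fx)$ and $s > \varphi_{C^\circ}(\fy)$, so that $\fx + t\fb \in \Delta$ and $\fy + s\fb \in C^\circ$; then
$$(\fx+\fy) + (t+s)\fb = (\fx+t\fb) + (\fy+s\fb) \in \Delta + C^\circ \subset \Delta,$$
whence $t+s \geqslant \varphi_\Delta(\fx+\fy)$, and taking the infimum over $t,s$ yields the desired inequality.

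For the sufficiency direction, given $\varphi$ satisfying (1) and (2), the natural candidate is
$$\Delta = \{\fx + t\fb : \fx \in H,\ t > \varphi(\fx)\}.$$
Containment $\Delta \subset C$ uses (1) combined with the defining property of $\varphi_C$: if $t > \varphi(\fx) \geqslant \varphi_C(\fx)$ then $\fx + t\fb \in C$. The $C^\circ$-ideal property runs the argument of the necessity direction in reverse: for $\fx + t\fb \in \Delta$ and $\fy + s\fb \in C^\circ$, condition (2) gives $\varphi(\fx+\fy) \leqslant \varphi(\fx) + \varphi_{C^\circ}(\fy) < t+s$, so the sum lies in $\Delta$. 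Finally, $\Delta$ is tautologically designed so that $\fx + t\fb \in \Delta$ precisely when $t > \varphi(\fx)$, and by the uniqueness in \autoref{ht_fun_existence} this identifies $\varphi_\Delta = \varphi$.

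I do not anticipate a serious obstacle, since the argument is essentially a definition-chase. The only subtleties worth noting are the choice of the strict inequality $t > \varphi(\fx)$ in the definition of $\Delta$ (which sidesteps any boundary ambiguity at $t = \varphi(\fx)$ and meshes with the strict inequalities obtained from (2)), and the need to check that $\Delta$ is nonzero so that \autoref{ht_fun_existence} applies; the latter is automatic because $\varphi$ is finite-valued, so arbitrarily large $t$ always produce points of $\Delta$.
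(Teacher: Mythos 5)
Your proposal is correct and follows essentially the same route as the paper's proof: for necessity, pick $t>\varphi_\Delta(\fx)$ and $s>\varphi_{C^\circ}(\fy)$ and use the $C^\circ$-ideal property, then take infima; for sufficiency, define $\Delta=\{\fx+t\fb:\fx\in H,\ t>\varphi(\fx)\}$ and check $\Delta\subset C$ via (1) and the ideal property via (2). The one small thing you add that the paper glosses over is the explicit remark that $\Delta$ is nonempty (so that \autoref{ht_fun_existence} applies), which is a worthwhile sanity check but not a substantive divergence.
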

\begin{proof}
\sloppy Suppose $\varphi=\varphi_{\Delta}$ for some $C^\circ$-ideal $\Delta$ in $C$. Then $\Delta \subset C$ implies $\varphi(\fx) \geqslant \varphi_C(\fx)$ for all $\fx \in H$. Choose any $t>\varphi(\fx)$ and $s>\varphi_{C^\circ}(\fy)$, then $\fx+t\fb \in \Delta$ and $\fy+s\fb \in C^\circ$. Since $\Delta$ is a $C^\circ$-ideal, $\fx+\fy+(s+t)\fb \in \Delta$, so $s+t \geqslant \varphi(\fx+\fy)$. Since $s,t$ are arbitrary, $\varphi(\fx)+\varphi_{C^\circ}(\fy) \geqslant \varphi(\fx+\fy)$.

Suppose the function $\varphi$ satisfies (1) and (2). Define
$$\Delta=\{\fx+t\fb:\fx \in H,t>\varphi(\fx)\},$$
then the height function of $\Delta$ is just $\varphi$, and $\varphi(\fx) \geqslant \varphi_C(\fx)$ implies that $\Delta \subset C$. So it suffices to prove that $\Delta$ is a $C^\circ$-ideal in $C$. Note that $\RR\fb$ is the orthogonal complement of $H$, so every element in $\RR^d$ has the form $\fx+t\fb$ for some $\fx \in H$ and $t \in \RR$. Choose $\fx+t\fb \in \Delta$ and $\fy+s\fb \in C^\circ$, then by definition of $\Delta$,  $ t>\varphi(\fx)$, and by definition of $\varphi_{C^\circ}$, $s \geqslant \varphi_{C^\circ}(\fy)$, so $t+s>\varphi(\fx)+\varphi_{C^\circ}(\fy)\geqslant \varphi(\fx+\fy)$, so $\fx+t\fb+\fy+s\fb=\fx+\fy+(s+t)\fb \in \Delta$. So $\Delta$ is a $C^\circ$-ideal.
\end{proof}
\begin{remark}
Actually we can prove that if $C$ is closed and upward, every $C^\circ$-ideal $\Delta$ in $C$ corresponds to a height function satisfying the above conditions and a subset $H_0$ of $H$ in the sense that
$$\Delta=\{\fx+t\fb,\fx \in H,t>\varphi(\fx)\textup{ if }x \notin H_0, t \geqslant \varphi(\fx)\textup{ if }x \in H_0\}.$$
We will not use this fact in our proof.
\end{remark}
\begin{proposition} \label{ht_func_Lipschitz}
Adopt \autoref{convex_upward_cone_setup}. Let $\Delta$ be a $C^\circ$-ideal in $C$. Assume for some $0<c_2< \|\fb\|$, the circular cone $\{\fx:\langle\fx,\fb\rangle \geqslant c_2\|\fx\|\}$ lies in $C^\circ$. Then there exists a positive constant $c_3$ such that for any $\fx,\fy \in H$, $|\varphi_\Delta(\fx)-\varphi_\Delta(\fy)|\leqslant c_3\|\fx-\fy\|$, i.e., $\varphi_\Delta$ is Lipschitz continuous with Lipschitz constant $c_3$. 
\end{proposition}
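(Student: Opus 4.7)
The plan is to derive the Lipschitz bound directly from two ingredients already in hand: the subadditivity inequality $\varphi_\Delta(\fx)+\varphi_{C^\circ}(\fy)\geqslant \varphi_\Delta(\fx+\fy)$ established in \autoref{ht_func_char}, and the linear upper bound $\varphi_{C^\circ}(\fy)\leqslant c_3\|\fy\|$ established in \autoref{ht_func_bound}. Crucially, since $H = \{\fx:\langle \fx,\fb\rangle=0\}$ is a linear subspace, it is closed under subtraction, so $\fx,\fy \in H$ implies $\fx-\fy,\fy-\fx \in H$, which lets us feed differences of points back into the height function machinery.

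Concretely, given any $\fx,\fy \in H$, I would write $\fy=\fx+(\fy-\fx)$ and apply \autoref{ht_func_char}(2) with the vectors $\fx$ and $\fy-\fx$, obtaining
\[
\varphi_\Delta(\fy) \leqslant \varphi_\Delta(\fx)+\varphi_{C^\circ}(\fy-\fx).
\]
Then \autoref{ht_func_bound}, applied to the cone $C^\circ$ with the chosen constant $c_2$, gives $\varphi_{C^\circ}(\fy-\fx)\leqslant c_3\|\fy-\fx\|$ for some $c_3>0$ depending only on $C$ and $c_2$. Hence $\varphi_\Delta(\fy)-\varphi_\Delta(\fx)\leqslant c_3\|\fy-\fx\|$. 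Swapping the roles of $\fx$ and $\fy$ (using $\fx=\fy+(\fx-\fy)$) yields the reverse inequality $\varphi_\Delta(\fx)-\varphi_\Delta(\fy)\leqslant c_3\|\fx-\fy\|$, and combining the two gives $|\varphi_\Delta(\fx)-\varphi_\Delta(\fy)|\leqslant c_3\|\fx-\fy\|$.

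There is essentially no obstacle here beyond bookkeeping: the assumption that $C$ is upward in the direction of $\fb$ is precisely what is needed to invoke \autoref{ht_func_bound} for the cone $C^\circ$ (so that $\varphi_{C^\circ}$ is finite and linearly bounded), and the cone-ideal characterization of height functions immediately transports this bound to $\varphi_\Delta$. The constant $c_3$ produced this way is the same constant appearing in the bound on $\varphi_{C^\circ}$, and is independent of $\Delta$, a point worth noting for the later applications of this Lipschitz estimate.
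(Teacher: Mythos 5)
Your proof is correct and follows essentially the same route as the paper: both use the subadditivity inequality from \autoref{ht_func_char} with the pair $(\fx,\fy-\fx)$ and its swap, then bound $\varphi_{C^\circ}$ linearly via \autoref{ht_func_bound}. The paper simply compresses the two one-sided estimates into a single $\max$ before invoking the bound, but the content is identical.
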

\begin{proof}
By \autoref{ht_func_char}, we have
$$\varphi_{\Delta}(\fx)+\varphi_{C^\circ}(\fy-\fx)\geqslant \varphi_{\Delta}(\fy)$$
and
$$\varphi_{\Delta}(\fy)+\varphi_{C^\circ}(\fx-\fy)\geqslant \varphi_{\Delta}(\fx).$$
Thus
$$|\varphi_{\Delta}(\fx)-\varphi_{\Delta}(\fy)|\leqslant \max\{\varphi_{C^\circ}(\fy-\fx),\varphi_{C^\circ}(\fx-\fy)\}.$$
By \autoref{ht_func_bound}, there exists $c_3>0$ such that
$$\max\{\varphi_{C^\circ}(\fy-\fx),\varphi_{C^\circ}(\fx-\fy)\}\leqslant c_3\|\fx-\fy\|.$$
So we are done.
\end{proof}
\begin{theorem} \label{ht_func_prop_for_closure_interior_boundary}
Adopt \autoref{convex_upward_cone_setup}. Let $\Delta$ be a $C^\circ$-ideal in $C$. Then:
\begin{enumerate}
\item $\Delta^\circ=\{\fx+t\fb:\fx \in H,t>\varphi_\Delta(\fx)\};$
\item $\overline{\Delta}=\{\fx+t\fb:\fx \in H,t\geqslant\varphi_\Delta(\fx)\};$
\item $\partial\Delta=\{\fx+t\fb:\fx \in H,t=\varphi_\Delta(\fx)\}$, and this set has zero measure.
\end{enumerate}
\end{theorem}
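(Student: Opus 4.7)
My plan is to derive parts (1) and (2) directly from the defining property of $\varphi_\Delta$ (\autoref{ht_fun_existence}) together with its Lipschitz continuity (\autoref{ht_func_Lipschitz}), and then obtain (3) as the set difference $\partial\Delta = \overline{\Delta}\setminus \Delta^\circ$. Throughout I use the orthogonal decomposition $\RR^d = H \oplus \RR\fb$, so that each point of $\RR^d$ is uniquely written $\fx + t\fb$ with $\fx \in H$, $t \in \RR$, and points near $\fx + t\fb$ correspond to pairs with $\fy$ near $\fx$ in $H$ and $s$ near $t$ in $\RR$.

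For (1), set $A = \{\fx + t\fb : \fx \in H,\, t > \varphi_\Delta(\fx)\}$. If $\fx + t\fb \in A$, write $t - \varphi_\Delta(\fx) = 2\eta > 0$, and use Lipschitz continuity to find $\delta > 0$ with $|\varphi_\Delta(\fy) - \varphi_\Delta(\fx)| < \eta$ whenever $\|\fy - \fx\| < \delta$. Then for any $\fy + s\fb$ with $\|\fy - \fx\| < \delta$ and $|s - t| < \eta$, we have $s > t - \eta > \varphi_\Delta(\fx) + \eta > \varphi_\Delta(\fy)$, so $\fy + s\fb \in \Delta$; hence a small ball around $\fx + t\fb$ lies in $\Delta$ and $A \subset \Delta^\circ$. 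Conversely, if $\fx + t\fb \in \Delta^\circ$, then $\fx + (t - \epsilon)\fb \in \Delta$ for all sufficiently small $\epsilon > 0$; by the defining property of $\varphi_\Delta$, this forces $t - \epsilon \geqslant \varphi_\Delta(\fx)$, hence $t > \varphi_\Delta(\fx)$.

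For (2), set $B = \{\fx + t\fb : t \geqslant \varphi_\Delta(\fx)\}$. If $t \geqslant \varphi_\Delta(\fx)$ then $\fx + (t + 1/n)\fb \in \Delta$ and converges to $\fx + t\fb$, giving $B \subset \overline{\Delta}$. Conversely, if $\fx_n + t_n\fb \in \Delta$ converges to $\fx + t\fb$, then $t_n \geqslant \varphi_\Delta(\fx_n)$; continuity of $\varphi_\Delta$ forces $t \geqslant \varphi_\Delta(\fx)$ in the limit, so $\overline{\Delta} \subset B$.

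For (3), combining (1) and (2) yields $\partial\Delta = \{\fx + \varphi_\Delta(\fx)\fb : \fx \in H\}$, which under the linear isomorphism $H \times \RR \cong \RR^d$ is the graph of a continuous function. This graph is closed, hence measurable, and by Tonelli its $t$-slice $\{\varphi_\Delta(\fx)\}$ has Lebesgue measure zero in $\RR$ for every $\fx \in H$, so the whole graph has Lebesgue measure zero. The only real subtlety in the argument is guaranteeing the limit step in (2), for which the continuity of $\varphi_\Delta$ provided by \autoref{ht_func_Lipschitz} is essential; otherwise the proofs are short and self-contained.
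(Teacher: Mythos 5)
Your proof is correct and follows essentially the same route as the paper's: both arguments reduce to the three-way decomposition of $\RR^d$ into the regions strictly above, on, and strictly below the graph of $\varphi_\Delta$ via the splitting $\RR^d = H \oplus \RR\fb$, both lean on the continuity of $\varphi_\Delta$ from \autoref{ht_func_Lipschitz} to identify the strict epigraph as $\Delta^\circ$ and the closed epigraph as $\overline{\Delta}$, and both conclude (3) by observing that $\partial\Delta$ is the graph of a continuous function and hence Lebesgue-null. The only cosmetic difference is that you phrase the inclusion $\Delta^\circ \subset \{t > \varphi_\Delta(\fx)\}$ via a small downward translate, while the paper phrases it as ``no point on the graph can be interior because the sub-graph region is arbitrarily close''; these are the same observation.
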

\begin{proof}
(1) We denote $\Delta_1=\{\fx+t\fb,\fx \in H,t>\varphi_\Delta(\fx)\}$, $\Delta_2=\{\fx+t\fb,\fx \in H,t \geqslant \varphi_\Delta(\fx)\}$, $\Delta_3=\{\fx+t\fb,\fx \in H,t<\varphi_\Delta(\fx)\}$. Then $\Delta_1 \subset \Delta$ and $\Delta_3 \subset \mathbb{R}^d\ba\Delta$. Since $\varphi_{\Delta}$ is a continuous function, $\Delta_1$ is an open set. So $\Delta_1=\Delta_1^\circ \subset \Delta^\circ$. We see $\Delta \subset \Delta_1 \cup \Delta_2$. But for any $(\fx+\varphi_\Delta(\fx)\fb) \in \Delta_2$, we can choose $s<\varphi_\Delta(\fx)$ which is arbitrarily close to $\varphi_\Delta(\fx)$. Then $\fx+s\fb\in \Delta_3$, and it can be arbitrarily close to $\fx+\varphi_\Delta(\fx)\fb$, so any point in $\Delta_2 \ba \Delta_1$ cannot lie in $\Delta^\circ$. So $\Delta^\circ=\Delta_1^{\circ}=\Delta_1$.

(2) Similar to (1) we can prove $\Delta_3=(\mathbb{R}^d\ba\Delta)^\circ$. So taking complement we get $\Delta_1\cup\Delta_2=\overline{\Delta}$.

(3) The first equality is true by (1) and (2), and the graph of a continuous function always has zero measure.
\end{proof}
\begin{corollary} \label{ht_func_equality}
Adopt \autoref{convex_upward_cone_setup}. Let $\Delta$ be an $C^\circ$-ideal. Consider its interior $\Delta^\circ$ as an $C^\circ$-ideal in $C$, and its closure $\overline{\Delta}$ as an $C^\circ$-ideal in $\overline{C}$. Then $\varphi_\Delta(\fx)=\varphi_{\Delta^\circ}(\fx)=\varphi_{\overline{\Delta}}(\fx)$ for all $\fx$. Also, the sets $\Delta^\circ$ and $\overline{\Delta}$ are uniquely determined by the height function $\varphi$.   
\end{corollary}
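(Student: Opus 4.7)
The plan is to derive this corollary directly from the explicit descriptions of $\Delta^\circ$ and $\overline{\Delta}$ given in \autoref{ht_func_prop_for_closure_interior_boundary}. The main preparatory step is to verify that the height functions $\varphi_{\Delta^\circ}$ and $\varphi_{\overline{\Delta}}$ are well-defined in the sense required, i.e., that $\Delta^\circ$ is a $C^\circ$-ideal in $C$ and $\overline{\Delta}$ is a $C^\circ$-ideal in $\overline{C}$.

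For $\Delta^\circ$: given $\fx \in \Delta^\circ$, pick $\epsilon>0$ so that $B(\fx,\epsilon) \subset \Delta$; then for any $\fy \in C^\circ$, the translate $B(\fx+\fy,\epsilon) = (\fx+\fy) + B(\f0,\epsilon)$ is contained in $\Delta$ since each point decomposes as $(\fx+\fz)+\fy$ with $\fx+\fz \in \Delta$ and $\fy \in C^\circ$, and $\Delta$ is a $C^\circ$-ideal. Hence $\fx+\fy \in \Delta^\circ$, showing $\Delta^\circ$ is a $C^\circ$-ideal in $C$. For $\overline{\Delta}$: given $\fx \in \overline{\Delta}$ and $\fy \in C^\circ$, pick a sequence $\fx_n \in \Delta$ with $\fx_n \to \fx$; then $\fx_n + \fy \in \Delta \subset \overline{\Delta}$ and the limit $\fx+\fy$ lies in $\overline{\Delta}$. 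Thus $\overline{\Delta}$ is a $C^\circ$-ideal in $\overline{C}$ (note we must use $\overline{C}$ here since $\overline{\Delta} \subset \overline{C}$, not generally inside $C$).

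With these established, the equality of height functions is immediate from \autoref{ht_func_prop_for_closure_interior_boundary}. Part (1) of that theorem states $\Delta^\circ = \{\fx+t\fb : \fx \in H, \, t > \varphi_\Delta(\fx)\}$; combined with the defining dichotomy in \autoref{ht_fun_existence} applied to $\Delta^\circ$, this forces $\varphi_{\Delta^\circ}(\fx) = \varphi_\Delta(\fx)$ for every $\fx \in H$. Part (2) analogously gives $\overline{\Delta} = \{\fx+t\fb : \fx \in H, \, t \geqslant \varphi_\Delta(\fx)\}$; the same dichotomy applied to $\overline{\Delta}$ (viewed as a $C^\circ$-ideal in $\overline{C}$) yields $\varphi_{\overline{\Delta}}(\fx) = \varphi_\Delta(\fx)$.

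Finally, for the uniqueness statement: if $\Delta_1$ and $\Delta_2$ are two $C^\circ$-ideals in $C$ with $\varphi_{\Delta_1} = \varphi_{\Delta_2} = \varphi$, then by \autoref{ht_func_prop_for_closure_interior_boundary} both $\Delta_i^\circ$ equal $\{\fx+t\fb : t > \varphi(\fx)\}$ and both $\overline{\Delta_i}$ equal $\{\fx+t\fb : t \geqslant \varphi(\fx)\}$, so $\Delta^\circ$ and $\overline{\Delta}$ depend only on $\varphi$. There is no genuine obstacle here: the entire argument is a straightforward consequence of \autoref{ht_func_prop_for_closure_interior_boundary}, and the only point requiring care is the verification that the interior and closure retain the $C^\circ$-ideal property so that the notation $\varphi_{\Delta^\circ}$ and $\varphi_{\overline{\Delta}}$ is meaningful.
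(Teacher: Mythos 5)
Your proof is correct and is essentially the argument the paper intends (the paper gives no written proof, treating this as an immediate consequence of \autoref{ht_func_prop_for_closure_interior_boundary}): you read off the equality of height functions from the explicit descriptions of $\Delta^\circ$ and $\overline{\Delta}$ together with the Dedekind-cut uniqueness in \autoref{ht_fun_existence}, and the uniqueness of the interior and closure follows the same way. The preliminary verification that $\Delta^\circ$ and $\overline{\Delta}$ remain $C^\circ$-ideals (in $C$ and $\overline{C}$ respectively) is a worthwhile, correct detail that the paper leaves tacit.
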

Now we compare two $C^\circ$-ideals in $C$. 
\begin{corollary} \label{comp_two_C_circ_ideals}
Adopt \autoref{convex_upward_cone_setup}. Let $\Delta_1$, $\Delta_2$ be two $C^\circ$-ideals in $C$ with $\Delta_1 \subset \Delta_2$. Then:
\begin{enumerate}
\item $\varphi_{\Delta_1}(\fx) \geqslant \varphi_{\Delta_2}(\fx)$ for all $\fx$.
\item If $\varphi_{\Delta_1}(\fx)= \varphi_{\Delta_2}(\fx)$ for all $\fx$, then $\Delta_1$ and $\Delta_2$ have the same interiors and closures.
\item If $\varphi_{\Delta_1}(\fx) \neq \varphi_{\Delta_2}(\fx)$ for some $\fx$, then $\Delta^\circ_2 \nsubseteq \overline{\Delta_1}$.
\end{enumerate}
\end{corollary}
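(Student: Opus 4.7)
The plan is to derive all three parts directly from the characterization of the height function established in \autoref{ht_fun_existence} and the closure/interior descriptions in \autoref{ht_func_prop_for_closure_interior_boundary}, \autoref{ht_func_equality}. No new analytic input is needed; the content is bookkeeping with strict versus non-strict inequalities.

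For part (1), I would unwind the definition of the height function. Fix $\fx \in H$ and any $t > \varphi_{\Delta_1}(\fx)$. By \autoref{ht_fun_existence} applied to $\Delta_1$, the point $\fx+t\fb$ lies in $\Delta_1$, and therefore also in $\Delta_2$ by hypothesis. Again applying \autoref{ht_fun_existence}, this time to $\Delta_2$, forces $t \geqslant \varphi_{\Delta_2}(\fx)$, since otherwise $t < \varphi_{\Delta_2}(\fx)$ would imply $\fx+t\fb \notin \Delta_2$. Taking the infimum over $t > \varphi_{\Delta_1}(\fx)$ gives $\varphi_{\Delta_1}(\fx) \geqslant \varphi_{\Delta_2}(\fx)$.

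Part (2) is a direct invocation of \autoref{ht_func_equality}: both $\Delta_i^\circ$ and $\overline{\Delta_i}$ are completely determined by $\varphi_{\Delta_i}$ via the formulas $\Delta_i^\circ = \{\fx+t\fb : t > \varphi_{\Delta_i}(\fx)\}$ and $\overline{\Delta_i} = \{\fx+t\fb : t \geqslant \varphi_{\Delta_i}(\fx)\}$ from \autoref{ht_func_prop_for_closure_interior_boundary}. Hence equality of the height functions implies equality of interiors and closures.

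For part (3), combine (1) with the interior/closure descriptions. Since $\Delta_1 \subset \Delta_2$, part (1) gives $\varphi_{\Delta_1} \geqslant \varphi_{\Delta_2}$ pointwise on $H$, so any point of inequality $\fx_0$ actually satisfies the strict inequality $\varphi_{\Delta_1}(\fx_0) > \varphi_{\Delta_2}(\fx_0)$. Choose $t$ with $\varphi_{\Delta_2}(\fx_0) < t < \varphi_{\Delta_1}(\fx_0)$. By \autoref{ht_func_prop_for_closure_interior_boundary}(1) applied to $\Delta_2$, the point $\fx_0 + t\fb$ lies in $\Delta_2^\circ$, while by \autoref{ht_func_prop_for_closure_interior_boundary}(2) applied to $\Delta_1$, the same point does not lie in $\overline{\Delta_1}$ because $t < \varphi_{\Delta_1}(\fx_0)$. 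This exhibits a point in $\Delta_2^\circ \setminus \overline{\Delta_1}$, proving $\Delta_2^\circ \nsubseteq \overline{\Delta_1}$.

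The only delicate point — and the one I would want to write out carefully — is keeping track of whether the height function's defining inequalities are strict at $t = \varphi_\Delta(\fx)$; this ambiguity is exactly what is absorbed into the closure/interior statements of \autoref{ht_func_prop_for_closure_interior_boundary}, and part (3) crucially exploits the strict gap between $\varphi_{\Delta_1}$ and $\varphi_{\Delta_2}$ to bypass it.
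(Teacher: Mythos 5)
Your proposal is correct and follows essentially the same route as the paper: part (1) from the definition of $\varphi$, part (2) from \autoref{ht_func_equality} together with the interior/closure formulas, and part (3) by picking $t$ strictly between $\varphi_{\Delta_2}(\fx_0)$ and $\varphi_{\Delta_1}(\fx_0)$ and invoking \autoref{ht_func_prop_for_closure_interior_boundary}. The paper states (1) and (2) more tersely, but the substance is identical.
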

\begin{proof}
(1) is true by definition of $\varphi$. (2) is true by \autoref{ht_func_equality}. For (3), pick $\fx$ such that $\varphi_{\Delta_1}(\fx) \neq \varphi_{\Delta_2}(\fx)$, then $\varphi_{\Delta_1}(\fx)> \varphi_{\Delta_2}(\fx)$. Choose $t$ such that $\varphi_{\Delta_1}(\fx)>t>\varphi_{\Delta_2}(\fx)$ and let $\fy=\fx+t\fb$, then $\fy \in \Delta_2^\circ\backslash\overline{\Delta_1}$.  
\end{proof}

\section{Families of ideals} \label{section_4}
In this section, we will list all the conditions on the family of ideals we consider. We assume in this section $(R,\fm,\kk)$ is a Noetherian local ring with maximal ideal $\fm$ and residue field $\kk$. We will use the symbols $I,I_1,I_2,\ldots,J,J_1,J_2,\ldots$ for ideals in $R$. We denote $R^\circ=R\ba (\cup_{P \in \Min(R)}P)$, where $\Min(R)=\{P \in \Spec(R) \mid \dim R/P=\dim R\}$.

First we consider the general characteristic case. In this case, we consider families of ideals $I_\bu=\{I_n\}_{n \in \NN}$ indexed by $\NN$.
\begin{definition}
We say a family of ideals $I_\bu$ indexed by $\NN$ is a \emph{graded family} of ideals, if 
$I_0=R$, $I_mI_n \subset I_{m+n}$ for all $m,n \in \NN$. We say $I_\bu$ is a \emph{weakly graded family} of ideals if there exists $c \in R^\circ$ independent of $m,n$ such that $cI_mI_n \subset I_{m+n}$ for all $m,n \in \NN$.   
\end{definition}
\begin{remark}
If $R$ is a domain, then $c \in R^\circ$ is equivalent to $0 \neq c \in R$.   
\end{remark}
Then we consider the case where $R$ has prime characteristic $p>0$. In this setting, we will use $e$ for a nonnegative integer, and always denote $q=p^e$. We recall some notions for rings of characteristic $p$. For an ideal $I \subset R$, the $e$-th Frobenius power of $I$ is $I^{[p^e]}=(a^{p^e},a \in I)$. Denote the Frobenius endomorphism of $R$ by $F$. Then $F^e:R \to R$ is an endomorphism, and we consider the target as a module over the source ring $R$, denoted by $F^e_*R$. We say $R$ is $F$-finite if $F_*R$ is a finitely generated $R$-module. 
\begin{remark}
Note that if $R$ is an $F$-finite local domain of dimension $d$. Then $\rank_R F_*R=[F_*\kk:\kk]p^{ed}$.    
\end{remark}
We denote $\alpha(R)=\textup{log}_p[F_*\kk:\kk]$. Then 
$\rank_R F_*R=p^{\alpha(R)+d}$ and $\rank_R F^e_*R=p^{e(\alpha(R)+d)}$. If $\kk$ is perfect, then $\alpha(R)=0$.

For rings of characteristic $p$, we will also consider families of ideals $I_\bu=\{I_q\}_{e \in \NN}$ indexed by powers of $p$.
\begin{definition}
Let $R$ be a Noetherian local ring of characteristic $p$, $I_\bu$ is a family of ideals indexed by powers of $p$. We say:
\begin{enumerate}
\item $I_\bu$ is a \emph{$p$-family} of ideals, if for any $q=p^e$, $I^{[p]}_q \subset I_{pq}$.
\item $I_\bu$ is a \emph{weak $p$-family} of ideals, if there exists $c \in R^\circ$ such that for any $q=p^e$, $cI^{[p]}_q \subset I_{pq}$.
\item $I_\bu$ is an \emph{inverse $p$-family} of ideals, if for any $q=p^e$, $I_{pq} \subset I^{[p]}_q$.
\item $I_\bu$ is a \emph{weakly inverse $p$-family} of ideals, if there exists $c \in R^\circ$ such that for any $q=p^e$, $cI_{pq} \subset I^{[p]}_q$.

\end{enumerate}
\end{definition}

Now we consider rings of general characteristic.
\begin{definition} \label{bbl_defn}
Let $(R,\fm,\kk)$ be a Noetherian local ring with maximal ideal $\fm$.
\begin{enumerate}
\item Let $I_\bu$ be a family of ideals indexed by $\NN$. We say $I_\bu$ is \emph{bounded below linearly}, or BBL for short, if there exists a constant $c \in \NN$ such that $\mathfrak{m}^{cn} \subset I_n$ for all $n$.
\item Assume moreover $R$ has characteristic $p>0$. Let $I_\bu$ be a family of ideals indexed by powers of $p$. We say $I_\bu$ is \emph{bounded below linearly}, or BBL for short, if there exists a constant $c \in \NN$ such that $\mathfrak{m}^{cq} \subset I_q$ for all $q$.
\end{enumerate}
\end{definition}
By definition, the ideals in a BBL family are all $\fm$-primary.
\begin{lemma} \label{graded_p_all_BBL}
Let $(R,\fm,\kk)$ be a Noetherian local ring with maximal ideal $\fm$.
\begin{enumerate}
\item If $I_\bu$ is a graded family of ideals indexed by $\NN$ and $I_1$ is $\fm$-primary, then it is BBL.
\item If $I_\bu$ is a $p$-family of ideals indexed by powers of $p$ and $I_1$ is $\fm$-primary, then it is BBL.
\end{enumerate}    
\end{lemma}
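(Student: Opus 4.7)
The plan for both parts is the same: first iterate the semigroup-type compatibility to bound $I_n$ (resp.\ $I_q$) from below by an ordinary, respectively Frobenius, power of $I_1$; then use the $\fm$-primary hypothesis on $I_1$ to translate this into a linear comparison with powers of $\fm$.

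For part (1), iterating the defining containment $I_m I_n \subset I_{m+n}$ of a graded family (together with $I_0 = R$, so $I_1 \subset I_1$ and $I_1 \cdot I_1^{k-1} \subset I_1 \cdot I_{k-1} \subset I_k$ by induction) yields $I_1^n \subset I_n$ for every $n \geq 1$. Since $I_1$ is $\fm$-primary there exists $c \in \NN$ with $\fm^c \subset I_1$, and therefore
$$\fm^{cn} = (\fm^c)^n \subset I_1^n \subset I_n,$$
which is exactly the BBL condition with constant $c$.

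For part (2), iterating the inclusion $I_q^{[p]} \subset I_{pq}$ along $q = 1, p, p^2, \dots$ gives $I_1^{[q]} \subset I_q$ for every $q = p^e$. Again choose $c$ with $\fm^c \subset I_1$, so that $(\fm^c)^{[q]} \subset I_1^{[q]} \subset I_q$. The only remaining ingredient is a standard pigeonhole fact: if $\fm = (x_1,\dots,x_n)$, then $\fm^{(c+n)q} \subset (\fm^c)^{[q]}$. Indeed, any monomial $x^{\beta}$ with $|\beta| \geq cq + n(q-1)$ satisfies
$$\sum_i \lfloor \beta_i/q \rfloor \geq \frac{|\beta| - n(q-1)}{q} \geq c,$$
so one can pick $\alpha_i \leq \lfloor \beta_i/q \rfloor$ with $|\alpha| = c$; then $q\alpha_i \leq \beta_i$, hence $x^{q\alpha} \mid x^{\beta}$ and $x^{q\alpha} \in (\fm^c)^{[q]}$. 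Since $(c+n)q \geq cq + n(q-1)$, we conclude $\fm^{(c+n)q} \subset I_q$, which is BBL with constant $c+n$.

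No serious obstacle is expected; both statements are elementary once the correct iterated containment is written down. The only mild subtlety is the monomial/pigeonhole exponent bound used in (2) to pass from $(\fm^c)^{[q]}$ back to an ordinary power of $\fm$, and the prefactor $c+n$ (depending on the minimal number of generators of $\fm$) is exactly what one should expect.
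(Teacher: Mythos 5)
Your proof is correct and follows essentially the same route as the paper's: iterate the defining containments to get $I_1^n\subset I_n$ (resp. $I_1^{[q]}\subset I_q$) and then compare with a power of $\fm$ using $\fm^c\subset I_1$ together with a pigeonhole bound relating ordinary and Frobenius powers. The only cosmetic difference is in part (2), where the paper applies the pigeonhole directly to $I_1$ (using $I_1^{\mu q}\subset I_1^{[q]}$ for $I_1$ with $\mu$ generators, giving constant $c\mu$), while you apply it to $\fm$ to get $\fm^{(c+n)q}\subset(\fm^c)^{[q]}$ and constant $c+n$; both are equally elementary and valid.
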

\begin{proof}
(1): Choose $c \in \NN$ such that $\fm^c \subset I_1$. Since $I_\bu$ is a graded family, $\fm^{cn}\subset I^n_1 \subset I_n$.  
(2): Choose $c \in \NN$ such that $\fm^c \subset I_1$. Assume $I_1$ is generated by $\mu$ elements, then for any $q=p^e$, $I^{\mu q} \subset I^{[q]}$ by pigeon hole principle. Since $I_\bu$ is a $p$-family, $\fm^{cq\mu}\subset I^{q\mu}_1 \subset I^{[q]}_1 \subset I_q$. 
\end{proof}
Now we can talk about examples of families. The following examples are families indexed by $\NN$.
\begin{example}
Take an ideal $I \subset R$. Then $I_{\bu}=\{I^n\}_{n \in \NN}$ is a graded family of ideals.    
\end{example}

It is apparent that one may employ standard algebraic techniques to generate new families from the given ones. We list few propositions to demonstrate that.

\begin{proposition}
Let $\hat{\bu}$ be an operation on ideals. That is, for every $R$-ideal $I$ we assign an ideal $\hat{I}$. Suppose for any ideal $I,J$, $\widehat{I} \cdot \widehat{J} \subset \widehat{IJ}$. Then for any graded family $I_\bu$, $\widehat{I_\bu}=\{\widehat{I_n}\}_n$ is a graded family. If moreover we have $\widehat{cI} \subset c \cdot \widehat{I}$, then for any weakly graded family $I_\bu$, $\widehat{I_\bu}$ is a weakly graded family.
\end{proposition}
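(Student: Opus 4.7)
My plan is to verify the defining conditions of a graded (respectively weakly graded) family for $\widehat{I_\bu}=\{\widehat{I_n}\}$ directly from the two hypotheses on $\widehat{\bu}$, by assembling a short containment chain in each case. I will implicitly use monotonicity of $\widehat{\bu}$ (if $I\subset J$ then $\widehat{I}\subset\widehat{J}$), which is standard for any reasonable ``operation on ideals'' and is in any case a consequence of the multiplicative hypothesis together with the normalization $\widehat{R}=R$.

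For the graded case, fix $m,n\in\NN$. Applying the multiplicative hypothesis with $I=I_m$, $J=I_n$, and then monotonicity to the graded relation $I_m I_n\subset I_{m+n}$, I obtain
\[
\widehat{I_m}\cdot\widehat{I_n}\ \subset\ \widehat{I_m I_n}\ \subset\ \widehat{I_{m+n}}.
\]
Combined with $\widehat{I_0}=\widehat{R}=R$, this is exactly the graded condition for $\widehat{I_\bu}$.

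For the weakly graded case, pick $c\in R^\circ$ with $c I_m I_n\subset I_{m+n}$ and assemble
\[
c\,\widehat{I_m}\widehat{I_n}\ \subset\ c\,\widehat{I_m I_n}\ \subset\ \widehat{c\,I_m I_n}\ \subset\ \widehat{I_{m+n}},
\]
where the first step is the multiplicative hypothesis, the second is the scalar-multiplication hypothesis applied to $J=I_m I_n$, and the third is monotonicity applied to the weakly graded relation; the same constant $c$ then witnesses that $\widehat{I_\bu}$ is weakly graded. I anticipate no genuine obstacle beyond a single point of care: the scalar-multiplication hypothesis must be used in the direction $c\,\widehat{J}\subset\widehat{cJ}$ — that is, moving the constant \emph{inside} the operator — which is precisely the inclusion that closes the middle step of the chain. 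This direction holds for the natural examples one has in mind (e.g.\ integral closure, where $c\overline{J}=\overline{cJ}$ for a nonzerodivisor $c$), and the rest of the argument is then a one-line diagram chase.
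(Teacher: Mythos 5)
Your chain of inclusions is the right shape, and since the paper's own proof is simply ``follows from definition,'' you are in spirit doing the same thing. But two points need repair. \emph{Monotonicity} of $\widehat{\bu}$, which you invoke to pass from $\widehat{I_mI_n}$ to $\widehat{I_{m+n}}$ and from $\widehat{cI_mI_n}$ to $\widehat{I_{m+n}}$, does \emph{not} follow from the multiplicative hypothesis together with $\widehat{R}=R$: on a DVR with uniformizer $t$, set $\widehat{(t^n)}=(t^{f(n)})$ for any subadditive $f$ with $f(0)=0$; this satisfies both hypotheses, yet taking $f(1)=3$ and $f(n)=2$ for $n\geq2$ gives $(t^2)\subset(t)$ while $\widehat{(t^2)}=(t^2)\not\subset(t^3)=\widehat{(t)}$. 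Monotonicity must be added as an explicit standing hypothesis; it is tacit in the paper, whose examples (integral closure, tight closure, contraction, colons) are all monotone, but it is not a formal consequence of the stated assumptions.

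The second point is sharper. You correctly observe that the middle step of the weakly graded chain needs $c\,\widehat{J}\subset\widehat{cJ}$, pushing the constant \emph{into} the operator. But the proposition as printed assumes the reverse inclusion $\widehat{cI}\subset c\,\widehat{I}$, which does not close your chain --- and in fact fails for some of the paper's own examples. For $\widehat{I}=I:J$ with $R=\kk[x,y]$, $c=x$, $I=(y)$, $J=(x)$, one computes $\widehat{cI}=(xy):(x)=(y)$ while $c\,\widehat{I}=x\,((y):(x))=(xy)$, and $(y)\not\subset(xy)$. So $\widehat{cI}\subset c\widehat{I}$ cannot be the intended hypothesis; the correct condition is $c\,\widehat{I}\subset\widehat{cI}$, which every listed example does satisfy, and which is what your chain actually uses. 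You noticed the discrepancy but softened it into a remark about ``natural examples''; it should instead be flagged as a misprint in the stated hypothesis. With these two corrections (add monotonicity, reverse the scalar inclusion) your argument is complete.
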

\begin{proof}
This follows from definition.
\end{proof}
\begin{example}
The following operations satisfies $\widehat{I} \cdot \widehat{J}\subset \widehat{IJ}$, therefore plugging in the graded family $I_{\bu}$ produces graded families.
\begin{enumerate}
\item Taking integral closure $I \to \overline{I}$ produces $\overline{I_{\bu}}$;
\item When $R$ has characteristic $p$, taking tight closure $I \to I^*$ produces $(I_{\bu})^*$;
\item Let $R \to S$ be any ring homomorphism and $I \to IS \cap R$ produces $I_{\bu} S \cap R$;
\item Take $S=W^{-1}R$, $W=R-\cup_{P \in \textup{Min}(I)} P$ in the above example, and if $I_{\bu}=\{I^n\}$, then it produces the symbolic power $I_{\bu}=\{I^{(n)}\}$;
\item For any ideal $J \subset R$, taking saturation with respect to $J$, $I \to I:J^\infty$ produces $I_{\bu}:J^\infty$.
\end{enumerate}
\end{example}

\begin{example} \label{weakly_graded_example_1}
Let $I_\bu$ be a graded family of $R$-ideals, $J$ be an $R$-ideal with $J \cap R^\circ \neq \emptyset$, then $I_\bu:J$ is a weakly graded family of $R$-ideals.    
\end{example}
\begin{example} \label{weakly_graded_example_2}
Let $I$ be an $R$-ideal with $I \cap R^\circ \neq \emptyset$. Assume $\alpha \in \RR$. Let $\lfloor\bu\rfloor$ be the floor function and $\lceil\bu\rceil$ be the ceiling function. Then $\{I^{\lfloor n\alpha \rfloor}\}$ is a weakly graded family, and $\{I^{\lceil n\alpha \rceil}\}$ is a graded family.     
\end{example}
\begin{remark}
Let $J_\bu=\{J_n\}_{n \in \NN}$ be an arbitrary family of ideals.

Let $I_n=\sum_{k \geqslant 1, n_1+\ldots+n_k=n} J_{n_1}\ldots J_{n_k}$, then $I_\bu$ is a graded family. It is the smallest graded family containing $J_\bu$.    
\end{remark}
Now we assume $R$ has characteristic $p>0$ and families are indexed by powers of $p$.
\begin{example}
Take an ideal $I \subset R$. Then $I^{[\bu]}=\{I^{[q]}\}_{e \in \NN}$ is a $p$-family of ideals, an inverse $p$-family of ideals.  
\end{example}
\begin{proposition}\label{extension_of_p_family}
Let $\hat{\bu}$ be an operation on ideals such that $\widehat{I^{[p]}} \subset (\widehat{I})^{[p]}$. Then for any $p$-family $I_\bu$, $\widehat{I_\bu}=\{\widehat{I_n}\}_n$ is a graded family. If moreover we have $\widehat{cI} \subset c \cdot \widehat{I}$, then for any weakly $p$-family $I_\bu$, $\widehat{I_\bu}$ is a weakly graded family.
\end{proposition}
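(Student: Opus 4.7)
The plan is to mirror the proof of the preceding proposition, which translated graded families through a monotone hat operation via the two-step chain $\widehat{I_m}\cdot\widehat{I_n}\subset\widehat{I_mI_n}\subset\widehat{I_{m+n}}$, combining product-compatibility with the monotonicity of $\widehat{\bu}$ (implicit in the listed examples: integral closure, tight closure, contractions, saturations). Here the analogous ingredients are the defining $p$-family relation $I_q^{[p]}\subset I_{pq}$ and the stated Frobenius-compatibility $\widehat{I^{[p]}}\subset(\widehat{I})^{[p]}$, applied with $I=I_q$; for the weakly case one additionally has the multiplier-clearing hypothesis $\widehat{cI}\subset c\cdot\widehat{I}$.

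Concretely, I would first apply monotonicity of $\widehat{\bu}$ to $I_q^{[p]}\subset I_{pq}$, obtaining $\widehat{I_q^{[p]}}\subset\widehat{I_{pq}}$, and then apply the hypothesis with $I=I_q$ to get $\widehat{I_q^{[p]}}\subset(\widehat{I_q})^{[p]}$. In the weakly case, starting from $cI_q^{[p]}\subset I_{pq}$, monotonicity combined with $\widehat{cI}\subset c\widehat{I}$ would yield $c\cdot\widehat{I_q^{[p]}}\subset\widehat{I_{pq}}$. One would then try to combine these pieces into a defining family relation on $\widehat{I_\bu}$, and verify the relation uniformly in $q=p^e$.

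The main obstacle — which in my reading reflects a typographical inconsistency in the statement — is that the two containments above both have $\widehat{I_q^{[p]}}$ on the \emph{left}, so they cannot be chained to compare $(\widehat{I_q})^{[p]}$ with $\widehat{I_{pq}}$, which is the shape of a $p$-family relation. The clean chain $(\widehat{I_q})^{[p]}\subset\widehat{I_q^{[p]}}\subset\widehat{I_{pq}}$ instead requires the opposite-direction hypothesis $(\widehat{I})^{[p]}\subset\widehat{I^{[p]}}$. Moreover, $\widehat{I_\bu}=\{\widehat{I_q}\}_{q=p^e}$ is indexed by powers of $p$, not by $\NN$, so a conclusion of the form ``graded family'' does not type-check against the data. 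Taking the statement literally I do not see a route to its conclusion from its hypotheses alone.

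My proposal is therefore to carry out the natural analogue of the preceding proposition in the Frobenius setting, which I believe to be the intended content: under the hypothesis $(\widehat{I})^{[p]}\subset\widehat{I^{[p]}}$ together with monotonicity, the chain $(\widehat{I_q})^{[p]}\subset\widehat{I_q^{[p]}}\subset\widehat{I_{pq}}$ shows that $\widehat{I_\bu}$ is a $p$-family; and under the additional hypothesis $\widehat{cI}\subset c\widehat{I}$, the parallel chain $c(\widehat{I_q})^{[p]}\subset\widehat{cI_q^{[p]}}\subset\widehat{I_{pq}}$ shows that $\widehat{I_\bu}$ is a weakly $p$-family. The verification at each $q=p^e$ is then immediate from the two displayed containments, and the witnessing element $c$ in the weakly case is unchanged, completing the argument.
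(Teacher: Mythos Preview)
Your analysis is correct. The statement as printed contains evident typographical slips (almost certainly copy-paste from the preceding graded-family proposition): the hypothesis should read $(\widehat{I})^{[p]}\subset\widehat{I^{[p]}}$, and the conclusions should be ``$p$-family'' and ``weakly $p$-family'' rather than ``graded family'' and ``weakly graded family''. With the hypothesis as written, the two containments $\widehat{I_q^{[p]}}\subset(\widehat{I_q})^{[p]}$ and $\widehat{I_q^{[p]}}\subset\widehat{I_{pq}}$ indeed point the same way and cannot be chained, exactly as you observe; and the indexing mismatch you note confirms the slip in the conclusion.

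The paper's own proof is simply ``This follows from definition,'' and the subsequent examples that cite this proposition (e.g.\ the tight-closure family $(I^{[q]})^*$ being a $p$-family, and the various operations listed immediately after) all require precisely the corrected reading $(\widehat{I})^{[p]}\subset\widehat{I^{[p]}}$ together with monotonicity. Your proposed chains $(\widehat{I_q})^{[p]}\subset\widehat{I_q^{[p]}}\subset\widehat{I_{pq}}$ and $c(\widehat{I_q})^{[p]}\subset c\,\widehat{I_q^{[p]}}\subset\widehat{cI_q^{[p]}}\subset\widehat{I_{pq}}$ are exactly the intended argument.
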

\begin{proof}
This follows from definition.
\end{proof}
\begin{example} \label{weakly_inverse_p_example}
For an ideal $I \in R$, $(I^{[\bu]})^*=\{(I^{[q]})^*\}$ is a $p$-family of ideals. If $R$ has a test element, then it is also a weakly inverse $p$-family.     
\end{example}
\begin{example} \label{weakly_p_example}
For two ideals $I,J \subset R$, $I^{[q]}:J^\infty$ is a $p$-family of ideals. If $J \cap R^\circ \neq \emptyset$, then $I^{[q]}:J$ is a weakly $p$-family of ideals.   
\end{example}
\begin{example} \cite{tucker2012f}
Let $J$ be an ideal of $R$, and let $R$ be an $F$-finite local domain. Denote
$$I_e(J)=\{a \in R: \forall \phi \in \Hom_R(F^e_*R,R), \phi(F^e_*a) \in J\}.$$
Then $I_e(J)$ is a $p$-family.
\end{example}
\begin{example}
The following operation on $p$-families of ideals produces another $p$-family.
\begin{enumerate}
\item Let $R \to S$ be any map, $I_\bu$ is a $p$-family of $R$-ideals. Then $I_\bu S$ is a $p$-family of $S$-ideals.
\item Let $R \to S$ be any map, $I_\bu$ is a $p$-family of $S$-ideals. Then $I_\bu \cap R$ is a $p$-family of $R$-ideals.
\item Let $I_\bu$ be a $p$-family of $R$-ideals, $J$ be an $R$-ideal, then $\{I_q:J^{[q]}\}_{e \in \NN}$ is a $p$-family of $R$-ideals, and $\{I_n+J\}$ is a $p$-family of $R$-ideals.
\item Let $I_\bu,J_\bu$ be two $p$-families of $R$-ideals, then $I_\bu \cap J_\bu$, $I_\bu J_\bu$, $I_\bu+J_\bu$ are three $p$-families of $R$-ideals.
\end{enumerate}
\end{example}
\begin{proof}
This follows from \autoref{extension_of_p_family}.
\end{proof}
\begin{remark}
Let $J_\bu=\{J_q\}_{e \in \NN}$ be an arbitrary family of ideals. Denote $I_q=\sum_{0 \leqslant k \leqslant e} J^{[p^{e-k}]}_{p^k}$, then $I_\bu$ is a $p$-family. It is the smallest $p$-family containing $J_\bu$.    
\end{remark}
We come back to general characteristic case. Among all the examples above in general characteristic or characteristic $p$, we will point out one particular case that allows us to do reduction in the last chapter.
\begin{theorem} \label{weakly_p_weakly_graded_stays_same_under_projection}
Let $R \to S$ be a map that maps $R^\circ$ to $S^\circ$. Let $I_\bu$ be a weakly graded family or a weakly $p$-family or a weakly inverse $p$-family, then $I_\bu S$ is a family of the same type. In particular, this holds for the natural projection $R \to R/P$ with $P \in \Min(R)$.   
\end{theorem}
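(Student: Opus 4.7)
The plan is to observe that each of the three definitions is a single containment involving a witness $c \in R^\circ$, and to verify the statement by chasing this witness through the ring map $\phi \colon R \to S$. In every case, one takes the witness $c \in R^\circ$ supplied by the hypothesis on $I_\bu$, notes that $\phi(c) \in S^\circ$ by the assumption on $\phi$, and uses $\phi(c)$ as the required witness for $I_\bu S$.

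Concretely, for the weakly graded case I would extend $cI_mI_n \subset I_{m+n}$ to $S$ to obtain $\phi(c)(I_mS)(I_nS) \subset I_{m+n}S$. For the weakly $p$-family and weakly inverse $p$-family cases, the only formal point needed is that Frobenius powers commute with ring extension, i.e.\ $(I_qS)^{[p]} = I_q^{[p]}S$ for any $R$-ideal $I_q$. With this identity, $cI_q^{[p]} \subset I_{pq}$ extends to $\phi(c)(I_qS)^{[p]} \subset I_{pq}S$, and symmetrically $cI_{pq} \subset I_q^{[p]}$ extends to $\phi(c)(I_{pq}S) \subset (I_qS)^{[p]}$.

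For the final sentence of the theorem I would show that the natural projection $\pi \colon R \to R/P$ with $P \in \Min(R)$ sends $R^\circ$ into $(R/P)^\circ$. Any $P \in \Min(R)$ is in fact a minimal prime of $R$ in the usual sense, since any strictly smaller prime $P' \subsetneq P$ would satisfy $\dim R/P' > \dim R/P = \dim R$, contradicting $\dim R/P' \leqslant \dim R$. Hence $R/P$ is a domain of dimension $\dim R$, so $\Min(R/P) = \{(0)\}$ and $(R/P)^\circ = (R/P) \setminus \{0\}$. For any $c \in R^\circ$, by definition $c \notin Q$ for every $Q \in \Min(R)$; in particular $c \notin P$, so $\pi(c) \neq 0$, i.e.\ $\pi(c) \in (R/P)^\circ$.

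The argument is essentially a formal unwinding of definitions, with no substantive obstacle. The only point that deserves a moment of care is the identification of $(R/P)^\circ$, which relies on the paper's convention that $\Min$ comprises only the top-dimensional minimal primes; once this is noted, the rest of the proof is a direct application of $\phi$ to the three defining containments.
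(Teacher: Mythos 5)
Your proof is correct and is essentially the direct definitional argument the paper implicitly relies on (the theorem is stated without proof in the paper). The key points — pushing the witness $c$ through $\phi$, the compatibility $(I_qS)^{[p]} = I_q^{[p]}S$ under ring maps of characteristic $p$, and the observation that the paper's $\Min$ consists of top-dimensional primes so that $P \in \Min(R)$ is genuinely minimal and $R/P$ is a domain of dimension $\dim R$ with $(R/P)^\circ = (R/P)\setminus\{0\}$ — are exactly what is needed, and you handle them all correctly.
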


Here we insert a small lemma into this chapter which will be used in the remark and example below.
\begin{lemma} \label{length_func_formula}
Let $(R,\fm,\kk)=\kk[[x_1,\ldots,x_d]]$ be a power series ring over a field. Let $I$ be an $\fm$-primary monomial ideal minimally generated by monomials $u_1,\ldots,u_\mu$. Fix integers $n_1,\ldots,n_d$. For each monomial $u_i=x_1^{a_1^{i}}\ldots x_d^{a_d^{i}}$, denote the monomial $v_i=x_1^{n_1a_1^i}\ldots x_d^{n_da_d^i}$, which can be seen as the monomial $u_i$ evaluated at $(x_1^{n_1},\ldots,x_d^{n_d})$. Let $J=(v_1,\ldots,v_\mu)$ be the ideal generated by all $v_i$'s. Then $J$ is minimally generated by $v_i$'s, $\fm$-primary, and $\ell(R/J)=n_1\ldots n_d \cdot \ell(R/I)$.   
\end{lemma}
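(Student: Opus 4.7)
The plan is to work with the standard characterization that for any $\fm$-primary monomial ideal $K$ in $R=\kk[[x_1,\ldots,x_d]]$, the quotient $R/K$ has a $\kk$-basis given by the residues of monomials not in $K$, so $\ell(R/K)=\#(\NN^d\setminus \Gamma(K))$, where $\Gamma(K)=\{\fb\in\NN^d:x^\fb\in K\}$. Writing $\fa^i=(a_1^i,\ldots,a_d^i)$ and $\fn=(n_1,\ldots,n_d)$, we have $\Gamma(I)=\bigcup_i(\fa^i+\NN^d)$ and $\Gamma(J)=\bigcup_i(\fn\cdot\fa^i+\NN^d)$ (componentwise product). The task thus reduces to a combinatorial counting argument relating these two "staircase" complements.

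First I handle the easy assertions. For minimal generation: if $v_j\mid v_i$ with $j\neq i$, then $n_k a_k^j\le n_k a_k^i$ for all $k$, and since each $n_k$ is a positive integer, this forces $a_k^j\le a_k^i$ for all $k$, i.e.\ $u_j\mid u_i$, contradicting the minimality of the $u_i$'s as generators of $I$. For $\fm$-primary: $I$ being $\fm$-primary means for each variable $x_k$ some generator $u_i$ is a pure power $x_k^{N}$; then $v_i=x_k^{n_kN}$ is still a pure power of $x_k$, so $J$ contains a power of each variable and is $\fm$-primary.

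For the length formula I set up the map $\pi:\NN^d\to\NN^d$ defined by $\pi(c_1,\ldots,c_d)=(\lfloor c_1/n_1\rfloor,\ldots,\lfloor c_d/n_d\rfloor)$. Each fiber $\pi^{-1}(\fb)$ is the product of boxes $[n_kb_k,n_k(b_k+1))\cap\NN$, hence has exactly $n_1\cdots n_d$ elements. The key observation is that $\fc\in\Gamma(J)$ iff $\pi(\fc)\in\Gamma(I)$: indeed $\fc\in\Gamma(J)$ iff there exists $i$ with $n_ka_k^i\le c_k$ for all $k$, and since $a_k^i$ is a nonnegative integer this is equivalent to $a_k^i\le\lfloor c_k/n_k\rfloor$ for all $k$, i.e.\ $\pi(\fc)\in\Gamma(I)$. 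Taking complements, $\NN^d\setminus\Gamma(J)=\pi^{-1}(\NN^d\setminus\Gamma(I))$, and partitioning this preimage by fibers gives $\#(\NN^d\setminus\Gamma(J))=n_1\cdots n_d\cdot\#(\NN^d\setminus\Gamma(I))$, which is the desired identity $\ell(R/J)=n_1\cdots n_d\cdot\ell(R/I)$.

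There is no real obstacle here; the only point to be careful about is the equivalence $a_k^i\le c_k/n_k\iff a_k^i\le\lfloor c_k/n_k\rfloor$, which crucially uses integrality of $a_k^i$, and the observation that this works even when some $n_k=0$ is excluded by the implicit positivity (otherwise $v_i$ would not be a well-defined monomial with the intended exponent).
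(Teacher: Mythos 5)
Your proof is correct, and it takes a genuinely different route from the paper's. The paper argues module-theoretically: it considers the ring endomorphism $\phi:R\to S=R$ with $\phi(x_k)=x_k^{n_k}$, notes that this makes $S$ a free $R$-module of rank $n_1\cdots n_d$, observes that $J=\phi(I)S$, and then invokes the multiplicativity of colength under a finite free extension, $\ell_S(S/\phi(I)S)=\rank_R S\cdot\ell_R(R/I)$. You instead work entirely with the staircase description of $\fm$-primary monomial ideals, identifying $\ell(R/K)$ with the number of lattice points outside $\Gamma(K)$, and constructing the fiberwise map $\pi(\fc)=(\lfloor c_1/n_1\rfloor,\ldots,\lfloor c_d/n_d\rfloor)$ with the key equivalence $\fc\in\Gamma(J)\iff\pi(\fc)\in\Gamma(I)$; the integrality of the $a_k^i$ is exactly what makes the floor commute with the divisibility condition, and you flag this correctly. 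The paper's argument is more compact and generalizes cleanly to non-monomial situations where freeness of $\phi$ is available; your combinatorial argument is more elementary (no appeal to rank/length multiplicativity) and makes the factor $n_1\cdots n_d$ visibly literal as a count of fiber sizes, which is arguably clearer for the monomial case at hand. Both approaches need positivity of the $n_k$, which you note; the paper leaves this implicit.
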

\begin{proof}
Since $I$ is minimally generated by $u_i$'s, for each pair $(u_i,u_j)$, $u_i$ does not divide $u_j$. By definition $v_i$ does not divide $v_j$, so $J$ is minimally generated by $v_i$'s. Let $S=R=\kk[[x_1,\ldots,x_d]]$, $\phi:R \to S$ be the ring homomorphism which maps $x_i$ to $x_i^{n_i}$. Then $\phi$ makes $S$ a free $R$-module of rank $n_1\ldots n_i$. Note that if we consider $J$ as an $S$-ideal, then $J=\phi(I)S$. So $\ell(S/J)=\rank_R S\cdot \ell(R/I)=n_1\ldots n_d \cdot \ell(R/I)<\infty$.   
\end{proof}

\begin{remark}
We may define a family of ideals $I_\bu$ indexed by $\NN$ to be an inverse graded family of ideals if for any $n,m \in \NN$, $I_{n+m} \subset I_nI_m$. This definition appears in \cite{mustactǎ2002multiplicities} named by reverse-graded sequence there. However, for such families, the corresponding limit may not exist. The following example comes from a paper by Hernandez, Teixeira and Witt \cite{hernandez2020frobenius}.
\end{remark}
\begin{example} \label{inverse_graded_limit_may_not_exist}
Let $R=\kk[[x_1,\ldots,x_d]]$ be a power series ring of dimension $d \geqslant 2$ over a field of any characteristic. Let $\fm=(x_1,\ldots,x_d)$ be the maximal ideal of $R$. Fix a prime integer $p>0$, and denote $\fm^{[p^e]}=(x_1^{p^e},\ldots,x_d^{p^e})$. We can define the Frobenius power of $\fm$ of arbitrary integer exponent: for $n \in \NN$ with base $p$ expansion $n=n_0+n_1p+\ldots+n_ep^e$, define $\fm^{[n]}=\fm^{n_0}(\fm^{[p]})^{n_1}\ldots(\fm^{[p^e]})^{n_e}$. Then use carrying in base $p$ and the fact that $\fm^{[pq]} \subset (\fm^{[q]})^p$, we can prove $\fm^{[n+m]}\subset \fm^{[n]}\fm^{[m]}$. Let $I_\bu=\{\fm^{[n]}\}_{n \in \NN}$. If $n=p^e$, then $I_n=\fm^{[p^e]}=(x_1^{p^e},\ldots,x_d^{p^e})$ and $\ell(R/I_n)=n^d$. Thus
$$\lim_{e \to \infty}\frac{\ell(R/I_{p^e})}{(p^e)^d}=1.$$
Now we calculate $\ell(R/I_n)$ for $n=p^e-1$. The base $p$ expansion of $n$ is $(p-1)p^{e-1}+(p-1)p^{e-2}+\ldots+(p-1)$. For $0 \leqslant k \leqslant e-1$, Set
$$L_k=\fm^{[(p-1)p^k]}=(x^{p^k}_1,x^{p^k}_2,\ldots,x^{p^k}_d)^{p-1}.$$
Then
$$I_n=L_{e-1}L_{e-2}\ldots L_0.$$
For $0 \leqslant k \leqslant e-1$, denote
$$J_k=L_{e-1}\ldots L_k.$$
Let $\mu$ be the number of minimal generators of an ideal. We make the following observation:
\begin{enumerate}
\item $L_k,J_k$ are all monomial ideals for any $k$.
\item $J_k=J_{k+1}L_k$.
\item The exponent of the minimal monomial generators of $L_k,J_k$ are multiples of $p^k$.
\item $L_k$ is an $\fm$-primary ideal whose minimal monomial generators have exponents at most $(p-1)p^k$, which is at most $p^{k+1}-1$.
\item $J_{k+1}/L_kJ_{k+1}$ are free $R/L_k$-modules of rank $\mu(J_{k+1})$.
\item $J_{k+1}/\fm L_kJ_{k+1}$ are free $R/\fm L_k$-modules of rank $\mu(J_{k+1})$.
\item $\mu(J_k)=\mu(L_kJ_{k+1})=\mu(L_k)\mu(J_{k+1})$.
\item $\mu(J_{e-1})=\mu(L_k)=\mu(\fm^{p-1})={\binom{p+d-2}{d-1}}.$
\item $\mu(J_k)={\binom{p+d-2}{d-1}}^{e-k}$.
\item $\ell(R/L_k)=p^{kd}\ell(R/\fm^{p-1})=p^{kd}{\binom{p+d-2}{d}}$. In particular, $$\ell(R/J_{e-1})=\ell(R/L_{e-1})=p^{(e-1)d}{p+d-2\choose d}.$$.
\item $\ell(R/J_k)-\ell(R/J_{k+1})=\mu(J_{k+1})\ell(R/L_k)={p+d-2\choose d-1}^{e-k+1}p^{kd}{p+d-2\choose d}=\frac{p-1}{d}{p+d-2\choose d-1}^{e-k}p^{kd}.$
\item We have
\begin{align*}
\ell(R/J_0)=\frac{p-1}{d}\sum_{0 \leqslant k \leqslant e-1}{p+d-2\choose d-1}^{e-k}p^{kd}\\
=\frac{p-1}{d}{p+d-2\choose d-1}\frac{p^{ed}-{p+d-2\choose d-1}^e}{p^d-{p+d-2\choose d-1}}\\
={p+d-2\choose d}\frac{p^{ed}-{p+d-2\choose d-1}^e}{p^d-{p+d-2\choose d-1}}.
\end{align*}
\item We have
$$\lim_{e \to \infty}\frac{\ell(R/I_{p^e-1})}{(p^e-1)^d}=\frac{{p+d-2\choose d}}{p^d-{p+d-2\choose d-1}}$$
which is not always equal to $1$.
\end{enumerate}
Here (1), (2), (4) is true by definition, (3) is true by (2) and induction. Now we prove (5) and (6). We set $J_{k+1}=(u_1,\ldots,u_\mu)$, where $u_i$'s are monomial minimal generators, and $\mu=\mu(J_{k+1})$. Suppose there is a relation of $u_i$ in $J_{k+1}/\fm L_kJ_{k+1}$, say
$$\sum_{1 \leqslant i \leqslant \mu} a_iu_i=0$$
Then we may lift this relation to an equation in $J_{k+1}$, say
$$\sum_{1 \leqslant i \leqslant \mu} a'_iu_i=\sum_{1 \leqslant i \leqslant \mu} a''_iu_i$$
where $a''_i \in \fm L_k$ for any $i$. Set $b_i=a'_i-a''_i$, then in $J_{k+1}$ we have
$$\sum_{1 \leqslant i \leqslant \mu} b_iu_i=0$$
So $b_i$ produces a syzygy of $u_i$. But $u_i$'s are monomials, so the syzygies are generated by binomial syzygies
$$s_iu_i-s_iu_j=0$$
where $s_i=\textup{gcd}(u_i,u_j)/u_i,s_j=\textup{gcd}(u_i,u_j)/u_j$. By (3), each exponent of $u_i,u_j$ are either the same or differ by $p^{k+1}$. Thus all $s_i$'s lie in $(x_1^{p^{k+1}},\ldots,x_d^{p^{k+1}})$, so $b_i \in (x_1^{p^{k+1}},\ldots,x_d^{p^{k+1}})$ for any $i$. Now $\fm L_k$ is an $\fm$-primary ideal minimally generated in degree at most $p^{k+1}$, so $x_i^{p^{k+1}} \in \fm L_k$. Therefore all $b_i$'s lie in $\fm L_k$. Since $a''_i \in \fm L_k$, $a'_i \in \fm L_k$, so the images of $a_i$ in $R/\fm L_k$ is $0$. So every $R/\fm L_k$-syzygy of $J_{k+1}/\fm L_kJ_{k+1}$ must be a zero syzygy, so $J_{k+1}/\fm L_kJ_{k+1}$ is free over $R/\fm L_k$, so (6) is true; (5) is true by (6) mod out $L_k$. By (5) and (6),
$$L_kJ_{k+1}/\fm L_kJ_{k+1}=(L_k/\fm L_k)^{\oplus \mu(J_{k+1})}$$
taking lengths on both sides, we see (7) is true. (8) is true by \autoref{length_func_formula} applied to $L_k$ and $\fm^{p-1}$. (9) is true by (7), (8) and induction. (10) is true by  \autoref{length_func_formula} applied to $L_k$ and $\fm^{p-1}$. (11), (12), and (13) are just computation coming from (9) and (10). In (13), we may take $p=d=2$, then the limit is $1/2 \neq 1$.

We also see $(x_1^n,\ldots,x_d^n) \subset I_n \subset \fm^n$ for any $n \in \NN$, which gives
$$\frac{1}{d!}\leqslant \displaystyle \liminf_{n \to \infty}\frac{\ell(R/I_n)}{n^d}\leqslant \displaystyle \limsup_{n \to \infty}\frac{\ell(R/I_n)}{n^d}\leqslant 1$$
The last inequality implies
$$ \displaystyle \limsup_{n \to \infty}\frac{\ell(R/I_n)}{n^d}=1$$
and when $p=d=2$, the first inequality implies
$$ \displaystyle \liminf_{n \to \infty}\frac{\ell(R/I_n)}{n^d}=1/2$$
because this limit inferior is achieved by a subsequence $I_{p^e-1}$. In other cases, it is not clear. For example, when $p=d=3$, then the corresponding limit for $I_{p^e-1}$ is $4/21$, which is strictly bigger than $1/d!=1/6$. So we ask the following question.
\end{example}
\begin{question}
Assume $I_\bu=\{\fm^{[n]}\}_{n \in \NN}$. In general, what is the limit inferior  
$$\displaystyle \liminf_{n \to \infty}\frac{\ell(R/I_n)}{n^d}$$
depending on $p$ and $d$?
\end{question}
Also, the inequalities on the limit inferior and limit superior lead to the following question.
\begin{question}
Let $R$ be a Noetherian local ring, $I_\bu$ be a BBL family of ideals indexed by $\NN$ satisfying $I_{n+m} \subset I_nI_m$. Do we have   
$$d! \displaystyle \liminf_{n \to \infty}\frac{\ell(R/I_n)}{n^d}\geqslant \displaystyle \limsup_{n \to \infty}\frac{\ell(R/I_n)}{n^d}?$$
\end{question}

\section{Approximation of sets of valuations and bounds for the valuation of BBL families} \label{section_5}
In this section we will show how to express the colength of an ideal $I$ using an OK valuation $\nu$.

Throughout this section, we assume $(R,\fm,\kk)$ is a Noetherian local ring which is an OK domain, $\FF$ is the fraction field of $R$, $\nu:\FF \to \ZZ^d$ is an OK valuation with $\ZZ^d$ ordered by $\leqslant_{\fa},\fa \in \RR^d$ and $S=\nu(R)$ is pointed at direction $\fa$. We will use $h$ to refer to a positive integer with $1 \leqslant h \leqslant [\kk_\nu:\kk]$. For an $R$-submodule $M$ of $\mathbb{F}$, denote
$$\nu^{(h)}(M)=\{\mathbf{u} \in \mathbb{Z}^d|\dim_{\kk}(\frac{M\cap\mathbb{F}_{\geqslant \mathbf{u}}}{M\cap \mathbb{F}_{>\mathbf{u}}})\geqslant h\}.$$
\begin{proposition} \cite[Remark 3.10]{hernandez2018local}
\begin{enumerate}
\item $\nu^{(1)}(M)=\nu(M)$
\item $\nu^{(h)}(M)$ is descending in $h$ as subsets of $\mathbb{Z}^d$. In particular, for ideal $I \subset R$, $\nu^{(h)}(I) \subset \nu(R)$.
\item $\nu^{(h)}(M)+S \subset \nu^{(h)}(M)$. In particular, for an ideal $I \subset R$, $\nu^{(h)}(I)$ are $S$-ideals.
\end{enumerate}    
\end{proposition}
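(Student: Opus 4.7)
The plan is to unwind the definition of $\nu^{(h)}(M)$ in each part, using that $\nu$ behaves well with respect to valuation-filtration pieces of $\FF$ and that $M$ is stable under multiplication by elements of $R$.

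For part (1), I would observe that the quotient $(M \cap \FF_{\geqslant \fu})/(M \cap \FF_{>\fu})$ has $\kk$-dimension $\geqslant 1$ precisely when it is nonzero, which happens exactly when there exists $x \in M$ with $\nu(x) = \fu$. This is the very definition of $\fu \in \nu(M)$, so the equality follows by direct comparison of the defining conditions.

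For part (2), the containment $\nu^{(h+1)}(M) \subset \nu^{(h)}(M)$ is immediate from the fact that $\dim \geqslant h+1$ implies $\dim \geqslant h$. The ``in particular'' statement then follows by chaining: $\nu^{(h)}(I) \subset \nu^{(1)}(I) = \nu(I) \subset \nu(R)$, using part (1) and that $I \subset R$.

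For part (3), which is the main content, I would fix $\fu \in \nu^{(h)}(M)$ and $\fs \in S = \nu(R)$, choose $r \in R^\times$ with $\nu(r) = \fs$, and construct a $\kk$-linear map
\[
m_r \colon \frac{M \cap \FF_{\geqslant \fu}}{M \cap \FF_{>\fu}} \longrightarrow \frac{M \cap \FF_{\geqslant \fu + \fs}}{M \cap \FF_{>\fu + \fs}}
\]
given by multiplication by $r$. This map is well-defined since $M$ is an $R$-module (so $rM \subset M$) and multiplying by $r$ shifts the valuation by $\fs$, hence carries $\FF_{\geqslant \fu}$ into $\FF_{\geqslant \fu + \fs}$ and $\FF_{>\fu}$ into $\FF_{>\fu + \fs}$. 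The key step is verifying injectivity: if $x \in M \cap \FF_{\geqslant \fu}$ and $rx \in \FF_{>\fu + \fs}$, then $\nu(x) = \nu(rx) - \nu(r) > \fu$, so $x \in \FF_{>\fu}$ and the class of $x$ is zero. Injectivity forces $\dim_\kk$ of the target to be at least $h$, giving $\fu + \fs \in \nu^{(h)}(M)$. The ``in particular'' part follows because any ideal $I \subset R$ is an $R$-submodule of $\FF$, so the general statement applies.

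I expect no serious obstacle: the only subtlety is checking that the shift map above is honestly injective (not merely well-defined), which amounts to the elementary observation that $r$ is a nonzerodivisor in $\FF$ with valuation exactly $\fs$, so the valuation of $rx$ determines the valuation of $x$ modulo the shift. No deeper input from the OK hypothesis is used here; this proposition is a purely formal consequence of the definitions of valuation and of $\nu^{(h)}$.
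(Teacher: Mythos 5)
Your proof is correct. The paper does not supply its own argument here---it simply cites \cite[Remark 3.10]{hernandez2018local}---so there is nothing to compare against, but your argument is the natural direct one from the definitions, and each step checks out. In particular, for part (3) the multiplication map $m_r$ is well defined because $M$ is an $R$-module and $\nu$ is additive, it descends to the $\kk$-vector space quotients since $\fm$ annihilates both (as $\fm \subset \fm_\nu$ forces $\nu(m) > 0$ for $0 \neq m \in \fm$), and your injectivity argument correctly uses that $\nu(rx) = \nu(r) + \nu(x)$ pins down $\nu(x)$ for nonzero $x$. One tiny notational point: you should confirm that $r \in R^\times$ means $r \in R \setminus \{0\}$ (the paper's convention $N^\times = N \setminus \{0\}$), not that $r$ is a unit of $R$; under the latter reading, $\nu(r) = \fs$ could fail to be attainable for arbitrary $\fs \in S$, but under the paper's convention the choice of $r$ is exactly what the definition of $S = \nu(R)$ provides.
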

For an $R$-ideal $I$, we denote $\nu^{h,c}(I)=\nu(R)-\nu^{(h)}(I)$. 
\begin{lemma} \cite[Lemma 3.12]{hernandez2018local} \label{approx_v^h_to_v}
There is an $\fv \in S$ such that $\nu(M)+\fv \subset \nu^{(h)}(M) \subset \nu(M)$ for any $M$ and $1 \leqslant h \leqslant [\kk_\nu:\kk]$. 
\end{lemma}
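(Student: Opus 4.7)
The plan is to prove the lemma in two steps. The second inclusion $\nu^{(h)}(M) \subset \nu(M)$ is essentially immediate from the preceding proposition: since $\nu^{(h)}(M)$ is descending in $h$ and $\nu^{(1)}(M) = \nu(M)$, we have $\nu^{(h)}(M) \subset \nu(M)$ for all $h \geqslant 1$. The real content is to exhibit a uniform $\fv \in S$ (independent of $M$ and of $h \leqslant N := [\kk_\nu:\kk]$) such that $\nu(M) + \fv \subset \nu^{(h)}(M)$.

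My strategy is to build $\fv$ directly from a lift of a $\kk$-basis of the residue extension $\kk_\nu / \kk$. Pick $e_1,\ldots,e_N \in V_\nu$ whose images form a $\kk$-basis of $\kk_\nu = V_\nu/\fm_\nu = \FF_{\geqslant 0}/\FF_{>0}$. Since $\FF$ is the fraction field of $R$, I can write $e_i = b_i/c_i$ with $b_i, c_i \in R$. Let $c = c_1 c_2 \cdots c_N \in R$, and set $\fv = \nu(c) \in S$ (this is in $S = \nu(R)$ by construction). Define
\[
d_i \;=\; e_i \cdot c \;=\; b_i \prod_{j \neq i} c_j \;\in\; R,
\]
so that each $d_i$ lies in $R$ and $\nu(d_i) = \nu(e_i) + \fv = \fv$. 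Multiplication by $c$ gives a $\kk$-linear isomorphism $\FF_{\geqslant 0}/\FF_{>0} \xrightarrow{\sim} \FF_{\geqslant \fv}/\FF_{>\fv}$ carrying the classes $[e_i]$ to $[d_i]$; hence the $d_i$ descend to a $\kk$-basis of $\FF_{\geqslant \fv}/\FF_{>\fv}$.

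Now suppose $\fu \in \nu(M)$ and pick $y \in M$ with $\nu(y) = \fu$. Because $d_i \in R$ and $M$ is an $R$-module, $y d_i \in M$, and $\nu(y d_i) = \fu + \fv$. Multiplication by $y$ is another $\kk$-linear isomorphism $\FF_{\geqslant \fv}/\FF_{>\fv} \xrightarrow{\sim} \FF_{\geqslant \fu + \fv}/\FF_{>\fu + \fv}$, so the classes $[y d_1],\ldots,[y d_N]$ form a $\kk$-basis of $\FF_{\geqslant \fu + \fv}/\FF_{>\fu + \fv}$. Since the natural map
\[
(M \cap \FF_{\geqslant \fu + \fv}) / (M \cap \FF_{>\fu + \fv}) \;\hookrightarrow\; \FF_{\geqslant \fu + \fv}/\FF_{>\fu + \fv}
\]
is injective and the $y d_i$'s live in the domain, their images are $\kk$-linearly independent in the left-hand side. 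Therefore
\[
\dim_\kk \!\bigl( (M \cap \FF_{\geqslant \fu + \fv}) / (M \cap \FF_{>\fu + \fv}) \bigr) \;\geqslant\; N \;\geqslant\; h,
\]
which is exactly the statement $\fu + \fv \in \nu^{(h)}(M)$.

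The construction is uniform: the same $\fv$ works for every $R$-submodule $M$ and every $h \leqslant N$. The only place that could create friction is verifying that the elements $d_i$ really land in $R$ (rather than only in $\FF$) and that $\fv \in S$; these are handled by clearing denominators with the product $c = \prod c_i$, which is the main technical trick. The rest is just tracking how the graded pieces $\FF_{\geqslant \bullet}/\FF_{>\bullet}$ transform under multiplication by ring elements, together with the finiteness of the residue extension $[\kk_\nu:\kk] < \infty$ which guarantees the basis $e_1,\ldots,e_N$ exists in the first place.
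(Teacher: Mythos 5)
Your proof is correct, and it is essentially the standard argument used in Hernández–Jeffries (to which the paper delegates this lemma). The one spot worth making explicit is why $\nu(e_i)=0$: since each $e_i$ has nonzero image in $\kk_\nu = V_\nu/\fm_\nu$, it lies in $V_\nu \setminus \fm_\nu$, forcing $\nu(e_i)=0$, and then $\nu(d_i)=\nu(e_i)+\nu(c)=\fv$ as you claim; otherwise, the construction of $\fv$ via $c=\prod c_i$, the observation that $d_i = e_i c \in R$ and $yd_i \in M$, and the injectivity of $(M\cap\FF_{\geqslant\fu+\fv})/(M\cap\FF_{>\fu+\fv})\hookrightarrow\FF_{\geqslant\fu+\fv}/\FF_{>\fu+\fv}$ together give exactly what is needed.
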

\begin{remark}\label{approx_v^h_to_v remark}
In the lemma above, we can assume that $\fv \in C^\circ$ where $C=\Cone(S)$. In fact, if $\fv \in C\backslash C^\circ$, then it lies on the boundary, $\fv \in \partial C$. Since $C$ is full-dimensional, we can always select a point $\fv_0 \in S \cap C^\circ.$ Consequently, we have $\nu(M)+\fv+\fv_0 \subset \nu^{(h)}(M)$ as $\nu^{(h)}(M)$ consists of $S$-ideals, and it follows that $\fv+\fv_0 \subset C+C^\circ \subset C^\circ$.
\end{remark}
\begin{lemma} \label{trunc_length_formula}
Let $\fu \in \RR^d$. Assume $I \subset J$ be two $R$-ideals such that $I\cap \mathbb{F}_{\geqslant \fu}=J\cap \mathbb{F}_{\geqslant \fu}$. Let $H=H_{<\fu}$ be the truncating half space, then
$$\ell_{R}(J/I)=\sum_{1 \leqslant h \leqslant [\kk_\nu:\kk]} \#(\nu^{(h)}(J)\cap H)-\#(\nu^{(h)}(I)\cap H).$$
\end{lemma}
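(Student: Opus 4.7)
The plan is to decompose $\ell_R(J/I)$ via a valuation-adic filtration on $J$ and then swap the order of summation.

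For each $R$-submodule $M \subset \FF$ and $\mathbf{w} \in \ZZ^d$, set
$$\textup{gr}_\mathbf{w}(M) := (M \cap \FF_{\geqslant \mathbf{w}})/(M \cap \FF_{>\mathbf{w}}).$$
If $\mathbf{w} \in \nu(R)$ and $y_\mathbf{w} \in R$ satisfies $\nu(y_\mathbf{w}) = \mathbf{w}$, then the assignment $x \mapsto \overline{x/y_\mathbf{w}}$ identifies $\textup{gr}_\mathbf{w}(M)$ with a $\kk$-subspace of $\kk_\nu$; otherwise $\textup{gr}_\mathbf{w}(M) = 0$ when $M \subset R$. In either case $\dim_\kk \textup{gr}_\mathbf{w}(M) \leqslant [\kk_\nu:\kk]$, and unravelling the definition of $\nu^{(h)}$ yields
$$\dim_\kk \textup{gr}_\mathbf{w}(M) = \#\bigl\{h : 1 \leqslant h \leqslant [\kk_\nu:\kk],\ \mathbf{w} \in \nu^{(h)}(M)\bigr\}.$$

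First I would verify that the natural map $\textup{gr}_\mathbf{w}(I) \to \textup{gr}_\mathbf{w}(J)$ induced by $I \subset J$ is injective, as its kernel is $\bigl(I \cap \FF_{\geqslant \mathbf{w}} \cap \FF_{>\mathbf{w}}\bigr)/(I \cap \FF_{>\mathbf{w}}) = 0$. In particular $\dim_\kk \textup{gr}_\mathbf{w}(J) - \dim_\kk \textup{gr}_\mathbf{w}(I) = \dim_\kk\bigl(\textup{gr}_\mathbf{w}(J)/\textup{gr}_\mathbf{w}(I)\bigr)$. Using $\bigcup_{\mathbf{w}' > \mathbf{w}} \FF_{\geqslant \mathbf{w}'} = \FF_{>\mathbf{w}}$ and the modular law together with $I \subset J$, I would then produce the natural isomorphism
$$\frac{(J \cap \FF_{\geqslant \mathbf{w}}) + I}{(J \cap \FF_{>\mathbf{w}}) + I} \;\cong\; \frac{\textup{gr}_\mathbf{w}(J)}{\textup{gr}_\mathbf{w}(I)}.$$

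Next I would build the ascending filtration $J_\mathbf{w} := (J \cap \FF_{\geqslant \mathbf{w}}) + I$ of $J$, where $J_\mathbf{w}$ grows as $\mathbf{w}$ decreases. The hypothesis $I \cap \FF_{\geqslant \fu} = J \cap \FF_{\geqslant \fu}$ gives $J_\mathbf{w} = I$ for all $\mathbf{w} \geqslant \fu$, while every $x \in J$ lies in $J_{\nu(x)}$, so the family exhausts $J$. When $\ell_R(J/I) < \infty$, the Noetherian hypothesis forces the chain to stabilize at $J$ for sufficiently small $\mathbf{w}$, and only finitely many $\mathbf{w} < \fu$ yield a strict jump (each contributing at least one unit of length). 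Summing these jumps and then swapping the order of summation over $\mathbf{w}$ and $h$ via the displayed identity above,
$$\ell_R(J/I) = \sum_{\mathbf{w} < \fu} \bigl[\dim_\kk \textup{gr}_\mathbf{w}(J) - \dim_\kk \textup{gr}_\mathbf{w}(I)\bigr] = \sum_{h=1}^{[\kk_\nu:\kk]} \bigl[\#(\nu^{(h)}(J) \cap H) - \#(\nu^{(h)}(I) \cap H)\bigr].$$

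The main obstacle is that the total order $\leqslant_\fa$ on $\ZZ^d$ is densely ordered, so naive induction on $\mathbf{w}$ is unavailable. I would circumvent this by restricting attention to the finitely many $\mathbf{w} < \fu$ at which a strict jump occurs, enumerating them as $\mathbf{w}_1 > \mathbf{w}_2 > \cdots > \mathbf{w}_N$, and applying the classical additive-length formula to the finite chain $I = J_{\fu} \subsetneq J_{\mathbf{w}_1} \subsetneq \cdots \subsetneq J_{\mathbf{w}_N} = J$.
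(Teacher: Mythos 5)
Your proof is correct and takes a genuinely different route from the paper's. The paper invokes two external results from Hern\'andez--Jeffries: it sandwiches $J/I$ via two short exact sequences through the intermediate module $J/(I\cap\FF_{\geqslant\fu})$, uses the hypothesis $I\cap\FF_{\geqslant\fu}=J\cap\FF_{\geqslant\fu}$ to cancel the common piece and arrive at $\ell_R(J/I)=\ell_R(J/(J\cap\FF_{\geqslant\fu}))-\ell_R(I/(I\cap\FF_{\geqslant\fu}))$, then applies the one-module identity $\ell_R(M/(M\cap\FF_{\geqslant\fu}))=\sum_h\#(\nu^{(h)}(M)\cap H)$ from the cited Lemma 3.11 to each term, and cites Remark 5.13 of that reference for finiteness. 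Your argument instead filters $J/I$ directly by the valuation: the identification of $((J\cap\FF_{\geqslant\mathbf{w}})+I)/((J\cap\FF_{>\mathbf{w}})+I)$ with the quotient of graded pieces via the modular law is precisely the content the paper outsources to its citation, and the swap of the $\mathbf{w}$- and $h$-sums then recovers the statement. What your approach buys is self-containment; the paper's is shorter because it is modular.

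There is one small but real gap in your writeup: you derive the finiteness of the number of jump values $\mathbf{w}$ by conditioning on $\ell_R(J/I)<\infty$, but that finiteness is not a hypothesis of the lemma---it is part of what the formula is asserting. The non-circular fix is to note that a jump can occur only at a $\mathbf{w}$ where $\dim_\kk\bigl((J\cap\FF_{\geqslant\mathbf{w}})/(J\cap\FF_{>\mathbf{w}})\bigr)>0$, i.e.\ at $\mathbf{w}\in\nu(J)\cap H\subset\nu(R)\cap H$, and $\nu(R)\cap H$ is finite because $\Cone(\nu(R))\cap H$ is bounded (pointedness of $\Cone(\nu(R))$ together with \autoref{trunc_bound}), with no length assumption needed. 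Combining this with the bound $\dim_\kk\bigl((M\cap\FF_{\geqslant\mathbf{w}})/(M\cap\FF_{>\mathbf{w}})\bigr)\leqslant[\kk_\nu:\kk]$ that you already recorded, the finiteness of $\ell_R(J/I)$ drops out as a consequence rather than an input, after which your finite-chain argument goes through unconditionally.
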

\begin{proof}
If $\fu \in \ZZ^d$, then the exact sequences 
\[ 0  \to \frac{ I }{ I \cap \FF_{\geq \fu } } \to \frac{ J }{ I \cap \FF_{\geq \fu } } \to \frac{ J }{ I  } \to 0 \] 
and 
\[ 0  \to \frac{ J \cap \FF_{\geq \fu } }{ I \cap \FF_{\geq \fu } } \to \frac{ J }{ I \cap \FF_{\geq \fu } } \to \frac{ J }{ J \cap \FF_{\geq \fu } } \to 0 \vspace{2mm} \] 
show that the length of $J / I$ equals 
\begin{equation*} 
\label{length relations: e}
\ell_R \left( \frac{ J \cap \FF_{\geq \fu } }{ I \cap \FF_{\geq \fu } }\right)  + \ell_R \left( \frac{ J }{ J \cap \FF_{\geq \fu } } \right)  - \ell_R \left( \frac{ I }{ I \cap \FF_{\geq \fu } } \right)  .
\end{equation*}
Note that the reason why length of $J / I$  is finite follows from \cite[Remark 5.13]{hernandez2018local}. Now using the fact that $I\cap \mathbb{F}_{\geqslant \fu}=J\cap \mathbb{F}_{\geqslant \fu}$ we get,
\begin{equation*}
\ell_{R}(J/I)= \ell_R \left( \frac{ J }{ J \cap \FF_{\geq \fu } } \right)  - \ell_R \left( \frac{ I }{ I \cap \FF_{\geq \fu } } \right).
\end{equation*}
Moreover by \cite[Lemma 3.11]{hernandez2018local} we get $$\ell_{R}(J/I)=\sum_{1 \leqslant h \leqslant [\kk_\nu:\kk]} \#(\nu^{(h)}(J)\cap H)-\#(\nu^{(h)}(I)\cap H).$$
\end{proof}
\begin{corollary}
    
 \label{length_formula_compl}
Assume $I$ be an $R$-ideal such that $R\cap \mathbb{F}_{\geqslant \fu}\subset I$. Let $H=H_{<\fu}$ be the truncating half space, then
$$\ell(R/I)= \sum_{1 \leqslant h \leqslant [\kk_\nu:\kk]}\#(\nu^{h,c}(I)\cap H)
-\sum_{1 \leqslant h \leqslant [\kk_\nu:\kk]}\#(\nu^{h,c}(R)\cap H)$$
\end{corollary}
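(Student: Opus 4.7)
The plan is simply to apply the previous lemma \autoref{trunc_length_formula} with $J=R$ and then rewrite the counts on the right-hand side using the complement sets $\nu^{h,c}$.

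First, I need to verify the hypothesis of \autoref{trunc_length_formula}. Since $I \subset R$, the hypothesis $R \cap \FF_{\geq \fu} \subset I$ immediately gives $R \cap \FF_{\geq \fu} = I \cap \FF_{\geq \fu}$, so the lemma applies and yields
\[
\ell(R/I)=\sum_{1 \leqslant h \leqslant [\kk_\nu:\kk]} \bigl(\#(\nu^{(h)}(R)\cap H)-\#(\nu^{(h)}(I)\cap H)\bigr).
\]
In particular all the cardinalities appearing here are finite, since $\ell(R/I)$ is finite and the difference inside each summand is nonnegative (because $\nu^{(h)}(I) \subset \nu^{(h)}(R)$).

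Next, I would use the fact that, by definition, $\nu^{h,c}(I) = \nu(R) \setminus \nu^{(h)}(I)$ and $\nu^{h,c}(R) = \nu(R) \setminus \nu^{(h)}(R)$, where $\nu^{(h)}(I)$ and $\nu^{(h)}(R)$ are both subsets of $\nu(R)$. Intersecting with $H$ gives disjoint decompositions
\[
\nu(R)\cap H = \bigl(\nu^{(h)}(R)\cap H\bigr) \sqcup \bigl(\nu^{h,c}(R)\cap H\bigr) = \bigl(\nu^{(h)}(I)\cap H\bigr) \sqcup \bigl(\nu^{h,c}(I)\cap H\bigr).
\]
I would like to conclude $\#(\nu^{(h)}(R)\cap H)-\#(\nu^{(h)}(I)\cap H) = \#(\nu^{h,c}(I)\cap H)-\#(\nu^{h,c}(R)\cap H)$ by rearranging, but this manipulation requires $\#(\nu(R)\cap H)<\infty$. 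This is the only mildly subtle point: one needs to know that the truncating half-space $H=H_{<\fu}$ contains only finitely many points of $\nu(R)$. This follows from the BBL-type assumption $R\cap \FF_{\geq \fu}\subset I$ together with the OK-valuation property (5) in \autoref{OK_val_def}, which forces $R/I$ (and hence the truncated counts) to be finite; alternatively one can observe directly that within the half-space $H$ the set $\nu(R)$ is trapped between two lattice points of the pointed cone and so is finite. Granting finiteness, the two decompositions above yield the claimed equality termwise.

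Summing over $h$ and combining with the expression from \autoref{trunc_length_formula} produces
\[
\ell(R/I)=\sum_{1 \leqslant h \leqslant [\kk_\nu:\kk]}\#(\nu^{h,c}(I)\cap H)-\sum_{1 \leqslant h \leqslant [\kk_\nu:\kk]}\#(\nu^{h,c}(R)\cap H),
\]
which is exactly the desired formula. The only real work is the bookkeeping with the complements and the finiteness justification; the rest is a direct specialization of the lemma.
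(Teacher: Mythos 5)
Your proof is correct and matches the paper's own argument: both specialize \autoref{trunc_length_formula} to the pair $I \subset J=R$, and then pass from $\nu^{(h)}$ to the complement sets $\nu^{h,c}$ by adding and subtracting $\#(\nu(R)\cap H)$ in each summand. Your attention to the finiteness of $\#(\nu(R)\cap H)$ is a point the paper leaves implicit, and it does need justification for the rearrangement to be legal; note, though, that the correct reason is the second one you offer --- $\nu(R)\subset C=\Cone(\nu(R))$ and $C\cap H$ is bounded by \autoref{trunc_bound}, so $\nu(R)\cap H$ is a bounded subset of $\ZZ^d$ and hence finite --- whereas the first reason (finiteness of $\ell(R/I)$) only bounds each \emph{difference} $\#(\nu^{(h)}(R)\cap H)-\#(\nu^{(h)}(I)\cap H)$ and would not by itself preclude $\#(\nu(R)\cap H)=\infty$.
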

\begin{proof}
Using  \autoref{trunc_length_formula} we have:
\begin{align*}
\ell(R/I)=\sum_{1 \leqslant h \leqslant [\kk_\nu:\kk]} \#(\nu^{(h)}(R)\cap H)-\#(\nu^{(h)}(I)\cap H)\\
=\sum_{1 \leqslant h \leqslant [\kk_\nu:\kk]}(\#(\nu(R)\cap H)-\#(\nu^{(h)}(I)\cap H))
-\sum_{1 \leqslant h \leqslant [\kk_\nu:\kk]}(\#(\nu(R)\cap H)-\#(\nu^{(h)}(R)\cap H))\\
=\sum_{1 \leqslant h \leqslant [\kk_\nu:\kk]}\#(\nu(R)\cap H\ba\nu^{(h)}(I)\cap H)
-\sum_{1 \leqslant h \leqslant [\kk_\nu:\kk]}\#(\nu(R)\cap H\ba \nu^{(h)}(R)\cap H)\\
=\sum_{1 \leqslant h \leqslant [\kk_\nu:\kk]}\#(\nu^{h,c}(I)\cap H)
-\sum_{1 \leqslant h \leqslant [\kk_\nu:\kk]}\#(\nu^{h,c}(R)\cap H).
\end{align*}
\end{proof}

\begin{remark}
Note that even if $I$ is $\mathfrak{m}$-primary, it is not clear whether $\nu^{h,c}(I)$ is bounded or not; however, we only need to consider its intersection with a halfspace $H$ which is bounded.
\end{remark}

Now we turn to a BBL family.
\begin{lemma} \label{val_bbl_prop}
Let $I_\bu$ be a BBL family indexed by $\NN$. Then there exists $\fu \in \ZZ^d$ such that $R \cap \FF_{\geqslant n\fu} \subset I_n$. For such $\fu$, denote $H=H_{<\fu}$, then $\nu^c(I_n) \subset \Cone(S) \cap nH$ for any $n \in \NN$.  
\end{lemma}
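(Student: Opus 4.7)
The plan is to chain together two linear bounds: the one built into the OK valuation (condition (5) of \autoref{OK_val_def}) and the BBL condition.

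For the existence of $\fu$: by condition (5) of \autoref{OK_val_def}, fix $\fv \in \ZZ^d$ such that $R \cap \FF_{\geqslant n\fv} \subset \fm^n$ for every $n \in \NN$. Since $I_\bu$ is BBL, pick $c \in \NN$ with $\fm^{cn} \subset I_n$. Then
\[
R \cap \FF_{\geqslant n(c\fv)} = R \cap \FF_{\geqslant (cn)\fv} \subset \fm^{cn} \subset I_n,
\]
so $\fu := c\fv$ works.

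For the containment $\nu^c(I_n) \subset \Cone(S) \cap nH$: by definition $\nu^c(I_n) = \nu(R) \setminus \nu(I_n) \subset \nu(R) \subset \Cone(S)$, which handles the first factor. For the second, observe first that scaling the defining inequality gives $nH = nH_{<\fu} = H_{<n\fu}$. Now argue by contradiction. Suppose $\fw \in \nu^c(I_n)$ but $\fw \notin H_{<n\fu}$, i.e.\ $\langle \fw,\fa\rangle \geqslant \langle n\fu,\fa\rangle$, which is exactly $\fw \geqslant_\fa n\fu$ in the order on $\ZZ^d$. Since $\fw \in \nu(R)$, there exists $r \in R$ with $\nu(r)=\fw$, hence $r \in R \cap \FF_{\geqslant n\fu}$. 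By the first part this places $r \in I_n$, so $\fw = \nu(r) \in \nu(I_n)$, contradicting $\fw \in \nu^c(I_n)$. Therefore $\fw \in H_{<n\fu} = nH$, completing the proof.

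There is no real obstacle here; both statements unwind directly from the definitions. The only slight care needed is in noting that $nH_{<\fu} = H_{<n\fu}$ (so the scaling is compatible with the order) and that the BBL constant $c$ can simply be absorbed into $\fv$ by passing to $c\fv$.
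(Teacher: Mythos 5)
Your proof is correct and matches the paper's argument: both take $\fu = c\fv$ by composing the BBL bound $\fm^{cn} \subset I_n$ with condition (5) of \autoref{OK_val_def}, and both derive $\nu^c(I_n) \subset \Cone(S) \cap nH$ from the definitions. The only cosmetic difference is that you unfold the set-theoretic containment into a pointwise contradiction argument, whereas the paper writes it as a chain of set inclusions.
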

\begin{proof}
By  \autoref{bbl_defn}, there exists $c>0$ such that $\fm^{cn} \subset I_n$. By \autoref{OK_val_def}, there exists $\fv \in \ZZ^d$ such that $R \cap \FF_{\geqslant n\fv} \subset \fm^n$. Thus taking $\fu=c\fv$ we have $R \cap \FF_{\geqslant n\fu} \subset I_n$. Moreover,  $\nu^c(I_n)=\nu(R)\backslash\nu(I_n) \subset \nu(R)\backslash\nu(R)\cap H_{\geqslant n\fu}=\nu(R)\cap H_{<n\fu}\subset \Cone(S)\cap H_{<n\fu}=\Cone(S)\cap nH$.  
\end{proof}
\begin{corollary}
Let $I_\bu$ be a BBL family indexed by $\NN$. Then for a truncating half space $H=H_{<\fu}$ satisfying $R \cap \FF_{\geqslant n\fu} \subset I_n$, we have:
\begin{enumerate}
\item $\ell(R/I_n)=\sum_{1 \leqslant h \leqslant [\kk_\nu:\kk]}\#(\nu^{h,c}(I)\cap nH))
-\sum_{1 \leqslant h \leqslant [\kk_\nu:\kk]}\#(\nu^{h,c}(R)\cap nH).$
\item $\ell(R/I_n)=\sum_{1 \leqslant h \leqslant [\kk_\nu:\kk]}\#(1/n\nu^{h,c}(I)\cap H))
-\sum_{1 \leqslant h \leqslant [\kk_\nu:\kk]}\#(1/n\nu^{h,c}(R)\cap H).$
\end{enumerate}
\end{corollary}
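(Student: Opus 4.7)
The plan is to recognize this corollary as an immediate consequence of \autoref{length_formula_compl} combined with \autoref{val_bbl_prop}, together with a trivial scaling argument. The real content has already been established; only a change of variables remains.

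First I would apply \autoref{length_formula_compl} with the ideal taken to be $I_n$ and the truncating vector taken to be $n\fu$ (so the truncating half space becomes $H_{<n\fu}=nH$). To justify the application I need the hypothesis $R\cap \FF_{\geqslant n\fu}\subset I_n$, and this is exactly the content of \autoref{val_bbl_prop}: since $I_\bu$ is BBL, the chosen $\fu\in\ZZ^d$ satisfies the required containment for every $n\in\NN$. Substituting directly yields part (1).

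For part (2), I would use that scalar multiplication by $1/n$ is a bijection of $\RR^d$ which sends $\ZZ^d$ to $(1/n)\ZZ^d$. Under this bijection, $A\cap nH$ corresponds to $((1/n)A)\cap H$ for any subset $A\subset \ZZ^d$; in particular the cardinalities agree:
\[
\#\bigl(\nu^{h,c}(I_n)\cap nH\bigr)=\#\bigl((1/n)\nu^{h,c}(I_n)\cap H\bigr),
\]
and similarly with $\nu^{h,c}(R)$ in place of $\nu^{h,c}(I_n)$. Plugging these equalities into part (1) gives part (2).

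There is no genuine obstacle: once \autoref{length_formula_compl} and \autoref{val_bbl_prop} are in hand, this corollary is purely bookkeeping. The point of stating it is to rewrite $\ell(R/I_n)$ as a count of lattice points of $(1/n)\ZZ^d$ inside the fixed half space $H$, which is the normalized form needed for the subsequent analytic passage to a volume in $\RR^d$ in \autoref{section_6} and \autoref{section_7}.
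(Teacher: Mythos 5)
Your proof is correct and follows the same route the paper takes: apply \autoref{length_formula_compl} with $I=I_n$ and truncating vector $n\fu$ (noting $H_{<n\fu}=nH_{<\fu}$), citing \autoref{val_bbl_prop} for the required containment, and then obtain part (2) by the cardinality-preserving rescaling $\fx\mapsto\fx/n$. Nothing further to add.
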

\begin{proof}
(1) Apply  \autoref{length_formula_compl} and  \autoref{val_bbl_prop}, and note that $nH_{<\fu}=H_{<n\fu}$.

(2) is trivial  because rescaling does not change the cardinality.
\end{proof}
\begin{corollary}\label{value_group_uniformly_bounded}
Let $I_\bu$ be a BBL family indexed by $\NN$. Then there exist a truncating half space $H=H_{<\fu}$ and constant $C>0$ such that for any $n \in \NN$, 
$$1/n\nu^c(I_n) \subset \Cone(S)\cap H \subset B(0,C).$$   
\end{corollary}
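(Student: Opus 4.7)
The plan is to combine the previous Lemma~5.5 (\texttt{val\_bbl\_prop}) with Proposition~2.5 (\texttt{trunc\_bound}); the statement is almost immediate once both are in hand, so the only real task is to keep track of the scaling and verify that the half space we pick is actually a truncating one in the sense of the paper.

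First I would apply Lemma~5.5 to obtain $\fu \in \ZZ^d$ with $R \cap \FF_{\geqslant n\fu} \subset I_n$ for all $n$, and the resulting containment $\nu^c(I_n) \subset \Cone(S) \cap nH$ where $H = H_{<\fu}$. The $\fu$ delivered by Lemma~5.5 is $c\fv$ with $\fv$ coming from condition (5) of the OK valuation; I want to ensure $\langle \fu, \fa \rangle > 0$ so that $H$ qualifies as a truncating half space. If necessary, I would replace $\fv$ by $\fv + \fw$ for some $\fw \in \nu(\fm) \subset H_{\fa,>0}$, observing that $\FF_{\geqslant n(\fv+\fw)} \subset \FF_{\geqslant n\fv}$, so the defining inclusion $R \cap \FF_{\geqslant n\fu} \subset I_n$ is preserved while pushing $\fu$ strictly into the upper half space.

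Next I would rescale. Since $\Cone(S)$ is a cone, $\tfrac{1}{n}\Cone(S) = \Cone(S)$, and $\tfrac{1}{n}(nH) = H$, so multiplying the containment of Lemma~5.5 by $1/n$ yields
$$\tfrac{1}{n}\,\nu^c(I_n) \subset \Cone(S) \cap H,$$
and the right-hand side no longer depends on $n$. For the second inclusion, I would apply Proposition~2.5 to the closed pointed cone $\Cone(S)$ (pointed at direction $\fa$ by condition (3) of \autoref{OK_val_def}) together with the truncating half space $H_{\fa,<\langle \fu,\fa\rangle}$: this gives $\Cone(S) \cap H \subset B(\f0, c_2\langle \fu,\fa\rangle)$ for the constant $c_2$ furnished by that proposition, so we may take $C = c_2\langle \fu,\fa\rangle$.

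There is no genuine obstacle here; the corollary is a bookkeeping consequence of the two earlier results, and the only point requiring any thought is the minor adjustment of $\fu$ to guarantee positivity against $\fa$, which is handled by the remark on $\nu(\fm)$ above.
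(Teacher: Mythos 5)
Your proof is correct and follows the same route as the paper (the paper simply cites \autoref{val_bbl_prop} for the first inclusion and \autoref{trunc_bound}, after rescaling, for the second). The adjustment you propose to force $\langle \fu,\fa\rangle>0$ is sound but in fact unnecessary: taking $n=1$ in condition~(5) of \autoref{OK_val_def} gives $R\cap\FF_{\geqslant\fv}\subset\fm$, and since $1\in R\setminus\fm$ with $\nu(1)=\f0$, we must already have $\langle\fv,\fa\rangle>0$, hence $\langle\fu,\fa\rangle=c\langle\fv,\fa\rangle>0$.
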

\begin{proof}
The first containment is true by  \autoref{val_bbl_prop}, and the second containment is true by  \autoref{trunc_bound}.   
\end{proof}
If $R$ has characteristic $p>0$, the following results follow similarly as before:
\begin{lemma}
Let $I_\bu$ be a BBL family indexed by powers of $p$. Then there exists $\fu \in \ZZ^d$ such that $R \cap \FF_{\geqslant q\fu} \subset I_q$. For such $\fu$, denote $H=H_{<\fu}$, then we have $\nu^c(I_q) \subset \Cone(S) \cap qH$ for any $e \in \NN$.     
\end{lemma}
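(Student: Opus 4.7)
The plan is to mirror the proof of \autoref{val_bbl_prop} almost verbatim, substituting the defining inclusion of BBL for $\NN$-indexed families with its analog for families indexed by powers of $p$. Since the OK valuation condition (5) in \autoref{OK_val_def} is stated for any $n \in \NN$, it applies in particular to $n = q = p^e$, so nothing new is needed from the valuation side.

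First I would invoke \autoref{bbl_defn}(2) to produce a constant $c \in \NN$ with $\fm^{cq} \subset I_q$ for every $q=p^e$. Next, I would apply condition (5) of \autoref{OK_val_def} to obtain $\fv \in \ZZ^d$ with $R \cap \FF_{\geqslant n\fv} \subset \fm^n$ for all $n \in \NN$; taking $n = cq$ gives $R \cap \FF_{\geqslant cq\fv} \subset \fm^{cq} \subset I_q$. Setting $\fu = c\fv \in \ZZ^d$ then yields $R \cap \FF_{\geqslant q\fu} \subset I_q$, which establishes the first claim.

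For the second claim, fix such a $\fu$ and set $H = H_{<\fu}$. Take any $\fw \in \nu^c(I_q) = \nu(R) \setminus \nu(I_q)$. Since $\fw \in \nu(R) = S$, automatically $\fw \in \Cone(S)$. Suppose for contradiction that $\fw \notin qH = H_{<q\fu}$, so $\fw \geqslant_{\fa} q\fu$. Pick $x \in R$ with $\nu(x) = \fw$; then $x \in R \cap \FF_{\geqslant q\fu}$, and by the first part $x \in I_q$, so $\fw = \nu(x) \in \nu(I_q)$, contradicting $\fw \in \nu^c(I_q)$. Thus $\fw \in \Cone(S) \cap qH$, giving $\nu^c(I_q) \subset \Cone(S) \cap qH$.

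There is no genuine obstacle here: the statement is a direct reindexing of \autoref{val_bbl_prop}, and the only substantive inputs are the BBL hypothesis for $p$-indexed families and the uniform containment $R \cap \FF_{\geqslant n\fv} \subset \fm^n$ from the OK valuation axioms, both of which are available unchanged. The point of isolating this lemma is presumably to set up the analog of the subsequent length-counting \autoref{value_group_uniformly_bounded} for BBL families indexed by powers of $p$, which will follow in the same way by combining this lemma with \autoref{length_formula_compl} and \autoref{trunc_bound}.
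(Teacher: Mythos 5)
Your proposal is correct and matches the paper's approach: the paper explicitly states that the $p$-indexed versions "follow similarly as before" by reindexing the $\NN$-indexed proof of \autoref{val_bbl_prop}, which is exactly what you carry out. Your second-part argument by contradiction is just an unwinding of the paper's chain of set inclusions $\nu^c(I_q) \subset \nu(R)\setminus(\nu(R)\cap H_{\geqslant q\fu}) = \nu(R)\cap H_{<q\fu} \subset \Cone(S)\cap qH$, so it is not a substantively different route.
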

\begin{corollary}
Let $I_\bu$ be a BBL family indexed by powers of $p$. Then for a truncating half space $H=H_{<\fu}$ satisfying $R \cap \FF_{\geqslant q\fu} \subset I_q$, we have:
\begin{enumerate}
\item $\ell(R/I_q)=\sum_{1 \leqslant h \leqslant [\kk_\nu:\kk]}\#(\nu^{h,c}(I)\cap qH))
-\sum_{1 \leqslant h \leqslant [\kk_\nu:\kk]}\#(\nu^{h,c}(R)\cap qH).$
\item $\ell(R/I_q)=\sum_{1 \leqslant h \leqslant [\kk_\nu:\kk]}\#(1/q\nu^{h,c}(I)\cap H))
-\sum_{1 \leqslant h \leqslant [\kk_\nu:\kk]}\#(1/q\nu^{h,c}(R)\cap H).$
\end{enumerate}
\end{corollary}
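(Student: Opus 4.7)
The statement is the exact analogue of the immediately preceding corollary, just with the indexing set $\NN$ replaced by powers of $p$; all the hard work has been packaged into \autoref{length_formula_compl} and the $p$-power version of \autoref{val_bbl_prop} stated right above this corollary. So my plan is to reduce to those two results essentially verbatim.

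First I would apply the $p$-power version of \autoref{val_bbl_prop} to obtain some $\fu \in \ZZ^d$ such that $R \cap \FF_{\geqslant q\fu} \subset I_q$ for every $q=p^e$, which is what allows us to use the truncating half space $H=H_{<\fu}$ in a scale-invariant way. Next I would observe the key scaling identity $qH_{<\fu}=H_{<q\fu}$, so that the half space $H_{<q\fu}$ appearing in the hypothesis of \autoref{length_formula_compl} equals $qH$. Applying \autoref{length_formula_compl} with the ideal $I=I_q$ and the vector $q\fu$ in place of $\fu$ then immediately gives
\[
\ell(R/I_q)=\sum_{1 \leqslant h \leqslant [\kk_\nu:\kk]}\#(\nu^{h,c}(I_q)\cap qH)-\sum_{1 \leqslant h \leqslant [\kk_\nu:\kk]}\#(\nu^{h,c}(R)\cap qH),
\]
which is part (1).

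For part (2), I would simply note that scalar multiplication by $1/q$ is a bijection of $\RR^d$, so it preserves cardinalities of subsets. In particular,
\[
\#(\nu^{h,c}(I_q)\cap qH)=\#\bigl((1/q)(\nu^{h,c}(I_q)\cap qH)\bigr)=\#\bigl((1/q)\nu^{h,c}(I_q)\cap H\bigr),
\]
and similarly for $\nu^{h,c}(R)$; substituting into part (1) yields part (2).

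There is no real obstacle here: the proof of the $\NN$-indexed corollary written immediately above was one line, and the only change required is to substitute $q=p^e$ for $n$ throughout and invoke the $p$-power version of the bounding lemma instead of the $\NN$-version. I would just make sure to cite the correct lemma (the $p$-power BBL bound) and the scaling identity $qH_{<\fu}=H_{<q\fu}$ explicitly, so the reader is not left to hunt for them.
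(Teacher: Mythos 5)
Your proposal is correct and matches the paper's intended argument exactly: the paper proves the $\NN$-indexed version of this corollary by combining \autoref{length_formula_compl} with \autoref{val_bbl_prop}, the identity $nH_{<\fu}=H_{<n\fu}$, and the observation that rescaling is a bijection, and for the $p$-indexed versions it simply remarks that the results ``follow similarly as before.'' Your substitution of $q=p^e$ for $n$ and invocation of the $p$-power analogue of the BBL bounding lemma is precisely that reduction.
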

\begin{corollary}
Let $I_\bu$ be a BBL family indexed by powers of $p$. Then there exist a truncating half space $H=H_{<\fu}$ and constant $C>0$ such that for any $q$, 
$$1/q\nu^c(I_q) \subset \Cone(S)\cap H \subset B(0,C).$$   
\end{corollary}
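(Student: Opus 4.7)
The plan is to derive this corollary directly from the preceding lemma and \autoref{trunc_bound}, mirroring verbatim the argument of \autoref{value_group_uniformly_bounded} in the $\NN$-indexed case; no new analytic input is required.

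First, the preceding lemma supplies a single $\fu \in \ZZ^d$, chosen once and for all independently of $q$, such that $R \cap \FF_{\geqslant q\fu} \subset I_q$, and consequently $\nu^c(I_q) \subset \Cone(S) \cap qH$ for every $q = p^e$, where $H = H_{<\fu}$. Since $\Cone(S)$ is a cone (closed under multiplication by positive scalars) and $qH_{<\fu} = H_{<q\fu}$, rescaling by $1/q$ gives
\[
\tfrac{1}{q}\nu^c(I_q) \;\subset\; \tfrac{1}{q}\Cone(S) \cap \tfrac{1}{q}(qH) \;=\; \Cone(S) \cap H,
\]
establishing the first containment uniformly in $q$.

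For the second containment, recall that $\Cone(S)$ is pointed in the direction $\fa$ by the OK hypothesis on $\nu$. Applying \autoref{trunc_bound} to the closed pointed cone $\overline{\Cone(S)}$ with truncating parameter $\alpha = \langle \fu,\fa\rangle > 0$ yields a constant $c_2 > 0$, depending only on $\Cone(S)$ (and on the fixed $\fu$), such that
\[
\Cone(S) \cap H_{<\fu} \;\subset\; B(\mathbf 0, c_2 \alpha).
\]
Setting $C = c_2 \alpha$ completes the proof. The only point that merits attention is the uniformity of $C$ in $q$, which is automatic because $\fu$ and $\Cone(S)$ are fixed once the OK valuation and the BBL constant for $I_\bu$ have been chosen; thus no step of the argument is an obstacle.
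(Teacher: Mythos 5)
Your proposal is correct and follows the paper's own proof exactly: the paper proves the $\NN$-indexed version (\autoref{value_group_uniformly_bounded}) by citing \autoref{val_bbl_prop} for the first containment and \autoref{trunc_bound} for the second, and then states the $q$-indexed version as following identically; you have simply spelled out that two-step argument in full, with the rescaling $\tfrac{1}{q}\Cone(S)=\Cone(S)$ and $\tfrac{1}{q}H_{<q\fu}=H_{<\fu}$ made explicit. (One cosmetic point: $\Cone(S)$ is closed by definition, so the bar in $\overline{\Cone(S)}$ is redundant, but harmless.)
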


\section{Proof of the convergence for weakly graded families} \label{section_6}
Throughout this section, we follow the setup below.
\begin{setup} \label{general_setup}
we assume $(R,\fm,\kk)$ is an OK domain of any characteristic of dimension $d$. The OK valuation is denoted by $\nu:\FF^\times \to \ZZ^d$, where $\FF$ is the field of fractions of $R$, and the order on $\ZZ^d$ is induced by $\fa \in \RR^d$. Let $S=\nu(R)$, $C=\Cone(S)$ which is pointed at direction $\fa$ and upward to direction $\fb$.    
\end{setup}

In this context, we will focus on families of 
$R$-ideals indexed by 
$\NN$. For a BBL family $I_\bu$ indexed by $\NN$, we fix a truncating half space $H=H_{<\fu}$ such that $R \cap \FF_{\geqslant n\fu} \subset I_n$.
Additionally, we will examine the convergence of the functions $F_{1/n\nu^{h,c}(I_n)\cap H,1/n}(\fx)$ mentioned in \autoref{des_of_L}.

However, the convergence may depend on $h$ and the choice of $H$.  Therefore, we will first examine the convergence of the function
$F_{1/n\nu^c(I_n),1/n}(\fx)$ 
and compare their converging property at all points in $\RR^d$.

The aim of this section is to prove existence of limit for BBL weakly graded families in an OK domain.
\begin{proposition} \label{type_0_F_conv}
Let $I_\bu$ be a BBL family. Suppose $H$ is a truncating half space as in \autoref{value_group_uniformly_bounded} such that $1/n\nu^c(I_n) \subset C \cap H$ for all $n$. 
\begin{enumerate}
\item If $\fx \notin C \cap \overline{H}$, then for sufficiently large $n$, $F_{1/n\nu^c(I_n),1/n}(\fx)=0$.
\item If $\fx \notin \overline{B(C \cap \overline{H},d)}$, then for any $n \geqslant 1$, $F_{1/n\nu^c(I_n),1/n}(\fx)=0$. 
\item $F_{1/n\nu^c(I_n),1/n}(\fx)$ is uniformly bounded and supported on a bounded set.
\end{enumerate}
\end{proposition}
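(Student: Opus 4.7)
The plan is to use the explicit description of $F_{L,1/n}$ from \autoref{prop_of_[]_and_nabla}, namely that $F_{1/n\nu^c(I_n),1/n}(\fx)=1$ if and only if $[\fx]_n \in 1/n\nu^c(I_n)$, combined with the key uniform containment $1/n\nu^c(I_n) \subset C \cap H$ coming from \autoref{value_group_uniformly_bounded}, and the fact that $\|\fx-[\fx]_n\| \leqslant d/n$ from \autoref{prop_of_[]_and_nabla}(1). The entire proof is then a simple distance estimate, and I do not expect any step to be an obstacle.

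For part (2), I would argue by contrapositive: if $F_{1/n\nu^c(I_n),1/n}(\fx)=1$ for some $n \geqslant 1$, then $[\fx]_n \in 1/n\nu^c(I_n) \subset C \cap H \subset C \cap \overline{H}$, so
\[
\dist(\fx, C \cap \overline{H}) \leqslant \|\fx-[\fx]_n\| \leqslant d/n \leqslant d,
\]
which forces $\fx \in \overline{B(C \cap \overline{H},d)}$. For part (1), observe that $C=\Cone(S)$ is closed by definition and $\overline{H}=H_{\leqslant \fu}$ is closed, so $C \cap \overline{H}$ is closed. If $\fx \notin C \cap \overline{H}$, then $\delta := \dist(\fx,C \cap \overline{H})>0$, and for any $n$ with $d/n<\delta$ the inequality $\|\fx-[\fx]_n\|\leqslant d/n<\delta$ prevents $[\fx]_n$ from lying in $C \cap \overline{H}$, hence in $1/n\nu^c(I_n)$; therefore $F_{1/n\nu^c(I_n),1/n}(\fx)=0$ for all sufficiently large $n$.

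For part (3), uniform boundedness is automatic because characteristic functions take only the values $0$ and $1$. The support statement follows from part (2): each $F_{1/n\nu^c(I_n),1/n}$ vanishes outside $\overline{B(C \cap \overline{H},d)}$, so it suffices to check this set is bounded. But $C$ is pointed at direction $\fa$ and $H=H_{\fa,<\fu}$ is a truncating half space, so $C \cap \overline{H}$ is bounded by \autoref{trunc_bound} (or directly by \autoref{value_group_uniformly_bounded}), and enlarging by a closed ball of radius $d$ preserves boundedness.
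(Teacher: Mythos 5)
Your proof is correct and follows essentially the same argument as the paper: both reduce to the distance estimate $\|\fx-[\fx]_n\|\leqslant d/n$ combined with the containment $[\fx]_n\in 1/n\nu^c(I_n)\subset C\cap\overline{H}$, with closedness/boundedness of $C\cap\overline{H}$ supplying the rest. The only cosmetic difference is that you handle part (2) first via contrapositive and then deduce (1), whereas the paper treats (1) and (2) in one breath by phrasing the distance bound in terms of $d/\dist(\fx,C\cap\overline{H})$; the content is identical.
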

\begin{proof}
By \autoref{trunc_bound}, $C \cap \overline{H}$ is a closed bounded set. Assume for some $n$, $F_{1/n\nu^c(I_n),1/n}(\fx) \neq 0$. This is equivalent to $[\fx]_n \in 1/n\nu^c(I_n)$, therefore $[\fx]_n \in C\cap \overline{H}$. But $\|\fx-[\fx]_n\| \leqslant d/n$, so $\dist(\fx,C\cap \overline{H})\leqslant d/n$. Therefore, if $\fx \notin C\cap \overline{H}$, we have $\dist(\fx,C\cap \overline{H})>0$ since $C\cap \overline{H}$ is a closed set, and for $n>d/\dist(\fx,C\cap \overline{H})$, we have $F_{1/n\nu^c(I_n),1/n}(\fx)=0$. If $\fx \notin \overline{B(C \cap \overline{H},d)}$, then $d/\dist(\fx,C\cap \overline{H})<1$, so $n>d/\dist(\fx,C\cap \overline{H})$ is true for all positive integer $n$. The set $\overline{B(C \cap \overline{H},d)}$ is a bounded set which contains the supports of all functions $F_{1/n\nu^c(I_n),1/n}(\fx)$, and since these functions are characteristic functions, their values are bounded by $1$.
\end{proof}
\begin{proposition} \label{type_0_F_conv_with_H}
Let $I_\bu$ be a BBL family. Let $H=H_{<\fu}$ be any truncating half space.
\begin{enumerate}
\item If $\fx \notin C \cap \overline{H}$, then for sufficiently large $n$, $F_{1/n\nu^{h,c}(I_n)\cap H,1/n}(\fx)=0$.
\item If $\fx \notin \overline{B(C\cap \overline{H},d)}$, then for any $n \geqslant 1$, $F_{1/n\nu^{h,c}(I_n)\cap H,1/n}(\fx)=0$. 
\item $F_{1/n\nu^{h,c}(I_n)\cap H,1/n}(\fx)$ is uniformly bounded and supported on a bounded set.
\end{enumerate}
\end{proposition}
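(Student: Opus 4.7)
The plan is to adapt the proof of \autoref{type_0_F_conv} almost verbatim, with the key conceptual simplification that we no longer need the BBL-derived containment tailored to one specific half space. The essential observation I would start with is that by the very definition of the complement $\nu^{h,c}$, we have $\nu^{h,c}(I_n) \subset \nu(R) \subset C$, so after scaling by $1/n$ the inclusion $1/n\,\nu^{h,c}(I_n) \subset C$ holds for every $n$ (since $C$ is a cone). Intersecting with the arbitrary truncating half space $H$ then yields
\[
L_n := \tfrac{1}{n}\,\nu^{h,c}(I_n) \cap H \subset C \cap H \subset C \cap \overline{H},
\]
and this last set is closed (since $C$ is a closed cone, by the convention that $\Cone(\cdot)$ denotes the minimal closed convex cone, and $\overline{H}$ is closed) and bounded by \autoref{trunc_bound} applied to $H$ and passed through the closure.

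Once this containment is in place, the three claims follow from the same distance estimate used in \autoref{type_0_F_conv}: whenever $F_{L_n,1/n}(\fx) \neq 0$, \autoref{prop_of_[]_and_nabla} forces $[\fx]_n \in L_n$ and $\|\fx - [\fx]_n\| \leqslant d/n$, giving $\dist(\fx, C \cap \overline{H}) \leqslant d/n$. For (1) I would use that $C \cap \overline{H}$ is closed, so any $\fx$ outside it has $\dist(\fx, C \cap \overline{H}) > 0$, and any $n > d/\dist(\fx, C \cap \overline{H})$ forces $F_{L_n,1/n}(\fx) = 0$. For (2) the hypothesis $\fx \notin \overline{B(C \cap \overline{H}, d)}$ gives $\dist(\fx, C \cap \overline{H}) > d \geqslant d/n$ for every $n \geqslant 1$, ruling out any nonzero value from the start. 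For (3), each $F_{L_n,1/n}$ is the characteristic function of a union of unit cubes (hence uniformly bounded by $1$), and by (2) all of their supports lie in the fixed bounded set $\overline{B(C \cap \overline{H}, d)}$.

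I do not anticipate any real obstacle: the argument is a direct translation of the one in \autoref{type_0_F_conv}, with the trivial inclusion $\nu^{h,c}(I_n) \subset \nu(R)$ playing the role of the BBL-derived inclusion $1/n\,\nu^c(I_n) \subset C \cap H$ used previously. The only bookkeeping point worth double-checking is the closedness of $C$, which guarantees that $C \cap \overline{H}$ is closed and hence that points outside it have strictly positive distance to it; this is exactly what makes the ``for sufficiently large $n$'' quantifier in (1) valid.
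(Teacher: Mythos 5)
Your proof is correct and takes essentially the same route as the paper, which simply refers back to the argument of \autoref{type_0_F_conv}; you have correctly identified that the only adjustment needed is to replace the BBL-derived containment $1/n\,\nu^c(I_n)\subset C\cap H$ by the trivial containment $1/n\,\nu^{h,c}(I_n)\cap H\subset C\cap H$, which holds because $\nu^{h,c}(I_n)\subset\nu(R)\subset C$ and the intersection with $H$ is built into the statement.
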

\begin{proof}
Following the same argument as in \autoref{type_0_F_conv}.
\end{proof}
\subsection{Convergence of $F_{1/n\nu^c(I_n),1/n}(\fx)$ for BBL families} Let $I_\bu$ be a BBL family. Choose a truncating half space $H$ such that $1/n\nu^c(I_n) \subset C \cap H$ for all $n$. The convergence on points outside $C \cap \overline{H}$ is straightforward, and since $\partial(C \cap \overline{H}) \subset \partial C \cup \partial H$ has zero measure, we can focus on the convergence of $F_{1/n\nu^c(I_n),1/n}(\fx)$ within $C^\circ$.

For each $\fx \in C^\circ$, the functions
$F_{1/n\nu^c(I_n),1/n}(\fx)$ have values equal to either $0$ or $1$. This gives rise to three distinct cases:
\begin{enumerate}
\item $F_{1/n\nu^c(I_n),1/n}(\fx)=1$ for all large $n$.
\item $F_{1/n\nu^c(I_n),1/n}(\fx)=0$ for all large $n$.
\item $F_{1/n\nu^c(I_n),1/n}(\fx)=1$ for $n$ lying in a sequence $n_k, k \in \mathbb{N}$ and $F_{1/n\nu^c(I_n),1/n}(\fx)=0$ for $n$ lying in another sequence $n_m, m \in \mathbb{N}$. That is, the value of $F_{1/n\nu^c(I_n),1/n}(\fx)$ oscillate between $0$ and $1$.
\end{enumerate}
Note that the limit $\lim_{n \to \infty}F_{1/n\nu^c(I_n),1/n}(\fx)$ exists if and only if $\fx$ lies in case (1) or (2).

We define some sets associated to the family $I_\bullet$:
$$\nabla^{up}=\{\fx \in C^\circ: F_{1/n\nu^c(I_n),1/n}(\fx)=1 \textup{ for infinitely many }n\}$$
$$\nabla^{low}=\{\fx \in C^\circ: F_{1/n\nu^c(I_n),1/n}(\fx)=1 \textup{ for almost all }n\}$$
$$\Delta^{up}=C^\circ\backslash\nabla^{up}$$
$$\Delta^{low}=C^\circ\backslash\nabla^{low}.$$
Also by the argument above, $\nabla^{low} \subset \nabla^{up} \subset C^\circ \cap\overline{H}$, and $C^\circ\ba\overline{H} \subset \Delta^{up} \subset \Delta^{low}$. Actually, for $\fx \in C^{\circ}\ba\overline{B(C\cap \overline{H},d)}$, $F_{1/n\nu^c(I_n),1/n}(\fx)=0$ for all $n$.
\begin{proposition}\label{second_definition_of_delta_and_nabla}
For a given BBL family $I_\bu$, we have:
\begin{enumerate}
\item $\nabla^{up}=\{\fx \in C^\circ: [\fx]_n \in 1/n\nu^c(I_n) \textup{ for infinitely many }n\}$;
\item $\nabla^{low}=\{\fx \in C^\circ: [\fx]_n \in 1/n\nu^c(I_n) \textup{ for }n \gg 0\}$;
\item $\Delta^{up}=\{\fx \in C^\circ: [\fx]_n \notin 1/n\nu^c(I_n) \textup{ for }n \gg 0\}$;
\item $\Delta^{low}=\{\fx \in C^\circ: [\fx]_n \notin 1/n\nu^c(I_n) \textup{ for infinitely many }n\}$;
\item $\Delta^{up}=\{\fx \in C^\circ: [\fx]_n \in 1/n\nu(I_n) \textup{ for }n \gg 0\}$;
\item $\Delta^{low}=\{\fx \in C^\circ: [\fx]_n \in 1/n\nu(I_n) \textup{ for infinitely many }n\}$.
\end{enumerate}
\end{proposition}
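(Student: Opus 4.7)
My plan is to treat this as a translation between the language of the characteristic functions $F_{L,1/n}$ and the language of lattice points $[\fx]_n$, which is essentially bookkeeping, plus one approximation-theoretic input needed for parts (5) and (6).

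First I would dispatch (1) and (2) by unwinding the definitions. By the description in \autoref{des_of_L} we have $F_{L,1/n}=\chi(\nabla_{L,1/n})$, and \autoref{prop_of_[]_and_nabla}(3) says $\fx \in \nabla_{L,1/n}$ iff $[\fx]_n \in L$. Applied to $L=1/n\nu^c(I_n) \subset (1/n)\ZZ^d$, this gives the tautology $F_{1/n\nu^c(I_n),1/n}(\fx)=1$ iff $[\fx]_n \in 1/n\nu^c(I_n)$. Substituting this into the defining conditions of $\nabla^{up}$ and $\nabla^{low}$ immediately yields (1) and (2).

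Next I would obtain (3) and (4) by passing to complements inside $C^\circ$. Since $F_{1/n\nu^c(I_n),1/n}(\fx) \in \{0,1\}$, the negation of ``$F=1$ for infinitely many $n$'' is ``$F=0$ for all but finitely many $n$'', and the negation of ``$F=1$ for all $n \gg 0$'' is ``$F=0$ for infinitely many $n$''. Combining this with the equivalence from the previous step gives (3) and (4).

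The substance of (5) and (6) is that the condition $[\fx]_n \notin 1/n\nu^c(I_n)$ can be upgraded to $[\fx]_n \in 1/n\nu(I_n)$. Since $\nu^c(I_n)=\nu(R)\setminus\nu(I_n)$, these two conditions are equivalent exactly when $n[\fx]_n$ lies in $S=\nu(R)$. The key (and essentially only non-formal) step is therefore to show that for every fixed $\fx \in C^\circ$ one has $n[\fx]_n \in S$ for all $n \gg 0$. For this I would select a full-dimensional closed subcone $C' \subset C^\bu$ with $\fx \in (C')^\circ$ and apply \autoref{K_K_appl2}(1) to the constant sequence $\fy_k=\fx$ with $m_k=k$, which gives $[\fx]_k \in 1/kS$ for $k \geqslant N$. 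Given this upgrade, (5) follows from (3) and (6) follows from (4); for (6) the two conditions are only equivalent for $n \gg 0$, but modifying a quantifier of the form ``for infinitely many $n$'' on a finite exceptional set changes nothing. I do not foresee any real obstacle here since \autoref{K_K_appl2} is already at our disposal; the argument is purely a reorganization of the definitions.
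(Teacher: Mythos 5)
Your proof is correct and follows the same route as the paper: parts (1)--(4) are definitional bookkeeping via \autoref{prop_of_[]_and_nabla}, and parts (5)--(6) reduce to the observation that $[\fx]_n \in 1/nS$ for $n \gg 0$ (via \autoref{K_K_appl2} applied to the constant sequence), after which $[\fx]_n \notin 1/n\nu^c(I_n)$ is equivalent to $[\fx]_n \in 1/n\nu(I_n)$. The paper's proof is simply a terser rendering of exactly this argument.
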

\begin{proof}
(1)-(4) is true by definition and \autoref{prop_of_[]_and_nabla}, so we only need to prove (5) and (6); if $\fx \in C^\circ$, then $\fx \neq 0$, and for large enough $n$, $[\fx]_n \in 1/nS$ by \autoref{K_K_appl2}, so $[\fx]_n \notin 1/n\nu^c(I_n)$ if and only if $[\fx]_n \in 1/n\nu(I_n)$, so (5) and (6) are true.
\end{proof}
\begin{lemma} \label{ideal_prop_of_delta_up_and_low}
Using the notations above, then for any family $I_\bullet$ which is not necessarily weakly graded or BBL, the corresponding $\Delta^{up}$ and $\Delta^{low}$ are closed under adding a nonzero element in $C^\circ$. In other words, they are $C^\circ$-ideals.    
\end{lemma}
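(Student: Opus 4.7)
The plan is to exploit parts (5) and (6) of \autoref{second_definition_of_delta_and_nabla}, which characterize $\Delta^{up}$ (resp.\ $\Delta^{low}$) as the set of $\fx \in C^\circ$ with $[\fx]_n \in (1/n)\nu(I_n)$ for all sufficiently large $n$ (resp.\ for infinitely many $n$). Fix such an $\fx$ and a nonzero $\fy \in C^\circ$; the goal is to show $\fx + \fy$ lies in the same set. Note $\fx + \fy \in C^\circ$ since $C^\circ$ is closed under addition. The key algebraic input is that $\nu(I_n)$ is an $S$-ideal in $S = \nu(R)$: if $r \in R^\times$ and $a \in I_n \setminus \{0\}$, then $ra \in I_n \setminus \{0\}$ (using that $R$ is a domain), so $\nu(r) + \nu(a) = \nu(ra) \in \nu(I_n)$.

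The central identity is
$$n[\fx+\fy]_n = n[\fx]_n + n\fz_n, \qquad \fz_n := [\fx+\fy]_n - [\fx]_n \in (1/n)\ZZ^d.$$
If I can show $n\fz_n \in S$ for all $n \gg 0$, then the $S$-ideal property of $\nu(I_n)$ immediately yields: whenever $n[\fx]_n \in \nu(I_n)$, we also have $n[\fx+\fy]_n \in \nu(I_n) + S \subset \nu(I_n)$. This simultaneously handles both $\Delta^{up}$ (with "$n \gg 0$") and $\Delta^{low}$ (with "infinitely many $n$"), because the condition $n\fz_n \in S$ will hold for all large $n$ regardless of which set $\fx$ lies in.

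To produce the needed $n\fz_n \in S$, I would first observe that $\fz_n \to \fy$ as $n \to \infty$, since $[\fw]_n \to \fw$ for every $\fw$ by \autoref{prop_of_[]_and_nabla}(1). Choose a full-dimensional closed subcone $C' \subset C^\bu$ with $\fy \in (C')^\circ$; such $C'$ exists by \autoref{union_interior_closed_upward_cone}. Apply \autoref{K_K_appl2}(1) to the sequence $\fz_n \to \fy$ and $m_n = n$ to conclude $[\fz_n]_n \in (1/n)S$ for all $n \gg 0$. Since $\fz_n$ already lies in $(1/n)\ZZ^d$, we have $[\fz_n]_n = \fz_n$, so indeed $n\fz_n \in S$ for $n \gg 0$, as required.

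The only subtle step is the final application of \autoref{K_K_appl2}(1): the sequence $\fz_n$ depends on $n$, but its definition forces $\fz_n \to \fy \neq \f0$ uniformly in any other parameter, so the approximation lemma applies cleanly. Beyond this, the argument uses neither the weakly graded nor the BBL hypothesis, consistent with the statement holding for an arbitrary family. I expect no further obstacle.
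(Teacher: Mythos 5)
Your proof is correct and follows essentially the same route as the paper's: characterize $\Delta^{up}$ and $\Delta^{low}$ via \autoref{second_definition_of_delta_and_nabla}, note that $[\fx+\fy]_n - [\fx]_n \to \fy$, pick a closed subcone $C'$ with $\fy \in C'^\circ$ via \autoref{union_interior_closed_upward_cone}, and apply \autoref{K_K_appl2} to land the difference in $(1/n)S$, after which the $S$-ideal property of $\nu(I_n)$ finishes the argument.
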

\begin{proof}
We first prove for $\Delta^{up}$. We take $\fx \in \Delta^{up}$ and $0 \neq \fx_0 \in C^\circ$. Using  \autoref{union_interior_closed_upward_cone} we have $C^\bu$ is the union of the interior of all upward closed cones $C'$ in $C^\bu$, we may take one such $C'$ such that $\fx,\fx_0 \in C'^\circ$.

By \autoref{second_definition_of_delta_and_nabla}, $[\fx]_n \in 1/n\nu(I_n)$ for $n \gg 0$. When $n \to \infty$, $[\fx+\fx_0]_n-[\fx]_n \to \fx_0$. Apply \autoref{K_K_appl2}, we see for large enough $n$, $[\fx+\fx_0]_n-[\fx]_n \in 1/nS \cap C'^\circ$. Since $1/n\nu(I_n)$ is an $1/nS$-ideal of $1/nS$, $[\fx+\fx_0]_n \in 1/n\nu(I_n)$. This is true for all large enough $n$, so $\fx+\fx_0 \in \Delta^{up}$.

The proof for $\Delta^{low}$ is similar; we just replace the condition $[\fx]_n \in 1/n\nu(I_n)$ for $n \gg 0$ by $[\fx]_n \in 1/n\nu(I_n)$ for infinitely many $n$.
\end{proof}

\subsection{Convergence of $F_{1/n\nu^{h,c}(I_n),1/n}(\fx)$ for BBL families.}

\phantom{}\\
Now we come back to the function
$$F_{1/n\nu^{h,c}(I_n)\cap H,1/n}(\fx).$$
We assume the family $I_\bu$ is BBL, and fix a truncating half space $H=H_{<\fu}$ such that $R \cap \FF_{\geqslant n\fu} \subset I_n$. Therefore by  \autoref{val_bbl_prop}, $H$ is a truncating half space such that $1/n\nu^c(I_n) \subset C \cap H$ for all $n$. 
\begin{proposition}\label{nabla_up_low_containment_relations}
Under the notations above,
\begin{enumerate}
\item $\nabla^{low} \subset \nabla^{up} \subset C^\circ\cap \overline{H}$;
\item $C^\circ\ba \overline{H} \subset \Delta^{up} \subset \Delta^{low}$;
\item $\nabla^{low\circ} \subset \nabla^{up\circ} \subset C^\circ \cap H^\circ$;
\item $C^\circ\cap H^\circ \subset \nabla^{low\circ} \cup (\partial\nabla^{low}\cap C^\circ) \cup (\Delta^{low\circ}\cap\nabla^{up\circ}) \cup (\partial \nabla^{up}\cap C^\circ) \cup (\Delta^{up\circ} \cap H^\circ)$;
\item $\partial\nabla^{low}\cap C^\circ =\partial \Delta^{low}\cap C^\circ$, $\partial\nabla^{up}\cap C^\circ= \partial \Delta^{up}\cap C^\circ$.
\end{enumerate}   
\end{proposition}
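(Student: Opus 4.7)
The five assertions are essentially topological; the only input from the family $I_\bu$ is the containment $1/n\nu^c(I_n)\subset C\cap H$ together with the definitions of $\nabla^{up},\nabla^{low},\Delta^{up},\Delta^{low}$ as subsets of $C^\circ$. My plan is to prove (1) directly, derive (2) by complementation in $C^\circ$, derive (3) by taking interiors in (1), then prove (5), and finally use (5) in a case analysis to prove (4).

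For (1): the inclusion $\nabla^{low}\subset\nabla^{up}$ is tautological since ``for $n\gg 0$'' implies ``for infinitely many $n$''. For $\nabla^{up}\subset C^\circ\cap\overline{H}$, take $\fx\in\nabla^{up}$; by definition $\fx\in C^\circ$, and there exist infinitely many $n$ with $[\fx]_n\in 1/n\nu^c(I_n)\subset C\cap H\subset\overline{H}$. Since $\|\fx-[\fx]_n\|\leqslant d/n\to 0$ and $\overline{H}$ is closed, $\fx\in\overline{H}$. Then (2) is just $C^\circ\ba(\nabla^{up})\subset C^\circ\ba(\nabla^{low})$ together with $C^\circ\ba\overline{H}\subset C^\circ\ba\nabla^{up}$, both obtained by taking complements inside $C^\circ$ in (1). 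For (3), applying the interior operator to the inclusion $\nabla^{low}\subset\nabla^{up}\subset C^\circ\cap\overline{H}$ yields $\nabla^{low\circ}\subset\nabla^{up\circ}\subset(C^\circ\cap\overline{H})^\circ$; since $C^\circ$ is open and $\overline{H}^\circ=H=H^\circ$ for the open half space $H=H_{<\fu}$, the right-hand side equals $C^\circ\cap H^\circ$.

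For (5): let $A$ stand for either $\nabla^{low}$ or $\nabla^{up}$, so that $A\subset C^\circ$ and $C^\circ\ba A$ is the corresponding $\Delta$-set. Pick $\fx\in C^\circ$ and choose an open ball $B\ni\fx$ with $B\subset C^\circ$ (possible since $C^\circ$ is open). Every neighborhood $V$ of $\fx$ contained in $B$ satisfies $V\cap(\RR^d\ba A)=V\cap(C^\circ\ba A)$, so $V$ meets $\RR^d\ba A$ iff it meets $C^\circ\ba A$; likewise $V$ meets $A$ iff it meets $C^\circ\cap A=A$. Hence $\fx\in\partial A$ iff $\fx\in\partial(C^\circ\ba A)$, giving $\partial A\cap C^\circ=\partial(C^\circ\ba A)\cap C^\circ$, which is exactly (5) for $A=\nabla^{low}$ and $A=\nabla^{up}$.

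For (4): fix $\fx\in C^\circ\cap H^\circ$. Since $C^\circ=\nabla^{low}\sqcup\Delta^{low}=\nabla^{up}\sqcup\Delta^{up}$, I split into cases. If $\fx\in\nabla^{low}$, either $\fx\in\nabla^{low\circ}$ or $\fx\in\nabla^{low}\ba\nabla^{low\circ}\subset\partial\nabla^{low}\cap C^\circ$, and both are in the target union. If $\fx\in\Delta^{low}$ but $\fx\in\partial\Delta^{low}$, then by (5) $\fx\in\partial\nabla^{low}\cap C^\circ$. Otherwise $\fx\in\Delta^{low\circ}$, and I subdivide further: if $\fx\in\nabla^{up}$ then either $\fx\in\nabla^{up\circ}$ (giving $\Delta^{low\circ}\cap\nabla^{up\circ}$) or $\fx\in\partial\nabla^{up}\cap C^\circ$; if instead $\fx\in\Delta^{up}$ then either $\fx\in\Delta^{up\circ}$ (giving $\Delta^{up\circ}\cap H^\circ$, since $\fx\in H^\circ$ by assumption) or $\fx\in\partial\Delta^{up}\cap C^\circ$, which by (5) coincides with $\partial\nabla^{up}\cap C^\circ$. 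Every case lands in one of the five advertised pieces, completing (4).

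The only non-trivial step is (5), since (4) genuinely relies on being able to replace boundaries of the $\Delta$-sets with boundaries of the $\nabla$-sets; once (5) is established the rest is bookkeeping, so I expect no serious obstacle.
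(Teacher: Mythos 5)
Your proof is correct. Items (1)--(3) proceed exactly as in the paper: $\nabla^{low}\subset\nabla^{up}$ is tautological, $\nabla^{up}\subset\overline{H}$ follows from $[\fx]_n\to\fx$ with $[\fx]_n\in C\cap H$, and (2), (3) are obtained by complementation and interiors. For (4) and (5) you reverse the paper's order and use a different decomposition: you first establish (5) by the clean topological observation that for any $A\subset C^\circ$ and $\fx\in C^\circ$, taking a neighborhood inside $C^\circ$ shows $\fx\in\partial A$ iff $\fx\in\partial(C^\circ\setminus A)$, and you then obtain (4) by explicit case analysis on the disjoint decompositions $C^\circ=\nabla^{low}\sqcup\Delta^{low}=\nabla^{up}\sqcup\Delta^{up}$, using (5) to trade $\partial\Delta$-boundaries for $\partial\nabla$-boundaries. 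The paper instead proves (4) directly by writing down the filtration $\emptyset\subset\nabla^{low\circ}\subset\overline{\nabla^{low}}\cap\nabla^{up\circ}\subset\nabla^{up\circ}\subset\overline{\nabla^{up}}\cap C^\circ\cap H^\circ\subset C^\circ\cap H^\circ$ and observing that the five successive differences lie in the five advertised sets, then dispatches (5) as an immediate remark. Your route is a bit longer in prose but isolates (5) as a reusable lemma and makes the covering in (4) transparent; the paper's filtration is more compact but leaves the reader to verify each difference. Either way the content is the same, and your argument has no gaps.
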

\begin{proof}
By  \autoref{type_0_F_conv}, we see if $x \notin C\cap \overline{H}$, $\lim_{n \to \infty}F_{1/n\nu^c(I_n),1/n}(\fx)=0$. This leads to (1). Taking complement in $C^\circ$, we get (2). Note that $(C^\circ \cap \overline{H})^\circ=C^\circ \cap H^\circ$, so taking interior for (1) we get (3). We filtrate $C^\circ \cap H^\circ$ by the following sequence of subsets
$$\emptyset \subset \nabla^{low\circ} \subset \overline{\nabla^{low}}\cap \nabla^{up\circ} \subset \nabla^{up\circ} \subset \overline{\nabla^{up}}\cap C^\circ \cap H^\circ \subset C^\circ \cap H^\circ. $$
Then the five difference of these six sets lies in
$\nabla^{low\circ}, (\partial\nabla^{low}\cap C^\circ), (\Delta^{low\circ}\cap\nabla^{up\circ}), (\partial \nabla^{up}\cap C^\circ),(\Delta^{up\circ} \cap H^\circ)$
respectively, so (4) is true. (5) is true since $\Delta^{up}=C^\circ\ba \nabla^{up}$ and $\Delta^{low}=C^\circ\ba \nabla^{low}$.
\end{proof}
\begin{lemma}
Let $I_\bullet$ be a BBL family. Let $H$ be a truncating half space. Then for any $\fx \in \nabla^{low\circ}$, we have
$$\lim_{n \to \infty}F_{1/n\nu^{h,c}(I_n)\cap H,1/n}(\fx)=1.$$
\end{lemma}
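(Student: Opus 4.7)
The plan is to verify the two membership conditions defining $F_{1/n\nu^{h,c}(I_n)\cap H, 1/n}(\fx) = 1$ separately. By \autoref{prop_of_[]_and_nabla}(3), this value equals $1$ precisely when $[\fx]_n \in 1/n\nu^{h,c}(I_n) \cap H$, so I would establish both $[\fx]_n \in 1/n\nu^{h,c}(I_n)$ and $[\fx]_n \in H$ for all sufficiently large $n$, and then the limit statement follows immediately.

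For the first condition, the hypothesis $\fx \in \nabla^{low\circ} \subset \nabla^{low}$ together with \autoref{second_definition_of_delta_and_nabla}(2) gives $[\fx]_n \in 1/n\nu^c(I_n)$ for all $n \gg 0$. The monotonicity of $\nu^{(h)}$ in $h$ gives the containment $\nu^{(h)}(I_n) \subset \nu^{(1)}(I_n) = \nu(I_n)$, so taking complements inside $\nu(R)$ produces $\nu^c(I_n) \subset \nu^{h,c}(I_n)$. Consequently $1/n\nu^c(I_n) \subset 1/n\nu^{h,c}(I_n)$ and the first membership is automatic. For the second condition, \autoref{nabla_up_low_containment_relations}(3) gives $\nabla^{low\circ} \subset C^\circ \cap H^\circ = C^\circ \cap H$ (the half-space $H$ is already open), so $\fx$ itself lies in the open set $H$. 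Since $\|\fx - [\fx]_n\| \leqslant d/n \to 0$, the lattice rounding $[\fx]_n$ is eventually inside $H$ as well.

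The one implicit point requiring care is that \autoref{nabla_up_low_containment_relations} was set up with the truncating half-space $H = H_{<\fu}$ chosen so as to be BBL-compatible, i.e.\ satisfying $R \cap \FF_{\geqslant n\fu} \subset I_n$ (equivalently $1/n\nu^c(I_n) \subset C \cap H$ for every $n$, by \autoref{val_bbl_prop}). Under this standing convention the inclusion $\nabla^{low\circ} \subset H$ is exactly what traps $\fx$ inside the open half-space, and without it the conclusion could fail at the level of $H$-membership. I do not expect any serious obstacle: the substantive input is the monotonicity of $\nu^{(h)}$ together with the previously established containment chain among $\nabla^{low}$, $\nabla^{up}$, and $C^\circ \cap \overline{H}$, after which the argument is a routine open-set continuity check.
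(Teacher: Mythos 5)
Your proof is correct and matches the paper's argument essentially line for line: both use $\nu^{(h)}(I_n) \subset \nu(I_n)$ to deduce $\nu^c(I_n) \subset \nu^{h,c}(I_n)$, and both use $\nabla^{low\circ} \subset H^\circ$ together with $\|\fx - [\fx]_n\| \to 0$ to trap $[\fx]_n$ in $H^\circ$ for all large $n$, after which $[\fx]_n \in 1/n\nu^{h,c}(I_n)\cap H$ follows. Your closing caveat about $H$ being the BBL-compatible truncating half space $H_{<\fu}$ with $R\cap\FF_{\geqslant n\fu}\subset I_n$ is exactly the paper's standing convention for this subsection, so you have read the context correctly rather than added a hypothesis.
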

\begin{proof}
We see $\nu^{(h)}(I) \subset \nu(I)$, so $\nu^c(I) \subset \nu^{h,c}(I)$. So if $F_{1/n\nu^{c}(I_n),1/n}(\fx)=1$ and $\fx \in H^\circ$, then for sufficiently large $n$, $[\fx]_n \in H^\circ$, and for such $n$, $F_{1/n\nu^{h,c}(I_n)\cap H,1/n}(\fx)=1.$ So if $\fx \in \nabla^{low\circ}\subset H^\circ$, then $\lim_{n \to \infty}F_{1/n\nu^{c}(I_n)\cap H,1/n}(\fx)=1$.  
\end{proof}
\begin{lemma}
Let $I_\bullet$ be a BBL family. Then for any $\fx \in \Delta^{up\circ}\cap H^\circ$, we have
$$\lim_{n \to \infty}F_{1/n\nu^{h,c}(I_n)\cap H,1/n}(\fx)=0.$$
\end{lemma}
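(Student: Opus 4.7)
The goal is to show that for $\fx \in \Delta^{up\circ}\cap H^\circ$ and all sufficiently large $n$, the lattice point $[\fx]_n$ lies outside $\nu^{h,c}(I_n)\cap H$, equivalently (by definition of $\nu^{h,c}$) that $[\fx]_n \in 1/n\,\nu^{(h)}(I_n)$ and $[\fx]_n \in H$. The second condition is immediate from $\fx \in H^\circ$ together with $[\fx]_n \to \fx$. So the real task is to produce an element of $\nu^{(h)}(I_n)$ at position $[\fx]_n$ for $n \gg 0$, using only the (weaker) information that $[\fx]_n \in 1/n\,\nu(I_n)$ eventually, which comes from $\fx \in \Delta^{up\circ}$.

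The bridge between $\nu(I_n)$ and $\nu^{(h)}(I_n)$ is \autoref{approx_v^h_to_v} together with \autoref{approx_v^h_to_v remark}: there exists $\fv \in \ZZ^d \cap C^\circ$, uniform in $n$ and $h$, such that $\nu(I_n)+\fv \subset \nu^{(h)}(I_n)$. Since $\fv \in \ZZ^d$, the identity $[\fx]_n - \fv/n = [\fx-\fv/n]_n$ holds, so it suffices to prove $[\fx]_n - \fv/n \in 1/n\,\nu(I_n)$ for $n \gg 0$. The issue is that $\fx \in \Delta^{up\circ}$ only tells us that points near $\fx$ belong to $\Delta^{up}$, i.e., their truncations eventually land in $1/m\,\nu(I_m)$ for $m \gg 0$ with threshold depending on the point; it does not directly give information at the specific index $m=n$ for the moving point $\fx - \fv/n$.

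The plan is to fix one good sample point and transport the property. Since $\fx \in \Delta^{up\circ}$, choose a small vector $\fw \in C^\circ$ (e.g.\ a small positive multiple of $\fb$) such that $\fy := \fx - 2\fw$ still lies in $\Delta^{up}$; this is possible because $\Delta^{up\circ}$ is open. By definition of $\Delta^{up}$, there is an $n_0$ with $[\fy]_n \in 1/n\,\nu(I_n)$ for every $n \geq n_0$. Now set
\[
\fu_n \;=\; [\fx]_n - \fv/n - [\fy]_n \;\in\; \tfrac{1}{n}\ZZ^d.
\]
Since $[\fx]_n \to \fx$, $[\fy]_n \to \fy$ and $\fv/n \to \f0$, we have $\fu_n \to \fx - \fy = 2\fw \in C^\circ$. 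Choose a full-dimensional closed subcone $C' \subset C^\bu$ whose interior contains $2\fw$; then for $n$ large, $\fu_n \in C'^\circ$ with $\|\fu_n\| \geq \|\fw\|$, so \autoref{K_K_appl} (applied with $\delta = \|\fw\|$) yields $\fu_n \in 1/n\,S$.

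The conclusion then follows formally from the $S$-ideal property: for $n$ large,
\[
[\fx]_n - \fv/n \;=\; [\fy]_n + \fu_n \;\in\; \tfrac{1}{n}\nu(I_n) + \tfrac{1}{n}S \;\subset\; \tfrac{1}{n}\nu(I_n),
\]
hence $[\fx]_n \in 1/n(\nu(I_n)+\fv) \subset 1/n\,\nu^{(h)}(I_n)$, which gives $[\fx]_n \notin 1/n\,\nu^{h,c}(I_n)$. Combined with $[\fx]_n \in H$ for $n \gg 0$, this forces $F_{1/n\nu^{h,c}(I_n)\cap H,1/n}(\fx)=0$ for all sufficiently large $n$, proving the limit equals $0$. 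The only delicate step is step three — the transfer via the fixed anchor point $\fy$ together with the density/asymptotic cone argument \autoref{K_K_appl} — since all other reductions are purely notational once one recognises that $\fv$ may be taken in the interior of $C$ by \autoref{approx_v^h_to_v remark}.
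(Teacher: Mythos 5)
Your proof is correct and follows essentially the same strategy as the paper: fix $\fv \in \ZZ^d \cap C^\circ$ from \autoref{approx_v^h_to_v}/\autoref{approx_v^h_to_v remark}, choose an anchor point in $\Delta^{up}$ slightly below $\fx$ in the direction of $C^\circ$, and absorb the residual $\fu_n$ into $1/nS$ via \autoref{K_K_appl} (the paper uses \autoref{K_K_appl2}, which is the same tool). The only cosmetic difference is that you apply the $\fv$-shift at the end and use the $S$-ideal property of $\nu(I_n)$, while the paper applies the $\fv$-shift midway and uses the $S$-ideal property of $\nu^{(h)}(I_n)$; the two orderings are equivalent.
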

\begin{proof}
We fix $\fv \in S \cap C^\circ$ as in  \autoref{approx_v^h_to_v} and \autoref{approx_v^h_to_v remark}. We may choose a closed subcone $C' \subset C^\bu$ such that 
$\fv, \fx \in C'^\circ$. Since $\fx \in \Delta^{up\circ}$, there exists $N_1$ such that for $n \geqslant N_1$, $\fx-1/n\fv \in \Delta^{up}\cap C'$. In particular $\fx-1/N_1\fv \in \Delta^{up}$, so for the fixed vector $\fx-1/N_1\fv$ 
and large enough $n$,
$[\fx-1/N_1\fv]_n \in 1/n\nu(I_n)$. This means $\lfloor n\fx-n/N_1\fv \rfloor \in \nu(I_n)$. By  \autoref{approx_v^h_to_v}, $\lfloor n\fx-n/N_1\fv+\fv \rfloor \in \nu^{(h)}(I_n)$, so  $[\fx-1/N_1\fv+1/n\fv]_n \in 1/n\nu^{(h)}(I_n)$. Now we consider
$$\fy_n=[\fx]_n-[\fx-1/N_1\fv+1/n\fv]_n.$$
We see $\lim_{n \to \infty}\fy_n=1/N_1\fv \in C'^\circ$, and $1/N_1\fv \neq 0$. Thus by \autoref{K_K_appl2}, for large eonugh $n$, $\fy_n=[\fy_n]_n \in 1/nS$. Since $[\fx-1/N_1\fv+1/n\fv]_n \in 1/n\nu^{(h)}(I_n)$ and $1/n\nu^{(h)}(I_n)$ is an $1/nS$-ideal, we see $[\fx]_n \in 1/n\nu^{(h)}(I_n)$. Also since $\fx \in H^\circ$, for large enough $n$, $[\fx]_n \in H^\circ$. Thus
$$F_{1/n\nu^{h,c}(I_n)\cap H,1/n}(\fx)=F_{1/n\nu^{h,c}(I_n)\cap H,1/n}([\fx]_n)=0$$
is true for large $n$, and
$$\lim_{n \to \infty}F_{1/n\nu^{h,c}(I_n)\cap H,1/n}(\fx)=0.$$
\end{proof}
\begin{corollary}
For any $\fx \in C^\circ\cap H^\circ$ we have  
$$\lim_{n \to \infty}F_{1/n\nu^{h,c}(R)\cap H,1/n}(\fx)=0.$$
\end{corollary}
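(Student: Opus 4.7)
The plan is to apply the previous lemma to the trivial constant family $I_n = R$ for all $n \in \NN$. This family is vacuously BBL, since $\fm^{cn} \subset R$ for any $c$, and any truncating half space $H = H_{<\fu}$ satisfies $R \cap \FF_{\geqslant n\fu} \subset R = I_n$, so all the setup preceding the lemma applies.

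For this constant family, we compute $\nu^c(I_n) = \nu(R) \setminus \nu(R) = \emptyset$. By the characterization of $\Delta^{up}$ in \autoref{second_definition_of_delta_and_nabla} (3), this gives
\[
\Delta^{up} = \{\fx \in C^\circ : [\fx]_n \notin 1/n\,\nu^c(R) \text{ for } n \gg 0\} = C^\circ,
\]
since the condition $[\fx]_n \notin \emptyset$ is trivially satisfied for every $n$. Because $C^\circ$ is open, we conclude $\Delta^{up\circ} = C^\circ$, and therefore
\[
\Delta^{up\circ} \cap H^\circ = C^\circ \cap H^\circ.
\]

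Now applying the preceding lemma to the constant family $I_n = R$, we obtain that for every $\fx \in \Delta^{up\circ} \cap H^\circ = C^\circ \cap H^\circ$,
\[
\lim_{n \to \infty} F_{1/n\,\nu^{h,c}(R) \cap H,\, 1/n}(\fx) = 0,
\]
which is precisely the statement of the corollary. No genuine obstacle arises here; the whole point is that the previous lemma was proved at a level of generality that already encompasses $R$ itself as a degenerate BBL family, and this corollary simply records the specialization that will be needed later (presumably to subtract off the ``ambient'' term in \autoref{length_formula_compl} when computing the asymptotic colength).
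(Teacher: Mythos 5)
Your proof is correct and follows essentially the same route as the paper: specialize to the constant family $I_n = R$, observe $\nu^c(R) = \emptyset$ so $\Delta^{up\circ} = C^\circ$, and then invoke the preceding lemma on $\Delta^{up\circ}\cap H^\circ$.
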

\begin{proof}
We just take the family $I_n=R$ for any $n$. Note that in this case $\nu^c(I_n)=\emptyset$ for any $n$. So $\nabla^{up}=\nabla^{low}=\emptyset$, and $\Delta^{up\circ}=C^\circ$.    
\end{proof}
In summary, for any $1 \leqslant k \leqslant [\kk_\nu:\kk]$, in terms of the convergence of the functions $F_{1/n\nu^{h,c}(I_n)\cap H,1/n}$, we get:
\begin{enumerate}
\item For $\fx \notin C \cap \overline{H}$,
$$\lim_{n \to \infty}F_{1/n\nu^{h,c}(I_n)\cap H,1/n}(\fx)=0.$$
\item For $\fx \in \nabla^{low\circ}\subset C^\circ \cap H^\circ$,
$$\lim_{n \to \infty}F_{1/n\nu^{h,c}(I_n)\cap H,1/n}(\fx)=1.$$
\item For $\fx \in \Delta^{up\circ}\cap H^\circ$,
$$\lim_{n \to \infty}F_{1/n\nu^{h,c}(I_n)\cap H,1/n}(\fx)=0.$$
\item For $\fx \in \Delta^{low\circ}\cap \nabla^{up\circ}$, the limit
$$\lim_{n \to \infty}F_{1/n\nu^{h,c}(I_n)\cap H,1/n}(\fx)$$
is not clear.
\item For $\fx$ not lying in all the sets above, then $\fx \in E=(\partial C \cup \partial H \cup \partial \Delta^{up} \cup\partial \Delta^{low})\cap(C \cap \overline{H})$. The set $E$ is bounded and has zero measure. Actually, the set $C \cap \overline{H}$ is bounded. The sets $\partial C, \partial \Delta^{low}, \partial \Delta^{up}$ have zero measure because $C,\Delta^{low},\Delta^{up}$ are $C^\circ$-ideals. $\partial H$ is a hyperplane, so has zero measure.
\end{enumerate}
Therefore we have:
\begin{corollary} \label{char_nabla_up_and_low_bound}
For any $\fx \in \mathbb{R}^d$ and $1 \leqslant h \leqslant [\kk_\nu:\kk]$,
$$\chi(\nabla^{low\circ})(\fx) \leqslant \displaystyle \liminf_{n \to \infty}F_{1/n\nu^{h,c}(I_n)\cap H,1/n}(\fx) \leqslant \displaystyle \limsup_{n \to \infty}F_{1/n\nu^{h,c}(I_n)\cap H,1/n}(\fx) \leqslant \chi(\nabla^{up\circ})(\fx)+\chi(E)(\fx).$$
\end{corollary}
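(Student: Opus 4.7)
The plan is to prove the corollary by case analysis on which of the five regions enumerated in the bullet list immediately preceding the statement contains $\fx$. These five regions, together with their overlaps, cover all of $\RR^d$: the complement of $C \cap \overline{H}$, the three open pieces $\nabla^{low\circ}$, $\Delta^{up\circ} \cap H^\circ$, and $\Delta^{low\circ} \cap \nabla^{up\circ}$, and finally the exceptional set $E = (\partial C \cup \partial H \cup \partial \Delta^{up} \cup \partial \Delta^{low}) \cap (C \cap \overline{H})$. The coverage follows by combining the obvious decomposition $\RR^d = (\RR^d \setminus (C \cap \overline{H})) \sqcup (C^\circ \cap H^\circ) \sqcup \partial(C \cap \overline H)$ with \autoref{nabla_up_low_containment_relations}(4) (applied to the middle piece) and noting that the topological boundary $\partial(C \cap \overline H)$ sits inside $\partial C \cup \partial H \subset E$.

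Next, I would verify the sandwich pointwise in each region. On $\fx \notin C \cap \overline{H}$, \autoref{type_0_F_conv_with_H} forces $F_{1/n\nu^{h,c}(I_n) \cap H,1/n}(\fx) = 0$ for all large $n$; since $\fx$ also lies outside $\nabla^{up\circ}$ and outside $\nabla^{low\circ}$, both sides of the inequality vanish. On $\fx \in \nabla^{low\circ}$, the first lemma in the current subsection gives $\lim_n F_{\ldots}(\fx) = 1$, and since $\nabla^{low\circ} \subset \nabla^{up\circ}$ (because $\nabla^{low} \subset \nabla^{up}$ and taking interiors preserves inclusion), the upper bound $\chi(\nabla^{up\circ})(\fx) = 1$ is at least the limit. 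On $\fx \in \Delta^{up\circ} \cap H^\circ$, the second lemma gives $\lim_n F_{\ldots}(\fx) = 0$; since $\Delta^{up\circ} \cap \nabla^{low\circ} = \emptyset$, the lower bound vanishes as well.

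The only region where convergence is not asserted is $\Delta^{low\circ} \cap \nabla^{up\circ}$. Here I would use only the crude bounds $0 \leqslant F_{1/n\nu^{h,c}(I_n) \cap H,1/n}(\fx) \leqslant 1$ together with the observations $\chi(\nabla^{low\circ})(\fx) = 0$ (since $\Delta^{low\circ}$ is disjoint from $\nabla^{low\circ}$) and $\chi(\nabla^{up\circ})(\fx) = 1$; this immediately gives the sandwich. For the final case $\fx \in E$ that is not already covered, the same crude bounds together with $\chi(E)(\fx) = 1$ yield the required estimate. Assembling these five cases produces the pointwise inequality for every $\fx \in \RR^d$.

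I do not expect any genuine obstacle: the statement is essentially a bookkeeping corollary of the lemmas already established in this section and of \autoref{nabla_up_low_containment_relations}. The only point that requires a small argument beyond quoting previous results is the containment $\nabla^{low\circ} \subset \nabla^{up\circ}$, which is immediate from the monotonicity of the interior under set inclusion, and the verification that the non-covered boundary points of $C \cap \overline H$ sit inside $E$, which follows from the definition of $E$.
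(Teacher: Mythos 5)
Your proposal is correct and follows essentially the same route the paper takes: the bulleted list of five cases immediately preceding the corollary is the paper's implicit proof, and your case analysis reproduces it faithfully, including the use of the two lemmas for $\nabla^{low\circ}$ and $\Delta^{up\circ}\cap H^\circ$, \autoref{nabla_up_low_containment_relations}(4)--(5) for the coverage, and the crude bounds $0\leqslant F\leqslant 1$ on the undetermined and exceptional regions. The one small point worth recording explicitly in a write-up is the use of \autoref{nabla_up_low_containment_relations}(5) to convert $\partial\nabla^{low}\cap C^\circ$ and $\partial\nabla^{up}\cap C^\circ$ into $\partial\Delta^{low}\cap C^\circ$ and $\partial\Delta^{up}\cap C^\circ$, since $E$ is defined in terms of the $\Delta$'s rather than the $\nabla$'s; you invoke this implicitly via "the definition of $E$" but it deserves a citation.
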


\begin{theorem} \label{volume_formula_for_BBL_family}
Let $I_\bullet$ be a BBL family. Then:
$$[\kk_\nu:\kk]\vol(\nabla^{low})\leqslant \displaystyle \liminf_{n \to \infty}\frac{\ell(R/I_n)}{n^d}\leqslant \displaystyle \limsup_{n \to \infty}\frac{\ell(R/I_n)}{n^d} \leqslant [\kk_\nu:\kk]\vol(\nabla^{up}).$$
In particular, when $\vol(\nabla^{up}\ba\nabla^{low})=0$, then the limit
$$\lim_{n \to \infty}\frac{\ell(R/I_n)}{n^d}=[\kk_\nu:\kk]\vol(\nabla^{low})=[\kk_\nu:\kk]\vol(\nabla^{up})$$
exists and is finite.
\end{theorem}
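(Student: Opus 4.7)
The plan is to read the colength off as an integral of the characteristic functions $F_{1/n\nu^{h,c}(I_n)\cap H,1/n}$, then apply Fatou's lemma for the lower bound and reverse Fatou's lemma for the upper bound, using the pointwise bounds already proved in \autoref{char_nabla_up_and_low_bound}.

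First, fix a truncating half-space $H=H_{<\fu}$ with $R\cap\FF_{\geqslant n\fu}\subset I_n$, which exists by \autoref{val_bbl_prop}. Using the elementary identity $\#(L)=n^d\int_{\RR^d}F_{L,1/n}(\fx)\,d\fx$ valid for any $L\subset(1/n)\ZZ^d$, the length formula (\autoref{length_formula_compl}, applied as in the corollary for BBL families) rewrites as
$$\frac{\ell(R/I_n)}{n^d}=\sum_{h=1}^{[\kk_\nu:\kk]}\int_{\RR^d}F_{1/n\nu^{h,c}(I_n)\cap H,1/n}(\fx)\,d\fx\;-\;\sum_{h=1}^{[\kk_\nu:\kk]}\int_{\RR^d}F_{1/n\nu^{h,c}(R)\cap H,1/n}(\fx)\,d\fx.$$
The second sum tends to zero: specializing the analysis in this section to the constant family $I_n=R$ gives $\nabla^{low}=\nabla^{up}=\emptyset$, so by \autoref{char_nabla_up_and_low_bound} the integrand converges to $0$ almost everywhere, and since it is uniformly bounded by $1$ and supported in a fixed bounded set (\autoref{type_0_F_conv_with_H}(3)), the bounded convergence lemma disposes of this term.

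For each $h$ in the first sum, \autoref{char_nabla_up_and_low_bound} asserts
$$\chi(\nabla^{low\circ})(\fx)\;\le\;\liminf_{n\to\infty}F_{1/n\nu^{h,c}(I_n)\cap H,1/n}(\fx)\;\le\;\limsup_{n\to\infty}F_{1/n\nu^{h,c}(I_n)\cap H,1/n}(\fx)\;\le\;\chi(\nabla^{up\circ})(\fx)+\chi(E)(\fx),$$
where $E$ is a bounded set of Lebesgue measure zero. Applying \autoref{Fatou's lemma} to the left bound yields $\vol(\nabla^{low\circ})\le\liminf_n\int F_n$, and applying \autoref{reverse Fatou's lemma} to the right bound — whose uniform-boundedness and common-bounded-support hypotheses are furnished by \autoref{type_0_F_conv_with_H}(3) — yields $\limsup_n\int F_n\le\vol(\nabla^{up\circ})+\vol(E)=\vol(\nabla^{up\circ})$.

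To replace the interiors by the sets themselves, note that $\Delta^{low}$ and $\Delta^{up}$ are $C^\circ$-ideals by \autoref{ideal_prop_of_delta_up_and_low}, so by \autoref{ht_func_prop_for_closure_interior_boundary}(3) their boundaries have measure zero; by \autoref{nabla_up_low_containment_relations}(5) the same holds for $\partial\nabla^{low}\cap C^\circ$ and $\partial\nabla^{up}\cap C^\circ$, giving $\vol(\nabla^{low\circ})=\vol(\nabla^{low})$ and $\vol(\nabla^{up\circ})=\vol(\nabla^{up})$. Summing the resulting inequalities over $h=1,\dots,[\kk_\nu:\kk]$ gives the desired two-sided estimate, and when $\vol(\nabla^{up}\setminus\nabla^{low})=0$ the liminf and limsup agree, so the limit exists and equals $[\kk_\nu:\kk]\vol(\nabla^{low})$. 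The only real care needed is in checking the reverse-Fatou hypotheses, but these have been set up precisely for this purpose in \autoref{type_0_F_conv_with_H}, so no substantive obstacle remains.
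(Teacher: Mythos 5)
Your proposal is correct and follows essentially the same route as the paper: expressing $\ell(R/I_n)/n^d$ as a difference of integrals of the characteristic functions $F_{1/n\nu^{h,c}(I_n)\cap H,1/n}$, invoking the pointwise bounds from \autoref{char_nabla_up_and_low_bound}, applying Fatou and reverse Fatou to sandwich the liminf and limsup, and disposing of the $\nu^{h,c}(R)$ term by specializing to the constant family $I_n=R$ (the paper does this last rather than first, but the content is identical). Your added justification that $\vol(\nabla^{low\circ})=\vol(\nabla^{low})$ and $\vol(\nabla^{up\circ})=\vol(\nabla^{up})$ via \autoref{ht_func_prop_for_closure_interior_boundary}(3) and \autoref{nabla_up_low_containment_relations}(5) correctly fills in a step the paper leaves implicit.
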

\begin{proof}
Since $I_\bullet$ is BBL, we take $\fu \in \mathbb{R}^d$ such that $R\cap \mathbb{F}_{\geqslant n\fu} \subset I_n$. Let $H=H_{<\fu}$. By  \autoref{length_formula_compl},
\begin{align*}
\ell(R/I_n)=\sum_{1 \leqslant h \leqslant [\kk_\nu:\kk]}\#(1/n\nu^{h,c}(I_n)\cap H)
-\sum_{1 \leqslant h \leqslant [\kk_\nu:\kk]} \#(1/n\nu^{h,c}(R)\cap H)\\
=n^d(\sum_{1 \leqslant h \leqslant [\kk_\nu:\kk]}\int_{\mathbb{R}^d}F_{1/n\nu^{h,c}(I_n)\cap H,1/n}(\fx)d\fx-\sum_{1 \leqslant h \leqslant [\kk_\nu:\kk]}\int_{\mathbb{R}^d}F_{1/n\nu^{h,c}(R)\cap H,1/n}(\fx)d\fx).
\end{align*}
Therefore
$$\frac{\ell(R/I_n)}{n^d}=\sum_{1 \leqslant h \leqslant [\kk_\nu:\kk]}\int_{\mathbb{R}^d}F_{1/n\nu^{h,c}(I_n)\cap H,1/n}(\fx)d\fx-\sum_{1 \leqslant h \leqslant [\kk_\nu:\kk]}\int_{\mathbb{R}^d}F_{1/n\nu^{h,c}(R)\cap H,1/n}(\fx)d\fx.$$
We have for any $\fx \in \mathbb{R}^d$,
$$\displaystyle \liminf_{n \to \infty}F_{1/n\nu^{h,c}(I_n)\cap H,1/n}(\fx) \geqslant \chi(\nabla^{low\circ})(\fx).$$
Using  \autoref{Fatou's lemma} we have,
\begin{align*}
\displaystyle \liminf_{n \to \infty}\int_{\mathbb{R}^d}F_{1/n\nu^{h,c}(I_n)\cap H,1/n}(\fx)d\fx\\
\geqslant \int_{\mathbb{R}^d}\displaystyle \liminf_{n \to \infty}F_{1/n\nu^{h,c}(I_n)\cap H,1/n}(\fx)d\fx\\
\geqslant \int_{\mathbb{R}^d}\chi(\nabla^{low\circ})(\fx)d\fx=\vol(\nabla^{low\circ})=\vol(\nabla^{low}).
\end{align*}

We have for any $\fx \in \mathbb{R}^d$,
$$ \displaystyle \limsup_{n \to \infty}F_{1/n\nu^{h,c}(I_n)\cap H,1/n}(\fx) \leqslant \chi(\nabla^{up\circ})(\fx)+\chi(E)(\fx).$$
Note that the sequence of functions $F_{1/n\nu^{h,c}(I_n)\cap H,1/n}(\fx)$ is uniformly bounded and supported on a bounded set $\overline{B(C \cap \overline{H},d)}$ by  \autoref{type_0_F_conv_with_H}. Therefore, we can apply  \autoref{reverse Fatou's lemma} to get
\begin{align*}
\displaystyle \limsup_{n \to \infty}\int_{\mathbb{R}^d}F_{1/n\nu^{h,c}(I_n)\cap H,1/n}(\fx)d\fx\\
\leqslant \int_{\mathbb{R}^d}\displaystyle \limsup_{n \to \infty}F_{1/n\nu^{h,c}(I_n)\cap H,1/n}(\fx)d\fx\\
\leqslant \int_{\mathbb{R}^d}\chi(\nabla^{up\circ})(\fx)+\chi(E)(\fx)d\fx=\vol(\nabla^{up\circ})=\vol(\nabla^{up}).
\end{align*}
In summary, we get inequalities
$$\vol(\nabla^{low}) \leqslant \displaystyle \liminf_{n \to \infty}\int_{\mathbb{R}^d}F_{1/n\nu^{h,c}(I_n)\cap H,1/n}(\fx)d\fx \leqslant \displaystyle \limsup_{n \to \infty}\int_{\mathbb{R}^d}F_{1/n\nu^{h,c}(I_n)\cap H,1/n}(\fx)d\fx \leqslant \vol(\nabla^{up}).$$
Now we set $I_n=R$, then $\nabla^{up}=\nabla^{low}=\emptyset$, and we immediately get
$$\lim_{n \to \infty}\int_{\mathbb{R}^d}F_{1/n\nu^{h,c}(R)\cap H,1/n}(\fx)d\fx=0.$$
Combining the equation expressing $\frac{\ell(R/I_n)}{n^d}$ and all the inequalities above, we get the result.
\end{proof}

\begin{corollary}
\label{result_for_BBL_with_h_equal_1}    

Let $I_\bullet$ be a BBL family. Then:
$$\vol(\nabla^{low})\leqslant \displaystyle \liminf_{n \to \infty}\frac{\#(\nu(R)\backslash\nu(I_n))}{n^d}\leqslant \displaystyle \limsup_{n \to \infty}\frac{\#(\nu(R)\backslash\nu(I_n))}{n^d} \leqslant \vol(\nabla^{up}).$$
In particular, when $\vol(\nabla^{up}\ba\nabla^{low})=0$, then the limit
$$\lim_{n \to \infty}\frac{\#(\nu(R)\backslash\nu(I_n))}{n^d}=\vol(\nabla^{low})$$
exists.
\end{corollary}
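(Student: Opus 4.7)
The plan is to specialize the argument of \autoref{volume_formula_for_BBL_family} to the single level $h=1$. At $h=1$ one has $\nu^{(1)}(I_n)=\nu(I_n)$, so $\nu^{1,c}(I_n)=\nu(R)\setminus \nu(I_n)=\nu^c(I_n)$ and $\nu^{1,c}(R)=\emptyset$. Thus the ``ambient'' correction term $\#(\nu^{h,c}(R)\cap nH)$ that appears in the length formula disappears, and the cardinality in the statement is precisely the quantity whose asymptotics are already encoded in the functions $F_{1/n\,\nu^c(I_n),\,1/n}$ used to define $\nabla^{up}$ and $\nabla^{low}$.

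First, since $I_\bullet$ is BBL, \autoref{val_bbl_prop} produces a truncating half-space $H=H_{<\fu}$ with $\nu^c(I_n)\subset C\cap nH$; in particular each such set is finite by \autoref{trunc_bound}. Scaling by $1/n$ preserves cardinality, so I would write
$$\frac{\#(\nu(R)\setminus \nu(I_n))}{n^d}=\int_{\RR^d} F_{1/n\,\nu^c(I_n),\,1/n}(\fx)\,d\fx,$$
since the integrand is a disjoint sum of characteristic functions of cubes of edge $1/n$ based at the points of $1/n\,\nu^c(I_n)$, each contributing volume $1/n^d$.

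Next, the pointwise asymptotics of $F_n:=F_{1/n\,\nu^c(I_n),\,1/n}$ follow directly from the definitions: on $\nabla^{low\circ}$ one has $F_n=1$ eventually, on $\Delta^{up\circ}\cap C^\circ$ one has $F_n=0$ eventually (using $\nabla^{low}\subset \nabla^{up}$), and off the compact set $\overline{B(C\cap\overline H,d)}$ the $F_n$ vanish identically by \autoref{type_0_F_conv}. The remaining ``bad'' points lie in the set $E=(\partial C\cup \partial H\cup \partial\nabla^{up}\cup\partial\nabla^{low})\cap (C\cap\overline H)$, which has Lebesgue measure zero: $\partial H$ is a hyperplane, $\partial C$ is the boundary of a full-dimensional convex cone, and $\partial\nabla^{up}\cap C^\circ=\partial\Delta^{up}\cap C^\circ$, $\partial\nabla^{low}\cap C^\circ=\partial\Delta^{low}\cap C^\circ$ are boundaries of $C^\circ$-ideals by \autoref{ideal_prop_of_delta_up_and_low}, hence of measure zero by \autoref{ht_func_prop_for_closure_interior_boundary}, after invoking \autoref{pointed_implies_upward} to view $C$ as upward in a suitable direction. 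Hence almost everywhere
$$\chi(\nabla^{low\circ})(\fx)\le \liminf_{n\to\infty} F_n(\fx)\le \limsup_{n\to\infty} F_n(\fx)\le \chi(\nabla^{up\circ})(\fx)+\chi(E)(\fx).$$

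Finally I would integrate. \autoref{Fatou's lemma} yields $\liminf_n\int F_n\ge\int\liminf_n F_n\ge \vol(\nabla^{low\circ})=\vol(\nabla^{low})$. Since the $F_n$ are uniformly bounded by $1$ and supported in a common compact set, \autoref{reverse Fatou's lemma} applies and gives $\limsup_n\int F_n\le\int\limsup_n F_n\le \vol(\nabla^{up\circ})=\vol(\nabla^{up})$. Combined with the integral identity for $\#(\nu(R)\setminus\nu(I_n))/n^d$ this is exactly the claimed sandwich, and the ``in particular'' clause follows immediately, since equality $\vol(\nabla^{low})=\vol(\nabla^{up})$ forces $\liminf=\limsup$. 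No substantive obstacle is anticipated: this argument is a direct repackaging of machinery already assembled in \autoref{section_3}, \autoref{section_5}, and \autoref{section_6}, the only delicate point being the measure-zero claim for $E$, which is handled by the height-function results of \autoref{section_3}.
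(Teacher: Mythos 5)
Your proposal is correct and matches the paper's intended argument, which simply says to apply \autoref{char_nabla_up_and_low_bound} and rerun the proof of \autoref{volume_formula_for_BBL_family} with $h=1$; you have unpacked that one-line instruction faithfully, correctly observing that $\nu^{(1)}=\nu$ makes $\nu^{1,c}(I_n)=\nu^c(I_n)$ and kills the correction term $\nu^{1,c}(R)=\emptyset$, and that the BBL truncation $H$ guarantees $1/n\,\nu^c(I_n)\subset C\cap H$ so the intersection with $H$ is harmless. The only cosmetic difference is that for $h=1$ the pointwise bounds on $F_n$ follow immediately from the definitions of $\nabla^{up},\nabla^{low}$ without invoking \autoref{approx_v^h_to_v}, a simplification the paper implicitly accepts but you make explicit.
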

\begin{proof}
Apply  \autoref{char_nabla_up_and_low_bound} and use the same proof as  \autoref{volume_formula_for_BBL_family} when $h=1$.    
\end{proof}

\subsection{Estimates for BBL weakly graded families}
In this subsection, we assume $I_\bu$ is a weakly graded family.
\begin{lemma} \label{most_imp_theorem}
Assume $I_\bullet$ is a weakly graded family with $I_1 \neq 0$, $C=\Cone(S)$ is upward to direction $\fb$. Then for any $\fx \in \Delta^{low}$ and $\epsilon>0$, we have  $\fx+\epsilon\fb \in \Delta^{up}$.    
\end{lemma}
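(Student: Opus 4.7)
The plan is to use the weakly graded condition to exhibit, for every sufficiently large $n$, an element $\fu_n \in \nu(I_n)$ such that the difference $\lfloor n(\fx+\epsilon\fb)\rfloor - \fu_n$ lands in the semigroup $S = \nu(R)$. Once this is done, the fact that $\nu(I_n)$ is an $S$-ideal gives $\lfloor n(\fx+\epsilon\fb)\rfloor \in \nu(I_n)$, i.e., $[\fx+\epsilon\fb]_n \in \tfrac{1}{n}\nu(I_n)$ for all $n \gg 0$, which by \autoref{second_definition_of_delta_and_nabla}(5) is precisely $\fx+\epsilon\fb \in \Delta^{up}$. The extra vector $\epsilon\fb$ pointing into $C^\circ$ is what supplies the slack that makes the translation possible.

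Since $\fx \in \Delta^{low}$, by \autoref{second_definition_of_delta_and_nabla} we may choose $n_0$ arbitrarily large with $\lfloor n_0\fx \rfloor \in \nu(I_{n_0})$. Fix $a_0 \in I_{n_0}$ with $\nu(a_0) = \lfloor n_0\fx \rfloor$, fix $a_1 \in I_1\setminus\{0\}$ with $\fv_1 := \nu(a_1)$, and set $\delta_0 := n_0\fx - \lfloor n_0\fx \rfloor$, a vector of uniformly bounded norm. Since $\fb \in C^\circ$ we have $\dist(n_0\epsilon\fb,\partial C) = n_0\epsilon\,\dist(\fb,\partial C) \to \infty$, so by taking $n_0$ large enough the vector
\[
\fy_0 := n_0\epsilon\fb + \delta_0 - \nu(c)
\]
lies in $C^\circ$. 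Fix a closed full-dimensional subcone $C' \subset C^\bu$ with $\fy_0 \in C'^\circ$; such a $C'$ exists by \autoref{union_interior_closed_upward_cone}. Now for any $n > n_0$, write $n = kn_0 + r$ with $0 \leq r < n_0$. Iterating $c I_m I_{m'} \subset I_{m+m'}$ gives $c^{k-1}I_{n_0}^k \subset I_{kn_0}$, and when $r \geq 1$, one further multiplication by $c^{r-1}a_1^r \in I_r$ produces $c^{k+r-1}a_0^k a_1^r \in I_n$ (when $r=0$, use $c^{k-1}a_0^k \in I_n$ directly). Its valuation is
\[
\fu_n := (k+r-1)\nu(c) + k\lfloor n_0\fx \rfloor + r\fv_1 \in \nu(I_n),
\]
and substituting $n = kn_0 + r$ yields the identity $\lfloor n(\fx+\epsilon\fb)\rfloor - \fu_n = k\fy_0 + \fz_n$, where $\fz_n$ depends only on $r \in \{0,\ldots,n_0-1\}$, on the bounded rounding of $n(\fx+\epsilon\fb)$, and on the fixed vectors $\fx, \fv_1, \nu(c)$; in particular $\|\fz_n\|$ is bounded uniformly in $n$.

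As $n \to \infty$, $k \to \infty$, so $\|k\fy_0 + \fz_n\| \to \infty$ and the unit direction of $k\fy_0 + \fz_n$ converges to $\fy_0/\|\fy_0\| \in C'^\circ$. Hence for all $n$ sufficiently large, $k\fy_0 + \fz_n \in C' \cap \ZZ^d$ with norm exceeding the threshold of \autoref{K_K_imp_lemma}, so $k\fy_0 + \fz_n \in S$. Adding $\fu_n \in \nu(I_n)$ and using $S + \nu(I_n) \subset \nu(I_n)$ yields $\lfloor n(\fx+\epsilon\fb)\rfloor \in \nu(I_n)$ for $n \gg 0$. The main obstacle is the rounding drift $k\delta_0 = O(k)$ arising from replacing $\lfloor kn_0\fx \rfloor$ by $k\lfloor n_0\fx \rfloor$; being unbounded in $n$, it cannot be treated as a lower-order error. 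The resolution is to absorb it into the interior direction by choosing $n_0$ large so that $n_0\epsilon\fb$ dominates both $\delta_0$ and $\nu(c)$, keeping $\fy_0$ strictly inside $C^\circ$ and enabling \autoref{K_K_imp_lemma}.
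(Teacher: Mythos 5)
Your proof is correct, and the core machinery is the same as the paper's: Euclidean division $n=kn_0+r$, iteration of the weakly graded relation to manufacture an element of $\nu(I_n)$, and an appeal to the Kaveh--Khovanskii lemma (your \autoref{K_K_imp_lemma} versus the paper's \autoref{K_K_appl2}) to place the difference vector in $S$, with the slack $\epsilon\fb$ supplying the needed interior direction. The one genuine structural difference is how the approximation error is managed. The paper runs a two-scale (diagonal) argument: it first fixes $m$ to make $\tilde\fx_{m,n}$ within $\delta$ of $\fx$, then lets $n\to\infty$, and finally says ``after changing $m$ according to $n$'' to extract a sequence $\tilde\fx_n\to\fx$, so that all the error is absorbed in the final appeal to $\epsilon\fb$. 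You instead fix $n_0$ once and for all (large in terms of $\epsilon$) and accept that $\fu_n/n$ converges not to $\fx$ but to $\fx + (\nu(c)-\delta_0)/n_0$; the constant offset $(\delta_0-\nu(c))/n_0$ is then bundled with $\epsilon\fb$ into the single vector $\fy_0/n_0 \in C^\circ$. This ``one-shot'' choice of $n_0$ sidesteps the diagonalization entirely and makes the dependence on $\epsilon$ explicit (the smaller $\epsilon$, the larger $n_0$ must be), at the cost of a slightly more intricate bookkeeping in the identity $\lfloor n(\fx+\epsilon\fb)\rfloor - \fu_n = k\fy_0 + \fz_n$, which you verify correctly. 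Both arguments are valid; yours is a modest streamlining of the paper's.
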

\begin{proof}
We fix $0 \neq a \in R$ such that $aI_mI_n \subset I_{m+n}$ for any $m,n \in \mathbb{N}$. Denote $\fv_1=\nu(a)$. Note that by induction we can prove for any $n_1,\ldots,n_k$, $a^{k-1}I_{n_1}\ldots I_{n_k} \subset I_{n_1+\ldots+n_k}$. In particular $a^{n-1}I^n_1 \subset I_n$.We choose $0 \neq b \in I_1$, then $(ab)^n \in I_n$. Denote $\fv_2=\nu(ab)$. For positive integers $n, N$, denote $q_{N,n}=\lfloor N/n \rfloor$ and $r_{N,n}=N-n\lfloor N/n \rfloor$. So we have $N=q_{N,n}n+r_{N,n}$, $q_{N,n}, r_{N,n}$ are integers, $0 \leqslant r_{N,n}<n$. We have $0 \leqslant 1/n-q_{N,n}/N=r_{N,n}/nN <1/N$.

Now we fix a closed upward cone $C' \subset C^\bu$ such that $\fx \in C'^\circ$. Since $\fx \in \Delta^{low}$, by \autoref{second_definition_of_delta_and_nabla} there exists an infinite sequence $\{n_k\}$ such that for any large enough $n_k$, $[\fx]_{n_k} \in 1/n_k\nu(I_{n_k})$.

For every pair of positive integers $m,N$ with $N \geqslant m$, the assumption of weakly graded family implies
$$a^{q_{N,m}}I_m^{q_{N,m}}I_{r_{N,m}} \subset I_N$$
Since $(ab)^{r_{N,m}} \in I_{r_{N,m}}$, $(ab)^m \in I_{r_{N,m}}$. Thus taking valuations on both sides, we have
$$q_{N,m}\fv_1+q_{N,m}\nu(I_m)+m\fv_2 \subset \nu(I_N)$$
Divide both sides by $N$, we have for any $m,N$,
$$(q_{N,m}/N)\fv_1+(mq_{N,m}/N)(\frac{1}{m}\nu(I_m))+(m/N)\fv_2 \subset 1/N\nu(I_N)$$
Use the above containment, we can always find an approximation of $\fx$ in $1/n\nu(I_n)$ for large $n$ within distance $\delta$ for arbitrarily small $\delta>0$. Roughly speaking:
$$(q_{n,m}/n)\fv_1+(mq_{n,m}/n)[\fx]_m+m/n\fv_2 \xrightarrow[]{\textup{fix} m, n \gg 0} 1/m\fv_1+[\fx]_m \xrightarrow[]{m \gg 0} \fx$$
Now we write down the above approximation in the $\epsilon-N$ language. Note that we replace the above terms with close terms 5 times, so in each step we approximate by $1/5\delta$. 

Since there are infinitely many $n_k$ with $[\fx]_{n_k} \in \frac{1}{n_k}\nu(I_{n_k})$, we can fix a sufficiently large $m$ such that $[\fx]_m \in 1/m\nu(I_m)$ with 
\begin{enumerate}
\item $\dist(\fx,[\fx]_m)<1/5\delta$;
\item $1/m\|\fv_1\|<1/5\delta$.
\end{enumerate}
Now for this fixed $m$, we have $q_{N,m}/N-1/m, mq_{N,m}/N-1, m/N$ both goes to $0$ as $N$ goes to infinity, so we can choose a sufficiently large $N_1$ according to $m$ such that the following holds for $n \geqslant N_1$:
\begin{enumerate}
\item $\mid(q_{n,m}/n-1/m)\mid \cdot \|\fv_1\|<1/5\delta$;
\item $\mid(mq_{n,m}/n-1)\mid \cdot\|[\fx]_m\|<1/5\delta$;
\item $m/n\|\fv_2\|<1/5\delta$.
\end{enumerate}
All the assumptions above implies the following inequality. When $n \geqslant N_1$, set $\tilde{\fx}_{m,n}=(q_{n,m}/n)\fv_1+(mq_{n,m}/n)[\fx]_m+m/n\fv_2$, then $\tilde{\fx}_{m,n} \in 1/n\nu(I_n)$ and
\begin{align*}
\|\fx-\tilde{\fx}_{m,n}\|\leqslant \|\fx-[\fx]_m\|+\|(mq_{n,m}/n-1)[\fx]_m\|+\|(q_{n,m}/n-1/m)\fv_1\|+\|1/m\fv_1\|+\|m/n\fv_2\|<\delta.  
\end{align*}
After changing $m$ according to $n$, we get a sequence $\{\tilde{\fx}_n\}$ such that $\tilde{\fx}_n \in 1/n\nu(I_n)$ and $\tilde{\fx}_n \to \fx$ as $n \to \infty$. Therefore,
$[\fx+\epsilon\fb]_n-\tilde{\fx}_n \to \fx+\epsilon\fb-\fx=\epsilon\fb \in C'^\circ$ as $n \to \infty$. We apply \autoref{K_K_appl2} to this sequence, then for large $n$,
$$[\fx+\epsilon\fb]_n-\tilde{\fx}_n \in 1/nS.$$
Now we have $\tilde{\fx}_n \in 1/n\nu(I_n)$, so for large $n$, $[\fx+\epsilon\fb]_n \in 1/n\nu(I_n)$. This means $\fx+\epsilon\fb\in \Delta^{up}$.
\end{proof}
\begin{corollary} \label{delta_low_and_up_same_ht_fn}
Using the notations above, we have:
\begin{enumerate}
\item $\Delta^{up} \subset \Delta^{low} \subset \overline{\Delta^{up}}$;
\item $\Delta^{low}$ and $\Delta^{up}$ have the same height function;
\item $\Delta^{low}$ and $\Delta^{up}$ have the same interior and closure, and $\Delta^{low}\ba\Delta^{up}$ has zero measure;
\item $\nabla^{up}\ba\nabla^{low}$ has zero measure.
\end{enumerate}
\end{corollary}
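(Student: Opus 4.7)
The plan is to organize the four parts so that each builds on the previous, leaning crucially on Lemma~\ref{most_imp_theorem} and the height-function formalism developed in Section~\ref{section_3}. First, for part (1), the containment $\Delta^{up} \subset \Delta^{low}$ is immediate from the definitions, since $\nabla^{low} \subset \nabla^{up}$. For the other containment, I take any $\fx \in \Delta^{low}$; by Lemma~\ref{most_imp_theorem}, $\fx + \epsilon \fb \in \Delta^{up}$ for every $\epsilon > 0$, and letting $\epsilon \to 0^+$ gives $\fx \in \overline{\Delta^{up}}$.

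For part (2), first observe that Lemma~\ref{ideal_prop_of_delta_up_and_low} guarantees both $\Delta^{up}$ and $\Delta^{low}$ are $C^\circ$-ideals in $C$, so their height functions on the hyperplane $H = H_{\fb, =0}$ are well defined by Proposition~\ref{ht_fun_existence}. Since $\Delta^{up} \subset \Delta^{low}$, Corollary~\ref{comp_two_C_circ_ideals}(1) gives $\varphi_{\Delta^{up}} \geqslant \varphi_{\Delta^{low}}$. For the reverse inequality, fix $\fx \in H$ and $t > \varphi_{\Delta^{low}}(\fx)$, so $\fx + t\fb \in \Delta^{low}$; applying Lemma~\ref{most_imp_theorem} to $\fx + t\fb$ yields $\fx + (t+\epsilon)\fb \in \Delta^{up}$ for every $\epsilon > 0$, and hence $t + \epsilon \geqslant \varphi_{\Delta^{up}}(\fx)$. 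Letting $\epsilon \to 0^+$ and then $t \to \varphi_{\Delta^{low}}(\fx)^+$ gives $\varphi_{\Delta^{low}}(\fx) \geqslant \varphi_{\Delta^{up}}(\fx)$.

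Part (3) is then a direct consequence of part (2): by Corollary~\ref{ht_func_equality} (or equivalently Corollary~\ref{comp_two_C_circ_ideals}(2)), two $C^\circ$-ideals with the same height function have the same interior and the same closure. Hence $\Delta^{low}$ and $\Delta^{up}$ share a common interior and a common closure, so $\Delta^{low} \ba \Delta^{up}$ is contained in their common boundary, which has zero Lebesgue measure by Theorem~\ref{ht_func_prop_for_closure_interior_boundary}(3). Finally, part (4) is simply a reformulation of part (3): since $\Delta^{up} \subset \Delta^{low}$ inside $C^\circ$, taking complements inside $C^\circ$ yields $\nabla^{up} \ba \nabla^{low} = \Delta^{low} \ba \Delta^{up}$, which has zero measure by what we just showed.

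The main obstacle is not in any one step but in recognizing that Lemma~\ref{most_imp_theorem} — which looks like a mere one-sided translation property along $\fb$ — together with the fact that both $\Delta^{up}$ and $\Delta^{low}$ are $C^\circ$-ideals, is exactly what is needed to force agreement of the two height functions, and therefore pin down the two sets up to a measure-zero boundary. Once this observation is made, the rest is bookkeeping via the results of Section~\ref{section_3}.
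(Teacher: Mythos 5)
Your proposal is correct and follows exactly the route the paper intends: the paper's proof is a one-line citation of \autoref{most_imp_theorem}, \autoref{ht_func_prop_for_closure_interior_boundary}, and \autoref{ht_func_equality}, and you have merely unpacked how those pieces combine, including the minor detail that $\nabla^{up}\setminus\nabla^{low}=\Delta^{low}\setminus\Delta^{up}$ after complementing in $C^\circ$.
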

\begin{proof}
Using \autoref{most_imp_theorem}, \autoref{ht_func_prop_for_closure_interior_boundary} and \autoref{ht_func_equality} we get the result.
\end{proof}
\begin{theorem} \label{limit_existence_weakly_graded_OK_domain}
Let $I_\bullet$ be a BBL weakly graded family. Then the limit
$$\lim_{n \to \infty}\frac{\ell(R/I_n)}{n^d}=[\kk_\nu:\kk]\vol(\nabla^{low})=[\kk_\nu:\kk]\vol(\nabla^{up})$$
exists and is finite.
\end{theorem}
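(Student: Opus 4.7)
The plan is to combine the sandwich bound of \autoref{volume_formula_for_BBL_family} with the measure-zero comparison of \autoref{delta_low_and_up_same_ht_fn}(4); once these two ingredients are in hand, the statement is essentially immediate, and the only substantive input from the weakly graded hypothesis has already been absorbed into \autoref{most_imp_theorem}.

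More concretely, I would first invoke \autoref{volume_formula_for_BBL_family}, which uses only the BBL hypothesis and gives
\[
[\kk_\nu:\kk]\vol(\nabla^{low})\;\leqslant\; \liminf_{n\to\infty}\frac{\ell(R/I_n)}{n^d}\;\leqslant\; \limsup_{n\to\infty}\frac{\ell(R/I_n)}{n^d}\;\leqslant\; [\kk_\nu:\kk]\vol(\nabla^{up}).
\]
Next I would invoke \autoref{delta_low_and_up_same_ht_fn}(4), which is precisely where the weakly graded hypothesis enters via \autoref{most_imp_theorem}: the key point is that for weakly graded $I_\bu$, an approximation argument shows $\fx\in\Delta^{low}$ implies $\fx+\epsilon\fb\in\Delta^{up}$ for every $\epsilon>0$, so $\Delta^{low}$ and $\Delta^{up}$ have the same height function and hence the same interior and closure. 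Passing to complements inside $C^\circ$ (modulo the boundary of $C$, which has measure zero), this yields $\vol(\nabla^{up}\ba\nabla^{low})=0$, i.e.\ $\vol(\nabla^{up})=\vol(\nabla^{low})$.

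Combining the two equalities collapses the sandwich, giving existence of the limit together with the claimed formula. For finiteness, I would remark that by \autoref{value_group_uniformly_bounded} the set $\nabla^{up}\subset C\cap\overline{H}$ is contained in a bounded set $B(\mathbf{0},C)$, so its Lebesgue measure is finite and $[\kk_\nu:\kk]<\infty$ by the OK condition. There is no real obstacle here: the entire technical content sits in \autoref{most_imp_theorem} (the valuation approximation using $a^{k-1}I_{n_1}\cdots I_{n_k}\subset I_{n_1+\cdots+n_k}$ together with \autoref{K_K_appl2}) and in the Fatou/reverse-Fatou computation inside \autoref{volume_formula_for_BBL_family}; the present theorem is a one-line assembly of these facts.
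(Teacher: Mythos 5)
Your proposal is correct and matches the paper's proof exactly: the paper also obtains the result by combining \autoref{volume_formula_for_BBL_family} (the Fatou/reverse-Fatou sandwich, valid for any BBL family) with \autoref{delta_low_and_up_same_ht_fn} (the measure-zero comparison coming from the weakly graded hypothesis via \autoref{most_imp_theorem}). Your added remark on finiteness is consistent with what is already asserted in \autoref{volume_formula_for_BBL_family} and correctly traced to the boundedness of $C\cap\overline{H}$.
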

\begin{proof}
Using \autoref{volume_formula_for_BBL_family} and \autoref{delta_low_and_up_same_ht_fn} we get the result.
\end{proof}
\begin{corollary}
Let $I_\bullet$ be a BBL weakly graded family of ideals, then
$$\lim_{n \to \infty}\frac{\#(\nu(R)\ba\nu(I_n))}{n^d}=\vol(\nabla^{up})$$
exists.
\end{corollary}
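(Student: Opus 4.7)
The statement follows almost immediately from results already assembled in the excerpt. My plan is to combine the general bounds from \autoref{result_for_BBL_with_h_equal_1} with the vanishing of $\vol(\nabla^{up}\ba\nabla^{low})$ guaranteed by the weakly graded assumption via \autoref{delta_low_and_up_same_ht_fn}.

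More precisely, I would first invoke \autoref{result_for_BBL_with_h_equal_1}, which applies to any BBL family (without the weakly graded hypothesis) and yields the double inequality
$$\vol(\nabla^{low})\leqslant \liminf_{n \to \infty}\frac{\#(\nu(R)\backslash\nu(I_n))}{n^d}\leqslant \limsup_{n \to \infty}\frac{\#(\nu(R)\backslash\nu(I_n))}{n^d} \leqslant \vol(\nabla^{up}).$$
At this point the only gap between $\liminf$ and $\limsup$ is controlled by $\vol(\nabla^{up}\ba\nabla^{low})$.

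Next I would use the weakly graded hypothesis. By \autoref{delta_low_and_up_same_ht_fn}(4), which is precisely the consequence of \autoref{most_imp_theorem} (the key input of the section: adding any positive multiple of $\fb$ to a point of $\Delta^{low}$ produces a point of $\Delta^{up}$), the set $\nabla^{up}\ba\nabla^{low}$ has Lebesgue measure zero. Therefore $\vol(\nabla^{low})=\vol(\nabla^{up})$, and the squeeze forces the limit to exist and to equal $\vol(\nabla^{up})$. No genuine obstacle arises here since the technical heart of the matter---the equality of the height functions of $\Delta^{low}$ and $\Delta^{up}$---is already contained in \autoref{most_imp_theorem} and was also what powered \autoref{limit_existence_weakly_graded_OK_domain}; this corollary is the $h=1$ analogue of that theorem, so the argument is parallel, only lighter since we work directly with $\nu(R)\ba\nu(I_n)=\nu^{c}(I_n)$ and do not need to sum over $1\leqslant h\leqslant [\kk_\nu:\kk]$ or subtract the $R$-contribution.
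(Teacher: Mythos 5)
Your proposal is correct and matches the paper's own argument in substance: both hinge on \autoref{result_for_BBL_with_h_equal_1} for the sandwich $\vol(\nabla^{low})\leqslant\liminf\leqslant\limsup\leqslant\vol(\nabla^{up})$ and on the weakly graded hypothesis to collapse the gap. The only cosmetic difference is that you cite \autoref{delta_low_and_up_same_ht_fn}(4) directly for $\vol(\nabla^{up}\ba\nabla^{low})=0$, whereas the paper instead cites \autoref{limit_existence_weakly_graded_OK_domain}, which encapsulates the same measure-zero fact (it too rests on \autoref{most_imp_theorem} via \autoref{delta_low_and_up_same_ht_fn}); either reference closes the argument identically.
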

\begin{proof}
Use \autoref{limit_existence_weakly_graded_OK_domain} and \autoref{result_for_BBL_with_h_equal_1}.
\end{proof}

There is an additional geometric property of the 
$C^\circ$-ideal associated with weakly graded families that may be of independent interest.
\begin{theorem}
Let $I_\bu$ be a BBL weakly graded family of ideals, then $\Delta^{up\circ}$ is a convex set.
\end{theorem}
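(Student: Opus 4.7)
My plan is to deduce the convexity of $\Delta^{up\circ}$ from the convexity of the associated height function $\varphi = \varphi_{\Delta^{up}} : H \to \RR_{\geq 0}$. This function exists by \autoref{ideal_prop_of_delta_up_and_low} and \autoref{ht_fun_existence}, and by \autoref{ht_func_prop_for_closure_interior_boundary}(1), $\Delta^{up\circ} = \{\fx + t\fb : \fx \in H,\ t > \varphi(\fx)\}$ is the strict epigraph of $\varphi$, which is convex precisely when $\varphi$ is convex. Since $\varphi$ is Lipschitz continuous (\autoref{ht_func_Lipschitz}), it suffices to prove midpoint convexity $\varphi(\tfrac{\fx_1+\fx_2}{2}) \leq \tfrac{\varphi(\fx_1)+\varphi(\fx_2)}{2}$. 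Via the translation $\fy_i = \fx_i + t_i \fb$ with $t_i > \varphi(\fx_i)$, the midpoint inequality reduces to the claim
$$(\ast)\quad \fy_1,\fy_2 \in \Delta^{up},\ \epsilon > 0 \ \Longrightarrow\ \tfrac{\fy_1+\fy_2}{2} + \epsilon\fb \in \Delta^{up};$$
applying $(\ast)$ gives $\varphi(\tfrac{\fx_1+\fx_2}{2}) \leq \tfrac{t_1+t_2}{2} + \epsilon$, whereupon letting $\epsilon \to 0^+$ and $t_i \to \varphi(\fx_i)^+$ (using continuity of $\varphi$) yields midpoint convexity.

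\textbf{Proof of $(\ast)$: approximating the midpoint.} I adapt the two-parameter argument of \autoref{most_imp_theorem} to two base points. Fix $0 \neq a \in R$ with $aI_mI_n \subset I_{m+n}$ and $0 \neq b \in I_1$ (which exists since $\fm^c \subset I_1$ by BBL), and set $\fv_1 = \nu(a)$, $\fv_2 = \nu(ab)$. Iterating the weakly graded relation yields $a^{k-1}I_{n_1}\cdots I_{n_k} \subset I_{n_1+\cdots+n_k}$, so in particular $a^{2j}I_m^{2j}I_r \subset I_{2mj+r}$ and $(ab)^r \in I_r$. Since $\fy_i \in \Delta^{up}$, \autoref{second_definition_of_delta_and_nabla}(5) provides $M$ such that $m[\fy_i]_m \in \nu(I_m)$ for all $m \geq M$ and $i = 1, 2$. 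Combining these, for any $n = 2mj + r$ with $m \geq M$, $j \geq 1$, and $1 \leq r < 2m$, the point
$$\tilde{\fx}_n := \frac{1}{n}\bigl(2j\fv_1 + jm[\fy_1]_m + jm[\fy_2]_m + r\fv_2\bigr)$$
lies in $\tfrac{1}{n}\nu(I_n)$. For fixed $m$, $\tilde{\fx}_n \to \tfrac{1}{m}\fv_1 + \tfrac{[\fy_1]_m + [\fy_2]_m}{2}$ as $n \to \infty$, and this in turn converges to $\tfrac{\fy_1+\fy_2}{2}$ as $m \to \infty$. Exactly as in \autoref{most_imp_theorem}, a careful choice of $m$ followed by $n$ produces, for every $\delta > 0$, some $N = N(\delta)$ with $\|\tilde{\fx}_n - \tfrac{\fy_1+\fy_2}{2}\| < \delta$ for all $n \geq N$.

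\textbf{Push-up and conclusion.} Consider $\fw_n := [\tfrac{\fy_1+\fy_2}{2} + \epsilon\fb]_n - \tilde{\fx}_n \in \tfrac{1}{n}\ZZ^d$; then $\fw_n \to \epsilon\fb \in C^\circ$. Picking a closed upward subcone $C' \subset C^\bu$ with $\epsilon\fb \in C'^\circ$ and applying \autoref{K_K_appl2}(1), we obtain $\fw_n \in \tfrac{1}{n}S$ for all $n$ large. Since $\nu(I_n)$ is an $S$-ideal and $n\tilde{\fx}_n \in \nu(I_n)$,
$$n\bigl[\tfrac{\fy_1+\fy_2}{2}+\epsilon\fb\bigr]_n = n\tilde{\fx}_n + n\fw_n \in \nu(I_n) \quad \text{for } n \gg 0,$$
establishing $\tfrac{\fy_1+\fy_2}{2} + \epsilon\fb \in \Delta^{up}$, hence $(\ast)$ and the theorem. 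The principal technical obstacle is precisely this \emph{uniform} approximation step, namely producing $\tilde{\fx}_n$ close to $\tfrac{\fy_1+\fy_2}{2}$ for every large $n$ rather than only along a subsequence; the essential role of the $(ab)^r$ trick is to absorb the residue $r = n - 2mj \in [1, 2m)$ through $\nu(I_r)$, as in \autoref{most_imp_theorem}.
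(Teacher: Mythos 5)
Your proof is correct, and it takes a genuinely different route from the paper's. The paper establishes the convex-combination claim first for rational points in $\Delta^{up\circ} \cap \QQ^d$ and rational weights $\lambda \in [0,1]$, clearing denominators $N_1, N_2$ so that all lattice memberships hold exactly (no rounding, no $\delta$-$N$ estimate), then extends to real points and weights by density, and finally absorbs the $\epsilon\fb$ slack via the $C^\circ$-ideal property of $\Delta^{up\circ}$. You instead put the height-function machinery of \autoref{section_3} front and center: $\Delta^{up\circ}$ is the strict epigraph of the Lipschitz function $\varphi_{\Delta^{up}}$, so its convexity reduces to midpoint convexity of $\varphi$, which is exactly your push-up claim $(\ast)$, and you prove $(\ast)$ directly for real points by a two-base-point adaptation of the $m$-then-$n$ approximation from \autoref{most_imp_theorem}. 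What the paper's route buys is an exact construction in the rational case; what yours buys is avoiding the general-$\lambda$ and density passages, at the cost of running the $\delta$-$N$ estimate. Since the paper's closing paragraph is itself an implicit height-function argument, yours is arguably a cleaner packaging of the same underlying idea. One place to add a sentence: in the step you label "exactly as in \autoref{most_imp_theorem}," it is worth noting that since $\fy_1, \fy_2 \in \Delta^{up}$ (rather than merely $\Delta^{low}$), \autoref{second_definition_of_delta_and_nabla}(5) gives $m[\fy_i]_m \in \nu(I_m)$ for \emph{all} large $m$, not just along a subsequence, so a single common $m$ serves both points and the resulting bound holds uniformly for all $n \geq N(\delta)$; this is precisely what makes $(\ast)$, and hence the uniform push-up, go through.
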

\begin{proof}
We fix $a \in R^\circ$ such that $aI_mI_n \subset I_{m+n}$. We may assume $\fv=\nu(a) \in C^\circ$; if $\fv \in \partial C$, we may choose $b \in R^\circ$ such that $\nu(b) \in C^\circ$ and replace $a$ by $ab$, and in this case $\nu(ab)=\nu(a)+\nu(b) \in C^\circ$.

We first choose $\fx,\fy \in \Delta^{up\circ} \cap \QQ^d$, $\lambda \in [0,1]\cap \QQ$, and claim that $\lambda\fx+(1-\lambda)\fy \in \overline{\Delta^{low}}$. We choose an integer $N_1>0$ such that $N_1\fx,N_1\fy \in \ZZ^d$, and another integer $N_2>0$ such that $N_2\lambda \in \ZZ$. By \autoref{second_definition_of_delta_and_nabla}, for large enough $n$,
$$\fx=[\fx]_{nN_1} \in 1/nN_1\nu(I_{nN_1}),\fy=[\fy]_{nN_1} \in 1/nN_1\nu(I_{nN_1})$$
or in other words,
$$nN_1\fx\in \nu(I_{nN_1}),nN_1\fy\in\nu(I_{nN_1})$$
By the weakly graded assumption,
$$N_2\lambda\cdot nN_1\fx+(N_2-N_2\lambda)\cdot nN_1\fy+N_2\fv \in \nu(I_{nN_1N_2})$$
which implies
$$\lambda\fx+(1-\lambda)\fy+1/nN_1\fv \in 1/nN_1N_2\nu(I_{nN_1N_2})$$
This also implies that for large $n$,
$$\lambda\fx+(1-\lambda)\fy+1/N_1\fv \in 1/nN_1N_2\nu(I_{nN_1N_2})$$
because $1/N_1\fv-1/nN_1\fv \in 1/nN_1S$ and $1/nN_1N_2\nu(I_{nN_1N_2})$ is an $1/nN_1S$-ideal.
Since $\{nN_1N_2\}$ is an infinite sequence,
$$\lambda\fx+(1-\lambda)\fy+1/N_1\fv \in \Delta^{low}$$
This is true for all $N_1$ such that $N_1\fx,N_1\fy \in \ZZ^d$, so it is also true for any multiple of $N_1$, so for any $n$,
$$\lambda\fx+(1-\lambda)\fy+1/nN_1\fv \in \Delta^{low}$$
But $1/nN_1\fv \to \f0$ as $n \to \infty$, so
$$\lambda\fx+(1-\lambda)\fy \in \overline{\Delta^{low}}$$
Now we choose $\fx,\fy \in \Delta^{up\circ}$, $\lambda \in [0,1]$, and claim that $\lambda\fx+(1-\lambda)\fy \in \overline{\Delta^{low}}$. Actually, since $\Delta^{up\circ}$ is open, $\Delta^{up\circ}\cap \QQ^d$ is dense in $\Delta^{up\circ}$; and also, $[0,1]\cap \QQ$ is dense in $[0,1]$. So we can choose $\fx_n,\fy_n \in \Delta^{up\circ}\cap \QQ^d$ and $\lambda_n \in [0,1]\cap \QQ$ such that $\fx_n \to \fx,\fy_n\to\fy,\lambda_n\to\lambda$ when $n \to \infty$. By the claim $\lambda_n\fx_n+(1-\lambda_n)\fy_n \in \overline{\Delta^{low}}$ and $\lambda_n\fx_n+(1-\lambda_n)\fy_n \to \lambda\fx+(1-\lambda)\fy$. So $\lambda\fx+(1-\lambda)\fy \in \overline{\Delta^{low}}$.

Finally if $\fx,\fy \in \Delta^{up\circ}$, then there exists $\epsilon>0$ such that $\fx-\epsilon\fb,\fy-\epsilon\fb \in \Delta^{up\circ}$. By the claim above, $\lambda\fx+(1-\lambda)\fy-\epsilon\fb \in \overline{\Delta^{low}}$, and $\overline{\Delta^{low}}=\overline{\Delta^{up}}$ by weakly graded assumption. so $\lambda\fx+(1-\lambda)\fy \in \Delta^{up\circ}$, as $\Delta^{up\circ}$ is a $C^{\circ}$ ideal.
\end{proof}

\section{$p$-families and OK valuations}\label{section_7}
In this section we will use OK valuation to give an alternate proof of the convergence for weakly $p$-families. Throughout this section, we assume $(R,\fm,\kk)$ is an OK domain of characteristic $p>0$; we do not assume $R$ is $F$-finite. We also follow  \autoref{general_setup} , and denote the dimension, the valuation, the value semigroup and the cone by $d=\dim R, \nu,S=\nu(R),C=\Cone(S)$ respectively.

Let $I_\bu$ be a BBL family of ideals indexed by powers of $p$. We will check the convergence of
$$F_{1/q\nu^{h,c}(I_q)\cap H,1/q}(\fx)$$
using the convergence of
$$F_{1/q\nu^c(I_q),1/q}(\fx).$$
Most theorems in this chapter are parallel to theorems in Chapter 6 and have same proofs. So we omit the proof.

\begin{proposition}
Let $I_\bu$ be a BBL family. Suppose $H$ is a truncating half space such that $1/q\nu^c(I_q) \subset C \cap H$ for all $q$. 
\begin{enumerate}
\item If $\fx \notin C \cap \overline{H}$, then for sufficiently large $q$, $F_{1/q\nu^c(I_q),1/q}(\fx)=0$.
\item If $\fx \notin \overline{B((C \cap \overline{H}),d)}$, then for any $q \geqslant 1$, $F_{1/q\nu^c(I_q),1/q}(\fx)=0$. 
\item $F_{1/q\nu^c(I_q),1/q}(\fx)$ is uniformly bounded and supported on a bounded set.
\end{enumerate}
\end{proposition}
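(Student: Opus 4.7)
The plan is to mirror the argument of \autoref{type_0_F_conv} almost verbatim, since the only difference between this proposition and that earlier one is that the index set has been changed from $\NN$ to powers of $p$. The three statements depend purely on the fact that $1/q\nu^c(I_q) \subset C \cap \overline{H}$ and that $\|\fx - [\fx]_q\| \leqslant d/q$; they make no use of any multiplicative structure on the family $I_\bu$. Therefore I expect the proof to require no new ideas, only a careful transcription with $n$ replaced by $q = p^e$.

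Concretely, I would proceed as follows. First I invoke \autoref{trunc_bound} to see that $C \cap \overline{H}$ is a bounded closed set. Next I observe, by \autoref{prop_of_[]_and_nabla}, that $F_{1/q\nu^c(I_q),1/q}(\fx) \neq 0$ is equivalent to $[\fx]_q \in 1/q\nu^c(I_q)$, which by hypothesis forces $[\fx]_q \in C \cap \overline{H}$. Combined with the estimate $\|\fx - [\fx]_q\| \leqslant d/q$, this gives
\[
\dist(\fx, C \cap \overline{H}) \leqslant d/q
\]
whenever $F_{1/q\nu^c(I_q),1/q}(\fx) \neq 0$.

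From this single inequality, the three conclusions follow immediately. For (1), if $\fx \notin C \cap \overline{H}$ then $\dist(\fx, C \cap \overline{H}) > 0$ since $C \cap \overline{H}$ is closed, so the inequality fails as soon as $q > d/\dist(\fx, C \cap \overline{H})$, i.e.\ for all sufficiently large $q = p^e$. For (2), if $\fx \notin \overline{B(C \cap \overline{H},d)}$ then $\dist(\fx, C \cap \overline{H}) > d$, which means $d/\dist(\fx, C \cap \overline{H}) < 1 \leqslant q$ for every $q \geqslant 1$, so the inequality already fails at $q = 1$. For (3), part (2) shows that all the functions $F_{1/q\nu^c(I_q),1/q}$ are supported in the bounded set $\overline{B(C \cap \overline{H},d)}$, and since each is a characteristic function its values lie in $\{0,1\}$, giving a uniform bound.

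There is no substantive obstacle; the only subtlety worth noting is that one must use $q \geqslant 1$ (the smallest power of $p$) in the second case, which works because $d/\dist(\fx, C \cap \overline{H}) < 1$ is strictly smaller than every $q = p^e$ with $e \geqslant 0$. I would present the argument for $F_{1/q\nu^c(I_q),1/q}$ only and remark that the proof for the halfspace-truncated version $F_{1/q\nu^{h,c}(I_q)\cap H,1/q}$ is identical, analogous to how \autoref{type_0_F_conv_with_H} follows from \autoref{type_0_F_conv}.
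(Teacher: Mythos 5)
Your proposal is correct and is exactly the argument the paper intends: the paper explicitly omits the proof of this proposition, stating that the results in Section~7 are parallel to those in Section~6 and share the same proofs, and your transcription of the proof of \autoref{type_0_F_conv} with $n$ replaced by $q=p^e$ is the intended reading. Your observation that the argument never uses any multiplicative structure on $I_\bu$ and relies only on $1/q\,\nu^c(I_q)\subset C\cap\overline{H}$ and $\|\fx-[\fx]_q\|\leqslant d/q$ is also the right way to see why the substitution is harmless.
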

\begin{proposition}
Let $I_\bu$ be a BBL family. Let $H=H_{<\fu}$ be a truncating half space.
\begin{enumerate}
\item If $\fx \notin C \cap \overline{H}$, then for sufficiently large $q$, $F_{1/q\nu^{h,c}(I_q)\cap H,1/q}(\fx)=0$.
\item If $\fx \notin \overline{B(C\cap \overline{H},d)}$, then for any $q \geqslant 1$, $F_{1/q\nu^{h,c}(I_q)\cap H,1/q}(\fx)=0$. 
\item $F_{1/q\nu^{h,c}(I_q)\cap H,1/q}(\fx)$ is uniformly bounded and supported on a bounded set.
\end{enumerate}
\end{proposition}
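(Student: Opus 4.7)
The plan is to imitate the proof of \autoref{type_0_F_conv_with_H} essentially verbatim, replacing the index $n\in\NN$ by $q=p^e$ throughout. The argument uses only three ingredients: (i) $\nu^{h,c}(I_q)\subset\nu(R)\subset C$ for every $h$ and $q$, so $1/q\nu^{h,c}(I_q)\cap H\subset C\cap\overline{H}$; (ii) the bound $\|\fx-[\fx]_q\|\leqslant d/q$ from \autoref{prop_of_[]_and_nabla}; and (iii) the fact that $C\cap\overline{H}$ is a bounded closed set by \autoref{trunc_bound}. None of these facts depend on the multiplicative structure of the indexing set, so the proof carries over with only cosmetic changes.

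First I would unwind the definition in \autoref{des_of_L}: by construction $F_{1/q\nu^{h,c}(I_q)\cap H,1/q}(\fx)=1$ exactly when $[\fx]_q\in 1/q\nu^{h,c}(I_q)\cap H$, and this forces $[\fx]_q\in C\cap\overline{H}$. Combining with (ii) yields $\dist(\fx,C\cap\overline{H})\leqslant d/q$. For part (1), if $\fx\notin C\cap\overline{H}$ then closedness gives $\delta:=\dist(\fx,C\cap\overline{H})>0$, and since the powers of $p$ are cofinal in $\NN$, any $q=p^e>d/\delta$ forces the function to vanish at $\fx$. For part (2), if $\fx\notin\overline{B(C\cap\overline{H},d)}$ then $\dist(\fx,C\cap\overline{H})>d\geqslant d/q$ already for every $q\geqslant 1$, so $F_{1/q\nu^{h,c}(I_q)\cap H,1/q}(\fx)=0$ for all $q$.

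Finally, part (3) follows immediately from part (2): the supports of all the functions $F_{1/q\nu^{h,c}(I_q)\cap H,1/q}$ are contained in the single bounded set $\overline{B(C\cap\overline{H},d)}$, and since each $F_{1/q\nu^{h,c}(I_q)\cap H,1/q}$ is a characteristic function its values lie in $\{0,1\}$, giving the uniform bound. There is no real obstacle here; the statement is a direct $q$-analogue of \autoref{type_0_F_conv_with_H}, which the authors themselves flag by remarking that the theorems in this section are parallel to those in the previous section and share the same proofs.
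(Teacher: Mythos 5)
Your proof is correct and matches the paper's approach: the authors explicitly note at the start of \autoref{section_7} that the propositions there are parallel to those in \autoref{section_6} and share the same proofs, and your argument faithfully reproduces the proof of \autoref{type_0_F_conv} and \autoref{type_0_F_conv_with_H} with $n$ replaced by $q=p^e$. Your three ingredients (the containment $1/q\nu^{h,c}(I_q)\cap H\subset C\cap\overline{H}$, the estimate $\|\fx-[\fx]_q\|\leqslant d/q$, and the compactness of $C\cap\overline{H}$) are exactly the ones the paper uses, and cofinality of $\{p^e\}$ in $\NN$ is indeed the only point where the change of indexing set matters.
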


We define some sets associated to the family $I_\bullet$:
$$\nabla^{up}=\{\fx \in C^\circ: F_{1/q\nu^c(I_q),1/q}(\fx)=1 \textup{ for infinitely many }q\}$$
$$\nabla^{low}=\{\fx \in C^\circ: F_{1/q\nu^c(I_q),1/q}(\fx)=1 \textup{ for almost all }q\}$$
$$\Delta^{up}=C^\circ\ba\nabla^{up}$$
$$\Delta^{low}=C^\circ\ba\nabla^{low}.$$

We have $\nabla^{low} \subset \nabla^{up} \subset C^\circ \cap\overline{H}$, and $C^\circ\ba\overline{H} \subset \Delta^{up} \subset \Delta^{low}$. Actually, for $\fx \in C^{\circ}\ba\overline{B(C \cap \overline{H},d)}$, $F_{1/q\nu^c(I_q),1/q}(\fx)=0$ for all $q$.

\begin{proposition}\label{second definition of delta and nabla p-family}
For a given BBL family $I_\bu$, we have:
\begin{enumerate}
\item $\nabla^{up}=\{\fx \in C^\circ: [\fx]_q \in 1/q\nu^c(I_q) \textup{ for infinitely many }q\}$;
\item $\nabla^{low}=\{\fx \in C^\circ: [\fx]_q \in 1/q\nu^c(I_q) \textup{ for }q \gg 0\}$;
\item $\Delta^{up}=\{\fx \in C^\circ: [\fx]_q \notin 1/q\nu^c(I_q) \textup{ for }q \gg 0\}$;
\item $\Delta^{low}=\{\fx \in C^\circ: [\fx]_q \notin 1/q\nu^c(I_q) \textup{ for infinitely many }q\}$;
\item $\Delta^{up}=\{\fx \in C^\circ: [\fx]_q \in 1/q\nu(I_q) \textup{ for }q \gg 0\}$;
\item $\Delta^{low}=\{\fx \in C^\circ: [\fx]_q \in 1/q\nu(I_q) \textup{ for infinitely many }q\}$.
\end{enumerate}
\end{proposition}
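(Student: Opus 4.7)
The plan is to adapt the proof of \autoref{second_definition_of_delta_and_nabla} almost verbatim, since the only change is that the index set $\NN$ is replaced by the powers of $p$, while all the underlying geometric facts about $[\fx]_n$ and $1/n\ZZ^d$ continue to hold when $n$ is restricted to the subsequence $q = p^e$.

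Parts (1)--(4) should follow directly from the definitions of $\nabla^{up}$, $\nabla^{low}$, $\Delta^{up}$, $\Delta^{low}$ given just above the statement, combined with \autoref{prop_of_[]_and_nabla}(3). Indeed, the latter says that for $L \subset 1/q\ZZ^d$, one has $\fx \in \nabla_{L,1/q}$ iff $[\fx]_q \in L$, applied here with $L = 1/q\nu^c(I_q)$. Thus the condition $F_{1/q\nu^c(I_q),1/q}(\fx) = 1$ translates into $[\fx]_q \in 1/q\nu^c(I_q)$, and taking complements in $C^\circ$ gives the statements for $\Delta^{up}$ and $\Delta^{low}$.

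For (5) and (6), the issue is converting ``$[\fx]_q \notin 1/q\nu^c(I_q)$'' into ``$[\fx]_q \in 1/q\nu(I_q)$'', which would be automatic from $\nu^c(I_q) = \nu(R) \setminus \nu(I_q)$ if we knew that $[\fx]_q \in 1/q\nu(R) = 1/qS$. This is precisely where \autoref{K_K_appl2} enters: since $\fx \in C^\circ$ is nonzero, we can pick a full-dimensional closed subcone $C' \subset C^\bu$ with $\fx \in C'^\circ$, take the constant sequence $\fy_k = \fx$, and specialize to $m_k = p^k$ (the second usage mentioned right after \autoref{K_K_appl2}) to conclude that for all sufficiently large $e$, $[\fx]_{p^e} \in 1/p^e S$. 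Once this membership holds, the equivalence $[\fx]_q \notin 1/q\nu^c(I_q) \Longleftrightarrow [\fx]_q \in 1/q\nu(I_q)$ is immediate, and inserting this into the characterizations from (3) and (4) yields (5) and (6) respectively.

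There is no real obstacle here; the proposition is a routine translation of the $\NN$-indexed case, and the only substantive input beyond unwinding definitions is the lattice-filling statement of \autoref{K_K_appl2} applied along the sparser sequence $q = p^e \to \infty$.
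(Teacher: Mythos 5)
Your argument is correct and is exactly the approach the paper intends: the paper explicitly omits the proof of this proposition, noting that results in \autoref{section_7} ``are parallel to theorems in Chapter 6 and have same proofs,'' and the proof of \autoref{second_definition_of_delta_and_nabla} that you are adapting goes through verbatim once $n$ is restricted to the subsequence $q=p^e$. In particular, your invocation of \autoref{K_K_appl2} with $\fy_k = \fx$ constant and $m_k = p^k$ is precisely the second of the two standard usages flagged immediately after that corollary.
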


\begin{lemma}
Using the notations above, then for any family $I_\bullet$ which is not necessarily BBL, the corresponding $\Delta^{up}$ and $\Delta^{low}$ are closed under adding a nonzero element in $C^\circ$. In other words, they are $C^\circ$-ideals.    
\end{lemma}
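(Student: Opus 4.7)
The proof proposal is to follow the exact template used in \autoref{ideal_prop_of_delta_up_and_low}, replacing indices $n \in \NN$ by $q = p^e$ ranging over powers of $p$. The key observation is that the BBL hypothesis and the weakly-graded structure were never actually used in that earlier proof: the argument only used (i) the characterization of $\Delta^{up}, \Delta^{low}$ in terms of $[\fx]_n \in 1/n\nu(I_n)$, (ii) the fact that each $\nu(I_n)$ is an $S$-ideal (which follows purely from $\nu$ being a valuation and $S = \nu(R)$), and (iii) the approximation \autoref{K_K_appl2}. All three of these ingredients are available here with $n$ replaced by $q$, using \autoref{second definition of delta and nabla p-family} in place of \autoref{second_definition_of_delta_and_nabla}.

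Concretely, I would argue as follows. Fix $\fx \in \Delta^{up}$ and $\f0 \neq \fx_0 \in C^\circ$. By \autoref{union_interior_closed_upward_cone}, choose a closed upward subcone $C' \subset C^\bu$ whose interior contains both $\fx$ and $\fx_0$. By part (5) of \autoref{second definition of delta and nabla p-family}, there is an infinite subsequence of powers of $p$, say $q_k \to \infty$, such that $[\fx]_{q_k} \in 1/q_k \nu(I_{q_k})$. Now apply \autoref{K_K_appl2}(1) to the bisequence $\fy_k = [\fx+\fx_0]_{q_k} - [\fx]_{q_k}$, noting that $\fy_k \to \fx_0 \in C'^\circ$ as $k \to \infty$ and $m_k = q_k \to \infty$. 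This yields $[\fx+\fx_0]_{q_k} - [\fx]_{q_k} \in 1/q_k S \cap C'^\circ$ for $k$ sufficiently large. Since $1/q_k \nu(I_{q_k})$ is a $1/q_k S$-ideal inside $1/q_k S$, we conclude $[\fx+\fx_0]_{q_k} \in 1/q_k \nu(I_{q_k})$ for infinitely many $k$, hence $\fx+\fx_0 \in \Delta^{up}$ by part (5) of \autoref{second definition of delta and nabla p-family} again.

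The proof for $\Delta^{low}$ is identical, except we use parts (6) of \autoref{second definition of delta and nabla p-family} on both the hypothesis and conclusion: the starting condition becomes $[\fx]_q \in 1/q\nu(I_q)$ for all sufficiently large $q$, and we need to upgrade this to $[\fx+\fx_0]_q \in 1/q\nu(I_q)$ for all sufficiently large $q$. This is where I would use \autoref{K_K_appl2}(2): applied to the bisequence $\fy_{k,l}$ (with the roles of $k$ and $l$ appropriately arranged so the convergence is uniform), we get a single threshold beyond which the containment $[\fx+\fx_0]_q - [\fx]_q \in 1/qS \cap C'^\circ$ holds for \emph{all} large $q$, not merely a subsequence.

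I expect no serious obstacle here, as the lemma is a formal consequence of the $S$-ideal property of $\nu(I_q)$ together with the lattice-point approximation, neither of which depends on any compatibility between the ideals $I_q$ at different $q$. The only thing to be careful about is ensuring the threshold $N$ obtained from \autoref{K_K_appl2} is uniform in the tail (rather than only giving a subsequence) when handling $\Delta^{low}$; this is precisely what part (2) of \autoref{K_K_appl2} is designed for.
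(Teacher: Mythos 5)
Your overall strategy --- import the proof of \autoref{ideal_prop_of_delta_up_and_low} verbatim with $n$ replaced by $q$ --- is exactly what the paper intends (the authors omit the proof precisely because of this parallelism), and the three ingredients you name are the right ones. However, you have swapped the characterizations of $\Delta^{up}$ and $\Delta^{low}$, and this creates a real gap in the $\Delta^{up}$ argument as written. Per \autoref{second definition of delta and nabla p-family}(5), $\Delta^{up}$ is the set where $[\fx]_q \in 1/q\nu(I_q)$ holds \emph{for all sufficiently large $q$}; per part (6), $\Delta^{low}$ is the set where it holds merely \emph{for infinitely many} $q$ (consistent with $\Delta^{up}\subset\Delta^{low}$). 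You extract only an infinite subsequence from the $\Delta^{up}$ hypothesis and conclude that $[\fx+\fx_0]_{q_k}\in 1/q_k\nu(I_{q_k})$ for infinitely many $k$, hence $\fx+\fx_0\in\Delta^{up}$ "by part (5)." But "for infinitely many $q$" is the $\Delta^{low}$ membership criterion, not the $\Delta^{up}$ one, so as stated your argument only places $\fx+\fx_0$ in $\Delta^{low}$. The fix is not to discard the strength of the hypothesis: $[\fx]_q\in 1/q\nu(I_q)$ for $q\gg 0$ combined with the single threshold from \autoref{K_K_appl2}(1) gives $[\fx+\fx_0]_q\in 1/q\nu(I_q)$ for $q\gg 0$, which is exactly what $\Delta^{up}$ requires.

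Symmetrically, your $\Delta^{low}$ argument begins "the starting condition becomes $[\fx]_q\in 1/q\nu(I_q)$ for all sufficiently large $q$," which is the $\Delta^{up}$ condition and too strong, and you then call on the bisequence version \autoref{K_K_appl2}(2) to obtain uniformity. Neither is needed: for $\Delta^{low}$ both the hypothesis and the conclusion are only "for infinitely many $q$," and \autoref{K_K_appl2}(1) already produces a single threshold $N$; intersecting the infinite set of exponents where $[\fx]_q\in 1/q\nu(I_q)$ holds with the cofinite tail $q\geqslant N$ leaves an infinite set, which is all the $\Delta^{low}$ conclusion requires. Part (2) of \autoref{K_K_appl2} is for genuine bisequences $\fy_{k,l}$ with convergence uniform in $l$; there is only one sequence here, and that lemma plays no role. (You also refer to the single sequence $\fy_k$ as a "bisequence" while citing part (1) --- a minor but telling slip.) With these labels straightened out your argument coincides with the paper's intended proof.
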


\begin{proposition}
Under the notations above,
\begin{enumerate}
\item $\nabla^{low} \subset \nabla^{up} \subset C^\circ\cap \overline{H}$;
\item $C^\circ\ba \overline{H} \subset \Delta^{up} \subset \Delta^{low}$;
\item $\nabla^{low\circ} \subset \nabla^{up\circ} \subset C^\circ \cap H^\circ$;
\item $C^\circ\cap H^\circ \subset \nabla^{low\circ} \cup (\partial\nabla^{low}\cap C^\circ) \cup (\Delta^{low\circ}\cap\nabla^{up\circ}) \cup (\partial \nabla^{up}\cap C^\circ) \cup (\Delta^{up\circ} \cap H^\circ)$;
\item $\partial\nabla^{low}\cap C^\circ =\partial \Delta^{low}\cap C^\circ$, $\partial\nabla^{up}\cap C^\circ= \partial \Delta^{up}\cap C^\circ$.
\end{enumerate}   
\end{proposition}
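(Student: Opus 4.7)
The plan is to mimic the proof of the analogous $\NN$-indexed statement (Proposition \ref{nabla_up_low_containment_relations}) essentially verbatim, since the arguments only rely on the fact that the defining condition involves a sequence indexed by $q=p^e$, and all the required limiting/approximation lemmas (\autoref{type_0_F_conv}, \autoref{K_K_appl2}, etc.) apply equally well along such a sequence. First I would establish (1): by the preceding analogue of \autoref{type_0_F_conv} for $p$-sequences, if $\fx \notin C\cap\overline{H}$ then $F_{1/q\nu^c(I_q),1/q}(\fx)=0$ for $q\gg 0$, which forces $\fx\notin \nabla^{up}$, giving $\nabla^{up}\subset C^\circ \cap \overline{H}$; the containment $\nabla^{low}\subset\nabla^{up}$ is immediate from the definitions (``for $q\gg 0$'' implies ``for infinitely many $q$''). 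Part (2) is then obtained by taking complements inside $C^\circ$.

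For (3) I would take the interior of the containments in (1); since $\nabla^{up}\subset C^\circ\cap\overline{H}$ and $(C^\circ\cap\overline{H})^\circ = C^\circ\cap H^\circ$ (because $C^\circ$ is open and the interior of $\overline{H}=\overline{H_{<\fu}}$ in $\RR^d$ is $H^\circ = H_{<\fu}$), taking interiors yields $\nabla^{low\circ}\subset \nabla^{up\circ}\subset C^\circ\cap H^\circ$.

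Part (4) is the combinatorial step. I would filtrate $C^\circ \cap H^\circ$ by the increasing chain
\[
\emptyset \subset \nabla^{low\circ} \subset \overline{\nabla^{low}}\cap \nabla^{up\circ} \subset \nabla^{up\circ} \subset \overline{\nabla^{up}}\cap C^\circ \cap H^\circ \subset C^\circ \cap H^\circ,
\]
and observe that the five consecutive set-differences land in $\nabla^{low\circ}$, $\partial\nabla^{low}\cap C^\circ$, $\Delta^{low\circ}\cap \nabla^{up\circ}$, $\partial\nabla^{up}\cap C^\circ$, and $\Delta^{up\circ}\cap H^\circ$, respectively. This gives the claimed covering.

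Finally for (5), since $\Delta^{low}=C^\circ \ba \nabla^{low}$ and $\Delta^{up}=C^\circ\ba \nabla^{up}$, their boundaries inside the open set $C^\circ$ agree with those of their complements, i.e., $\partial\nabla^{low}\cap C^\circ = \partial\Delta^{low}\cap C^\circ$ and similarly for $\nabla^{up}, \Delta^{up}$. There is no real obstacle here; the only point to keep in mind is that throughout one must be careful that interiors and boundaries are taken in $\RR^d$ (so that $C^\circ$ is open in $\RR^d$), which is the convention already fixed in \autoref{section_2}. The argument is otherwise a direct transcription from \autoref{nabla_up_low_containment_relations}.
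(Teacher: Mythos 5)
Your proposal is correct and matches the paper's approach exactly: the paper itself omits the proof of this proposition, stating that the results in \autoref{section_7} are parallel to those in \autoref{section_6} with the same proofs, and your argument is precisely a transcription of the proof of \autoref{nabla_up_low_containment_relations} with $n$ replaced by $q=p^e$, including the same filtration of $C^\circ\cap H^\circ$ and the observation that $(C^\circ\cap\overline{H})^\circ = C^\circ\cap H^\circ$ because $C^\circ$ is open.
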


\begin{lemma}
Take any $1 \leqslant h \leqslant [\kk_\nu:\kk]$. Under the notations above:  
\begin{enumerate}
\item For $\fx \notin C \cap \overline{H}$,
$$\lim_{n \to \infty}F_{1/q\nu^{h,c}I_q\cap H,1/q}(\fx)=0.$$
\item For $\fx \in \nabla^{low\circ}\subset H^\circ$,
$$\lim_{n \to \infty}F_{1/q\nu^{h,c}I_q\cap H,1/q}(\fx)=1.$$
\item For $\fx \in \Delta^{up\circ}\cap H^\circ$,
$$\lim_{n \to \infty}F_{1/q\nu^{h,c}I_q\cap H,1/q}(\fx)=0.$$
\item For $\fx \in \Delta^{low\circ}\cap \nabla^{up\circ}$, the limit
$$\lim_{n \to \infty}F_{1/q\nu^{h,c}I_q\cap H,1/q}(\fx)$$
is not clear.
\item For $\fx$ not lying in all the sets above, then $\fx \in E=(\partial C \cup \partial H \cup \partial \Delta^{up} \cup\partial \Delta^{low})\cap(C \cap \overline{H})$. The set $E$ is bounded and has zero measure.
\end{enumerate}
\end{lemma}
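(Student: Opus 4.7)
The plan is to transport the arguments of Subsection~6.2 (the three lemmas preceding Corollary \autoref{char_nabla_up_and_low_bound}) from the $\NN$-indexed setting to the $p$-power-indexed setting, replacing $n$ by $q=p^e$ throughout. This transfer is legitimate because the descriptions of $\nabla^{low},\nabla^{up},\Delta^{up},\Delta^{low}$ recorded in \autoref{second definition of delta and nabla p-family} are formally identical to their $\NN$-indexed counterparts in \autoref{second_definition_of_delta_and_nabla}, and the key tools invoked—namely \autoref{approx_v^h_to_v} together with \autoref{approx_v^h_to_v remark}, and \autoref{K_K_appl2}—are insensitive to whether one lets $n\to\infty$ or $q\to\infty$ along powers of $p$ (the second variant of \autoref{K_K_appl2} already permits subsequences $m_k=p^k$).

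For part (1), the containment $\nu^{h,c}(I_q)\subset\nu(R)\subset C$ forces $\tfrac{1}{q}\nu^{h,c}(I_q)\cap H\subset C\cap\overline{H}$; so if the function equals $1$ at $\fx$ then $[\fx]_q\in C\cap\overline{H}$, giving $\dist(\fx,C\cap\overline{H})\leqslant d/q$, which fails for $q\gg 0$ once $\fx\notin C\cap\overline{H}$. For part (2), since $\nu^{(h)}(I_q)\subset\nu(I_q)$ we have $\nu^c(I_q)\subset\nu^{h,c}(I_q)$, so $\fx\in\nabla^{low\circ}\subset H^\circ$ yields $[\fx]_q\in\tfrac{1}{q}\nu^{h,c}(I_q)$ and $[\fx]_q\in H$ for $q\gg 0$. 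Part (4) asserts nothing. For part (5), the complement in $C\cap\overline{H}$ of the sets appearing in (1)--(3) is contained in $(\partial C\cup\partial H\cup\partial\Delta^{up}\cup\partial\Delta^{low})\cap(C\cap\overline{H})$, which is bounded by \autoref{trunc_bound} and of measure zero because each $\partial\Delta$ is the graph of a continuous height function by \autoref{ht_func_prop_for_closure_interior_boundary}, $\partial C$ admits the same description, and $\partial H$ is a hyperplane.

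The essential step is part (3). By \autoref{approx_v^h_to_v} together with \autoref{approx_v^h_to_v remark}, fix $\fv\in S\cap C^\circ$ satisfying $\nu(M)+\fv\subset\nu^{(h)}(M)\subset\nu(M)$ for every $R$-submodule $M\subset\FF$, and choose a closed upward subcone $C'\subset C^\bu$ containing $\fx$ and $\fv$ in its interior. Openness of $\Delta^{up}$ (a $C^\circ$-ideal described by the graph of a continuous height function) lets us pick $N_1\gg 0$ with $\fx-\tfrac{1}{N_1}\fv\in\Delta^{up}$; then \autoref{second definition of delta and nabla p-family}(5) yields $\lfloor q(\fx-\tfrac{1}{N_1}\fv)\rfloor\in\nu(I_q)$ for $q\gg 0$, which upon adding $\fv$ lands in $\nu^{(h)}(I_q)$, i.e.\ $[\fx-\tfrac{1}{N_1}\fv+\tfrac{1}{q}\fv]_q\in\tfrac{1}{q}\nu^{(h)}(I_q)$. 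Setting
$$\fy_q=[\fx]_q-[\fx-\tfrac{1}{N_1}\fv+\tfrac{1}{q}\fv]_q,$$
one checks $\fy_q\to\tfrac{1}{N_1}\fv\in C'^\circ\setminus\{\mathbf 0\}$, so \autoref{K_K_appl2} applied along $m_k=p^k$ forces $\fy_q\in\tfrac{1}{q}S$ for $q\gg 0$. Since $\tfrac{1}{q}\nu^{(h)}(I_q)$ is a $\tfrac{1}{q}S$-ideal, we conclude $[\fx]_q\in\tfrac{1}{q}\nu^{(h)}(I_q)$, equivalently $[\fx]_q\notin\tfrac{1}{q}\nu^{h,c}(I_q)$; combined with $[\fx]_q\in H$ (from $\fx\in H^\circ$), the function vanishes for $q\gg 0$.

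The main obstacle is nothing conceptually new beyond the weakly-graded case: one must combine the two-sided approximation $\nu(M)+\fv\subset\nu^{(h)}(M)\subset\nu(M)$ with the $S$-ideal structure of $\nu^{(h)}(I_q)$ to absorb the perturbation $\tfrac{1}{N_1}\fv$ through the semigroup-density statement of \autoref{K_K_appl2}. The only care required is that this density argument be invoked along the $p$-power subsequence $m_k=p^k$ rather than $m_k=k$, which is explicitly accommodated by the second part of that corollary.
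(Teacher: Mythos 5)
Your proposal is correct and reproduces exactly what the paper intends: the paper explicitly omits proofs in Section~7 with the remark that they ``are parallel to theorems in Chapter~6 and have same proofs,'' and you have faithfully transported the three lemmas preceding \autoref{char_nabla_up_and_low_bound} from the $\NN$-indexed to the $q=p^e$-indexed setting, invoking the $m_k=p^k$ variant of \autoref{K_K_appl2} where needed.

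One small imprecision in wording worth flagging in part~(3): you write that ``openness of $\Delta^{up}$'' lets you choose $N_1$ with $\fx-\tfrac{1}{N_1}\fv\in\Delta^{up}$, but $\Delta^{up}$ itself is not open in general. What is being used, as in the Section~6 argument, is that the \emph{hypothesis} places $\fx$ in the interior $\Delta^{up\circ}$, and since $\fx-\tfrac{1}{N_1}\fv\to\fx$ as $N_1\to\infty$, for large $N_1$ the point lies in a neighborhood of $\fx$ contained in $\Delta^{up}$. The parenthetical invocation of the continuous height function is unnecessary for this step. This does not affect the validity of the argument, which is otherwise identical to the paper's.
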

\begin{corollary}
For any $\fx \in \mathbb{R}^d$ and $1 \leqslant h \leqslant [\kk_\nu:\kk]$,
$$\chi(\nabla^{low\circ})(\fx) \leqslant \displaystyle \liminf_{n \to \infty}F_{1/q\nu^{h,c}(I_q)\cap H,1/q}(\fx) \leqslant \displaystyle \limsup_{n \to \infty}F_{1/q\nu^{h,c}(I_q)\cap H,1/q}(\fx) \leqslant \chi(\nabla^{up\circ})(\fx)+\chi(E)(\fx).$$
\end{corollary}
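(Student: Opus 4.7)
The plan is to deduce the corollary directly from the preceding lemma, which already pins down the pointwise limit of $F_{1/q\nu^{h,c}(I_q)\cap H,1/q}(\fx)$ in four of the five regions that partition $\RR^d$ (the ambiguous region being $\Delta^{low\circ}\cap\nabla^{up\circ}$). Since every $F_{1/q\nu^{h,c}(I_q)\cap H,1/q}$ is a characteristic function and therefore takes only the values $0$ and $1$, both the $\liminf$ and $\limsup$ lie in $[0,1]$. Together with the containments $\nabla^{low\circ}\subset\nabla^{up\circ}$ and the basic set-theoretic facts $\nabla^{low\circ}\cap\Delta^{low}=\emptyset$ and $\nabla^{up\circ}\cap\Delta^{up}=\emptyset$, this will make the verification a short case-check.

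Concretely, I would split $\RR^d$ into the five pieces listed in the previous lemma and argue as follows. If $\fx\notin C\cap\overline{H}$, both extremes equal $0$ and $\chi(\nabla^{low\circ})(\fx)=0$, while the upper bound is nonnegative, so the inequality is trivial. If $\fx\in\nabla^{low\circ}$ the limit equals $1$ and $\chi(\nabla^{low\circ})(\fx)=1$, and since $\nabla^{low\circ}\subset\nabla^{up\circ}$ one has $\chi(\nabla^{up\circ})(\fx)=1$, so both inequalities are equalities. If $\fx\in\Delta^{up\circ}\cap H^\circ$ the limit is $0$, and because $\Delta^{up\circ}$ and $\nabla^{low\circ}$ are disjoint, $\chi(\nabla^{low\circ})(\fx)=0$, giving the lower bound; the upper bound is automatic.

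For $\fx\in\Delta^{low\circ}\cap\nabla^{up\circ}$ the pointwise limit may not exist, but $\chi(\nabla^{low\circ})(\fx)=0$ (since $\Delta^{low\circ}$ is disjoint from $\overline{\nabla^{low}}\supset\nabla^{low\circ}$) and $\chi(\nabla^{up\circ})(\fx)=1$, so the inequality $0\leqslant\liminf\leqslant\limsup\leqslant 1$ suffices. Finally, for $\fx$ in the exceptional set $E$, one has $\chi(E)(\fx)=1$, so the right-hand side is at least $1$ and the bound follows from the trivial estimate $F_{1/q\nu^{h,c}(I_q)\cap H,1/q}(\fx)\leqslant 1$; the lower bound again uses $\chi(\nabla^{low\circ})(\fx)\leqslant 1$ together with $\nabla^{low\circ}\cap E\subset\nabla^{low\circ}\cap\partial\nabla^{low}=\emptyset$, which forces $\chi(\nabla^{low\circ})(\fx)=0$.

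No step here is delicate: the work has already been done in establishing the five-region dichotomy in the preceding lemma (which is where the use of \autoref{approx_v^h_to_v} and \autoref{K_K_appl2} was needed). The main thing to be careful about is making sure the two indicator functions on the right-hand side really do dominate the $\limsup$ on the boundary set $E$, which is handled by the crude bound $F_{1/q\nu^{h,c}(I_q)\cap H,1/q}(\fx)\leqslant 1=\chi(E)(\fx)$ on $E$.
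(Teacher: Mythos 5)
Your proposal is correct and follows essentially the same route as the paper: the paper itself proves this corollary (in both \autoref{section_6} and \autoref{section_7}, where the Section 7 version is stated to have the same proof as Section 6) by exactly the five-region case-check you describe, derived from the preceding pointwise-convergence summary. One small imprecision: in your final case you write $\nabla^{low\circ}\cap E\subset\nabla^{low\circ}\cap\partial\nabla^{low}$, but $E$ also contains $\partial\Delta^{up}$, so the correct containment is $\nabla^{low\circ}\cap E\subset\nabla^{low\circ}\cap(\partial\nabla^{low}\cup\partial\nabla^{up})$; this is still empty because $\nabla^{low\circ}\subset\nabla^{up\circ}$ is open and hence misses both boundaries, so the conclusion $\chi(\nabla^{low\circ})(\fx)=0$ on $E$ is unaffected. (Even more simply, since $\nabla^{low\circ}$ is one of the earlier cases and the five regions are disjoint, any $\fx\in E$ automatically has $\chi(\nabla^{low\circ})(\fx)=0$.)
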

\begin{theorem} \label{volume_formula_for_BBL_family_indexed_by_p}
Let $I_\bullet$ be a BBL family. Then:
$$[\kk_\nu:\kk]\vol(\nabla^{low})\leqslant \displaystyle \liminf_{n \to \infty}\frac{\ell(R/I_q)}{q^d}\leqslant \displaystyle \limsup_{n \to \infty}\frac{\ell(R/I_q)}{q^d} \leqslant [\kk_\nu:\kk]\vol(\nabla^{up}).$$
In particular, when $\vol(\nabla^{up}\ba\nabla^{low})=0$, the limit
$$\lim_{n \to \infty}\frac{\ell(R/I_q)}{q^d}=[\kk_\nu:\kk]\vol(\nabla^{low})$$
exists.
\end{theorem}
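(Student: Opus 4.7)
The plan is to mirror the argument used to establish \autoref{volume_formula_for_BBL_family} in the general-characteristic case, replacing the index $n \in \NN$ by $q = p^e$ throughout. Since the parallel combinatorial statements (pointwise bounds on $\liminf$ and $\limsup$ of the characteristic functions $F_{1/q\nu^{h,c}(I_q) \cap H, 1/q}$, bounded support, uniform boundedness, the structure of the exceptional set $E$) have already been assembled earlier in this section, most of the setup is in place; what remains is to feed these ingredients into the length formula and integrate.

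First I would pick $\fu \in \ZZ^d$ such that $R \cap \FF_{\geqslant q\fu} \subset I_q$ for all $q$, which exists because $I_\bu$ is BBL and $\nu$ is OK. Setting $H = H_{<\fu}$, the analogue of \autoref{length_formula_compl} (with $q$ in place of $n$) yields
\[
\ell(R/I_q) = \sum_{1 \leqslant h \leqslant [\kk_\nu:\kk]} \#(1/q \, \nu^{h,c}(I_q) \cap H) - \sum_{1 \leqslant h \leqslant [\kk_\nu:\kk]} \#(1/q \, \nu^{h,c}(R) \cap H).
\]
Rewriting each cardinality as the integral of the corresponding indicator cube-sum function gives
\[
\frac{\ell(R/I_q)}{q^d} = \sum_{h} \int_{\RR^d} F_{1/q\nu^{h,c}(I_q) \cap H,\, 1/q}(\fx)\,d\fx \; - \; \sum_{h} \int_{\RR^d} F_{1/q\nu^{h,c}(R) \cap H,\, 1/q}(\fx)\,d\fx.
\]

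Next I would apply Fatou's lemma (\autoref{Fatou's lemma}) to the first sum to produce the lower bound $\vol(\nabla^{low\circ}) = \vol(\nabla^{low})$ per value of $h$, using the already-proved pointwise inequality $\liminf_q F_{1/q\nu^{h,c}(I_q)\cap H, 1/q}(\fx) \geqslant \chi(\nabla^{low\circ})(\fx)$. For the upper bound I would invoke reverse Fatou (\autoref{reverse Fatou's lemma}), which applies because the functions are uniformly bounded by $1$ and supported inside the fixed bounded set $\overline{B(C \cap \overline H, d)}$; this produces $\vol(\nabla^{up\circ}) + \vol(E) = \vol(\nabla^{up})$ per $h$, since $E$ has zero measure. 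Finally, specializing to the constant family $I_q = R$ kills $\nabla^{up}$ and $\nabla^{low}$, so the $R$-terms above satisfy $\lim_q \int F_{1/q\nu^{h,c}(R)\cap H, 1/q}(\fx)\,d\fx = 0$. Combining everything yields the sandwich
\[
[\kk_\nu:\kk]\vol(\nabla^{low}) \leqslant \liminf_q \frac{\ell(R/I_q)}{q^d} \leqslant \limsup_q \frac{\ell(R/I_q)}{q^d} \leqslant [\kk_\nu:\kk]\vol(\nabla^{up}),
\]
and the ``in particular'' clause is immediate from $\vol(\nabla^{up} \ba \nabla^{low}) = 0$.

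There is no real obstacle here: the only subtlety worth checking is that the supports of $F_{1/q\nu^{h,c}(I_q)\cap H, 1/q}$ are uniformly contained in a bounded set, so that reverse Fatou is legitimate, and this is exactly the content of the parallel statements already proved at the start of this section. Everything else is a verbatim transcription of the $\NN$-indexed proof with $n$ replaced by $q = p^e$.
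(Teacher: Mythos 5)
Your proposal is correct and follows exactly the route the paper intends: the paper omits the proof of \autoref{volume_formula_for_BBL_family_indexed_by_p}, noting at the start of \autoref{section_7} that the results there are parallel to those of \autoref{section_6} and have the same proofs, and your argument is a faithful transcription of the proof of \autoref{volume_formula_for_BBL_family} with $n$ replaced by $q = p^e$. All the supporting ingredients you cite (the pointwise bounds on the cube-indicator functions, uniform boundedness and compact support, the length formula via \autoref{length_formula_compl}, and Fatou/reverse Fatou) are invoked correctly.
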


Now comes the key step in our proof:
\begin{lemma} \label{key_step_weak_p family}
Using the notation above, and assume $I_\bullet$ is a weakly $p$-family. Then for any $\fx \in \Delta^{low}$ and $\epsilon>0$, then $\fx+\epsilon\fb \in \Delta^{up}$.    
\end{lemma}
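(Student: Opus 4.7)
The plan is to imitate the structure of \autoref{most_imp_theorem}, with the weakly graded subadditivity $aI_mI_n \subset I_{m+n}$ replaced by an iterated form of the weakly $p$-family condition. The first step is to establish by induction on $k$ that
$$c^{(p^k - 1)/(p-1)}\, I_q^{[p^k]} \subset I_{p^k q}$$
for every $q = p^e$ and every $k \geqslant 0$. The inductive step is obtained by applying Frobenius to the level-$k$ inclusion (yielding $c^{(p^{k+1}-p)/(p-1)} I_q^{[p^{k+1}]} \subset I_{p^k q}^{[p]}$), then multiplying by $c$ and invoking the weakly $p$-family hypothesis $cI_{p^k q}^{[p]} \subset I_{p^{k+1} q}$. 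Applying $\nu$ translates this into the statement: for any $\fw \in \nu(I_q)$,
$$\tfrac{p^k - 1}{p-1}\,\nu(c) + p^k \fw \;\in\; \nu(I_{p^k q}).$$

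Next, fix $\fx \in \Delta^{low}$ and $\epsilon > 0$. Using \autoref{union_interior_closed_upward_cone}, I would choose a closed upward subcone $C' \subset C^\bu$ whose interior contains both $\fx$ and $\epsilon \fb$. By \autoref{second definition of delta and nabla p-family} there is an infinite sequence $q_j = p^{e_j}$ with $[\fx]_{q_j} \in \tfrac{1}{q_j}\nu(I_{q_j})$. For each large $Q = p^E$, I would select an index $j = j(Q)$ with $q_j \leqslant Q$ and $q_{j(Q)} \to \infty$ as $Q \to \infty$ (for instance, the largest $q_j \leqslant \sqrt{Q}$). Writing $Q = p^k q_j$ and applying the iterated containment to $\fw = q_j [\fx]_{q_j}$ yields
$$\tilde{\fx}_Q \;:=\; [\fx]_{q_j} + \tfrac{p^k - 1}{(p-1)Q}\,\nu(c) \;\in\; \tfrac{1}{Q}\,\nu(I_Q).$$
Since $\|[\fx]_{q_j} - \fx\| \leqslant d/q_j$ and the coefficient $\tfrac{p^k - 1}{(p-1)Q} \leqslant \tfrac{1}{(p-1)q_j}$ both tend to zero, one gets $\tilde{\fx}_Q \to \fx$.

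The remainder is the standard dominance argument. Noting that $q_j \mid Q$ and $(p^k - 1)/(p-1) \in \ZZ$, the difference $\fz_Q := [\fx + \epsilon \fb]_Q - \tilde{\fx}_Q$ lies in $\tfrac{1}{Q}\ZZ^d$ and tends to $\epsilon \fb \in C'^\circ$. By \autoref{K_K_appl} with $\delta = \tfrac{1}{2}\epsilon\|\fb\|$, we have $\fz_Q \in \tfrac{1}{Q} S$ for all $Q \gg 0$. Since $\tfrac{1}{Q}\nu(I_Q)$ is a $\tfrac{1}{Q} S$-ideal, it follows that $[\fx + \epsilon \fb]_Q = \tilde{\fx}_Q + \fz_Q \in \tfrac{1}{Q}\,\nu(I_Q)$ for all $Q \gg 0$, so $\fx + \epsilon \fb \in \Delta^{up}$ by \autoref{second definition of delta and nabla p-family}.

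The principal delicacy I anticipate is the dual limit in the choice of $q_{j(Q)}$: if $q_j$ were held fixed while $Q \to \infty$, the error term $\tfrac{p^k - 1}{(p-1)Q}\nu(c)$ would converge to the \emph{nonzero} vector $\tfrac{1}{(p-1)q_j}\nu(c)$ instead of tending to zero. It is therefore essential that $q_{j(Q)}$ be driven to infinity simultaneously with $Q$, in direct analogy with the requirement that $m$ grow together with $N$ in the proof of \autoref{most_imp_theorem}. No new ingredient beyond this bookkeeping is needed, because $R$ is characteristic $p$ and the Frobenius power operation $I \mapsto I^{[p]}$ supplies precisely the combinatorial scaling by $p$ on valuations that replaces the additive decomposition $N = q_{N,m} m + r_{N,m}$ used in the weakly graded case.
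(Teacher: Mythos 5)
Your proposal is correct and mirrors the paper's own proof in all essentials: the same iterated weakly-$p$-family inclusion $c^{(p^k-1)/(p-1)}I_q^{[p^k]}\subset I_{p^kq}$, the same translation to valuations, the same simultaneous double limit ($q_{j(Q)}\to\infty$ together with $Q$) so that the error term $\tfrac{p^k-1}{(p-1)Q}\nu(c)$ tends to zero, and the same lattice-approximation step to absorb the difference $[\fx+\epsilon\fb]_Q-\tilde{\fx}_Q$ into $\tfrac{1}{Q}S$. The only (cosmetic) deviations are that you keep the exact exponent $(p^k-1)/(p-1)$ where the paper loosens it to $p^k$, and you invoke \autoref{K_K_appl} directly rather than \autoref{K_K_appl2}, but these are interchangeable here.
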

\begin{proof}
We fix $0 \neq a \in R$ such that $aI_q^{[p]} \subset I_{pq}$ for all $q \geqslant 1$. Then by induction we can prove for any $q_1,q_2$
$$a^{(q_2-1)/(p-1)}I_{q_1}^{[q_2]}=a^{1+p+\ldots+q_2/p}I_{q_1}^{[q_2]}\subset I_{q_1q_2}.$$
This implies $a^{q_2}I_{q_1}^{[q_2]} \subset I_{q_1q_2}$. Let $\fv_1=\nu(a)$, then
$$q_2\fv_1+q_2\nu(I_{q_1}) \subset \nu(I_{q_1q_2}), \fv_1/q_1+1/q_1\nu(I_{q_1}) \subset 1/(q_1q_2)\nu(I_{q_1q_2}).$$
Now we fix a closed upward cone $C' \subset C^\bu$ such that $\fx \in C'^\circ$. By \autoref{second definition of delta and nabla p-family}, $\fx \in \Delta^{low}$ implies that for any large enough $q_k$, $[\fx]_{q_k} \in 1/q_k\nu(I_{q_k})$. This implies for $q \geq q_k$, $1/q_k\fv_1+[\fx]_{q_k} \subset 1/q\nu(I_q)$. Note that we can choose arbitrarily large $k$ and as $k \to \infty$, $q_k \to \infty$ and $1/q_k\fv_1+[\fx]_{q_k} \to \fx$. Therefore there exists a sequence $\{\tilde{\fx}_q\}$ such that $\tilde{\fx}_q \in 1/q\nu(I_q)$ and $\tilde{\fx}_q \to \fx$ as $q \to \infty$. Therefore,
$[\fx+\epsilon\fb]_q-\tilde{\fx}_q \to \fx+\epsilon\fb-\fx=\epsilon\fb \in C'^\circ$ as $q \to \infty$. We apply \autoref{K_K_appl2} to this sequence in the case $k=e,m_k=p^e=q$, then for large $q$,
$$[\fx+\epsilon\fb]_q-\tilde{\fx}_q \in 1/qS.$$
Now $\tilde{\fx}_q \in 1/q\nu(I_q)$ implies $[\fx+\epsilon\fb]_q \in 1/q\nu(I_q)$. So for large $q$, $[\fx+\epsilon\fb]_q \in 1/q\nu(I_q)$, which means $\fx+\epsilon\fb\in \Delta^{up}$.

\end{proof}
\begin{corollary} \label{delta_low_delta_up_same_ht_fn_for_weakly_p_family}
Using the notations above, we have:
\begin{enumerate}
\item $\Delta^{up} \subset \Delta^{low} \subset \overline{\Delta^{up}}$;
\item $\Delta^{low}$ and $\Delta^{up}$ have the same height function.
\item $\Delta^{low}$ and $\Delta^{up}$ have the same interior and closure, and $\Delta^{low}\ba\Delta^{up}$ has zero measure.
\item $\nabla^{up}\ba\nabla^{low}$ has zero measure.
\end{enumerate}
\end{corollary}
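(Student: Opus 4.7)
The plan is to mirror the proof of \autoref{delta_low_and_up_same_ht_fn} verbatim, substituting the weakly graded input \autoref{most_imp_theorem} with its weakly $p$-family analogue \autoref{key_step_weak_p family}, which has been established precisely so this argument carries over.

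First I would verify (1). The containment $\Delta^{up} \subset \Delta^{low}$ is immediate from the definitions: if $[\fx]_q \notin 1/q\nu^c(I_q)$ for all large $q$, then certainly the same holds for infinitely many $q$. For the second containment $\Delta^{low} \subset \overline{\Delta^{up}}$, given any $\fx \in \Delta^{low}$, \autoref{key_step_weak_p family} supplies $\fx + \epsilon\fb \in \Delta^{up}$ for every $\epsilon > 0$; letting $\epsilon \to 0^+$ shows $\fx \in \overline{\Delta^{up}}$.

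Next, for (2), since $\Delta^{up}$ and $\Delta^{low}$ are $C^\circ$-ideals in $C$ with $\Delta^{up} \subset \Delta^{low}$, \autoref{comp_two_C_circ_ideals}(1) yields $\varphi_{\Delta^{low}} \leqslant \varphi_{\Delta^{up}}$ pointwise on the hyperplane $H$. For the reverse inequality, fix $\fy \in H$ and pick any $t > \varphi_{\Delta^{low}}(\fy)$, so that $\fy + t\fb \in \Delta^{low}$. Applying \autoref{key_step_weak_p family} to this point gives $\fy + (t+\epsilon)\fb \in \Delta^{up}$ for every $\epsilon > 0$, whence $\varphi_{\Delta^{up}}(\fy) \leqslant t + \epsilon$. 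Letting $\epsilon \to 0^+$ and then taking the infimum over admissible $t$ forces $\varphi_{\Delta^{up}}(\fy) \leqslant \varphi_{\Delta^{low}}(\fy)$, so the two height functions agree.

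Parts (3) and (4) then follow formally. By \autoref{ht_func_equality}, equality of height functions forces $\Delta^{up}$ and $\Delta^{low}$ to share the same interior and the same closure. By \autoref{ht_func_prop_for_closure_interior_boundary}(3) the common boundary $\partial \Delta^{up}$ has Lebesgue measure zero, and since $\Delta^{low} \ba \Delta^{up} \subset \overline{\Delta^{up}} \ba \Delta^{up\circ} = \partial \Delta^{up}$, part (3) is complete. For (4), note that $\nabla^{up} \ba \nabla^{low} = (C^\circ \ba \Delta^{up}) \ba (C^\circ \ba \Delta^{low}) = \Delta^{low} \ba \Delta^{up}$, which we have just shown is null. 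The only substantial input is \autoref{key_step_weak_p family} itself, which is already proven in the preceding lemma; the remainder is routine bookkeeping, and I anticipate no real obstacle.
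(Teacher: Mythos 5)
Your proposal is correct and follows essentially the same route as the paper: the paper's own proof of \autoref{delta_low_delta_up_same_ht_fn_for_weakly_p_family} simply cites \autoref{key_step_weak_p family}, \autoref{ht_func_prop_for_closure_interior_boundary}, and \autoref{ht_func_equality}, which is exactly the skeleton you flesh out. Your detailed verification of the four items (including the set-theoretic identity $\nabla^{up}\ba\nabla^{low}=\Delta^{low}\ba\Delta^{up}$ and the use of \autoref{comp_two_C_circ_ideals} for the easy inequality between height functions) is a faithful unwinding of the argument already carried out for \autoref{delta_low_and_up_same_ht_fn}, with \autoref{most_imp_theorem} replaced by its $p$-family analogue.
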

\begin{proof}
Use \autoref{key_step_weak_p family}, \autoref{ht_func_prop_for_closure_interior_boundary} and \autoref{ht_func_equality}.
\end{proof}
\begin{theorem} \label{limit_existence_weakly_p_family_OK_domain}
Let $I_\bullet$ be a BBL weakly $p$-family. Then the limit
$$\lim_{q \to \infty}\frac{\ell(R/I_q)}{q^d}=[\kk_\nu:\kk]\vol(\nabla^{up})$$
exists.
\end{theorem}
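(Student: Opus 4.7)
The plan is to combine the two main ingredients developed earlier in Section~\ref{section_7}: the general sandwich inequality for BBL families (\autoref{volume_formula_for_BBL_family_indexed_by_p}) and the ``collapsing'' of the upper and lower sets for weakly $p$-families (\autoref{delta_low_delta_up_same_ht_fn_for_weakly_p_family}). Since $I_\bullet$ is BBL, \autoref{volume_formula_for_BBL_family_indexed_by_p} gives
$$[\kk_\nu:\kk]\vol(\nabla^{low})\leqslant \liminf_{q \to \infty}\frac{\ell(R/I_q)}{q^d} \leqslant \limsup_{q \to \infty}\frac{\ell(R/I_q)}{q^d} \leqslant [\kk_\nu:\kk]\vol(\nabla^{up}),$$
so everything reduces to showing the outer bounds coincide.

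The weakly $p$-family hypothesis is what makes this possible. By \autoref{key_step_weak_p family}, for every $\fx \in \Delta^{low}$ and every $\epsilon > 0$ we have $\fx + \epsilon \fb \in \Delta^{up}$; this is precisely the statement needed to feed into the height-function machinery of Section~\ref{section_3} (via \autoref{ht_func_prop_for_closure_interior_boundary} and \autoref{ht_func_equality}), producing the inclusions $\Delta^{up} \subset \Delta^{low} \subset \overline{\Delta^{up}}$ and the equality of height functions $\varphi_{\Delta^{low}} = \varphi_{\Delta^{up}}$. Taking complements in $C^\circ$ yields $\nabla^{low}$ and $\nabla^{up}$ with the same interior and closure; thus $\nabla^{up} \setminus \nabla^{low}$ lies in the boundary of these sets, which has Lebesgue measure zero by \autoref{ht_func_prop_for_closure_interior_boundary}(3). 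This is exactly the content of \autoref{delta_low_delta_up_same_ht_fn_for_weakly_p_family}, so $\vol(\nabla^{up}) = \vol(\nabla^{low})$.

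Plugging $\vol(\nabla^{up}\setminus \nabla^{low}) = 0$ into the ``In particular'' clause of \autoref{volume_formula_for_BBL_family_indexed_by_p} forces the limit inferior and limit superior to coincide with the common value $[\kk_\nu:\kk]\vol(\nabla^{low}) = [\kk_\nu:\kk]\vol(\nabla^{up})$, which is finite because $\nabla^{up}$ is contained in the bounded set $C \cap \overline{H}$ guaranteed by the BBL hypothesis (\autoref{value_group_uniformly_bounded}). No new argument is needed; the proof is essentially a one-line invocation. The real technical work — the approximation argument inside \autoref{key_step_weak_p family} that uses the single constant $c \in R^\circ$ with $cI_q^{[p]} \subset I_{pq}$ to transport valuations from $[\fx]_{q_k}$ at the infinitely many ``good'' indices $q_k$ forward to all sufficiently large $q$ via the iterated containment $a^{q_2}I_{q_1}^{[q_2]} \subset I_{q_1 q_2}$ — has already been carried out, so there is no further obstacle to resolve at this stage.
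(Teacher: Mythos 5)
Your proposal follows the paper's proof exactly: cite the sandwich inequality of \autoref{volume_formula_for_BBL_family_indexed_by_p} and collapse it using $\vol(\nabla^{up}\setminus\nabla^{low})=0$ from \autoref{delta_low_delta_up_same_ht_fn_for_weakly_p_family}, whose content in turn rests on \autoref{key_step_weak_p family} and the height-function results \autoref{ht_func_prop_for_closure_interior_boundary} and \autoref{ht_func_equality}. The reasoning and the dependency chain are both correct; there is no gap.
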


\begin{proof}
Use \autoref{volume_formula_for_BBL_family_indexed_by_p} and \autoref{delta_low_delta_up_same_ht_fn_for_weakly_p_family}.
\end{proof}
\textbf{Application to $F$-graded system.} \label{subsection_application_to_F_graded_system}\\

\phantom{}
In Brosowsky's thesis \cite{thesis} she considered a particular kind of families called \emph{$F$-graded systems}, see \autoref{defn_F_graded_system}. This concept is defined by Blickle in \cite{blickle2009test} .
\begin{definition} \label{defn_F_graded_system}
If $R$ has characteristic $p$ and $I_\bu$ is a family of ideals indexed by $p$, we say $I_\bu$ is an $F$-graded system, if for any $q_1,q_2$, $I^{[q_2]}_{q_1}I_{q_2} \subset I_{q_1q_2}$. 
\end{definition}

This definition is important in the description of Cartier subalgebras. We recall that for an $F$-finite ring $R$ of characteristic $p$, the Cartier algebra of $R$ is a graded algebra defined as follows
$$\mathcal{C}=\oplus \mathcal{C}_e \quad \text{and} \quad  \mathcal{C}_e=\Hom_R(F^e_*R,R),$$
and the multiplication is given by
$$(\phi_1 \in \mathcal{C}_e,\phi_2 \in \mathcal{C}_f) \to \phi_1\circ F^e_*\phi_2 \in \mathcal{C}_{e+f}.$$
For a family of ideals indexed by powers of $p$, we consider
$$\mathcal{D}=\oplus I_{p^e}\mathcal{C}_e$$
which is a graded $R$-submodule of $\mathcal{C}$. The following theorem is in Blickle-Schwede-Tucker's paper \cite{blickle2012f} and explains how $F$-graded system recovers Cartier subalgebra.
\begin{theorem}
If $I_\bu$ is an $F$-graded system, then $\mathcal{D}$ is a Cartier subalgebra. If moreover $R$ is Gorenstein, then every Cartier subalgebra arises in this way.    
\end{theorem}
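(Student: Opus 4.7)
The assertion has two parts. First, for any $F$-graded system $I_\bu$, the graded $R$-module $\mathcal{D}=\bigoplus_e I_{p^e}\mathcal{C}_e$ is closed under the composition multiplication of $\mathcal{C}$. Second, when $R$ is Gorenstein, every Cartier subalgebra is of this form.

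The cleanest interpretation of $I_{p^e}\mathcal{C}_e$ is via the natural $F^e_*R$-module structure on $\mathcal{C}_e=\Hom_R(F^e_*R,R)$ given by precomposition: set $(\psi\cdot F^e_*a)(F^e_*y):=\psi(F^e_*(ay))$ and let $I_{p^e}\mathcal{C}_e:=\mathcal{C}_e\cdot F^e_*I_{p^e}$, with $I_1=R$ so that $\mathcal{D}_0=R=\mathcal{C}_0$. To verify closure under composition, I would take $\phi_1=\psi\cdot F^e_*a$ with $a\in I_{p^e}$ and $\phi_2=\chi\cdot F^f_*b$ with $b\in I_{p^f}$, and unfold $(\phi_1\circ F^e_*\phi_2)(F^{e+f}_*x)$ step by step. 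The key maneuver is to use the $R$-linearity of $\chi$ together with the Frobenius twist $a\cdot F^f_*y=F^f_*(a^{p^f}y)$ to move the scalar $a$ past $\chi$, producing
\[
\phi_1\circ F^e_*\phi_2 \;=\; (\psi\circ F^e_*\chi)\cdot F^{e+f}_*(a^{p^f}b).
\]
Since $a^{p^f}\in I_{p^e}^{[p^f]}$, the $F$-graded hypothesis $I_{p^e}^{[p^f]}I_{p^f}\subset I_{p^{e+f}}$ puts $a^{p^f}b\in I_{p^{e+f}}$, so the product lies in $\mathcal{C}_{e+f}\cdot F^{e+f}_*I_{p^{e+f}}=I_{p^{e+f}}\mathcal{C}_{e+f}$. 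This is the subalgebra property.

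For the Gorenstein converse, the essential input is that $\mathcal{C}_e$ is free of rank one over $F^e_*R$ under the precomposition action. I would fix a generator $\Phi_1$ of $\mathcal{C}_1$ and build compatible generators $\Phi_e:=\Phi_1\circ F_*\Phi_1\circ\cdots\circ F^{e-1}_*\Phi_1\in\mathcal{C}_e$; by construction these satisfy $\Phi_e\circ F^e_*\Phi_f=\Phi_{e+f}$. Given any Cartier subalgebra $\mathcal{D}$, the inclusion $R=\mathcal{D}_0\subset\mathcal{D}$ together with closure under composition forces each $\mathcal{D}_e$ to be stable under the right $F^e_*R$-action, since precomposition with $F^e_*c$ is composition with $c\in\mathcal{D}_0$. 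Hence $\mathcal{D}_e=F^e_*I_{p^e}\cdot\Phi_e$ for a unique ideal $I_{p^e}\subset R$, with $I_1=R$. Running the Frobenius-twist computation of the forward direction in reverse on the elements $\Phi_e\cdot F^e_*a$ and $\Phi_f\cdot F^f_*b$ converts the closure $\mathcal{D}_e\cdot\mathcal{D}_f\subset\mathcal{D}_{e+f}$ into precisely the $F$-graded identity $I_{p^e}^{[p^f]}I_{p^f}\subset I_{p^{e+f}}$.

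The main obstacle will be the bookkeeping around the compatible generators in the Gorenstein step: a priori the isomorphism $\mathcal{C}_e\cong F^e_*R$ is only well-defined up to a unit, and one has to verify that the iterated composition $\Phi_e=\Phi_1^{\circ e}$ really does generate $\mathcal{C}_e$ freely over $F^e_*R$. This depends on Gorenstein duality propagating a single choice at $e=1$ through all higher $e$; once it is in place, the entire algebraic content of both directions reduces to the single Frobenius-twist identity displayed above.
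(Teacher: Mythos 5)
The paper states this result without proof; it is attributed to \cite{blickle2012f}, so there is no internal argument here to compare against. Your sketch is a sound reconstruction of the standard proof. Reading $I_{p^e}\mathcal{C}_e$ as the right $F^e_*R$-module span $\mathcal{C}_e\cdot F^e_*I_{p^e}$ is the correct convention, and your Frobenius-twist identity
\[
(\psi\cdot F^e_*a)\circ F^e_*(\chi\cdot F^f_*b)=(\psi\circ F^e_*\chi)\cdot F^{e+f}_*(a^{p^f}b)
\]
is exactly what converts the containment $I_{p^e}^{[p^f]}I_{p^f}\subset I_{p^{e+f}}$ into closure of $\mathcal{D}$ under the composition product. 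In the converse, your reduction to $\mathcal{D}_e$ being a right $F^e_*R$-submodule, obtained by closing against $\mathcal{D}_0=R$, is the right first step.

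The lemma you flag --- that a composition of free generators again freely generates --- is indeed the one nontrivial input in the Gorenstein step, and you are right to be careful about it. To close it: the adjunction
\[
\mathcal{C}_{e+f}=\Hom_R\bigl(F^{e+f}_*R,R\bigr)\cong\Hom_{F^e_*R}\bigl(F^{e+f}_*R,\mathcal{C}_e\bigr)
\]
is an isomorphism of right $F^{e+f}_*R$-modules. Applying $\Hom_{F^e_*R}(F^{e+f}_*R,-)$ to the Gorenstein isomorphism $F^e_*R\cong\mathcal{C}_e$, $s\mapsto\Phi_e\cdot s$, and using full faithfulness of $F^e_*$ to identify $\Hom_{F^e_*R}(F^{e+f}_*R,F^e_*R)$ with $F^e_*\mathcal{C}_f$, one checks that the resulting composite isomorphism $F^e_*\mathcal{C}_f\to\mathcal{C}_{e+f}$ is precisely $F^e_*\psi\mapsto\Phi_e\circ F^e_*\psi$. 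It therefore carries the generator $F^e_*\Phi_f$ to $\Phi_e\circ F^e_*\Phi_f$, as you need. With this lemma in hand, your plan is complete.
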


\begin{proposition} \label{F_graded_is_weakly_p_family}
Assume for an $F$-graded system $I_\bu$, $I_1 \cap R^\circ \neq 0$. Then $I_\bu$ is a weakly $p$-family.    
\end{proposition}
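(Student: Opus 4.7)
The strategy hinges on a single instance of the $F$-graded axiom, obtained by setting $q_1 = q$ and $q_2 = p$:
$$ I_q^{[p]} \cdot I_p \;\subseteq\; I_{pq} \qquad \text{for every } q = p^e. $$
It therefore suffices to exhibit one element $c \in R^\circ$ lying in $I_p$; for then
$$ c \cdot I_q^{[p]} \;\subseteq\; I_p \cdot I_q^{[p]} \;\subseteq\; I_{pq} $$
uniformly in $q$, which is exactly the weakly $p$-family condition with constant $c$.

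Let $a \in I_1 \cap R^\circ$ as furnished by the hypothesis. Since $R^\circ = R \setminus \bigcup_{P \in \Min(R)} P$ is multiplicatively closed, every power $a^N$ lies in $R^\circ$, so the candidate $c$ will be a suitable power $a^N$. A natural first guess is $c = a^{p+1}$, motivated by the two auxiliary $F$-graded inclusions $I_p \cdot I_1 \subseteq I_p$ (from $(q_1,q_2) = (p,1)$) and $I_1^{[p]} \cdot I_p \subseteq I_p$ (from $(q_1,q_2) = (1,p)$), which together show that $I_p$ is stable under multiplication by both $a$ and $a^p$. Note also that $a I_q \subseteq I_q$ (by the first of these inclusions applied at index $q$) yields $a^p I_q^{[p]} = (a I_q)^{[p]} \subseteq I_q^{[p]}$, so $a^{p+1} I_q^{[p]} \subseteq a \cdot I_q^{[p]}$; this reduces the verification to showing $a \cdot I_q^{[p]} \subseteq I_{pq}$, i.e.\ $a \in I_p$.

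The \emph{main obstacle} is precisely placing $c$ into $I_p$. The $F$-graded axiom is an intrinsically multiplicative containment: it transports existing products into larger-indexed ideals, but it does not by itself manufacture a non-trivial element of $I_p$ out of elements of $I_1$. Closing this gap most likely requires invoking an additional structural feature of the Blickle/Brosowsky framework for $F$-graded systems (for instance, the convention $I_1 \subseteq I_{p^e}$ for all $e$, which is automatic for unital Cartier subalgebras where $I_1 = R$), or a bootstrapping argument that combines the $F$-graded axiom at several pairs of indices simultaneously — for instance, iterating $I_{q_1}^{[q_2]} I_{q_2} \subseteq I_{q_1 q_2}$ with $q_1 = q_2 = 1$ to populate $I_1$ densely, and then using $I_1^{[p]} \cdot I_p \subseteq I_p$ to propagate a seed element. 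Once $c \in I_p \cap R^\circ$ is secured, the weakly $p$-family property follows from the one-line deduction displayed at the outset.
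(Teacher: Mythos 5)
Your reduction is exactly the one the paper has in mind: take $q_1 = q$, $q_2 = p$ in the $F$-graded axiom to get $I_q^{[p]} I_p \subset I_{pq}$ uniformly in $q$, and then a single $c \in I_p \cap R^\circ$ certifies the weakly-$p$ condition. The paper's proof is the one-liner ``Follows from the definition,'' and you have correctly identified that this one-liner has a step it suppresses — namely, producing $c \in I_p \cap R^\circ$ — which does \emph{not} follow from the stated hypothesis $I_1 \cap R^\circ \neq \emptyset$.

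Your hesitation is well-founded, and in fact the gap you flagged is irreparable from the literal hypotheses. The family $I_1 = R$ and $I_{p^e} = 0$ for $e \geq 1$ satisfies $I_{q_1}^{[q_2]} I_{q_2} \subset I_{q_1 q_2}$ for all $q_1, q_2$ (one of the two factors is $0$ whenever either index exceeds $1$, and for $q_1 = q_2 = 1$ the inclusion is $I_1^2 \subset I_1$) and has $I_1 \cap R^\circ \neq \emptyset$, yet it is not a weakly $p$-family in a domain: the case $q=1$ of $cI_q^{[p]} \subset I_{pq}$ reads $cR^{[p]} \subset I_p = 0$, forcing $c = 0$. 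So the proposition as literally stated needs the hypothesis strengthened to $I_p \cap R^\circ \neq \emptyset$ (then $I_1$ plays no role whatsoever, and the proof is your opening displayed line), or the family must be assumed BBL with $\fm$-primary terms — which is what the paper actually imposes in its application, \autoref{main_result_for_BBL_F_graded_system}, and which does force $I_p \cap R^\circ \neq \emptyset$. One small correction to your speculative remarks: the containment $I_1 \subset I_{p^e}$ is \emph{not} automatic when $I_1 = R$; that would force $I_{p^e} = R$ for all $e$, which is far stronger than anything the Cartier-subalgebra framework requires and would trivialize every example of interest.
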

\begin{proof}
Follows from the definition of $F$-graded system and weakly $p$-family.
\end{proof}

Let $R$ is a polynomial ring; we may replace $R$ by a power series ring to assume $R$ is local, and this does not change $\ell(R/I)$ for a monomial ideal $I$. Then $R$ has a monomial OK valuation. For a monomial ideal $I$, let $\textup{log}(I)$ be the set of exponents for monomials in $I$. Under a monomial OK valuation, $S=\NN^d$, $C=\RR^d_+$, $\textup{log}(I)=\nu(I)$, and 
$$\cup_{f \geqslant 0}\cap_{e \geqslant f}1/p^e\textup{log}(I_{p^e}) \cap C^\circ=\Delta^{up}.$$
Thus we can prove Conjecture IV.5.4 in Brosowsky's thesis \cite{thesis} in a more general setting, which becomes a theorem:
\begin{theorem} \label{main_result_for_BBL_F_graded_system}
Assume $R$ is an $F$-finite OK domain of characteristic $p>0$ and $I_\bu$ is an BBL $F$-graded system of $\fm$-primary ideals, then $\vol(I_\bu)=\vol(C\backslash\Delta(I_\bu))$. 
\end{theorem}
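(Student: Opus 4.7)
The strategy is to reduce the statement to \autoref{limit_existence_weakly_p_family_OK_domain}, then identify the combinatorial set $\Delta(I_\bu) = \bigcup_{f \geq 0}\bigcap_{e \geq f} \tfrac{1}{p^e}\nu(I_{p^e}) \cap C^\circ$ with the analytic set $\Delta^{up}$ constructed in \autoref{section_7}. First, since $R$ is a domain and $I_1$ is $\fm$-primary, any nonzero element of $I_1$ lies in $R^\circ = R \setminus \{0\}$, so $I_1 \cap R^\circ \neq \emptyset$. By \autoref{F_graded_is_weakly_p_family}, the BBL $F$-graded system $I_\bu$ is then a BBL weakly $p$-family, and \autoref{limit_existence_weakly_p_family_OK_domain} yields
\[
\lim_{q \to \infty}\frac{\ell(R/I_q)}{q^d} \;=\; [\kk_\nu:\kk]\,\vol(\nabla^{up}) \;=\; [\kk_\nu:\kk]\,\vol(C^\circ \setminus \Delta^{up}),
\]
where the last equality uses that $\nabla^{up}$ and $\Delta^{up}$ partition $C^\circ$ and $\partial C$ has measure zero.

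The technical core is the identification $\Delta(I_\bu) = \Delta^{up}$ up to a null set. The forward inclusion $\Delta(I_\bu) \subset \Delta^{up}$ follows by fixing $\fx \in \bigcap_{e \geq f} \tfrac{1}{p^e}\nu(I_{p^e}) \cap C^\circ$ and noting that $\|[\fx]_{p^e} - \fx\| \leq d/p^e$; choosing a closed upward subcone $C' \subset C^\bu$ with $\fx \in C'^\circ$, \autoref{K_K_appl2} places $[\fx]_{p^e}$ in $\tfrac{1}{p^e}S \cap C'^\circ$ for $e \gg 0$, and since $\tfrac{1}{p^e}\nu(I_{p^e})$ is a $\tfrac{1}{p^e}S$-ideal containing $\fx$, we get $[\fx]_{p^e} \in \tfrac{1}{p^e}\nu(I_{p^e})$, i.e.\ $\fx \in \Delta^{up}$. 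For the reverse inclusion on interiors, mimic \autoref{key_step_weak_p family}: given $\fx \in \Delta^{up\circ}$, pick $\epsilon > 0$ with $\fx - \epsilon\fb \in \Delta^{up\circ}$, so $[\fx - \epsilon\fb]_{p^e} \in \tfrac{1}{p^e}\nu(I_{p^e})$ for $e \gg 0$; the same approximation argument run backwards, together with the $\tfrac{1}{p^e}S$-ideal property, shows $\fx \in \overline{\Delta(I_\bu)}$. Since both $\Delta(I_\bu)$ and $\Delta^{up}$ are $C^\circ$-ideals, \autoref{ht_func_prop_for_closure_interior_boundary} and \autoref{ht_func_equality} then force them to have identical interiors and closures, hence to differ only on a set of measure zero.

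Combining the two steps and assuming (as in Brosowsky's setup where the monomial valuation satisfies $\kk_\nu = \kk$) that $\vol(I_\bu)$ denotes the normalized limit $\lim_{q \to \infty} \ell(R/I_q)/([\kk_\nu:\kk]\,q^d)$, we obtain
\[
\vol(I_\bu) \;=\; \vol(\nabla^{up}) \;=\; \vol(C \setminus \Delta^{up}) \;=\; \vol(C \setminus \Delta(I_\bu)).
\]
The main obstacle is the second step's reverse inclusion: translating between the elementwise membership $\fx \in \tfrac{1}{p^e}\nu(I_{p^e})$ (entering $\Delta(I_\bu)$) and the weaker lattice-approximation condition $[\fx]_{p^e} \in \tfrac{1}{p^e}\nu(I_{p^e})$ (entering $\Delta^{up}$). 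In the monomial case this collapses for free because $\nu(I_q)$ is the upward closure of a lattice set, but for a general OK valuation it must be engineered via the $C^\circ$-ideal structure combined with the density statement \autoref{K_K_appl2}, with the cost of only recovering equality up to boundary, which is harmless for the volume computation.
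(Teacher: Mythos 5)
The paper's proof is essentially definitional: it takes for granted (from the paragraph preceding the theorem) the identification $\Delta(I_\bu)\cap C^\circ=\Delta^{up}$, after which $C\setminus\Delta(I_\bu)$ differs from $\nabla^{up}=C^\circ\setminus\Delta^{up}$ only inside $\partial C$, a null set, and \autoref{limit_existence_weakly_p_family_OK_domain} finishes. You instead try to \emph{prove} the identification, reading $\Delta(I_\bu)$ as the point set $\bigcup_{f\geq 0}\bigcap_{e\geq f}\tfrac{1}{p^e}\nu(I_{p^e})\cap C^\circ$ with each $\nu(I_{p^e})\subset\ZZ^d$. That reading cannot be right: any $\fx\in\bigcap_{e\geq f}\tfrac{1}{p^e}\nu(I_{p^e})$ lies in $\tfrac{1}{p^f}\ZZ^d$, so your $\Delta(I_\bu)$ is a countable subset of $\ZZ[1/p]^d$ and has measure zero, forcing $\vol(C\setminus\Delta(I_\bu))=\vol(C)=\infty$, whereas $\Delta^{up}$ is a full-dimensional $C^\circ$-ideal whose complement in $C^\circ$ has finite volume; your target identity ``up to a null set'' is thus false on its face. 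The reading that makes the paper's identity hold is the staircase-region one: $\tfrac{1}{p^e}\log(I_{p^e})$ should be taken as $\nabla_{\tfrac{1}{p^e}\nu(I_{p^e}),\,\tfrac{1}{p^e}}$, so that membership of $\fx$ is precisely the condition $[\fx]_{p^e}\in\tfrac{1}{p^e}\nu(I_{p^e})$; under this convention $\Delta(I_\bu)\cap C^\circ=\Delta^{up}$ is immediate from \autoref{second definition of delta and nabla p-family}, and no analysis via \autoref{K_K_appl2} is needed. (You correctly flagged the implicit $[\kk_\nu:\kk]$ normalization of $\vol(I_\bu)$.)

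Even taking your lattice reading at face value, both halves of your approximation sketch misfire. The forward step reaches a true conclusion but for the wrong reason: $\fx\in\tfrac{1}{p^e}\nu(I_{p^e})$ already forces $\fx\in\tfrac{1}{p^e}\ZZ^d$, so $[\fx]_{p^e}=\fx$ trivially; meanwhile the step ``$\tfrac{1}{p^e}\nu(I_{p^e})$ is a $\tfrac{1}{p^e}S$-ideal containing $\fx$, hence $[\fx]_{p^e}\in\tfrac{1}{p^e}\nu(I_{p^e})$'' is a non sequitur, since the ideal property lets you translate by elements of $\tfrac{1}{p^e}S$, but $[\fx]_{p^e}-\fx$ is a floor difference with nonpositive coordinates and need not lie in $\tfrac{1}{p^e}S$. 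The reverse step does not close at all: $\Delta^{up\circ}\subset\overline{\Delta(I_\bu)}$ would require a single $\fz$ near $\fx$ lying in $\tfrac{1}{p^e}\nu(I_{p^e})$ for every large $e$, whereas your hypothesis only gives that the $e$-dependent point $[\fx-\epsilon\fb]_{p^e}$ lies there. For $p$-families the nesting $\tfrac{1}{p^e}\nu(I_{p^e})\subset\tfrac{1}{p^{e+1}}\nu(I_{p^{e+1}})$ would let you fix $\fz=[\fx-\epsilon\fb]_{p^{e_0}}$, but an $F$-graded system gives only the shifted containment $p\nu(I_{p^e})+\nu(I_p)\subset\nu(I_{p^{e+1}})$, which does not supply the nesting you need.
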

\begin{proof} Note that $C\backslash\Delta(I_\bu)$ and $\nabla^{up}_{I_\bu}$ differs by a subset of $\partial(C)\cap \partial\Delta(I_\bu)$ which has measure $0$, so $\vol(C\backslash\Delta(I_\bu))=\vol(\nabla^{up}_{I_\bu})$.
\end{proof}

\section{OK basis, estimate on $\nu(I^{[p]})$, and weakly inverse $p$-families} \label{section_8}
Throughout this section, we assume  \autoref{general_setup}. Here we use OK valuation to prove the existence for the corresponding limit associate to weakly inverse $p$-families of $R$-ideals. This way of proving the existence is more subtle and restricted. However, it reflects more about the limiting behaviour of the OK valuation.

The difficulty in using the OK valuation lies in the following fact: for any valuation $\nu$,
$$p\nu(I)+S \subset \nu(I^{[p]}).$$
However, the containment of the other side does not always hold, because two terms of two elements in $I^{[p]}$ may add up to a smaller valuation. Thus for an inverse $p$-families we have
$$p\nu(I_q) \subset \nu(I^{[p]}_q), \nu(I_{pq}) \subset \nu(I^{[p]}_q).$$
But there are no containment relations between the two sets $p\nu(I_q),\nu(I_{pq})$. To overcome this, we introduce a concept called \emph{OK basis}. Every OK basis consists of a set of basis in $R$ of $\mathbb{F}$ over the subfield $\mathbb{F}^p$ satisfying conditions on valuations. They generate an $R^p$-submodule of $R$ which is equal to $R$ locally at the generic point, and their valuations will give a control of $\nu(I^{[p]})$ from the other side.

Note that $\nu(R) \subset \mathbb{Z}^d$ generates $\mathbb{Z}^d$ as a group. This means we can extend $\nu$ to $F_*R$ and $F_*\mathbb{F}$, then $\nu(F_*R) \subset 1/p\mathbb{Z}^d$ and generates $1/p\mathbb{Z}^d$ as a group.
\begin{definition} \label{OK_basis_defn}
We say $u_1,\ldots,u_l$ is an \textbf{OK basis} if they satisfy:
\begin{enumerate}
\item $u_1,\ldots,u_l \in R$ are all nonzero;
\item The images of $\nu(u_i)$ under $\mathbb{Z}^d\to \mathbb{Z}^d/p\mathbb{Z}^d$ are pairwise distinct.
\item There exists $0 \neq c \in R$ such that $F_*(cR) \subset \sum RF_*(u_i)$.
\end{enumerate}
\end{definition}
\begin{lemma} \label{OK_basis_reln_with_val}
Let $u_1,\ldots,u_l \in R$ such that the images of $\nu(u_i)$ under $\mathbb{Z}^d\to \mathbb{Z}^d/p\mathbb{Z}^d$ are pairwise distinct. Take any $a_1,\ldots,a_l \in R$, then there exists $i$ such that
$$\nu(\sum_{1 \leqslant j \leqslant l}a_j^pu_j)=\nu(a_i^pu_i).$$
\end{lemma}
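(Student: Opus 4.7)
The plan is to use the classical ``strict triangle inequality'' for valuations: if $\nu(x_1),\dots,\nu(x_l)$ are pairwise distinct, then $\nu(x_1+\cdots+x_l)=\min_j \nu(x_j)$. The proof reduces to checking that the $\nu(a_j^p u_j)$ are pairwise distinct so that no cancellation of the leading term occurs.

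First I would assume without loss of generality that all $a_j\neq 0$, since zero summands contribute nothing and only shrink the index set (if every $a_j=0$, the statement is vacuous under the convention $\nu(0)=\infty$). Then for each $j$,
\[
\nu(a_j^p u_j) \;=\; p\,\nu(a_j) + \nu(u_j).
\]
Reducing modulo $p\ZZ^d$, the first summand vanishes, so
\[
\nu(a_j^p u_j) \equiv \nu(u_j) \pmod{p\ZZ^d}.
\]
By hypothesis the residues $\nu(u_j) \bmod p\ZZ^d$ are pairwise distinct, hence the values $\nu(a_j^p u_j)$ are also pairwise distinct as elements of $\ZZ^d$.

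Now I would invoke the following standard consequence of the ultrametric inequality $\nu(x+y)\geqslant\min\{\nu(x),\nu(y)\}$: whenever $\nu(x)\neq \nu(y)$, one in fact has equality $\nu(x+y)=\min\{\nu(x),\nu(y)\}$ (otherwise $\nu(x)=\nu((x+y)-y)\geqslant \min\{\nu(x+y),\nu(y)\}>\nu(x)$, a contradiction). Applying this inductively to the sum $\sum_j a_j^p u_j$, whose summands have pairwise distinct valuations, I obtain
\[
\nu\Bigl(\sum_{j=1}^l a_j^p u_j\Bigr) \;=\; \min_{1\leqslant j\leqslant l}\nu(a_j^p u_j).
\]
Taking $i$ to be an index where this minimum is attained finishes the proof.

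There is no substantive obstacle: the only subtlety is the bookkeeping around zero summands, which is handled by simply restricting to the nonzero ones. The content of the lemma is entirely the congruence observation $\nu(a_j^p u_j)\equiv \nu(u_j)\pmod{p\ZZ^d}$ combined with the strict triangle inequality, and it is precisely this rigidity that will later let the $u_i$ control $\nu(I^{[p]})$ from above.
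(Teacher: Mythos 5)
Your argument is correct and is essentially identical to the paper's proof: both reduce $\nu(a_j^p u_j)=p\nu(a_j)+\nu(u_j)$ modulo $p\ZZ^d$ to see the valuations of the summands are pairwise distinct, and then invoke the standard ultrametric fact that the valuation of a sum of terms with pairwise distinct valuations equals the minimum. You simply spell out the handling of zero summands and the induction more explicitly than the paper does.
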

\begin{proof}
Since $\nu(a_j^pu_j)=p\nu(a_j)+\nu(u_j) \in p\mathbb{Z}^d+\nu(u_j)$, the condition on $u_j$ guarantees that $\nu(a_j^pu_j)$ are pairwise distinct, so the valuation of the sum must be the valuation of one term.    
\end{proof}
\begin{lemma} \label{OK_basis_characterization}
If $u_1,\ldots,u_l$ is an OK basis of a local ring $R$ of dimension $d$, then:
\begin{enumerate}
\item Every coset of $\mathbb{Z}^d/p\mathbb{Z}^d$ is represented by one $u_i$.
\item $l=p^d$.
\item $\sum RF_*u_i=\oplus RF_*u_i$.
\item $[F_*\mathbb{F}:\mathbb{F}]=p^d$ and the residue field of $R$ is perfect.
\end{enumerate}
\end{lemma}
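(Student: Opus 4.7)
The plan is to treat the four claims in sequence, letting each build on the previous ones. For (1) and (2), the key is to show that $\{\nu(u_i) \bmod p\mathbb{Z}^d\}$ fills all of $\mathbb{Z}^d/p\mathbb{Z}^d$. Given any $r \in R$, condition (3) of the OK basis definition lets me write $cr = \sum a_j^p u_j$ for some $a_j \in R$, and then \autoref{OK_basis_reln_with_val} forces $\nu(cr) \equiv \nu(u_i) \pmod{p\mathbb{Z}^d}$ for some $i$. So the image of $\nu(cR)$ in $\mathbb{Z}^d/p\mathbb{Z}^d$ is contained in $\{\nu(u_i) \bmod p\}$. Next I would observe that the image of $\nu(R)$ in the finite abelian group $\mathbb{Z}^d/p\mathbb{Z}^d$ is a subgroup (any sub-semigroup of a finite abelian group containing $0$ is closed under inverses), and since $\mathbb{Z}\nu(R) = \mathbb{Z}^d$ this subgroup must be the whole quotient. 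Translating by $\nu(c)$ does not shrink the image, so every coset is represented, proving (1); then (2) is immediate from $|\mathbb{Z}^d/p\mathbb{Z}^d| = p^d$ combined with pairwise distinctness.

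For (3), suppose $\sum a_i^p u_i = 0$ in $R$ with some $a_i \neq 0$. Applying \autoref{OK_basis_reln_with_val} to the nonzero summands yields $\nu(0) = p\nu(a_i) + \nu(u_i)$ for some $i$ with $a_i \neq 0$; but the left side is $\infty$ while the right side is finite, a contradiction. Hence the sum $\sum RF_*(u_i)$ is direct.

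For (4), I would localize the inclusion $F_*(cR) \subset \oplus RF_*(u_i)$ at the zero ideal of $R$. Since $c$ becomes a unit in $\mathbb{F}$, this yields $F_*\mathbb{F} = \oplus \mathbb{F} F_*(u_i)$, whence $[F_*\mathbb{F}:\mathbb{F}] = l = p^d$. For perfection of $\kk$, the extension $\mathbb{F} \subset F_*\mathbb{F}$ is purely inseparable, so the OK valuation extends uniquely; the extended value group is $\tfrac{1}{p}\mathbb{Z}^d$, giving ramification index $e = p^d$, and the residue field extension is $\kk_\nu \subset F_*\kk_\nu$. The classical fundamental inequality $e \cdot f \leqslant [F_*\mathbb{F}:\mathbb{F}]$ then forces $[F_*\kk_\nu : \kk_\nu] = 1$, so $\kk_\nu$ is perfect. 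Finally, if $\kk$ were not perfect, some $a \in \kk \setminus \kk^p$ would produce an infinite tower $\kk \subsetneq \kk(a^{1/p}) \subsetneq \kk(a^{1/p^2}) \subsetneq \cdots \subset \kk_\nu$, contradicting the OK valuation axiom $[\kk_\nu:\kk] < \infty$.

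The main obstacle is part (4): the reasoning in (1)--(3) is an elementary manipulation of the OK basis definition together with the residue-disjointness packaged in \autoref{OK_basis_reln_with_val}, but concluding perfection of $\kk$ requires invoking both the classical fundamental inequality for valued field extensions and the finiteness of $[\kk_\nu:\kk]$ from the OK hypotheses, which is precisely the interaction between the valuation-theoretic and ring-theoretic sides that justifies introducing the notion of OK basis in the first place.
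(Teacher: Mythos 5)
Your proof is correct, and in parts (1) and (4) it departs genuinely from the paper's argument. For (1), the paper constructs an explicit representative: given a coset $\gamma$, it chooses $v = v_1/v_2 \in \FF^\times$ with $\nu(v) \equiv \gamma$ and observes that $v' = c^p v_1 v_2^{p-1} \in cR$ lies in the same coset, then expands $v'$ in the $u_i$'s. You instead argue abstractly that the image of the semigroup $\nu(R)$ in the finite group $\ZZ^d/p\ZZ^d$ is a subgroup, that it generates (hence equals) the whole quotient because $\ZZ\nu(R)=\ZZ^d$, and that translation by $\nu(c)$ preserves this — a cleaner route that avoids the ad hoc choice of $v'$. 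Part (3) is essentially the paper's cancellation argument. For (4), the paper invokes the Kunz rank identity $[F_*\FF:\FF]=p^d[F_*\kk:\kk]$ for $F$-finite local domains, which gives $[F_*\kk:\kk]=1$ directly. You instead localize the OK-basis inclusion to obtain $[F_*\FF:\FF]=p^d$, compute the ramification index $e=p^d$ of the unique extension of $\nu$ to the purely inseparable extension $F_*\FF$, apply the fundamental inequality $ef \leq [F_*\FF:\FF]$ to force $f=1$ so $\kk_\nu$ is perfect, and then pull perfection back to $\kk$ through the OK axiom $[\kk_\nu:\kk]<\infty$ via the infinite $p$-th-root tower. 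This is a valid and somewhat more self-contained alternative, since it trades the $F$-finite rank formula for classical ramification theory and the finiteness built into the OK valuation axioms; the two proofs trade off which piece of background they lean on, and yours makes visible exactly where the hypothesis $[\kk_\nu:\kk]<\infty$ enters, which the paper's proof hides inside Kunz's formula.
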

\begin{proof}
(1): we choose $v_1,v_2 \in R^\times$ with $v=v_1/v_2 \in \mathbb{F}$ lying in any given coset of $\mathbb{Z}^d/p\mathbb{Z}^d$. Then the valuation of $v'=c^pv_1v_2^{p-1}$ lies in the same coset. Now $v' \in cR$, so there exists $a_1,\ldots,a_l \in R$ with
$$F_*v'=\sum_{1 \leqslant j \leqslant l}a_jF_*u_j, v'=\sum_{1 \leqslant j \leqslant l}a^p_ju_j.$$
Then for some $i$, $\nu(v')=\nu(a^p_iu_i) \in p\mathbb{Z}^d+\nu(u_i)$.

(2): This is true by (1) since $[\mathbb{Z}^d:p\mathbb{Z}^d]=p^d$.

(3) If $\sum RF_*u_i \neq \oplus RF_*u_i$, then there exists $a_1,\ldots,a_l \in R$ not all zero with
$$0=\sum_{1 \leqslant j \leqslant l}a_jF_*u_j, 0=\sum_{1 \leqslant j \leqslant l}a^p_ju_j.$$
Thus two of $\nu(a_j^pu_j)$ must be equal, but they live in different cosets of $\mathbb{Z}^d/p\mathbb{Z}^d$, which is a contradiction.

(4) We see $F_*u_i$ is an $\mathbb{F}$-basis of $F_*\mathbb{F}$, so $[F_*\mathbb{F}:\mathbb{F}]=p^d$. But for $F$-finite rings we get $[F_*\mathbb{F}:\mathbb{F}]=p^d[F_*\kk:\kk]$, so $F_*\kk=\kk$, that is, $\kk$ is perfect.
\end{proof}
The following set up will be used for the rest of this section.
\begin{setup} \label{F_finite_perefct_residue_field_set_up}
Let $R$ be an $F$-finite local OK domain of dimension $d$ with perfect residue field.
\end{setup}
From the above we see an $F$-finite OK domain with an OK basis must have perfect residue field. We claim the converse of this statement is true:
\begin{proposition} \label{OK_basis_existence}
Adopt \autoref{F_finite_perefct_residue_field_set_up}. Then $R$ has an OK basis.    
\end{proposition}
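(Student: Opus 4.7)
The plan is to build the basis $u_1,\ldots,u_{p^d}$ by choosing one representative in $R$ for each coset of $\ZZ^d/p\ZZ^d$, and then verify the module-theoretic condition (3) through a torsion argument over $R^p$.

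First I would handle conditions (1) and (2). Because $R$ is OK, $\ZZ\nu(R)=\ZZ^d$, so for any $\fv\in\ZZ^d$ I can write $\fv=\nu(a)-\nu(b)$ with $0\neq a,b\in R$. Setting $u:=ab^{p-1}\in R\setminus\{0\}$ yields
\[
\nu(u)=\nu(a)+(p-1)\nu(b)=\fv+p\nu(b)\equiv \fv\pmod{p\ZZ^d}.
\]
Enumerating representatives of the $p^d$ cosets of $\ZZ^d/p\ZZ^d$, this construction produces $u_1,\ldots,u_{p^d}\in R\setminus\{0\}$ whose valuations lie in pairwise distinct cosets, giving (1) and (2).

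Next I would argue that $u_1,\ldots,u_{p^d}$ form an $\FF^p$-basis of $\FF$. By \autoref{OK_basis_reln_with_val} applied over $\FF$, any relation $\sum_i a_i^p u_i = 0$ with $a_i\in\FF$ forces all $a_i=0$, so the $u_i$ are $\FF^p$-linearly independent. Because $R$ is $F$-finite with perfect residue field, $\alpha(R)=0$ and $\rank_R F_*R=p^d$, hence
\[
[\FF:\FF^p]=[F_*\FF:\FF]=p^d,
\]
and so the $u_i$ form an $\FF^p$-basis of $\FF$ by dimension count.

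Finally I would address condition (3). Set $M:=\sum_i R^p u_i\subset R$, viewed as an $R^p$-submodule. Since $R$ is $F$-finite, $R$ is a finitely generated $R^p$-module, so the quotient $N:=R/M$ is also finitely generated over $R^p$. The previous paragraph implies $M\otimes_{R^p}\FF^p=\FF=R\otimes_{R^p}\FF^p$, so $N$ is a torsion $R^p$-module and its annihilator in $R^p$ contains some nonzero element $c$. Then $cR\subset M$, which means every $r\in R$ admits an expression $cr=\sum_i r_i^p u_i$ with $r_i\in R$; applying Frobenius converts this into $F_*(cr)=\sum_i r_i\cdot F_*(u_i)$, so $F_*(cR)\subset\sum_i R\cdot F_*(u_i)$, giving condition (3). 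The main subtlety is using $F$-finiteness together with the perfect residue field hypothesis to pin $[\FF:\FF^p]$ exactly to $p^d$, so that the $\FF^p$-linear independence of the $u_i$ (coming for free from the distinct valuation cosets) upgrades to spanning; once that is in place, condition (3) follows immediately from the standard fact that a torsion finitely generated module over a Noetherian domain is annihilated by a nonzero element.
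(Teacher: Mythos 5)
Your proof is correct and follows essentially the same route as the paper: choose $u_i$ whose valuations represent the cosets of $\ZZ^d/p\ZZ^d$ (using that $F$-finiteness with perfect residue field forces $[\FF:\FF^p]=\rank_R F_*R=p^d$), then compare generic ranks to see that the cokernel of $\sum R^p u_i \hookrightarrow R$ is finitely generated torsion over $R^p$, hence annihilated by some nonzero $c$, which is exactly condition (3) of \autoref{OK_basis_defn}. Your explicit construction of $u=ab^{p-1}$ with $\nu(u)\equiv\fv\pmod{p\ZZ^d}$ from $\ZZ\nu(R)=\ZZ^d$ spells out a step the paper takes for granted, and phrasing the rank argument over $R^p$ rather than via $F_*R$ is only a cosmetic change.
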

\begin{proof}
By the assumption above we have $\rank_R F_*R=p^d$. We choose $u_i \in R, 1 \leqslant i \leqslant p^d$ whose valuation consists of a set of representatives of $\mathbb{Z}^d/p\mathbb{Z}^d$. From the proof of \autoref{OK_basis_characterization}, we see $\oplus RF_*u_i=\sum RF_*u_i \subset F_*R$, and these two $R$-modules have the same rank $p^d$, so they are the same locally at the generic point, which means there exists $0 \neq c' \in F$ with $c'F_*R=F_*((c')^pR) \subset \sum RF_*u_i$. So we are done.  
\end{proof}
\begin{proposition} \label{OK_basis_gives_containment_from_the_other_side}
 Adopt \autoref{F_finite_perefct_residue_field_set_up}. Fix an OK basis $u_1,\ldots,u_{p^d}$ and $c \in R$ with $F_*(cR) \subset \sum RF_*u_i$. Then for any ideal $I \subset R$, 
$$\nu(I^{[p]}) \subset \cup_i (p\nu(I)+\nu(u_i)-\nu(c)) \subset (p\nu(I)-\nu(c)+S).$$
For this choice of $c$ and any $q$, we have
$$\nu(I^{[q]}) \subset q\nu(I)-q\nu(c)+S$$.
\end{proposition}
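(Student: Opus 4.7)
The plan is to use the OK-basis condition $F_*(cR) \subset \sum_i RF_*u_i$ to rewrite every element of $I^{[p]}$ in a canonical form involving the basis $u_1, \ldots, u_{p^d}$, and then apply \autoref{OK_basis_reln_with_val} to pinpoint its valuation exactly.

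First, fix $f \in I^{[p]}$ and write $f = \sum_j r_j a_j^p$ with $a_j \in I$ and $r_j \in R$. Since $F_*(cr_j) \in \sum_i RF_*u_i$, there exist $s_{j,i} \in R$ with $cr_j = \sum_i s_{j,i}^p u_i$. Substituting and using Frobenius additivity in characteristic $p$, one computes
$$cf = \sum_j (cr_j) a_j^p = \sum_{i,j} (s_{j,i}a_j)^p u_i = \sum_i \Bigl(\sum_j s_{j,i}a_j\Bigr)^p u_i = \sum_i t_i^p u_i,$$
where $t_i := \sum_j s_{j,i}a_j \in I$. Applying \autoref{OK_basis_reln_with_val} to this expression $cf = \sum_i t_i^p u_i$ produces some index $i$ with $\nu(cf) = p\nu(t_i) + \nu(u_i)$, whence $\nu(f) = p\nu(t_i) + \nu(u_i) - \nu(c) \in p\nu(I) + \nu(u_i) - \nu(c)$. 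Taking the union over $i$ proves the first displayed containment, and the second is immediate because $\nu(u_i) \in S = \nu(R)$.

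For the general $q = p^e$ statement, I would iterate. Since $I^{[p^e]} = (I^{[p^{e-1}]})^{[p]}$, the inclusion just proved yields $\nu(I^{[p^e]}) \subset p\nu(I^{[p^{e-1}]}) - \nu(c) + S$, and induction on $e$ produces
$$\nu(I^{[q]}) \subset q\nu(I) - \tfrac{q-1}{p-1}\,\nu(c) + S.$$
To match the stated form, observe that $q\nu(c) - \tfrac{q-1}{p-1}\nu(c) = \tfrac{(p-2)q+1}{p-1}\nu(c)$ is a nonnegative integer multiple of $\nu(c)$, the integrality following from $(p-2)q + 1 \equiv 0 \pmod{p-1}$ (as $p \equiv 1$). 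Because $\nu(c) \in S$ and $S$ is closed under addition, this multiple lies in $S$, so $-\tfrac{q-1}{p-1}\nu(c) + S \subset -q\nu(c) + S$, finishing the proof.

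The only genuinely creative ingredient is the rearrangement producing $cf = \sum_i t_i^p u_i$ via Frobenius additivity; once that is in place, \autoref{OK_basis_reln_with_val} does all the analytic work. I anticipate no serious obstacles: the only delicate piece of bookkeeping is the integrality check at the end that allows one to upgrade $\tfrac{q-1}{p-1}\nu(c)$ to $q\nu(c)$ modulo an element of $S$.
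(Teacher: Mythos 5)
Your proof is correct and follows essentially the same strategy as the paper: the paper uses the $F_*$-module formalism to pass directly from $F_*(ca)\in \sum_i IF_*u_i$ to $ca=\sum a_i^p u_i$ with $a_i\in I$, then invokes \autoref{OK_basis_reln_with_val}, whereas you carry out the same rewriting explicitly via Frobenius additivity. Your induction and the integrality bookkeeping for upgrading $\tfrac{q-1}{p-1}\nu(c)$ to $q\nu(c)$ modulo $S$ likewise match the paper's argument (the paper absorbs $S+pS+\cdots+(q/p)S$ into $S$ in the same way).
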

\begin{proof} 
We choose $a \in I^{[p]}$, then $F_*(ca) \in F_*(cI^{[p]}) \subset IF_*(cR) \subset \sum_i IF_*u_i$. So there exists $a_1,\ldots,a_{p^d} \in I$ with $F_*(ca)=\sum_i a_iF_*u_i$, which means $ca=\sum a_i^{p}u_i$. By \autoref{OK_basis_reln_with_val}, there exists $j$ such that $\nu(ca)=\nu(a_j^{p}u_j)$, so $\nu(a)=p\nu(a_j)+\nu(u_j)-\nu(c) \in p\nu(I)+\nu(u_j)-\nu(c)$. So $\nu(I^{[p]}) \subset \cup_i (p\nu(I)+\nu(u_i)-\nu(c)) \subset (p\nu(I)-\nu(c)+S)$. By induction on $q$, we get $\nu(I^{[q]}) \subset q\nu(I)-(1+p+p^2+\ldots+q/p)\nu(c)+(S+pS+\ldots+(q/p)S) \subset q\nu(I)-q\nu(c)+S$.
\end{proof}
\begin{proposition}\label{OK_basis_gives_containment_from_the_other_side2}
Adopt \autoref{F_finite_perefct_residue_field_set_up}. Then there exists $\fv_2 \in S$ such that for any $q$, $1/q\nu(I^{[q]}) \cap S \subset 1/q\nu(I^{[q]}) \cap \ZZ^d \subset \nu(I)-\fv_2$. 
\end{proposition}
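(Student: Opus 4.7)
The plan is to sharpen \autoref{OK_basis_gives_containment_from_the_other_side} by iterating the OK basis argument $e$ times at once, producing an explicit bounded correction vector $\fy_e \in \ZZ^d$ depending only on $R$ and $e$, and then dominating all such $\fy_e$ by a single element $\fv_2 \in S$.

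First I would set up the iterated OK basis. Fix an OK basis $u_1,\ldots,u_{p^d}$ of $R$ and an element $c \in R$ with $cR \subset \sum_i R^p u_i$, as supplied by \autoref{OK_basis_existence} and \autoref{OK_basis_defn}. Induction on $e$ (using that Frobenius is additive in characteristic $p$) gives, for $q = p^e$,
\[ c_e R \;\subset\; \sum_{\vec i \in \{1,\ldots,p^d\}^e} R^q\, w_{\vec i}, \qquad w_{\vec i} := u_{i_0}u_{i_1}^p\cdots u_{i_{e-1}}^{p^{e-1}}, \quad c_e := c^{(q-1)/(p-1)}. \]
Uniqueness of base-$p$ digit expansion together with \autoref{OK_basis_characterization}(1) shows that the $q^d$ valuations $\nu(w_{\vec i}) = \sum_{k=0}^{e-1} p^k \nu(u_{i_k})$ represent each coset of $\ZZ^d/q\ZZ^d$ exactly once. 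Repeating the argument of \autoref{OK_basis_gives_containment_from_the_other_side} with $\{u_i, c\}$ replaced by $\{w_{\vec i}, c_e\}$ then yields
\[ \nu(I^{[q]}) \;\subset\; \bigcup_{\vec i} \bigl(q\nu(I) + \nu(w_{\vec i}) - \nu(c_e)\bigr). \]

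Given $\mathbf{w} \in \tfrac{1}{q}\nu(I^{[q]}) \cap \ZZ^d$, so $q\mathbf{w} = q\fu + \nu(w_{\vec i}) - \nu(c_e)$ for some $\fu \in \nu(I)$ and some multi-index $\vec i$, the left side lying in $q\ZZ^d$ forces $\nu(w_{\vec i}) \equiv \nu(c_e) \pmod{q\ZZ^d}$; by the coset distinctness the index $\vec i$ is uniquely determined, independent of $\mathbf{w}$ and $I$. Call the resulting $w_{\vec i}$ by $\bar w_e$ and set $\fy_e := (\nu(c_e) - \nu(\bar w_e))/q \in \ZZ^d$, a vector depending only on $R$ and $e$. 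Then $\mathbf{w} + \fy_e = \fu \in \nu(I)$. The geometric-series bounds $\|\nu(c_e)/q\| \leq \|\nu(c)\|/(p-1)$ and $\|\nu(\bar w_e)/q\| \leq \max_j \|\nu(u_j)\|/(p-1)$ show that $\{\fy_e : e \geq 1\}$ is a bounded, hence finite, subset of $\ZZ^d$.

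Finally, using $\ZZ S = \ZZ^d$, for each of the finitely many distinct $\fy_e$ pick $\mathbf{s}_e, \mathbf{t}_e \in S$ with $\fy_e = \mathbf{s}_e - \mathbf{t}_e$, and set $\fv_2 := \sum_e \mathbf{s}_e \in S$. Then $\fv_2 - \fy_e = \mathbf{t}_e + \sum_{e' \neq e} \mathbf{s}_{e'} \in S$ for every $e$, so
\[ \mathbf{w} + \fv_2 = (\mathbf{w} + \fy_e) + (\fv_2 - \fy_e) \in \nu(I) + S \subset \nu(I), \]
since $\nu(I)$ is an $S$-ideal, yielding $\mathbf{w} \in \nu(I) - \fv_2$. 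The first inclusion in the statement is trivial from $S \subset \ZZ^d$. The main obstacle is establishing the refined containment of $\nu(I^{[q]})$ in the first paragraph, specifically the coset distinctness of $\nu(w_{\vec i})$ modulo $q\ZZ^d$; the remaining steps are bookkeeping once that is in hand.
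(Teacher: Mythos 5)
Your proof is correct, and it takes a genuinely different route from the paper's. Both arguments begin by iterating \autoref{OK_basis_gives_containment_from_the_other_side} to obtain, for $q=p^e$, the containment $\nu(I^{[q]})\subset \bigcup_{\vec i}\bigl(q\nu(I)+\sum_{k=0}^{e-1}p^k\nu(u_{i_k})-\tfrac{q-1}{p-1}\nu(c)\bigr)$, but they diverge afterward. The paper coarsens this to $\nu(I^{[q]})\subset q\nu(I)+W-q\nu(c)$, where $W$ is the finitely generated subsemigroup of $S$ generated by all $\nu(u_i)$, and then invokes the lattice-point result \cite[Theorem~1.4]{kaveh2012newton} to show $\tfrac{1}{q}W\cap\ZZ^d\subset W-\fu'$ for a single $\fu'\in S$, giving $\fv_2=\nu(c)+\fu'$. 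You instead keep the union form and observe that, modulo $q\ZZ^d$, the vectors $\sum_k p^k\nu(u_{i_k})$ form a complete and irredundant set of coset representatives (the digit-by-digit induction you allude to, using \autoref{OK_basis_characterization}(1) at each stage, is exactly what makes this work); intersecting with $\ZZ^d$ then collapses the union to a single multi-index and produces a uniformly bounded integer correction $\fy_e$, after which a finite majorization in $S$ via $\ZZ S=\ZZ^d$ finishes the proof. Your approach is more self-contained, avoiding the appeal to the Kaveh--Khovanskii semigroup-filling theorem, at the price of the more delicate coset-distinctness bookkeeping; the paper's is shorter given that external result, and also yields the intermediate containment $1/q\nu(I^{[q]})\cap\ZZ^d\subset\nu(I)-\nu(c)+\reg(\Cone(W))$, which is a slightly more structured statement than a bare translate.
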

\begin{proof}We fix an OK basis $u_1,\ldots,u_{p^d}$ and an element $c \in R$ as in \autoref{OK_basis_gives_containment_from_the_other_side}. Denote $\fu_i=\nu(u_i) \in S$ and $\fu=\nu(c) \in S$. \autoref{OK_basis_gives_containment_from_the_other_side} tells us that if $\fx \in \nu(I^{[p]})$ then there exists some $\fu_i$ such that $\fx \in p\nu(I)+\fu_i-\fu$. So by induction we can prove that for any $e \geqslant 1$ and $\fx \in \nu(I^{[q]})$, there exists a sequence $i_0,\ldots,i_{e-1}$ such that $\fx \in q\nu(I)+\sum_{0 \leqslant k \leqslant e-1}p^k\fu_{i_k}-(q-1)/(p-1)\fu$. Since $\nu(I^{[q]})$ is an $S$-ideal, $\nu(I^{[q]})+\fu \subset \nu(I^{[q]})$, $\nu(I^{[q]})\subset \nu(I^{[q]})-\fu$, so $\fx \in q\nu(I)+\sum_{0 \leqslant k \leqslant e-1}p^k\fu_{i_k}-q\fu$. We consider the semigroup 
$W$ generated by all $\fu_i$'s, then $W$ is a finitely generated subsemigroup of $S$. The above argument implies $\nu(I^{[q]}) \subset q\nu(I)+W-q\fu$. So $1/q\nu(I^{[q]}) \subset \nu(I)+1/qW-\fu$. Now intersect with $\ZZ^d$, we see $\nu(I) \subset \ZZ^d$ and $\fu \in \ZZ^d$, so
$$1/q\nu(I^{[q]})\cap \ZZ^d \subset \nu(I)-\fu+(1/qW\cap \ZZ^d)$$
But $W$ is a finitely generated semigroup. Therefore, by \cite[Theorem 1.4]{kaveh2012newton}, there is $\fu' \in W \subset S$ such that for any $q$,
$$1/qW\cap \ZZ^d \subset \reg(\Cone(W)) \subset W-\fu'$$
And note that $\nu(I)+W=\nu(I)$, we get $1/q\nu(I^{[q]})\cap \ZZ^d \subset \nu(I)-\fu-\fu'$. Since $S \subset \ZZ^d$, $\fv_2=\fu+\fu'$ satisfies the statement of this proposition.
\end{proof}
\begin{remark}
In the following proof we will use the following fact: let $G \subset \RR^d$ be a subgroup and $g \in G$, $S \subset \RR^d$, then
$$(g+S)\cap G=g+(S\cap (G-g))=g+(S\cap G)$$
So we may write $g+S \cap G$ without causing confusion.
\end{remark}
\begin{lemma} \label{equality_of-ht_fn_for_nabla_up_and_down_for_weakly_inverse_setup}
Adopt \autoref{F_finite_perefct_residue_field_set_up}. Let $I_\bullet$ is a weakly inverse $p$-family, then for any $\fx \in \nabla^{up}$ and $\epsilon>0$ with $\fx-\epsilon\fb \in C^\circ$, then $\fx-\epsilon\fb \in \nabla^{low}$. Therefore, for any $\fx \in \Delta^{low}$ and $\epsilon>0$, then $\fx+\epsilon\fb \in \Delta^{up}$.   
\end{lemma}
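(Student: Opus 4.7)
The plan is to prove the second formulation (the first is its contrapositive, since $\fx\in C^\circ$ and $\fb\in C^\circ$ force $\fx+\epsilon\fb\in C^\circ$): for every $\fx\in\Delta^{low}$ and $\epsilon>0$, show $\fx+\epsilon\fb\in\Delta^{up}$. The strategy parallels the weakly $p$-family argument of \autoref{key_step_weak_p family}, except that the forward direction in valuations now runs from a larger index \emph{down} to a smaller one, and the OK basis is exactly what supplies this descent.

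First I would fix $c\in R^\circ$ with $cI_{pq}\subset I_q^{[p]}$ together with an OK basis $u_1,\ldots,u_{p^d}$ and companion $a$ as in \autoref{OK_basis_defn}. Iterating the weakly inverse relation yields $c^{(p^e-1)/(p-1)}I_{p^e Q}\subset I_Q^{[p^e]}$, while iterating \autoref{OK_basis_gives_containment_from_the_other_side} (as in the proof of \autoref{OK_basis_gives_containment_from_the_other_side2}) gives
\[
\nu(I_Q^{[p^e]}) \subset p^e\,\nu(I_Q) + W_e - \tfrac{p^e-1}{p-1}\nu(a),
\]
where $W_e=\bigl\{\sum_{k=0}^{e-1}p^k\nu(u_{i_k})\bigr\}$ is a finite set of vectors of norm $O(p^e)$. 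Composing the two gives the key \emph{descent} inclusion
\[
\nu(I_{p^e Q}) \subset p^e\,\nu(I_Q) + W_e - \tfrac{p^e-1}{p-1}\bigl(\nu(a)+\nu(c)\bigr).
\]
After dividing by $q=p^e Q$, both error terms $W_e/q$ and $\tfrac{p^e-1}{p-1}(\nu(a)+\nu(c))/q$ have norm $O(1/Q)$, with a constant independent of $e$.

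Using this, I would construct approximations of $\fx$ inside $\nu(I_Q)/Q$ for \emph{every} large $Q$. By \autoref{second definition of delta and nabla p-family}, $\fx\in\Delta^{low}$ gives an infinite sequence $q_k\to\infty$ with $q_k[\fx]_{q_k}\in\nu(I_{q_k})$. For each large $Q$ I pick the smallest $q_k\geqslant Q$ and write $q_k=p^e Q$. Applying the descent inclusion to $q_k[\fx]_{q_k}\in\nu(I_{q_k})$ produces some $\tilde\fx_Q\in\nu(I_Q)/Q$ with $\|\tilde\fx_Q-[\fx]_{q_k}\|=O(1/Q)$, and hence $\|\tilde\fx_Q-\fx\|=O(1/Q)$; thus $\tilde\fx_Q\to\fx$ as $Q\to\infty$.

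To close, pick a closed upward subcone $C'\subset C^\bu$ containing $\fx$ and $\epsilon\fb$ in its interior. Then $[\fx+\epsilon\fb]_Q-\tilde\fx_Q\to\epsilon\fb\in C'^\circ$, so by \autoref{K_K_appl2} we have $[\fx+\epsilon\fb]_Q-\tilde\fx_Q\in(1/Q)S$ for all large $Q$. Since $\nu(I_Q)$ is an $S$-ideal and $Q\tilde\fx_Q\in\nu(I_Q)$, this forces $Q[\fx+\epsilon\fb]_Q\in\nu(I_Q)$, i.e., $[\fx+\epsilon\fb]_Q\in\nu(I_Q)/Q$ for all large $Q$, which is $\fx+\epsilon\fb\in\Delta^{up}$. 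The crux is the descent containment itself: without an OK basis, the inclusion $\nu(I_Q^{[p^e]})\subset p^e\nu(I_Q)+\text{(bounded error)}$ fails, because sums of elements of $I^{[p]}$ can acquire strictly smaller valuation than any summand. This is precisely why this half of the paper needs the $F$-finite perfect-residue-field hypothesis, which is used only to produce an OK basis via \autoref{OK_basis_existence}.
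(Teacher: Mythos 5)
Your proposal is correct, and the mechanism is the same as the paper's: combine the iterated weakly-inverse relation with the iterated OK-basis estimate from \autoref{OK_basis_gives_containment_from_the_other_side} to descend from a large index to a smaller one, approximate, and finish with \autoref{K_K_appl2}. The difference is purely one of routing. The paper proves the first formulation directly: starting from $\fx \in \nabla^{up}$, it transports points of the \emph{complement} $\nu^c(I_{q_k})$ into $\nu^c(I_q)$ for $q \geqslant q_k$, and it controls the error term by citing \autoref{OK_basis_gives_containment_from_the_other_side2}, whose proof intersects with $\ZZ^d$ and applies a Kaveh--Khovanskii result to the finitely generated semigroup $W$. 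You instead prove the contrapositive formulation directly: from $\fx \in \Delta^{low}$ you descend $q_k[\fx]_{q_k} \in \nu(I_{q_k})$ down to $\nu(I_Q)$ for every $Q$, and you control the error by keeping the explicit finite set $W_e = \{\sum_{k=0}^{e-1} p^k \nu(u_{i_k})\}$ and observing its elements have norm $O(p^e)$, so that after dividing by $q = p^e Q$ the error is $O(1/Q)$ uniformly in $e$. This sidesteps \autoref{OK_basis_gives_containment_from_the_other_side2} entirely, which is a small streamlining. One cosmetic remark: when you "pick a closed upward subcone $C' \subset C^\bu$ containing $\fx$ and $\epsilon\fb$ in its interior", it is cleaner to ask that $\fx,\fb \in C'^\circ$ (then $\epsilon\fb \in C'^\circ$ automatically since $C'^\bu$ is a cone), and to note that $\fy_Q := [\fx+\epsilon\fb]_Q - \tilde\fx_Q$ already lies in $1/Q\ZZ^d$ so $[\fy_Q]_Q = \fy_Q$ and \autoref{K_K_appl2} applies verbatim.
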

\begin{proof}
We start with the first half. Fix $0 \neq a \in R$ such that $aI_{pq} \subset I_q^{[p]}$ for all $q \geqslant 1$. Then by induction we can prove for any $q_1,q_2$
$$a^{(q_2-1)/(p-1)}I_{q_1q_2}=a^{1+p+\ldots+q_2/p}I_{q_1q_2}\subset I_{q_1}^{[q_2]}.$$
This implies $a^{q_2}I_{q_1q_2} \subset I_{q_1}^{[q_2]}$. Let $\fv_1=\nu(a)$, then
$q_2\fv_1+\nu(I_{q_1q_2}) \subset \nu(I_{q_1}^{[q_2]})$, so $1/q_1\fv_1+1/q_1q_2\nu(I_{q_1q_2}) \subset 1/q_1q_2\nu(I_{q_1}^{[q_2]})$. We fix $\fv_2=\nu(c) \in S$ as in \autoref{OK_basis_gives_containment_from_the_other_side2} such that for any ideal $I \subset R$, $1/q\nu(I^{[q]})\cap \ZZ^d \subset \nu(I)-\fv_2$. Then $1/q_1\fv_1+1/q_1q_2\nu(I_{q_1q_2})\cap 1/q_1\ZZ^d \subset 1/q_1\nu(I_{q_1})-1/q_1\fv_2$. Thus
$$1/q_1(\fv_1+\fv_2)+1/q_1q_2\nu(I_{q_1q_2})\cap 1/q_1\ZZ^d \subset 1/q_1\nu(I_{q_1})$$
Now we proceed as in \autoref{key_step_weak_p family}. We fix a closed upward cone $C' \subset C^\bu$ such that $\fx,\fx-\epsilon\fb \in C'^\circ$. We have $\lim_{n \to \infty}[\fx]_n=\fx$ and $\lim_{n \to \infty}[\fx-\epsilon\fb]_n=\fx-\epsilon\fb$, so by  \autoref{K_K_appl2}, for large enough $n$, $[\fx]_n \in 1/nS$ and $[\fx-\epsilon\fb]_n \in 1/nS$. By \autoref{second definition of delta and nabla p-family}, $\fx \in \nabla^{up}$ implies that for any large enough $q_k$, $[\fx]_{q_k} \in 1/q_k\nu^c(I_{q_k}) \subset 1/q_kS$. This implies for $q \geqslant q_k$, $[\fx]_{q_k}\notin 1/q_k(\fv_1+\fv_2)+1/q\nu(I_q)\cap 1/q_k\ZZ^d$. So $[\fx]_{q_k}- 1/q_k(\fv_1+\fv_2)\notin 1/q\nu(I_q)\cap 1/q_k\ZZ^d$. Note that we can choose arbitrarily large $k$ and as $k \to \infty$, $q_k \to \infty$ and $[\fx]_{q_k}-1/q_k(\fv_1+\fv_2) \to \fx$. By \autoref{K_K_appl2}, for large enough $k$ we have $[\fx]_{q_k}-1/q_k(\fv_1+\fv_2) \in 1/q_kS \subset 1/qS \cap 1/q_k\ZZ^d$, so $[\fx]_{q_k}- 1/q_k(\fv_1+\fv_2)\in 1/q\nu^c(I_q)$ for large enough $k$ and large enough $q$ such that $q \geq q_k$. Therefore there exists a sequence $\{\tilde{\fx}_q\}$ such that for large 
$q$, $\tilde{\fx}_q \in 1/q\nu^c(I_q)$ and $\tilde{\fx}_q \to \fx$ as $q \to \infty$. In this case,
$\tilde{\fx}_q-[\fx-\epsilon\fb]_q \to \fx-(\fx-\epsilon\fb)=\epsilon\fb \in C'^\circ$ as $q \to \infty$. We apply \autoref{K_K_appl2} to this sequence in the case $k=e,m_k=p^e=q$, then for large $q$,
$$\tilde{\fx}_q-[\fx-\epsilon\fb]_q \in 1/qS.$$
Also for large $q$, $\tilde{\fx}_q \in 1/q\nu^c(I_q)$ and $[\fx-\epsilon\fb]_q \in 1/qS$, so $[\fx-\epsilon\fb]_q \in 1/q\nu^c(I_q)$. This implies $\fx-\epsilon\fb\in \nabla^{low}$.

The second half is just contrapositive of the first half, so we prove by contradiction. For any $\fx \in \Delta^{low} \in C^\circ$, assume there exists $\epsilon>0$ with $\fx+\epsilon\fb \notin \Delta^{up}$, then $\fx+\epsilon\fb \in C^\circ-\Delta^{up}=\nabla^{up}$. Then by the first half, 
$\fx \in C^\circ$ implies that $\fx=\fx+\epsilon\fb-\epsilon\fb \in \nabla^{low}$, so $\fx \notin \Delta^{low}$, contradiction. So such $\epsilon$ cannot exist, and for any $\fx \in \Delta^{low}$ and $\epsilon>0$, then $\fx+\epsilon\fb \in \Delta^{up}$. 
\end{proof}
\begin{theorem} \label{limit_existence_weakly_inverse_p_in_OK_domain}
Adopt \autoref{F_finite_perefct_residue_field_set_up}. Let $I_\bullet$ be a BBL weakly inverse $p$-family. Then the limit
$$\lim_{q \to \infty}\frac{\ell(R/I_q)}{q^d}=[\kk_\nu:\kk]\vol(\nabla^{up})$$
exists.
\end{theorem}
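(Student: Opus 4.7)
The plan is to mimic the two earlier limit-existence proofs \autoref{limit_existence_weakly_graded_OK_domain} and \autoref{limit_existence_weakly_p_family_OK_domain}: sandwich the quotient $\ell(R/I_q)/q^d$ between $[\kk_\nu:\kk]\vol(\nabla^{low})$ and $[\kk_\nu:\kk]\vol(\nabla^{up})$ via the general Fatou/reverse-Fatou estimate, and then show the two volumes agree. All the heavy lifting has actually been done already; what remains is to assemble the pieces.

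First, since $I_\bullet$ is BBL and indexed by powers of $p$, \autoref{volume_formula_for_BBL_family_indexed_by_p} applies directly (its statement only requires the BBL hypothesis, not any weakly $p$ or weakly inverse $p$ structure). This gives the general inequality
$$[\kk_\nu:\kk]\vol(\nabla^{low})\leqslant \liminf_{q \to \infty}\frac{\ell(R/I_q)}{q^d}\leqslant \limsup_{q \to \infty}\frac{\ell(R/I_q)}{q^d} \leqslant [\kk_\nu:\kk]\vol(\nabla^{up}).$$
Hence it suffices to show that $\vol(\nabla^{up}\setminus \nabla^{low})=0$, or equivalently that $\Delta^{low}$ and $\Delta^{up}$ (the complements in $C^\circ$) have the same closure up to a null set.

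The key input for this is \autoref{equality_of-ht_fn_for_nabla_up_and_down_for_weakly_inverse_setup}, which I would invoke next. That lemma asserts that for every $\fx \in \Delta^{low}$ and every $\epsilon>0$ we have $\fx+\epsilon\fb \in \Delta^{up}$. Combined with the trivial inclusion $\Delta^{up}\subset \Delta^{low}$, this produces the chain
$$\Delta^{up}\subset \Delta^{low}\subset \overline{\Delta^{up}}.$$
Since both sets are $C^\circ$-ideals in $C$, \autoref{comp_two_C_circ_ideals} implies that they carry the same height function, and then \autoref{ht_func_equality} (applied in both directions) forces them to share the same interior and the same closure. Passing to complements inside $C^\circ$ we conclude that $\nabla^{up}$ and $\nabla^{low}$ likewise have the same interior and closure, so their symmetric difference is contained in $\partial\nabla^{up}\cup\partial\nabla^{low}$, which has measure zero by \autoref{ht_func_prop_for_closure_interior_boundary}.

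Consequently $\vol(\nabla^{low})=\vol(\nabla^{up})$, and the sandwich above collapses to
$$\lim_{q \to \infty}\frac{\ell(R/I_q)}{q^d}=[\kk_\nu:\kk]\vol(\nabla^{up}),$$
as desired. The genuine obstacle—the absence of a containment of the form $\nu(I^{[p]})\subset p\nu(I)+S$ for arbitrary $I$—was absorbed entirely into \autoref{OK_basis_gives_containment_from_the_other_side} and \autoref{OK_basis_gives_containment_from_the_other_side2} via the OK basis, which feed into \autoref{equality_of-ht_fn_for_nabla_up_and_down_for_weakly_inverse_setup}; that is precisely where the $F$-finite and perfect-residue-field hypotheses enter. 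Once those inputs are in hand, the argument here is a purely formal assembly and presents no further difficulty.
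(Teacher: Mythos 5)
Your proposal is correct and follows essentially the same route the paper takes: it cites precisely the same three results (\autoref{volume_formula_for_BBL_family_indexed_by_p}, \autoref{equality_of-ht_fn_for_nabla_up_and_down_for_weakly_inverse_setup}, \autoref{comp_two_C_circ_ideals}) and assembles them in the same way. The only difference is expository—you spell out the intermediate chain $\Delta^{up}\subset\Delta^{low}\subset\overline{\Delta^{up}}$ and the resulting null set, whereas the paper leaves these steps as a one-line citation of the same lemmas.
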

\begin{proof}
Using \autoref{comp_two_C_circ_ideals}, \autoref{volume_formula_for_BBL_family_indexed_by_p} and \autoref{equality_of-ht_fn_for_nabla_up_and_down_for_weakly_inverse_setup} we get the result.
\end{proof}
OK basis gives us a good estimate of $\nu(I^{[p]})$, but its existence is only guaranteed in a special case. So we pose the following question:
\begin{question}
Assume $R$ has characteristic $p>0$ and is an OK domain. Under what assumptions on $R$, except that $R$ is $F$-finite with perfect residue field, can we find some $\fv \in S$, such that for any ideal $I \subset R$,
$$\nu(I^{[p]}) \subset p\nu(I)-\fv+S$$
holds, where $\nu$ is the corresponding OK valuation?
\end{question}

\subsection{Alternate limit existence proof for BBL weakly inverse $p$-families in $F$-finite rings}
In this subsection, we will use methods introduced by Polstra and Tucker in \cite{polstra2018f} to give an alternate limit existence proof for BBL weakly inverse $p$-families in $F$-finite rings.

\begin{theorem} \label{alternate_limit_existence_proof_for_weakly_inverse_p_family}
Assume $(R,\mathfrak{m},\kk)$ is an $F$-finite local domain of dimension $d$, $I_\bu$ is a BBL weakly inverse $p$-families of ideals indexed by powers of $p$. Then:
\begin{enumerate}
\item The limit
\begin{equation*}
\eta=\lim_{e \to \infty} \dfrac{\ell(R/I_q)}{q^d}
\end{equation*}
exists.
\item There exists a positive constant $C$  such that $1/q^d\ell(R/I_q)-\eta \leqslant C/q$ for all $n \in \mathbb{N}$.
\item If $I_\bu$ is weakly $p$ and weakly inverse $p$-family, then there exists a constant $C$ such that $$\lvert 1/q^d\ell(R/I_q)-\eta\lvert \leqslant C/q.$$
\end{enumerate}
\end{theorem}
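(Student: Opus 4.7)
The strategy follows Polstra and Tucker in \cite{polstra2018f}: exploit $F$-finiteness of $R$ to obtain Cauchy-like behaviour of the sequence $a_e := \ell(R/I_{p^e})/p^{ed}$ by a direct module-theoretic argument, bypassing the OK valuation apparatus of \autoref{section_8}. The key technical input, taken from their work, is a uniform comparison: there exists a constant $C_R$, depending only on $R$, such that for every $\mathfrak{m}$-primary ideal $J$ with $\mathfrak{m}^N\subseteq J$,
$$\bigl|\ell(R/J^{[p]}) - p^d\ell(R/J)\bigr| \leqslant C_R\cdot N^{d-1}.$$
This is established by comparing $F_*R$ to a free $R$-submodule $G\subseteq F_*R$ of rank $p^{d+\alpha(R)}$ whose cokernel $F_*R/G$ has dimension $<d$, and controlling $\ell((F_*R/G)/J(F_*R/G))$ via the Hilbert--Samuel function of $\mathfrak{m}$ on the $(d-1)$-dimensional module $F_*R/G$. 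For the BBL family $I_\bu$, the containment $\mathfrak{m}^{cq}\subseteq I_q$ specialises this to $|\ell(R/I_q^{[p]}) - p^d\ell(R/I_q)| \leqslant C_1 q^{d-1}$.

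Next, the weakly inverse $p$ hypothesis $cI_{pq}\subseteq I_q^{[p]}$ is equivalent to $I_{pq}\subseteq (I_q^{[p]}:c)$, so $\ell(R/I_{pq}) \geqslant \ell(R/(I_q^{[p]}:c))$. Multiplication by $c$ injects $R/(I_q^{[p]}:c)\hookrightarrow R/I_q^{[p]}$ with image $(cR+I_q^{[p]})/I_q^{[p]}$, hence
$$\ell(R/(I_q^{[p]}:c)) = \ell(R/I_q^{[p]}) - \ell(R/(cR+I_q^{[p]})).$$
Since $c\in R^\circ$ and $R$ is a domain, $R/cR$ has dimension $d-1$; moreover, BBL (via pigeonhole on generators of $\mathfrak{m}$) forces $\mathfrak{m}^{C'q}\subseteq I_q^{[p]}$ for some constant $C'$, whence $\ell(R/(cR+I_q^{[p]}))\leqslant C_2 q^{d-1}$ by the Hilbert--Samuel polynomial on $R/cR$. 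Assembling these bounds gives $\ell(R/I_{pq})\geqslant p^d\ell(R/I_q)-C_3 q^{d-1}$, and dividing by $(pq)^d$ yields $a_{e+1}-a_e\geqslant -C_4/p^{e+d}$. Therefore $b_e := a_e - \sum_{k\geqslant e}C_4/p^{k+d}$ is non-decreasing, and bounded above because BBL gives $a_e\leqslant \ell(R/\mathfrak{m}^{cq})/q^d = O(1)$. So $(b_e)$ converges, forcing $a_e\to\eta$ for some $\eta\geqslant 0$; since $b_e\leqslant\eta$ we extract $a_e-\eta\leqslant C_5/p^e$, proving (1) and (2).

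For (3), the additional weakly $p$ hypothesis $c'I_q^{[p]}\subseteq I_{pq}$ gives $I_q^{[p]}\subseteq (I_{pq}:c')$, and the symmetric computation (multiplication by $c'$ together with the same Hilbert--Samuel bound on $R/c'R$) yields $\ell(R/I_{pq})\leqslant p^d\ell(R/I_q)+C_6 q^{d-1}$, hence $a_{e+1}-a_e\leqslant C_7/p^{e+d}$. Combined with (2) this forces $|a_e-\eta|\leqslant C_8/p^e$. The principal technical obstacle is establishing the uniform Polstra--Tucker bound with error of order $N^{d-1}$ rather than $N^d$: one must produce a free $R$-submodule of $F_*R$ of the maximum possible rank $p^{d+\alpha(R)}$ and carefully bound the Hilbert--Samuel degree of the cokernel, uniformly in the ideal $J$.
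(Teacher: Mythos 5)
Your strategy is correct and follows the same Polstra--Tucker roadmap the paper uses: exploit $F$-finiteness of $R$ to relate $\ell(R/I_{pq})$ to $p^d\ell(R/I_q)$ up to an error of order $q^{d-1}$, deduce the one-sided Cauchy estimate $a_{e+1}-a_e\geqslant -C/p^e$ for $a_e=\ell(R/I_q)/q^d$, and conclude convergence together with the rate bound from monotonicity of the corrected sequence. The bookkeeping, however, is organized differently. The paper pre-composes a fixed embedding $\phi\colon F_*R\hookrightarrow R^{\oplus p^{d+\alpha(R)}}$ with multiplication by $F_*c$, producing in one stroke a map $F_*(R/I_{e+1})\to (R/I_e)^{\oplus p^{d+\alpha(R)}}$ whose cokernel is a quotient of a single torsion module $N$ independent of $e$; the length estimate then falls out directly. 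You instead factor the argument through an abstract uniform comparison $\bigl|\ell(R/J^{[p]})-p^d\ell(R/J)\bigr|\leqslant C_R N^{d-1}$ for $\fm^N\subseteq J$, and dispose of $c$ separately via the colon identity $\ell\bigl(R/(I_q^{[p]}:c)\bigr)=\ell(R/I_q^{[p]})-\ell\bigl(R/(cR+I_q^{[p]})\bigr)$ together with a Hilbert--Samuel bound on the $(d-1)$-dimensional ring $R/cR$. Both routes yield the same numerics; your modular version makes the role of $c$ and of BBL more transparent, at the cost of quoting the two-sided comparison (the paper's twisted sequence builds in the one inequality it needs). One small mismatch in your sketch of the auxiliary lemma: the direction you use for parts (1)--(2), namely $\ell(R/J^{[p]})\geqslant p^d\ell(R/J)-C_R N^{d-1}$, comes from an embedding $F_*R\hookrightarrow R^{\oplus p^{d+\alpha(R)}}$ with torsion cokernel (precisely the paper's exact sequence), not from a free submodule $G\subseteq F_*R$; that sequence gives the opposite inequality, needed only in your part (3). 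Since you cite \cite{polstra2018f} for the lemma this is cosmetic, but the described construction should be corrected.
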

\begin{proof}
In this proof, we will use the notation $I_{e}$ instead of $I_{p^e}$ to denote the $e$-th element of the family. Choose $c \in R^\circ$ such that $I_{e}^{[p]} \supseteq cI_{e+1}$ for all $e$. Since $F_*R$ is a finite torsion-free $R$-module of rank $p^{(d+\alpha(R))}$, there is a short exact sequence
\begin{equation*}
0 \to F_{*}R \xrightarrow[]{\phi} R^{\bigoplus p^{(d+\alpha(R))}}   \to N_1 \to 0
\end{equation*}
with $N_1$ a torsion $R$-module. This induces another short exact sequence:
\begin{equation}
0 \to F_{*}R \xrightarrow[]{\phi(F_*c\cdot)} R^{\bigoplus p^{(d+\alpha(R))}}   \to N \to 0.
\end{equation}
Since $c \in R^\circ$, $N$ is still a torsion $R$-module, and $\dim N < \dim R$. Note that $I_{e}^{[p]} \supseteq cI_{e+1}$, therefore $\phi(F_*cF_*I_{e+1}) \subset \phi(F_*I^{[p]}_e) \subset I_e\phi(F_*R) \subset I_eR^{\bigoplus p^{(d+\alpha(R))}}$. Thus we have,
\begin{equation}
F_{*}(R/I_{e+1}) \to (R/I_{e})^{p^{(d+\alpha(R))}} \to N' \to 0
\end{equation}
where $N^{'}$ is a quotient of $N$. This leads to the following inequality:
$$p^{(d+\alpha(R))}\ell(R/I_{e}) \leqslant \ell\left( F_{*}(R/I_{e+1})\right)+\ell(N').$$
Since, $I_{e} \cdot N^{'}=0$, we can assume that $N^{'}$ is a quotient of $\dfrac{N}{I_{e} \cdot N}$, this implies $\ell(N^{'}) \leqslant \ell \left(\dfrac{N}{I_{e} \cdot N}\right)$. Since $I_\bu$ is BBL, we may find an $\fm$-primary ideal $J$ such that $J^{[p^e]} \subset I_e$ for all $q=p^e$. Therefore, 
$$\ell(N^{'}) \leqslant \ell \left( \dfrac{N}{J^{[p^e]}N}\right) \sim e_{HK}(J,N) \cdot p^{e \cdot \dim N} + O(p^{e(\dim N -1)} \leqslant C_{N} \cdot p^{e(d-1)} $$ for some constant $C_{N}$. Now, we also have $\ell_{R} \left( F_{*}(R/I_{e+1})\right) = [F_{*}\kk : \kk] \cdot \ell_{F_{*}R}\left(F_{*}(R/I_{e+1})\right) = p^{\alpha(R)} \cdot \ell_{R}(R/I_{e+1})$. Therefore using Equation 8.2, we have, $p^{(d+\alpha(R))} \cdot \ell (R/I_{e}) - p^{\alpha(R)} \cdot \ell(R/I_{e+1}) \leqslant C_{N}\cdot p^{e(d-1)}$. Now dividing both sides by $p^{ed+d+\alpha(R)}$, we get $ \dfrac{\ell(R/I_{e})}{p^{ed}} -\dfrac{\ell(R/I_{e+1})}{p^{(e+1)d}} \leqslant \dfrac{C_{N}p^{e(d-1)}}{p^{ed+d+\alpha(R)}} = C \cdot 1/p^{e}$, where $C= \dfrac{C_{N}}{p^{d+\alpha(R)}}$. Denote $\beta_{e} = \dfrac{\ell(R/I_{e})}{p^{ed}} $, then $\beta_{e} - \beta_{e+1} \leqslant C \cdot 1/p^{e}$, this implies $ (\beta_{e}-C/p^{e-1})-(\beta_{e+1}-C/p^{e}) \leqslant \dfrac{2C}{p^e}-\dfrac{C}{p^{e-1}} \leqslant 0$, as $2 \cdot C \leqslant p \cdot C$. Therefore $\gamma_{e}= \beta_{e}-C/p^{e-1}$ is an increasing sequence. Now $\mid \gamma_{e} \mid \leqslant (1+ \mid \beta_{e}\mid)$ for all large values of $e$. Note that $\ell(R/I_{e}) \leqslant \ell(R/J^{[p^e]}) \sim e_{HK}(J,R) \cdot p^{ed} + O(p^{e(d-1)})$. So we have $\ell(R/I_{e}) \leqslant D \cdot p^{ed}$ for some constant $D$. Since, this is true for all large $e$, we have that $\mid \beta_{e} \mid$ is bounded. This implies $\eta= \displaystyle \lim_{e \to \infty} \gamma_{e}$ exists, and for any $e$, $\eta-\gamma_e \geqslant 0$. Therefore, $\eta = \displaystyle \lim_{e \to \infty} \dfrac{\ell(R/I_{p^{e}})}{p^{ed}}$ exists, thus (1) is true; and for any $e$, $\eta-\beta_e \geqslant -C/p^{e-1}$, or $\beta_e-\eta \leqslant C/p^{e-1}=Cp/p^e$. Denote $Cp$ by $C$, then (2) is true. (3) is a corollary of (2).
\end{proof}

\section{Volume=Multiplicity formula, Minskowski inequality for families, and positivity} \label{section_9}
In this section we will use the analysis on the sets $\nabla^{up}$, $\nabla^{low}$, $\Delta^{up}$, $\Delta^{low}$ to derive the volume = multiplicity formula. We will also derive Minkowski inequalities for weakly graded families, weakly $p$-families and weakly inverse $p$-families. Finally, we work with the \emph{BAL} family to determine when the limit is nonzero.

We will adopt  \autoref{general_setup}. If we work with a weakly graded family we assume $R$ has arbitrary characteristic; if we work with weakly $p$-families, we assume $R$ has characteristic $p>0$; and if we work with weakly inverse $p$-families, we assume $R$ has characteristic $p$, $R$ is $F$-finite, and its residue field is perfect.
\subsection{Volume=multiplicity formula for weakly graded families}
First we start with a weakly graded family $I_\bullet$. Note that for each $I_n$, the family $I^\bullet_n=\{I^k_n\}$ is a graded family, thus we can associate two sets to the family, denoted by $\nabla^{up}_{I_n}$ and $\nabla^{low}_{I_n}$. Now we consider
$$G^{up}_n(\fx)=\chi(1/n\nabla^{up}_{I_n})(\fx),G^{low}_n(\fx)=\chi(1/n\nabla^{low}_{I_n})(\fx),$$
The above two sequences of functions are equal almost everywhere, i.e., for each $n \in \NN$, $G^{up}_n(\fx)=G^{low}_n(\fx)$ for $\fx$ outside a set of measure $0$. We define
$$\nabla^{up}_{\infty}=\{\fx \in C^\circ: \displaystyle \limsup_{n \to \infty}G^{up}_n(\fx)=1\},\nabla^{low}_{\infty}=\{\fx \in C^\circ: \displaystyle  \liminf_{n \to \infty}G^{low}_n(\fx)=1\}.$$
We want to claim that $\nabla^{up}_{\infty}$ and $\nabla^{low}_{\infty}$ differ by a set of measure $0$.

We denote $\Delta^*_*=C^\circ\ba\nabla^*_*$ for $\nabla^{up}_{I_n},\nabla^{low}_{I_n},\nabla^{up}_{\infty},\nabla^{low}_{\infty}$. By  \autoref{ideal_prop_of_delta_up_and_low}, $\Delta^{up}_{I_n},\Delta^{low}_{I_n}$ are $C^\circ$-ideals. Moreover, we have the following:
\begin{proposition}\label{second_definition_of_delta_and_nabla3}
For a given BBL family $I_\bu$, we have:
\begin{enumerate}
\item $\nabla^{up}_{\infty}=\{\fx \in C^\circ: x \in 1/n\nabla^{up}_{I_n} \textup{ for infinitely many }n\}$.
\item $\nabla^{low}_{\infty}=\{\fx \in C^\circ: x \in 1/n\nabla^{low}_{I_n} \textup{ for }n \gg 0\}$.
\item $\Delta^{up}_{\infty}=\{\fx \in C^\circ: x \in 1/n\Delta^{up}_{I_n} \textup{ for }n \gg 0\}$.
\item $\Delta^{low}_{\infty}=\{\fx \in C^\circ: x \in 1/n\Delta^{low}_{I_n} \textup{ for infinitely many }n\}$.

\end{enumerate}
\end{proposition}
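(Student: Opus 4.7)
The proof is essentially a translation of definitions, so the plan is to chase characteristic functions carefully. The key observation is that each $G^{up}_n$ and $G^{low}_n$ is $\{0,1\}$-valued, so the $\limsup$ and $\liminf$ appearing in the definitions of $\nabla^{up}_\infty$ and $\nabla^{low}_\infty$ also take only the values $0$ and $1$. Concretely, $\limsup_{n\to\infty}G^{up}_n(\fx)=1$ precisely when $G^{up}_n(\fx)=1$ for infinitely many $n$, while $\liminf_{n\to\infty}G^{low}_n(\fx)=1$ precisely when $G^{low}_n(\fx)=1$ for $n\gg 0$.

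First I would verify (1): unwinding $G^{up}_n(\fx)=1\iff \fx\in (1/n)\nabla^{up}_{I_n}$ and combining with the $\limsup$ characterization above immediately gives $\nabla^{up}_\infty=\{\fx\in C^\circ:\fx\in (1/n)\nabla^{up}_{I_n}\text{ for infinitely many }n\}$. Statement (2) follows identically from the analogous $\liminf$ characterization applied to $G^{low}_n$.

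Next, for (3) and (4) I would pass to complements inside $C^\circ$. Since $C$ is a cone, its interior $C^\circ$ is scale-invariant, so $(1/n)C^\circ=C^\circ$ and hence $(1/n)\Delta^{up}_{I_n}=C^\circ\setminus\bigl((1/n)\nabla^{up}_{I_n}\bigr)$, with the same identity for $\Delta^{low}_{I_n}$. Then (3) follows by negating ``for infinitely many $n$'' in (1) to ``for $n\gg 0$ the point lies in the complement,'' and (4) follows by the analogous negation applied to (2).

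There is no substantive obstacle: the only subtlety is making sure the complement in $C^\circ$ commutes with the scaling by $1/n$, which reduces to $(1/n)C^\circ=C^\circ$. Everything else is pure bookkeeping on $\{0,1\}$-valued functions. I would not expect the hypotheses that $I_\bullet$ is BBL weakly graded to enter this particular proof; they are needed only to make $\nabla^{up}_{I_n}$ and $\nabla^{low}_{I_n}$ well-behaved enough that the earlier results of \autoref{section_6} apply, but the four equalities asserted here follow purely from the definitions of $G^{up}_n, G^{low}_n$ and of $\nabla^{up}_\infty, \nabla^{low}_\infty$.
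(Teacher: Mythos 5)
Your proof is correct, and it is essentially the argument the paper intends — the paper's proof is the one-liner ``Same as \autoref{second_definition_of_delta_and_nabla},'' which you have correctly unwound, noting that since $G^{up}_n$ and $G^{low}_n$ are genuine characteristic functions (not the cube-averaged $F$-functions), the translation is even more direct here and does not need the $[\cdot]_n$ rounding or \autoref{prop_of_[]_and_nabla}. Your observation that $(1/n)C^\circ = C^\circ$ is exactly the bookkeeping point needed to pass from (1)--(2) to (3)--(4) by complementation, and you are right that only the definitions, not the weakly graded hypothesis, are used.
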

\begin{proof}
Same as \autoref{second_definition_of_delta_and_nabla}.
\end{proof}
\begin{lemma} \label{delta_up_low_infnty_ideal_result}
We have $\Delta^{up}_{\infty} \subset \Delta^{low}_{\infty}$, and these two sets are $C^\circ$-ideals.   
\end{lemma}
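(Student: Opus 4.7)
The plan is to work directly with the alternative characterization given in \autoref{second_definition_of_delta_and_nabla3}, which expresses membership in $\Delta^{up}_\infty$ and $\Delta^{low}_\infty$ purely in terms of the sequence of $C^\circ$-ideals $\{1/n\,\Delta^{up}_{I_n}\}$ and $\{1/n\,\Delta^{low}_{I_n}\}$. The key ingredients are three observations which are already available: (i) for every $n$ the sets $\Delta^{up}_{I_n}$ and $\Delta^{low}_{I_n}$ are $C^\circ$-ideals by \autoref{ideal_prop_of_delta_up_and_low}; (ii) the inclusion $\Delta^{up}_{I_n} \subset \Delta^{low}_{I_n}$ for every $n$ (coming from $\nabla^{low}_{I_n} \subset \nabla^{up}_{I_n}$); and (iii) that scaling a $C^\circ$-ideal by a positive real preserves the $C^\circ$-ideal property, because $C^\circ$ is a cone (if $\fx_0 \in C^\circ$ and $\fz \in \Delta$, then $\fx_0 + \tfrac{1}{n}\fz = \tfrac{1}{n}(n\fx_0+\fz) \in \tfrac{1}{n}\Delta$).

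First I would deduce the inclusion $\Delta^{up}_\infty \subset \Delta^{low}_\infty$. Take $\fx \in \Delta^{up}_\infty$; by \autoref{second_definition_of_delta_and_nabla3}(3) there is $N$ with $\fx \in \tfrac{1}{n}\Delta^{up}_{I_n}$ for all $n \geqslant N$. Using (ii), for every such $n$ also $\fx \in \tfrac{1}{n}\Delta^{low}_{I_n}$, in particular this holds for infinitely many $n$, which by \autoref{second_definition_of_delta_and_nabla3}(4) places $\fx$ in $\Delta^{low}_\infty$.

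Next I would check the $C^\circ$-ideal property for both sets. Fix $\fx$ in $\Delta^{up}_\infty$ (respectively $\Delta^{low}_\infty$) and $\mathbf{0}\neq \fx_0 \in C^\circ$. In the first case there is some $N$ with $\fx \in \tfrac{1}{n}\Delta^{up}_{I_n}$ for all $n \geqslant N$; since $\tfrac{1}{n}\Delta^{up}_{I_n}$ is a $C^\circ$-ideal by (i) and (iii), we get $\fx+\fx_0 \in \tfrac{1}{n}\Delta^{up}_{I_n}$ for the same range of $n$, hence $\fx+\fx_0 \in \Delta^{up}_\infty$. In the second case the same argument is carried out along the infinite subsequence of $n$'s witnessing $\fx \in \tfrac{1}{n}\Delta^{low}_{I_n}$; the $C^\circ$-ideal property is applied pointwise at each such $n$, so the translated point $\fx+\fx_0$ lies in $\tfrac{1}{n}\Delta^{low}_{I_n}$ for infinitely many $n$, giving $\fx+\fx_0 \in \Delta^{low}_\infty$.

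I do not expect any serious obstacle here: both assertions reduce by the characterizations of \autoref{second_definition_of_delta_and_nabla3} to statements about each individual $\Delta^{up}_{I_n}$, $\Delta^{low}_{I_n}$, which were already established in \autoref{ideal_prop_of_delta_up_and_low}. The only minor subtlety is keeping track of the two distinct quantifiers (``for all large $n$'' versus ``for infinitely many $n$''); both are preserved under the pointwise $C^\circ$-ideal operation because the same shift $\fx \mapsto \fx+\fx_0$ works at every scale $1/n$ simultaneously.
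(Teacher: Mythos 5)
Your proof is correct and follows essentially the same route as the paper's: both use the reformulation in \autoref{second_definition_of_delta_and_nabla3} to reduce each claim to the pointwise $C^\circ$-ideal property of the individual $\Delta^{up}_{I_n}$ and $\Delta^{low}_{I_n}$ from \autoref{ideal_prop_of_delta_up_and_low}, together with $\Delta^{up}_{I_n}\subset\Delta^{low}_{I_n}$. Your remark on the scale-invariance of the $C^\circ$-ideal property (via $\fx_0+\tfrac1n\fz=\tfrac1n(n\fx_0+\fz)$) is exactly the observation the paper uses in the rearranged form $n\fx+n\fu\in\Delta^{up}_{I_n}$, and you handle the ``for all large $n$'' versus ``for infinitely many $n$'' quantifiers correctly.
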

\begin{proof}
The containment is true because for every $n$, $\Delta^{up}_{I_n}\subset \Delta^{low}_{I_n}$. Now assume $\fx \in \Delta^{up}_{\infty}$ and $\fu \in C^\circ$. Then $\fx \in 1/n\Delta^{up}_{I_n}$ for large $n$. But since $\Delta^{up}_{I_n}$ is an $C^\circ$-ideal, $n\fx+n\fu \in \Delta^{up}_{I_n}$, so $\fx+\fu \in 1/n\Delta^{up}_{I_n}$ for large $n$. So $\fx+\fu \in \Delta^{up}_{\infty}$, and $\Delta^{up}_{\infty}$ is a $C^\circ$-ideal. Similarly we can prove $\Delta^{low}_{\infty}$ is a $C^\circ$-ideal.   
\end{proof}
Now we take into consideration the two sets $\Delta^{up}$ and $\Delta^{low}$ associated to the original graded family $I_\bullet$.
\begin{lemma} \label{delta_infty_ht_fn_reln_with_delta}
Let $I_\bullet$ be a BBL weakly graded family. Then:
\begin{enumerate}
\item For any $\fx \in \Delta^{up}$ and $\epsilon>0$, $\fx+\epsilon\fb \in \Delta^{up}_{\infty}$.   
\item For any $\fx \in \Delta^{low}_{\infty}$ and $\epsilon>0$, $\fx+\epsilon\fb \in \Delta^{low}$.
\end{enumerate}
\end{lemma}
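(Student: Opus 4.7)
The plan is to combine the characterizations of $\Delta^{up},\Delta^{low},\Delta^{up}_\infty,\Delta^{low}_\infty$ given in \autoref{second_definition_of_delta_and_nabla} and \autoref{second_definition_of_delta_and_nabla3} with the approximation lemma \autoref{K_K_appl2}, using the iterated weakly graded inclusion. I fix $0 \neq c \in R^\circ$ with $cI_mI_n \subset I_{m+n}$ and set $\fv := \nu(c) \in S$; by induction $c^{k-1}I_n^k \subset I_{nk}$, equivalently $(k-1)\fv + \nu(I_n^k) \subset \nu(I_{nk})$ for all $n,k$.

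For (1), the hypothesis $\fx \in \Delta^{up}$ gives, via \autoref{second_definition_of_delta_and_nabla}(5), an element $a_n \in I_n$ with $\nu(a_n) = n[\fx]_n$ for each $n \gg 0$. Since $a_n^k \in I_n^k$, one obtains $n[\fx]_n \in \tfrac{1}{k}\nu(I_n^k)$ for every $k \geqslant 1$. I then fix such an $n$, choose a closed upward subcone $C' \subset C^\bu$ with $n\epsilon\fb \in C'^\circ$, and set $\fy_k := [n(\fx+\epsilon\fb)]_k - n[\fx]_n \in \tfrac{1}{k}\ZZ^d$. Then $\fy_k \to n\epsilon\fb$ as $k \to \infty$, so \autoref{K_K_appl2} puts $\fy_k$ in $\tfrac{1}{k}S$ for $k \gg 0$. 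Because $\tfrac{1}{k}\nu(I_n^k)$ is a $\tfrac{1}{k}S$-ideal, it follows that $[n(\fx+\epsilon\fb)]_k \in \tfrac{1}{k}\nu(I_n^k)$ for $k \gg 0$, i.e.\ $n(\fx+\epsilon\fb) \in \Delta^{up}_{I_n}$. Since this holds for every $n \gg 0$, \autoref{second_definition_of_delta_and_nabla3}(3) yields $\fx + \epsilon\fb \in \Delta^{up}_\infty$.

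For (2) a diagonal construction is required. From $\fx \in \Delta^{low}_\infty$ and \autoref{second_definition_of_delta_and_nabla3}(4) I extract a strictly increasing sequence $n_j \to \infty$ with $n_j\fx \in \Delta^{low}_{I_{n_j}}$; within each of these, infinitely many $k$'s satisfy $[n_j\fx]_k \in \tfrac{1}{k}\nu(I_{n_j}^k)$, so I may pick $k_1 < k_2 < \cdots$ with $[n_j\fx]_{k_j} \in \tfrac{1}{k_j}\nu(I_{n_j}^{k_j})$. Setting $N_j := n_jk_j$, the identity $\tfrac{1}{n_j}[n_j\fx]_{k_j} = [\fx]_{N_j}$ combined with the weakly graded inclusion produces $[\fx]_{N_j} + \tfrac{(k_j-1)\fv}{N_j} \in \tfrac{1}{N_j}\nu(I_{N_j})$. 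I form $\fz_j := [\fx+\epsilon\fb]_{N_j} - [\fx]_{N_j} - \tfrac{(k_j-1)\fv}{N_j} \in \tfrac{1}{N_j}\ZZ^d$; since $\tfrac{(k_j-1)\fv}{N_j} = \tfrac{(1-1/k_j)\fv}{n_j} \to 0$ as $n_j \to \infty$, one has $\fz_j \to \epsilon\fb \in C^\circ$. Applying \autoref{K_K_appl2} once more places $\fz_j$ in $\tfrac{1}{N_j}S$ for $j \gg 0$, and the $S$-ideal property of $\nu(I_{N_j})$ then gives $[\fx+\epsilon\fb]_{N_j} \in \tfrac{1}{N_j}\nu(I_{N_j})$ for an infinite strictly increasing sequence of $N_j$'s. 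Hence $\fx + \epsilon\fb \in \Delta^{low}$ by \autoref{second_definition_of_delta_and_nabla}(6).

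The main subtlety is entirely in (2): only one "infinitely many $n$" is furnished by $\Delta^{low}_\infty$, with a nested "infinitely many $k$" inside each $\Delta^{low}_{I_{n_j}}$, and these must be braided into a single diagonal sequence $N_j=n_jk_j$ with \emph{both} factors tending to infinity. The growth of $n_j$ is essential so that the correction $(k_j-1)\fv/N_j$ forced by the weakly graded inequality decays; the growth of $k_j$ supplies $N_j \to \infty$, which is precisely what allows \autoref{K_K_appl2} to deliver the required $\tfrac{1}{N_j}S$-containment for $\fz_j$.
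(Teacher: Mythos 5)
Your proof of part (2) is correct, and it is actually a bit cleaner than the paper's: instead of first passing from $\Delta^{low}_{I_{n_k}}$ to $\Delta^{up}_{I_{n_k}}$ via \autoref{most_imp_theorem} with an $\epsilon/2$ loss and then using the ``for all $m \gg 0$'' description of $\Delta^{up}$, you diagonalize directly on the ``infinitely many $k$'' description of $\Delta^{low}_{I_{n_j}}$ and aim straight at $\epsilon\fb$. The diagonal extraction, the identity $\tfrac{1}{n_j}[n_j\fx]_{k_j}=[\fx]_{N_j}$, the induction giving $(k-1)\fv+\nu(I_n^k)\subset\nu(I_{nk})$, the decay of $(k_j-1)\fv/N_j$ as $n_j\to\infty$, and the final application of \autoref{K_K_appl2} are all sound.

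Part (1), however, contains a genuine error. You set $\fy_k=[n(\fx+\epsilon\fb)]_k-n[\fx]_n$ and claim $\fy_k\to n\epsilon\fb$ as $k\to\infty$. That is false: for fixed $n$, $[n(\fx+\epsilon\fb)]_k\to n(\fx+\epsilon\fb)$, while $n[\fx]_n=\lfloor n\fx\rfloor$ is a constant in $k$, so in fact $\fy_k\to n(\fx-[\fx]_n)+n\epsilon\fb$. The fractional-part term $n(\fx-[\fx]_n)$ is bounded in norm by $d$ but does \emph{not} vanish as $k\to\infty$, so you cannot apply \autoref{K_K_appl2} with the cone $C'$ you chose (which you only arranged to contain $n\epsilon\fb$ in its interior, not the actual limit). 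The argument is salvageable, but it requires work you have not done: one must use that $C$ is upward so that $\overline{B(n\epsilon\fb,d)}\subset C^\circ$ for $n$ sufficiently large (depending on $\epsilon$, $\fb$, and $C$), and only then choose $C'$ to contain this ball; this is legitimate because the conclusion only needs $n(\fx+\epsilon\fb)\in\Delta^{up}_{I_n}$ for all $n\gg 0$. The paper sidesteps this entirely by working with $\tilde{\fx}_{n,k}=[n(\fx+\epsilon\fb)]_k/n-[\fx]_n$, so that the error $\fx-[\fx]_n$ is of size $O(1/n)$ and tends to $0$ uniformly in $k$ as $n\to\infty$, and then invoking the uniform version \autoref{K_K_appl2}(2). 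As written, your part (1) drops a bounded but nonzero error and thus has a hole at precisely the step where \autoref{K_K_appl2} is invoked.
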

\begin{proof}
(1): This part does not rely on the weakly graded assumption. We take $\fx \in \Delta^{up}$ and $C' \subset C^\bu$ such that $\fx \in C'^\circ$. By \autoref{second_definition_of_delta_and_nabla}, for large $n$, $[\fx]_n \in 1/n\nu(I_n)$. For such $n$ and any $k$, $k[\fx]_n \in 1/n\nu(I^k_n)$, $[\fx]_n \in 1/nk\nu(I^k_n)$. Let
$$\tilde{\fx}_{n,k}=[n\fx+n\epsilon\fb]_k/n-[\fx]_n$$
We see $|\epsilon\fb-\tilde{\fx}_{n,k}| \leq |([n\fx+n\epsilon\fb]_k-(n\fx+n\epsilon\fb))/n|+|\fx-[\fx]_n| \leqslant d/kn+d/n \to 0$ as $n \to \infty$ and this convergence is uniform with respect to $k$. Thus by 
\autoref{K_K_appl2} there exists $N$ independent of $k$ such that whenever $n \geq N$, $\tilde{\fx}_{n,k}\in 1/nkS$ for any $k$. In particular, $\tilde{\fx}_{n,k}\in 1/nkS$ for large enough $n,k$, so $[n\fx+n\epsilon\fb]_k/n \in 1/nk\nu(I^k_n)$, $[n\fx+n\epsilon\fb]_k \in 1/k\nu(I^k_n)$. We fix $n$ and let $k \to \infty$, this implies $n\fx+n\epsilon\fb \in \Delta^{up}_{I_n}$, so $\fx+\epsilon\fb \in 1/n\Delta^{up}_{I_n}$. Then let $n \to \infty$, we get $\fx+\epsilon\fb \in \Delta^{up}_{\infty}$.

(2) Fix $c \in R^\circ$ such that $cI_nI_m \subset I_{n+m}$ for every $m,n$, and let $\fv=\nu(c)$. Then for every $m,n$, $c^mI^m_n \subset I_{mn}$, and $m\fv+\nu(I^m_n) \subset \nu(I_{mn})$. Take $\fx \in \Delta^{low}_{\infty}$. There exists a sequence $n_k$ such that $\fx \in 1/n_k\Delta^{low}_{I_{n_k}}$. Since $I^\bu_{n_k}$ is a graded family for every $k$, $\fx+\frac{\epsilon}{2}\fb \in 1/n_k\Delta^{up}_{I_{n_k}}$ and $n_k\fx+\frac{n_k\epsilon}{2}\fb \in \Delta^{up}_{I_{n_k}}$. So for each $k$, there exists $N_k$ such that for $m \geqslant N_k$, $[n_k\fx+\frac{n_k\epsilon}{2}\fb]_m \in 1/m\nu(I^m_{n_k})$. But $m\fv+\nu(I^m_{n_k}) \subset \nu(I_{mn_k})$. Therefore $[n_k\fx+\frac{n_k\epsilon}{2}\fb]_m+\fv \in 1/m\nu(I_{mn_k})$ and $[n_k\fx+\frac{n_k\epsilon}{2}\fb]_m/n_k+1/n_k\fv \in 1/mn_k\nu(I_{mn_k})$. When $n_k \to \infty$, $[n_k\fx+\frac{n_k\epsilon}{2}\fb]_m/n_k+1/n_k\fv \to \fx+\frac{\epsilon}{2}\fb$ and this convergence is uniform in $m$. We consider
$$\tilde{\fx}_{m,k}=[\fx+\epsilon\fb]_{mn_k}-[n_k\fx+\frac{n_k\epsilon}{2}\fb]_m/n_k-1/n_k\fv$$
Then as $k\to \infty$, $\tilde{\fx}_{m,k} \to \frac{\epsilon}{2}\fb \in C'^\circ$ which is uniform in $m$, so by \autoref{K_K_appl2}, $\tilde{\fx}_{m,k} \in 1/mn_kS$ for large $m,n_k$. This implies $[\fx+\epsilon\fb]_{mn_k} \in 1/mn_k\nu(I_{mn_k})$ for large $m,k$. By definition $\fx+\epsilon\fb \in \Delta^{low}$.
\end{proof}
\begin{corollary} \label{volume_equality_for_delta_and_delta_infinity}
The four sets $\Delta^{up},\Delta^{up}_{\infty},\Delta^{low},\Delta^{low}_{\infty}$ are the same up to a set of measure $0$. In particular their complement in $C^\circ$ have the same volume.    
\end{corollary}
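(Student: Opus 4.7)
The plan is to reduce the statement to a comparison of height functions on the hyperplane $H$ for the four $C^\circ$-ideals in question. First I would observe that by \autoref{ideal_prop_of_delta_up_and_low} and \autoref{delta_up_low_infnty_ideal_result}, each of $\Delta^{up},\Delta^{low},\Delta^{up}_\infty,\Delta^{low}_\infty$ is a $C^\circ$-ideal contained in $C^\circ$, so \autoref{ht_fun_existence} assigns to each a height function; call them $\varphi^{up},\varphi^{low},\varphi^{up}_\infty,\varphi^{low}_\infty$ respectively. The tautological inclusions $\Delta^{up}\subset \Delta^{low}$ and $\Delta^{up}_\infty\subset \Delta^{low}_\infty$ give, via \autoref{comp_two_C_circ_ideals}(1), the inequalities $\varphi^{up}\geqslant \varphi^{low}$ and $\varphi^{up}_\infty\geqslant \varphi^{low}_\infty$.

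Next I would convert the two approximate containments supplied by \autoref{delta_infty_ht_fn_reln_with_delta} into height-function inequalities. For part (1), given any $\fx\in H$ and any $t>\varphi^{up}(\fx)$ one has $\fx+t\fb\in\Delta^{up}$, hence for every $\epsilon>0$ the point $\fx+(t+\epsilon)\fb$ lies in $\Delta^{up}_\infty$, so $\varphi^{up}_\infty(\fx)\leqslant t+\epsilon$; letting $t\downarrow \varphi^{up}(\fx)$ and $\epsilon\downarrow 0$ yields $\varphi^{up}_\infty\leqslant \varphi^{up}$. Symmetrically, part (2) gives $\varphi^{low}\leqslant \varphi^{low}_\infty$. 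Combining with the previous paragraph produces the chain
\[ \varphi^{up}\;\geqslant\;\varphi^{up}_\infty\;\geqslant\;\varphi^{low}_\infty\;\geqslant\;\varphi^{low}. \]

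To close, I would invoke the weakly graded hypothesis through \autoref{delta_low_and_up_same_ht_fn}, which supplies the crucial identity $\varphi^{up}=\varphi^{low}$. The chain then collapses and all four height functions coincide. Applying \autoref{ht_func_equality} to each of the four $C^\circ$-ideals shows that they share a common interior and a common closure, and \autoref{ht_func_prop_for_closure_interior_boundary}(3) guarantees that this common boundary has Lebesgue measure zero. Hence the four sets differ pairwise only by sets of measure zero, and in particular their complements in $C^\circ$ have the same volume. The bookkeeping is straightforward once the height-function dictionary is in place; the only step that could conceivably cause friction is the passage from the $\epsilon\fb$-shifted containments to pointwise height inequalities, but this is essentially the statement of \autoref{ht_func_char}(2) applied with arbitrarily small $\epsilon$, so I do not expect any substantial obstacle.
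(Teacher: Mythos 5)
Your proof is correct and follows essentially the same route as the paper: both convert the $\epsilon\fb$-shifted containments of Lemma~\ref{delta_infty_ht_fn_reln_with_delta} into the height-function inequalities $\varphi_{\Delta^{up}}\geqslant\varphi_{\Delta^{up}_\infty}$ and $\varphi_{\Delta^{low}_\infty}\geqslant\varphi_{\Delta^{low}}$, combine these with $\varphi_{\Delta^{up}_\infty}\geqslant\varphi_{\Delta^{low}_\infty}$ (from the inclusion $\Delta^{up}_\infty\subset\Delta^{low}_\infty$) and collapse the chain using $\varphi_{\Delta^{up}}=\varphi_{\Delta^{low}}$ from Corollary~\ref{delta_low_and_up_same_ht_fn}. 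The only difference is that you spell out the $t\downarrow\varphi$, $\epsilon\downarrow 0$ limit step and the final appeal to Corollary~\ref{ht_func_equality} and Theorem~\ref{ht_func_prop_for_closure_interior_boundary}(3) more explicitly, which the paper leaves implicit.
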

\begin{proof}
All the four sets are $C^\circ$-ideals in $C$. By  \autoref{delta_infty_ht_fn_reln_with_delta}, we get for any $\epsilon>0$,
$$\varphi_{\Delta^{up}}+\epsilon\geqslant \varphi_{\Delta^{up}_{\infty}},\varphi_{\Delta^{low}_{\infty}}+\epsilon\geqslant \varphi_{\Delta^{low}}.$$
 \autoref{delta_low_and_up_same_ht_fn} tells us $\varphi_{\Delta^{low}}=\varphi_{\Delta^{up}}$, and we always have $\Delta^{up}_{\infty} \subset \Delta^{low}_{\infty}$, so $\varphi_{\Delta^{up}_{\infty}}\geqslant \varphi_{\Delta^{low}_{\infty}}$. Combining all these inequalities we get
$$\varphi_{\Delta^{up}}=\varphi_{\Delta^{up}_{\infty}}=\varphi_{\Delta^{low}_{\infty}}=\varphi_{\Delta^{low}}.$$
So the four $C^\circ$-ideals have the same height function, and they are the same up to a set of measure $0$.
\end{proof}
\begin{theorem} \label{volume_multiplicity_formula_for_OK_domain}
For a BBL weakly graded family $I_\bullet$, the volume=multiplicity formula holds. That is,
$$\lim_{n \to \infty} d!\frac{\ell(R/I_n)}{n^d}=\lim_{n \to \infty}\frac{e(I_n,R)}{n^d}.$$
\end{theorem}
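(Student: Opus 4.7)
The plan is to express both sides of the claimed identity as asymptotic volumes of certain sets in $\RR^d$ and then reduce everything to a bounded-convergence argument. I apply \autoref{limit_existence_weakly_graded_OK_domain} twice. First, to the original BBL weakly graded family $I_\bu$: this gives $d!\ell(R/I_n)/n^d \to d![\kk_\nu:\kk]\vol(\nabla^{up})$. Second, for each fixed $n$ the ordinary power family $\{I_n^k\}_{k \in \NN}$ is a BBL graded family (since $I_n$ is $\fm$-primary by the BBL assumption on $I_\bu$), so $\ell(R/I_n^k)/k^d \to [\kk_\nu:\kk]\vol(\nabla^{up}_{I_n})$ as $k \to \infty$. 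By the classical Hilbert--Samuel polynomial, this limit also equals $e(I_n,R)/d!$, hence $e(I_n,R)/n^d = d![\kk_\nu:\kk]\vol(1/n\nabla^{up}_{I_n})$. The theorem therefore reduces to establishing the single limit
$$\lim_{n \to \infty}\vol(1/n\nabla^{up}_{I_n}) = \vol(\nabla^{up}).$$

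Setting $G^{up}_n=\chi(1/n\nabla^{up}_{I_n})$, so that $\vol(1/n\nabla^{up}_{I_n}) = \int_{\RR^d} G^{up}_n\,d\fx$, I plan to apply the bounded convergence theorem. For uniform support: the BBL constant $c$ with $\fm^{cn} \subset I_n$ gives $\fm^{cnk}\subset I_n^k$, so by \autoref{val_bbl_prop} there is $\fv \in \ZZ^d$ (independent of $n$) with $\nu^c(I_n^k) \subset C \cap k H_{<cn\fv}$; one checks as in \autoref{type_0_F_conv} that this forces $\nabla^{up}_{I_n} \subset C \cap \overline{H_{<cn\fv}}$, so $1/n\nabla^{up}_{I_n} \subset C \cap \overline{H_{<c\fv}}$, a bounded set independent of $n$. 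Uniform boundedness by $1$ is automatic. For pointwise a.e.~convergence, introduce $G^{low}_n = \chi(1/n\nabla^{low}_{I_n})$; by \autoref{delta_low_and_up_same_ht_fn} applied to $\{I_n^k\}_k$, the sets $\nabla^{up}_{I_n}$ and $\nabla^{low}_{I_n}$ differ by a measure-zero set $E_n$, and the countable union $E = \bigcup_n E_n$ still has measure zero. For $\fx \notin E$, the sequences $G^{up}_n(\fx)$ and $G^{low}_n(\fx)$ coincide termwise, so the conditions defining $\nabla^{up}_\infty$ and $\nabla^{low}_\infty$ apply to the same sequence: $\fx \in \nabla^{low}_\infty$ forces $G^{up}_n(\fx) \to 1$, and $\fx \notin \nabla^{up}_\infty$ forces $G^{up}_n(\fx) \to 0$. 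Now \autoref{volume_equality_for_delta_and_delta_infinity}, which crucially uses the weakly graded hypothesis, tells us that $\nabla^{up}$, $\nabla^{up}_\infty$ and $\nabla^{low}_\infty$ all coincide up to measure zero. Thus $G^{up}_n \to \chi(\nabla^{up})$ a.e., and the bounded convergence theorem concludes $\vol(1/n\nabla^{up}_{I_n}) \to \vol(\nabla^{up})$.

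The principal obstacle is the pointwise a.e.~convergence of $G^{up}_n$: by construction we only have the one-sided information that $G^{up}_n(\fx) = 1$ infinitely often on $\nabla^{up}_\infty$ and $G^{low}_n(\fx)=1$ eventually on $\nabla^{low}_\infty$. Without the weakly graded hypothesis, the sets $\nabla^{up}_\infty$ and $\nabla^{low}_\infty$ could genuinely differ in measure, and $\limsup$ and $\liminf$ of $G^{up}_n$ would not agree almost everywhere. It is precisely \autoref{delta_infty_ht_fn_reln_with_delta} (via \autoref{most_imp_theorem}) that controls $\varphi_{\Delta^{up}_\infty}$ and $\varphi_{\Delta^{low}_\infty}$ and collapses them to the common height function of $\Delta^{up}$, making the a.e.~pointwise limit exist.
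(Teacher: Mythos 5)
Your proposal is correct and takes essentially the same approach as the paper: both express $e(I_n,R)/n^d$ via $\vol(1/n\nabla^{up}_{I_n})$, both exploit the a.e.\ coincidence of $G^{up}_n$ and $G^{low}_n$, and both rest on \autoref{volume_equality_for_delta_and_delta_infinity} to collapse $\nabla^{low}_\infty$, $\nabla^{up}_\infty$, and $\nabla^{up}$ to a common volume. The only cosmetic difference is that you isolate the a.e.\ pointwise limit and invoke bounded convergence directly, whereas the paper squeezes $\liminf$ and $\limsup$ of the integrals between $\vol(\nabla^{low}_\infty)$ and $\vol(\nabla^{up}_\infty)$ via Fatou and reverse Fatou; since the paper's bounded convergence lemma is itself a corollary of those two Fatou lemmas, this is the same argument.
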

\begin{proof}
We apply the results on a single graded family $I^\bullet_n$ to get
$$e(I_n,R)= d!\cdot [\kk_\nu:\kk]\vol(\nabla^{up}_{I_n})=d! \cdot n^d[\kk_\nu:\kk]\vol(1/n\nabla^{up}_{I_n}).$$
Thus
$$\frac{e(I_n,R)}{n^d}= d!\cdot [\kk_\nu:\kk]\vol(1/n\nabla^{up}_{I_n})= d!\cdot [\kk_\nu:\kk]\int_{\mathbb{R}^d}G^{up}_n(\fx)d\fx.$$
But for $\fx$ outside a set of measure $0$, $G^{up}_n(\fx)=G^{low}_n(\fx)$ for all $n$, therefore
$$\chi(\nabla^{low}_{\infty})(\fx) \leqslant  \displaystyle \liminf_{n \to \infty}G^{up}_n(\fx) \leqslant \displaystyle \limsup_{n \to \infty}G^{up}_n(\fx) \leqslant \chi(\nabla^{up}_{\infty})(\fx).$$
Since all the functions are bounded on a bounded set, we apply  \autoref{Fatou's lemma} and \autoref{reverse Fatou's lemma} to get
$$\vol(\nabla^{low}_{\infty})\leqslant \displaystyle \liminf_{n \to \infty}\int_{\mathbb{R}^d}G^{up}_n(\fx)d\fx \leqslant \displaystyle \limsup_{n \to \infty}\int_{\mathbb{R}^d}G^{up}_n(\fx)d\fx \leqslant \vol(\nabla^{up}_{\infty}).$$
Thus
$$\lim_{n \to \infty}\frac{e(I_n,R)}{n^d}= d!\cdot [\kk_\nu:\kk]\vol(\nabla^{low}_{\infty}).$$
But we have
$$\lim_{n \to \infty} d! \frac{\ell(R/I_n)}{n^d}=d! \cdot [\kk_\nu:\kk]\vol(\nabla^{low}).$$
and $\vol(\nabla^{low}_{\infty})=\vol(\nabla^{low})$ by  \autoref{volume_equality_for_delta_and_delta_infinity}.
\end{proof}

\subsection{Volume=multiplicity for weak $p$-families and weak inverse $p$-families}
Next we assume the ring $R$ has characteristic $p>0$ and start with a family $I_\bullet$ indexed by powers of $p$. For each $I_{q_0}$, the family $I^\bullet_{q_0}=\{I^{[q]}_{q_0}\}$ is a $p$-family, thus we can associate two sets to the family, denoted by $\nabla^{up}_{I_q}$ and $\nabla^{low}_{I_q}$. Now we consider
$$G^{up}_q(\fx)=\chi(1/q\nabla^{up}_{I_q})(\fx),G^{low}_q(\fx)=\chi(1/q\nabla^{low}_{I_q})(\fx),$$
The above two sequences of functions are equal almost everywhere. We define
$$\nabla^{up}_{\infty}=\{\fx \in C^\circ:\displaystyle \limsup_{q \to \infty}G^{up}_q(\fx)=1\},\nabla^{low}_{\infty}=\{\fx \in C^\circ:\lim_{q \to \infty}G^{low}_q(\fx)=1\}.$$
We denote $\Delta^*_*=C^\circ\ba\nabla^*_*$ for $\nabla^{up}_{I_q},\nabla^{low}_{I_q},\nabla^{up}_{\infty},\nabla^{low}_{\infty}$. 
\begin{proposition}\label{second definition of delta and nabla4}
For a given BBL family $I_\bu$, we have:
\begin{enumerate}
\item $\nabla^{up}_{\infty}=\{\fx \in C^\circ: x \in 1/q\nabla^{up}_{I_q} \textup{ for infinitely many }q\}$
\item $\nabla^{low}_{\infty}=\{\fx \in C^\circ: x \in 1/q\nabla^{low}_{I_q} \textup{ for }q \gg 0\}$
\item $\Delta^{up}_{\infty}=\{\fx \in C^\circ: x \in 1/q\Delta^{up}_{I_q} \textup{ for }q \gg 0\}$
\item $\Delta^{low}_{\infty}=\{\fx \in C^\circ: x \in 1/q\Delta^{low}_{I_q} \textup{ for infinitely many }q\}$

\end{enumerate}
\end{proposition}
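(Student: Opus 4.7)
The plan is to observe that the statement is essentially a direct unpacking of the definitions of $\nabla^{up}_{\infty}$ and $\nabla^{low}_{\infty}$, using that the functions $G^{up}_q$ and $G^{low}_q$ are characteristic functions taking values in $\{0,1\}$. This is the same pattern as \autoref{second_definition_of_delta_and_nabla} and \autoref{second_definition_of_delta_and_nabla3}, so I would mirror those arguments.

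For part (1), I would first observe that by the definition $G^{up}_q(\fx) = \chi(1/q\nabla^{up}_{I_q})(\fx)$, the condition $G^{up}_q(\fx)=1$ is equivalent to $\fx \in 1/q\nabla^{up}_{I_q}$. Since $G^{up}_q$ takes only values $0$ or $1$, the standard fact about $\{0,1\}$-valued sequences gives $\limsup_{q \to \infty} G^{up}_q(\fx) = 1$ if and only if $G^{up}_q(\fx)=1$ for infinitely many $q$. For part (2), the same reasoning applies: $\lim_{q \to \infty} G^{low}_q(\fx) = 1$ is equivalent to $G^{low}_q(\fx)=1$ for all sufficiently large $q$, which unpacks to $\fx \in 1/q\nabla^{low}_{I_q}$ for $q \gg 0$.

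For parts (3) and (4), I would proceed by taking complements inside $C^\circ$. Since $\Delta^{up}_{I_q} = C^\circ \setminus \nabla^{up}_{I_q}$ and $C^\circ$ is a cone (so $1/q C^\circ = C^\circ$), we have $1/q\Delta^{up}_{I_q} = C^\circ \setminus 1/q\nabla^{up}_{I_q}$. Thus for $\fx \in C^\circ$, the condition $\fx \in 1/q\Delta^{up}_{I_q}$ for $q \gg 0$ is precisely the negation of the condition in (1), namely that $\fx \in 1/q\nabla^{up}_{I_q}$ for only finitely many $q$. This matches $\fx \in C^\circ \setminus \nabla^{up}_{\infty} = \Delta^{up}_{\infty}$. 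The argument for (4) is symmetric, using that $\fx \notin \nabla^{low}_{\infty}$ is equivalent to $\fx \notin 1/q\nabla^{low}_{I_q}$ for infinitely many $q$, i.e., $\fx \in 1/q\Delta^{low}_{I_q}$ for infinitely many $q$.

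There is no real obstacle here: the proposition is a bookkeeping lemma that reformulates $\limsup/\liminf$ conditions on $\{0,1\}$-valued sequences into containment conditions indexed by $q$. The only minor point to verify cleanly is that $\fx \in C^\circ$ automatically lives in $1/q C^\circ$ for every $q$, so the ambient set $C^\circ$ stays fixed when passing between $\nabla$ and $\Delta$ via complements.
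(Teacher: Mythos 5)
Your proof is correct and is essentially the same as the paper's, which simply records that parts (1)--(4) follow by unwinding the definitions of $G^{up}_q$, $G^{low}_q$ as characteristic functions and of $\limsup$/$\lim$ for $\{0,1\}$-valued sequences (the paper cites this as ``same as \autoref{second_definition_of_delta_and_nabla} with $n$ replaced by $q$''). Your added remark that $1/q\,\Delta^{up}_{I_q} = C^\circ \setminus 1/q\,\nabla^{up}_{I_q}$ because $1/q\,C^\circ = C^\circ$ is a correct and clean way to handle the complements in (3) and (4).
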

\begin{proof}
Same as \autoref{second_definition_of_delta_and_nabla} with $n$ replaced by $q$.
\end{proof}
\begin{lemma}
We have $\Delta^{up}_{\infty} \subset \Delta^{low}_{\infty}$, and these two sets are $C^\circ$-ideals.   
\end{lemma}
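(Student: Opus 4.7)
The plan is to mirror almost verbatim the argument for \autoref{delta_up_low_infnty_ideal_result}, with indices replaced by powers of $p$, using the characterizations of $\Delta^{up}_{\infty}$ and $\Delta^{low}_{\infty}$ provided in \autoref{second definition of delta and nabla4} together with the fact (already established in Section \ref{section_7}) that $\Delta^{up}_{I_q}$ and $\Delta^{low}_{I_q}$ are $C^\circ$-ideals for each fixed $q$, since $I^\bu_{q}=\{I^{[q']}_q\}_{q'}$ is a $p$-family (it is even an honest $p$-family, so no weakly-graded hypothesis enters).

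First I would handle the containment. If $\fx \in \Delta^{up}_{\infty}$ then by part (3) of \autoref{second definition of delta and nabla4}, $\fx \in 1/q\,\Delta^{up}_{I_q}$ for all sufficiently large $q$. Since $\Delta^{up}_{I_q} \subset \Delta^{low}_{I_q}$ for every $q$, this implies $\fx \in 1/q\,\Delta^{low}_{I_q}$ for all sufficiently large $q$, and in particular for infinitely many $q$, so by part (4) of \autoref{second definition of delta and nabla4}, $\fx \in \Delta^{low}_{\infty}$.

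Next I would verify the $C^\circ$-ideal property. Let $\fx \in \Delta^{up}_{\infty}$ and let $\mathbf 0 \neq \fu \in C^\circ$; I want $\fx+\fu \in \Delta^{up}_{\infty}$. For $q \gg 0$ we have $q\fx \in \Delta^{up}_{I_q}$, and since $\Delta^{up}_{I_q}$ is a $C^\circ$-ideal in $C$ (applying the lemma in Section \ref{section_7} to the $p$-family $I^\bu_{q}$) and $q\fu \in C^\circ$, we obtain $q\fx+q\fu \in \Delta^{up}_{I_q}$, i.e., $\fx+\fu \in 1/q\,\Delta^{up}_{I_q}$ for all sufficiently large $q$. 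Hence $\fx+\fu \in \Delta^{up}_{\infty}$. The argument for $\Delta^{low}_{\infty}$ is identical, replacing ``for $q \gg 0$'' by ``for infinitely many $q$'': for every $q$ in the infinite set witnessing $\fx \in \Delta^{low}_{\infty}$, the same $C^\circ$-ideal property of $\Delta^{low}_{I_q}$ gives $\fx+\fu \in 1/q\,\Delta^{low}_{I_q}$, so this holds for infinitely many $q$, and $\fx+\fu \in \Delta^{low}_{\infty}$.

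There is no real obstacle here: the argument is formal and parallels \autoref{delta_up_low_infnty_ideal_result} step for step. The only subtlety worth flagging is that $\Delta^{up}_{I_q}$ and $\Delta^{low}_{I_q}$ are constructed from the $p$-family $\{I^{[q']}_q\}_{q'}$ (not from a graded family), so one must cite the $p$-family version of \autoref{ideal_prop_of_delta_up_and_low} proved in Section \ref{section_7} rather than the graded one, but both versions are stated without assuming weakly graded/weakly $p$-family hypotheses, so this substitution is straightforward.
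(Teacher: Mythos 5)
Your proof is correct and matches the paper's argument: the paper simply says "the proof is the same as \autoref{delta_up_low_infnty_ideal_result}, where we replace $n$ by $q$," and your write-up is precisely that substitution carried out explicitly, including the correct appeal to the $p$-family version of the $C^\circ$-ideal lemma for the sets $\Delta^{up}_{I_q}$, $\Delta^{low}_{I_q}$ attached to $I^\bu_q=\{I^{[q']}_q\}_{q'}$.
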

\begin{proof}
The proof is the same as  \autoref{delta_up_low_infnty_ideal_result}, where we replace $n$ by $q$ and $n_k$ by $q_k$.    
\end{proof}
\begin{lemma}
We assume either $I_\bu$ is a weak $p$-family, or $R$ is $F$-finite with perfect residue field and $I_\bu$ is a weak inverse $p$-family. Then:
\begin{enumerate}
\item For any $\fx \in \Delta^{up}$ and $\epsilon>0$, $\fx+\epsilon\fb \in \Delta^{up}_{\infty}$.   
\item For any $\fx \in \Delta^{low}_{\infty}$ and $\epsilon>0$, $\fx+\epsilon\fb \in \Delta^{low}$.
\end{enumerate}
\end{lemma}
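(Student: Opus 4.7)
The plan is to mirror the structure of \autoref{delta_infty_ht_fn_reln_with_delta}, replacing the index $n$ by $q=p^e$ and $k$ by $p^f$, but where the argument for part (2) splits into two cases depending on whether we have a weakly $p$-family (the analog of the weakly graded proof) or a weakly inverse $p$-family (where we must invoke the OK basis from \autoref{section_8}).

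For part (1), neither the weak $p$ nor the weak inverse $p$ hypothesis is actually needed; only the $p$-power Frobenius structure matters. I would take $\fx \in \Delta^{up}$, so $[\fx]_q \in 1/q\nu(I_q)$ for all large $q$, whence $q[\fx]_q\in \nu(I_q)$ and $p^f q[\fx]_q \in \nu(I^{[p^f]}_q)$ for every $f$. Setting
\[
\tilde{\fx}_{q,f} \;=\; [q\fx+q\epsilon\fb]_{p^f}/q - [\fx]_q,
\]
the usual estimates give $|\tilde{\fx}_{q,f}-\epsilon\fb|\leqslant 2d/q$ uniformly in $f$, so \autoref{K_K_appl2}(2) with $m_q=q$ forces $\tilde{\fx}_{q,f}\in 1/qS\subset 1/(qp^f)S$ for $q\gg 0$, independent of $f$. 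Since $1/(qp^f)\nu(I^{[p^f]}_q)$ is a $1/(qp^f)S$-ideal, adding yields $[q\fx+q\epsilon\fb]_{p^f}\in 1/p^f\nu(I^{[p^f]}_q)$ for all large $f$; fixing $q$ and letting $f\to\infty$ shows $q(\fx+\epsilon\fb)\in \Delta^{up}_{I_q}$, hence $\fx+\epsilon\fb\in\Delta^{up}_\infty$.

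For part (2) in the weakly $p$-family case, fix $a\in R^\circ$ with $aI^{[p]}_q\subset I_{pq}$ and $\fv=\nu(a)$; iterating gives $p^f\fv+\nu(I^{[p^f]}_{q_1})\subset \nu(I_{q_1 p^f})$. Given $\fx\in\Delta^{low}_\infty$, pick a sequence $q_k$ with $q_k\fx\in\Delta^{low}_{I_{q_k}}$; by \autoref{key_step_weak_p family} applied to the $p$-family $I^{\bullet}_{q_k}$, $q_k\fx+(q_k\epsilon/2)\fb\in\Delta^{up}_{I_{q_k}}$, so $[q_k\fx+q_k\epsilon/2\,\fb]_{p^f}\in 1/p^f\nu(I^{[p^f]}_{q_k})$ for $f\gg 0$. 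The weakly $p$ inclusion then gives $[q_k\fx+q_k\epsilon/2\,\fb]_{p^f}/q_k+\fv/q_k\in 1/(q_k p^f)\nu(I_{q_k p^f})$. Setting
\[
\tilde{\fx}_{k,f} \;=\; [\fx+\epsilon\fb]_{q_k p^f}-[q_k\fx+q_k\epsilon/2\,\fb]_{p^f}/q_k-\fv/q_k,
\]
the triangle inequality yields $\tilde{\fx}_{k,f}\to \epsilon/2\,\fb\in C'^\circ$ as $k\to\infty$ uniformly in $f$, and $\tilde{\fx}_{k,f}\in 1/(q_k p^f)\ZZ^d$. \autoref{K_K_appl2}(2) with $m_k=q_k$ puts $\tilde{\fx}_{k,f}\in 1/q_k S\subset 1/(q_k p^f)S$, and reassembling gives $[\fx+\epsilon\fb]_{q_k p^f}\in 1/(q_k p^f)\nu(I_{q_k p^f})$. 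The indices $q_k p^f$ are powers of $p$, so $\fx+\epsilon\fb\in\Delta^{low}$.

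For part (2) in the weakly inverse $p$-family case, the naive strategy fails because the valuation inclusion $\nu(I_{pq})+\fv\subset \nu(I^{[p]}_q)$ runs in the wrong direction, so the bisequence trick cannot produce elements of $\nu(I_{q_k p^f})$ from elements of $\nu(I^{[p^f]}_{q_k})$. Instead, I would take $Q=q_k$ itself: as before $q_k\fx+(q_k\epsilon/2)\fb$ lies in $\Delta^{up}_{I_{q_k}}$, in fact in its interior, and since $q_k\fx+(q_k\epsilon/4)\fb\in\Delta^{up}_{I_{q_k}}$ already, the integer point $\fw_k=\lfloor q_k\fx+q_k\epsilon/2\,\fb\rfloor$ differs from $q_k\fx+(q_k\epsilon/4)\fb$ by an element of $C^\circ$ for $k\gg 0$ (using $\fb\in C^\circ$ and $q_k\epsilon/4\gg d$), so $\fw_k\in\Delta^{up}_{I_{q_k}}\cap \ZZ^d$. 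Hence $\fw_k\in 1/p^f\nu(I^{[p^f]}_{q_k})\cap \ZZ^d$ for some $f$ (in fact all large $f$), and \autoref{OK_basis_gives_containment_from_the_other_side2} gives $\fw_k+\fv_2\in \nu(I_{q_k})$, so $\fw_k/q_k+\fv_2/q_k\in 1/q_k\nu(I_{q_k})$. Setting $\fy_k=[\fx+\epsilon\fb]_{q_k}-\fw_k/q_k-\fv_2/q_k\in 1/q_k\ZZ^d$ with limit $\epsilon/2\,\fb\in C'^\circ$, \autoref{K_K_appl2} places $\fy_k\in 1/q_k S$, and adding back shows $[\fx+\epsilon\fb]_{q_k}\in 1/q_k\nu(I_{q_k})$ for infinitely many $q_k$, proving $\fx+\epsilon\fb\in\Delta^{low}$.

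The principal obstacle is precisely the weakly inverse $p$-family step in part (2): one cannot directly move between $\nu(I^{[p^f]}_{q_k})$ and $\nu(I_{q_k p^f})$ in the useful direction, and the OK basis result \autoref{OK_basis_gives_containment_from_the_other_side2} is stated only for points in $\ZZ^d$ (not $1/p^f\ZZ^d$). The trick above, replacing the fractional rounding $[\cdot]_{p^f}$ by the integer rounding $\lfloor\cdot\rfloor$ and then exploiting that $\Delta^{up}_{I_{q_k}}$ is a full-dimensional $C^\circ$-ideal, is what forces an integer lattice point into $\nu(I^{[p^f]}_{q_k})$ so that OK basis can be applied with the target $Q=q_k$ rather than $q_k p^f$.
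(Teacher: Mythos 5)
Your proof is essentially correct, and for part (2) in the weakly inverse $p$-family case you take a genuinely different route from the paper. The paper decomposes $[\fx]_{Q_kq_k}$ additively into $\fy_{1,k}+\fy_{2,k}-\fv/q_k$ using \autoref{OK_basis_gives_containment_from_the_other_side}, proves the $\fy_{1,k}$ lie in a bounded region of the pointed cone, extracts convergent subsequences by compactness, and only then invokes \autoref{K_K_appl2}. You instead manufacture a concrete integer lattice point $\fw_k=\lfloor q_k\fx+q_k\epsilon/2\,\fb\rfloor$, show it lands in $\Delta^{up}_{I_{q_k}}\cap\ZZ^d$ by pushing $q_k\fx+q_k\epsilon/4\,\fb$ (itself in $\Delta^{up}_{I_{q_k}}$ by \autoref{key_step_weak_p family} applied to the $p$-family $I^{[\bu]}_{q_k}$) by an element of $C^\circ$, and then feed $\fw_k$ directly into \autoref{OK_basis_gives_containment_from_the_other_side2}, which is tailored to integer points. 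This avoids the subsequence extraction entirely and is arguably more transparent. The trade-off is that you rely on \autoref{OK_basis_gives_containment_from_the_other_side2} rather than the weaker \autoref{OK_basis_gives_containment_from_the_other_side}, but both are already established in \autoref{section_8}, so nothing new is needed. Your parts (1) and (2) in the weakly $p$-family case follow the same bisequence structure as the paper's proof of \autoref{delta_infty_ht_fn_reln_with_delta}.

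One small imprecision, which mirrors the paper's own slightly loose phrasing: when you write that \autoref{K_K_appl2}(2) with $m_q=q$ forces $\tilde{\fx}_{q,f}\in 1/qS$, this is not literally what the corollary gives, since $\tilde{\fx}_{q,f}$ lies in $1/(qp^f)\ZZ^d$ rather than $1/q\ZZ^d$, and \autoref{K_K_appl2}(2) would yield $[\tilde{\fx}_{q,f}]_q\in 1/qS$, a different statement. The correct route is to apply \autoref{K_K_appl} directly at scale $qp^f$ (using that the convergence $\tilde{\fx}_{q,f}\to\epsilon\fb$ is uniform in $f$ and $qp^f\geqslant q\to\infty$), which gives $\tilde{\fx}_{q,f}\in 1/(qp^f)S$; this is exactly what the subsequent $1/(qp^f)S$-ideal step needs, so the argument still goes through. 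Similarly in part (2), weakly $p$-family case.
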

\begin{proof}
Here the proof of (1) are the same as  \autoref{delta_infty_ht_fn_reln_with_delta} (1) and does not rely on the fact that $I_\bu$ is a weak $p$ or weak inverse $p$-family. So we only prove (2).   

First we assume $I_\bu$ is weakly $p$-family. Fix $c \in R^\circ$ such that $cI^{[p]}_q \subset I_{pq}$ for every $m,n$, and let $\fv=\nu(c)$. Then for every $q_1,q_2$, $c^{q_2}I^{[q_2]}_{q_1} \subset I_{q_1q_2}$, and $q_2\fv+\nu(I^{[q_2]}_{q_1}) \subset \nu(I_{q_2q_1})$. Take $\fx \in \Delta^{low}_{\infty}$. There exists a sequence $q_k$ such that $\fx \in 1/q_k\Delta^{low}_{I_{q_k}}$. Since $I^\bu_{q_k}$ is a graded family for every $k$, $\fx+\frac{\epsilon}{2}\fb \in 1/q_k\Delta^{up}_{I_{q_k}}$ and $q_k\fx+\frac{q_k\epsilon}{2}\fb \in \Delta^{up}_{I_{q_k}}$. So for each $k$, there exists $Q_k$ such that for $q \geqslant Q_k$, $[q_k\fx+\frac{q_k\epsilon}{2}\fb]_q \in 1/q\nu(I^{[q]}_{q_k})$. But $q\fv+\nu(I^{[q]}_{q_k}) \subset \nu(I_{qq_k})$, so $[q_k\fx+\frac{q_k\epsilon}{2}\fb]_q+\fv \in 1/q\nu(I_{qq_k})$, $[q_k\fx+\frac{q_k\epsilon}{2}\fb]_q/q_k+1/q_k\fv \in 1/qq_k\nu(I_{qq_k})$. This is true for large enough $q$ and $k$; similar to \autoref{delta_infty_ht_fn_reln_with_delta}, we can prove
$$\tilde{\fx}_{q,q_k}=[\fx+\epsilon\fb]_{qq_k}-[q_k\fx+\frac{q_k\epsilon}{2}\fb]_q/q_k-1/q_k\fv \to \frac{\epsilon}{2}\fb$$
and for large enough $q,q_k$, $\tilde{\fx}_{q,q_k} \in 1/qq_kS$, so $[\fx+\epsilon\fb]_{qq_k} \in 1/qq_k\nu(I_{qq_k})$, and $\fx+\epsilon\fb \in \Delta^{low}$.

Now we assume $I_\bu$ is weakly inverse $p$-family. Fix $c \in R^\circ$ such that $cI_{pq} \subset I^{[p]}_{q}$ for every $q$. Then by \autoref{OK_basis_gives_containment_from_the_other_side}, let $\fv=\nu(c)$, then for every $q_1,q_2$, $\nu(I^{[q_2]}_{q_1}) \subset q_2\nu(I_{q_1})-q_2\fv+S$. Choose $\fx \in \Delta^{low}_{\infty}$, then there exists a sequence $\{q_k\}$ such that $q_k \to \infty$ as $k \to \infty$ and $\fx \in 1/q_k\Delta^{low}_{I_{q_k}}$. This means $q_k\fx \in \Delta^{low}_{I_{q_k}}$. So there exists a sequence $Q_k \geq q_k$ such that for any $k$, $[q_k\fx]_{Q_k} \in 1/Q_k\nu(I^{[Q_k]}_{q_k})$. Note that $[q_k\fx]_{Q_k}/q_k=\lfloor Q_kq_k\fx\rfloor/Q_kq_k=[\fx]_{Q_kq_k}$, so $[\fx]_{Q_kq_k} \in 1/Q_kq_k\nu(I^{[Q_k]}_{q_k})$. By the choice of $\fv$, $[\fx]_{Q_kq_k} \in 1/q_k\nu(I_{q_k})-1/q_k\fv+1/Q_kq_kS$. We choose $\fy_{1,k} \in 1/q_k\nu(I_{q_k})$ and $\fy_{2,k} \in 1/Q_kq_kS$ such that $[\fx]_{Q_kq_k}=\fy_{1,k}+\fy_{2,k}-1/q_k\fv$. Note that $\fy_{1,k}$ lies in a bounded set; actually, since $[\fx]_{Q_kq_k}+1/q_k\fv \to \fx$, $[\fx]_{Q_kq_k}+1/q_k\fv$ is bounded. And $[\fx]_{Q_kq_k}+1/q_k\fv-\fy_{1,k} \in 1/Q_kq_kS \subset C$, we see $[\fx]_{Q_kq_k}+1/q_k\fv\geqslant_{\fa}\fy_{1,k}$, so $\fy_{1,k}$ is bounded. So after replacing $k$ by a subsequence, we may assume $\fy_{1,k}$ converges to $\fy_1$. But $[\fx]_{Q_kq_k}+1/q_k\fv \to \fx$, so $\fy_{2,k}$ converges to $\fy_2$. By the choice of $\fy_{1,k}$ and $\fy_{2,k}$, these sequences lie in the closed cone $C$, so $\fy_1,\fy_2 \in C$. Taking limit we get $\fx=\fy_1+\fy_2$. Now consider the sequence 
$$\tilde{\fx}_k=[\fx+\epsilon\fb]_{q_k}-\fy_{1,k}$$
Then as $k \to \infty$, $\tilde{\fx}_k \to \epsilon\fb+\fy_2 \in C^\circ$. By \autoref{K_K_appl2}, $\tilde{\fx}_k \in 1/q_kS$ for large $k$. So $\fy_{1,k} \in 1/q_k\nu(I_{q_k})$ implies $[\fx+\epsilon\fb]_{q_k} \in 1/q_k\nu(I_{q_k})$ and $\fx+\epsilon\fb \in \Delta^{low}$.

\end{proof}
\begin{corollary}\label{volume_equality_for_delta_and_delta_infinity_for_p_family}
The four sets $\Delta^{up},\Delta^{up}_{\infty},\Delta^{low},\Delta^{low}_{\infty}$ are the same up to a set of measure $0$. In particular their complement in $C^\circ$ have the same volume.    
\end{corollary}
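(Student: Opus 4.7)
The plan is to follow the template of \autoref{volume_equality_for_delta_and_delta_infinity} essentially verbatim, substituting the $p$-family analogs of the key inputs. First I would observe that all four sets $\Delta^{up}, \Delta^{up}_{\infty}, \Delta^{low}, \Delta^{low}_{\infty}$ are $C^\circ$-ideals in $C$: the first two by the $p$-family analog of \autoref{ideal_prop_of_delta_up_and_low} (which appears as the lemma before \autoref{second definition of delta and nabla p-family}), and the last two by the lemma proved immediately above. Each therefore has a well-defined continuous height function on $H$ by \autoref{ht_fun_existence} and \autoref{ht_func_Lipschitz}.

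Next I would convert the preceding lemma into inequalities of height functions. Applying part (1) to a point $\fx + t\fb \in \Delta^{up}$ with $t > \varphi_{\Delta^{up}}(\fx)$ yields $\fx + (t+\epsilon)\fb \in \Delta^{up}_{\infty}$ for every $\epsilon > 0$, hence $\varphi_{\Delta^{up}_{\infty}}(\fx) \leq \varphi_{\Delta^{up}}(\fx) + \epsilon$; letting $\epsilon \to 0^+$ gives $\varphi_{\Delta^{up}_{\infty}} \leq \varphi_{\Delta^{up}}$. Part (2) symmetrically gives $\varphi_{\Delta^{low}} \leq \varphi_{\Delta^{low}_{\infty}}$. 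The trivial containment $\Delta^{up}_{\infty} \subset \Delta^{low}_{\infty}$ combined with \autoref{comp_two_C_circ_ideals}(1) produces $\varphi_{\Delta^{up}_{\infty}} \geq \varphi_{\Delta^{low}_{\infty}}$. The crucial additional input is the equality $\varphi_{\Delta^{up}} = \varphi_{\Delta^{low}}$ for the corresponding single-family case; this is supplied by \autoref{delta_low_delta_up_same_ht_fn_for_weakly_p_family} when $I_\bu$ is weakly $p$, and by \autoref{equality_of-ht_fn_for_nabla_up_and_down_for_weakly_inverse_setup} (together with \autoref{ht_func_equality}) when $I_\bu$ is weakly inverse $p$ in the $F$-finite, perfect residue field setting. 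Chaining these inequalities yields
\[
\varphi_{\Delta^{up}} \;=\; \varphi_{\Delta^{low}} \;\leq\; \varphi_{\Delta^{low}_{\infty}} \;\leq\; \varphi_{\Delta^{up}_{\infty}} \;\leq\; \varphi_{\Delta^{up}},
\]
forcing the four height functions to coincide.

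Finally I would invoke \autoref{comp_two_C_circ_ideals}(2) to conclude that the four $C^\circ$-ideals share a common interior and closure, and \autoref{ht_func_prop_for_closure_interior_boundary}(3) to note that each boundary, being the graph of a continuous function over $H$, has Lebesgue measure zero. Hence the four sets agree up to a null set, and their complements in $C^\circ$ share a common volume. The step I expect to be the main obstacle is already behind us---namely, the $p$-family version of \autoref{most_imp_theorem} (i.e., \autoref{key_step_weak_p family} and \autoref{equality_of-ht_fn_for_nabla_up_and_down_for_weakly_inverse_setup}), which required the OK basis machinery in the weakly inverse $p$ case; given those, the present corollary is purely formal.
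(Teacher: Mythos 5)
Your proposal is correct and mirrors the paper's own proof exactly: the paper's proof is literally "Same as \autoref{volume_equality_for_delta_and_delta_infinity}", and you have reconstructed that argument faithfully, substituting the $p$-family analogs ($\autoref{delta_low_delta_up_same_ht_fn_for_weakly_p_family}$ or $\autoref{equality_of-ht_fn_for_nabla_up_and_down_for_weakly_inverse_setup}$ for $\autoref{delta_low_and_up_same_ht_fn}$, and the lemma preceding $\autoref{volume_equality_for_delta_and_delta_infinity_for_p_family}$ for $\autoref{delta_infty_ht_fn_reln_with_delta}$) and arriving at the same chain of height-function inequalities.
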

\begin{proof}
Same as \autoref{volume_equality_for_delta_and_delta_infinity}.
\end{proof}
\begin{theorem} \label{volume_multiplicity_formula_for_weakly_p_weakly_inverse_p_for_OK_domain}
For a BBL family $I_\bullet$, assume either $I_\bu$ is weak $p$-family, or $I_\bu$ is weak inverse $p$-family and $R$ is $F$-finite with perfect residue field, then the volume=multiplicity formula holds. That is,
$$\lim_{q \to \infty}\frac{\ell(R/I_q)}{q^d}=\lim_{q \to \infty}\frac{e_{HK}(I_q,R)}{q^d}.$$
\end{theorem}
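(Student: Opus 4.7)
The plan is to mimic the proof of \autoref{volume_multiplicity_formula_for_OK_domain} (the weakly graded case), replacing $n$-indices with $q$-indices and graded subfamilies $I_n^\bullet = \{I_n^k\}$ by $p$-subfamilies $I_{q_0}^\bullet = \{I_{q_0}^{[q]}\}_{q}$. All the hard work has already been done: the two lemmas immediately preceding this theorem establish the $q$-indexed analogue of \autoref{delta_infty_ht_fn_reln_with_delta}, and \autoref{volume_equality_for_delta_and_delta_infinity_for_p_family} gives that $\Delta^{up}$, $\Delta^{up}_\infty$, $\Delta^{low}$, $\Delta^{low}_\infty$ coincide up to a set of measure zero. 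So the proof should just be a reorganization of existing ingredients.

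First I would fix $q_0 = p^{e_0}$ and note that the family $\{I_{q_0}^{[q]}\}_{q}$ is an honest $p$-family (even when $I_\bullet$ is only weakly $p$ or weakly inverse $p$), and it is BBL because $I_\bullet$ is. Applying \autoref{limit_existence_weakly_p_family_OK_domain} to this single $p$-family yields
$$\frac{e_{HK}(I_{q_0},R)}{q_0^d} = \lim_{q \to \infty}\frac{\ell(R/I_{q_0}^{[q]})}{(q q_0)^d} \cdot q_0^{d(?)} = [\kk_\nu:\kk]\vol\bigl(1/q_0\,\nabla^{up}_{I_{q_0}}\bigr) = [\kk_\nu:\kk]\int_{\RR^d} G^{up}_{q_0}(\fx)\,d\fx,$$
after rescaling and using that $e_{HK}(I_{q_0},R) = [\kk_\nu:\kk]\vol(\nabla^{up}_{I_{q_0}})$. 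The two sequences $G^{up}_{q_0}$ and $G^{low}_{q_0}$ agree almost everywhere, so the same integral equals $[\kk_\nu:\kk]\int G^{low}_{q_0}$.

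Next I would pass $q_0 \to \infty$. By definition of $\nabla^{up}_\infty$ and $\nabla^{low}_\infty$, for almost every $\fx$ we have
$$\chi(\nabla^{low}_\infty)(\fx) \leqslant \liminf_{q_0 \to \infty} G^{low}_{q_0}(\fx) \leqslant \limsup_{q_0 \to \infty} G^{up}_{q_0}(\fx) \leqslant \chi(\nabla^{up}_\infty)(\fx).$$
Since $I_\bullet$ is BBL, all functions $G^{up}_{q_0}, G^{low}_{q_0}$ are uniformly bounded by $1$ and supported in a common bounded set (the same truncating region from \autoref{value_group_uniformly_bounded}). Hence Fatou's lemma (\autoref{Fatou's lemma}) and the reverse Fatou lemma (\autoref{reverse Fatou's lemma}) give
$$\vol(\nabla^{low}_\infty) \leqslant \liminf_{q_0 \to \infty}\int G^{low}_{q_0} \leqslant \limsup_{q_0 \to \infty}\int G^{up}_{q_0} \leqslant \vol(\nabla^{up}_\infty).$$

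Finally, \autoref{volume_equality_for_delta_and_delta_infinity_for_p_family} ensures $\vol(\nabla^{low}_\infty) = \vol(\nabla^{up}_\infty) = \vol(\nabla^{up})$, where the last quantity is exactly the one identified in \autoref{limit_existence_weakly_p_family_OK_domain} (weakly $p$-case) or \autoref{limit_existence_weakly_inverse_p_in_OK_domain} (weakly inverse $p$-case) as $[\kk_\nu:\kk]^{-1}\lim_{q \to \infty} \ell(R/I_q)/q^d$. Combining,
$$\lim_{q \to \infty}\frac{e_{HK}(I_q,R)}{q^d} = [\kk_\nu:\kk]\vol(\nabla^{up}) = \lim_{q \to \infty}\frac{\ell(R/I_q)}{q^d}.$$
The main (and only nontrivial) obstacle is the identification $\vol(\nabla^{up}_\infty) = \vol(\nabla^{up})$, and this is precisely what the preceding lemma pair, via height-function equality, delivers; in the weakly inverse $p$ case the existence of an OK basis and \autoref{OK_basis_gives_containment_from_the_other_side} are used implicitly through that lemma. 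No new analytic input beyond bounded convergence is required.
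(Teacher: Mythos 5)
Your proposal is correct and follows essentially the same route as the paper's own (very brief) proof, which simply invokes the argument of \autoref{volume_multiplicity_formula_for_OK_domain} with $n$ replaced by $q$ and then applies \autoref{volume_equality_for_delta_and_delta_infinity_for_p_family}. The spurious ``$\cdot q_0^{d(?)}$'' in your rescaling display is not needed---$\lim_{q\to\infty}\ell(R/I_{q_0}^{[q]})/(qq_0)^d$ already equals $e_{HK}(I_{q_0},R)/q_0^d=[\kk_\nu:\kk]\vol\bigl(1/q_0\,\nabla^{up}_{I_{q_0}}\bigr)$---but this is cosmetic and the remainder of the argument is sound.
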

\begin{proof}
Use the same argument as in \autoref{volume_multiplicity_formula_for_OK_domain}, and then use \autoref{volume_equality_for_delta_and_delta_infinity_for_p_family}.
\end{proof}

\subsection{Minkowski inequalities} \label{Minkoswki section}
Next we prove some Minkowski inequality for families of ideals.
\begin{theorem}[Minkowski inequality for weakly graded families] \label{Minkowski_weakly_graded}
If $I_\bullet$ and $J_\bullet$ are two BBL weakly graded families, then so is $I_\bullet J_\bullet$, and
$$(\lim_{n \to \infty}\frac{\ell(R/I_n)}{n^d})^{1/d}+(\lim_{n \to \infty}\frac{\ell(R/J_n)}{n^d})^{1/d} \geqslant (\lim_{n \to \infty}\frac{\ell(R/I_nJ_n)}{n^d})^{1/d}.$$
\end{theorem}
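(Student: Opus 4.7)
The plan is to derive the Minkowski inequality for the colength limits from the classical Teissier--Rees--Sharp Minkowski inequality for Hilbert--Samuel multiplicities, using the Volume = Multiplicity formula (\autoref{volume_multiplicity_formula_for_OK_domain}) established just above. This avoids any further geometric analysis of the sets $\Delta^{up}, \nabla^{up}$, etc.

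First, I would check that $I_\bullet J_\bullet$ is itself BBL weakly graded. Pick $c_1, c_2 \in R^\circ$ with $c_1 I_m I_n \subset I_{m+n}$ and $c_2 J_m J_n \subset J_{m+n}$. Since $R$ is a domain, $c_1 c_2 \in R^\circ$, and
$$c_1 c_2 (I_m J_m)(I_n J_n) \subset (c_1 I_m I_n)(c_2 J_m J_n) \subset I_{m+n} J_{m+n},$$
verifying the weakly graded condition. For BBL: if $\fm^{k_1 n} \subset I_n$ and $\fm^{k_2 n} \subset J_n$, then $\fm^{(k_1+k_2)n} = \fm^{k_1 n} \cdot \fm^{k_2 n} \subset I_n J_n$.

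Next, for each fixed $n$ the ideals $I_n, J_n, I_n J_n$ are $\fm$-primary in the Noetherian local ring $R$, so the classical Teissier--Rees--Sharp Minkowski inequality for Hilbert--Samuel multiplicities applies, giving
$$e(I_n J_n, R)^{1/d} \leqslant e(I_n, R)^{1/d} + e(J_n, R)^{1/d}.$$
Dividing by $n$ and taking $n \to \infty$, then converting each limit $\lim_{n \to \infty} e(\cdot, R)/n^d$ into $d! \lim_{n \to \infty} \ell(R/\cdot)/n^d$ via \autoref{volume_multiplicity_formula_for_OK_domain}, and canceling the common factor $(d!)^{1/d}$, yields the claimed inequality.

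The only thing to worry about is passage to the limit after taking $d$-th roots, but all three limits $\lim \ell(R/I_n)/n^d$, $\lim \ell(R/J_n)/n^d$, $\lim \ell(R/I_n J_n)/n^d$ exist and are finite by \autoref{limit_existence_weakly_graded_OK_domain}, and $t \mapsto t^{1/d}$ is continuous on $[0,\infty)$; the corresponding multiplicity limits also exist by Volume = Multiplicity. So no subtle convergence argument is required, and the main content of the proof is the single application of Teissier--Rees--Sharp at each finite level $n$.
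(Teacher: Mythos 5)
Your proof is correct but takes a genuinely different route from the paper's. The paper argues directly at the level of the Okounkov-style regions: it first shows $\Delta^{up\circ}_{I_\bullet} + \Delta^{up\circ}_{J_\bullet} \subset \overline{\Delta^{low}_{I_\bullet J_\bullet}}$ (using density of $\QQ^d$ and the weakly graded condition to add valuation vectors at a common denominator), then applies the Euclidean Brunn--Minkowski inequality to the complements $C^\circ\backslash\Delta^{up\circ}$; in particular it never invokes the Volume~=~Multiplicity formula. Your route instead applies the classical Teissier--Rees--Sharp inequality $e(I_nJ_n)^{1/d}\leqslant e(I_n)^{1/d}+e(J_n)^{1/d}$ at each finite level $n$, passes to the limit, and converts multiplicities to colengths via \autoref{volume_multiplicity_formula_for_OK_domain}. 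Both are valid, and your verification that $I_\bullet J_\bullet$ remains BBL weakly graded is exactly right. The trade-off: your argument is shorter and leans on a standard algebraic theorem (be sure to cite a version valid for arbitrary Noetherian local rings with $\fm$-primary ideals, such as Huneke--Swanson Theorem 17.7.2, since $R$ here is only an OK domain and not assumed Cohen--Macaulay or formally equidimensional); the paper's geometric argument, on the other hand, does not need Volume~=~Multiplicity as an intermediary and carries over essentially verbatim to the $p$-family and weakly inverse $p$-family analogues, where no off-the-shelf Minkowski inequality for Hilbert--Kunz multiplicities is available to play the role that Teissier--Rees--Sharp plays in your argument.
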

\begin{proof}
First we prove $\Delta_{I_\bullet}^{up\circ}\cap \QQ^d+\Delta_{J_\bullet}^{up\circ}\cap\QQ^d \subset \Delta_{I_\bullet J_\bullet}^{low}$. Choose $0 \neq \fu \in \Delta^{up\circ}_{I_\bu}\cap\QQ^d$ and $0 \neq \fv \in \Delta^{up\circ}_{J_\bu}\cap\QQ^d$. Choose a closed cone $C' \subset C^\bu$ whose interior contains $\fu,\fv$. By \autoref{second_definition_of_delta_and_nabla}, for large $n$, $[\fu]_n \in 1/n\nu(I_n)$ and $[\fv]_n \in 1/n\nu(J_n)$. Thus $[\fu]_n+[\fv]_n \in 1/n\nu(I_nJ_n)$. But $\fu,\fv \in \QQ^d$, so there is a common denominator $N_0$ such that for any $n$, $[\fu]_{nN_0}=\fu,[\fv]_{nN_0}=\fv,[\fu+\fv]_{nN_0}=\fu+\fv=[\fu]_{nN_0}+[\fv]_{nN_0}$. Thus for large $n$, $[\fu+\fv]_{nN_0} \in 1/nN_0\nu(I_{nN_0}J_{nN_0})$, so $\fu+\fv \in \Delta^{low}_{I_\bu J_\bu}$. So we have $\Delta_{I_\bullet}^{up\circ}\cap \QQ^d+\Delta_{J_\bullet}^{up\circ}\cap\QQ^d \subset \Delta_{I_\bullet J_\bullet}^{low}$.
Since $\Delta_{I_\bullet}^{up\circ}$ is an open set and $\QQ^d$ is dense in $\RR^d$, $\Delta_{I_\bullet}^{up\circ}\cap \QQ^d$ is dense in $\Delta_{I_\bullet}^{up\circ}$. Thus

$$\Delta_{I_\bullet}^{up\circ}+\Delta_{J_\bullet}^{up\circ}\\
\subset \overline{\Delta_{I_\bullet}^{up\circ}\cap \QQ^d}+\overline{\Delta_{J_\bullet}^{up\circ}\cap\QQ^d}\\
\subset \overline{\Delta_{I_\bullet J_\bullet}^{low}}$$

Now by applying Minkowski's inequality to get
$$\vol(C^\circ\ba\Delta^{up\circ}_{I_\bu})^{1/d}+\vol(C^\circ\ba\Delta^{up\circ}_{J_\bu})^{1/d} \geqslant \vol(C^\circ\ba(\Delta^{up\circ}_{I_\bu}+\Delta^{up\circ}_{J_\bu}))^{1/d} \geqslant \vol(C^\circ\ba\overline{\Delta^{low}_{I_\bu J_\bu}})^{1/d}$$
And we are done.
\end{proof}
\begin{theorem}[Minkowski inequality for weakly $p$-families and weakly inverse $p$-families] \label{Minkowski_inequality_for_ weakly_p-families_and_weakly_inverse_p-families}
Assume $R$ is an OK domain of characteristic $p>0$. If $I_\bullet$ and $J_\bullet$ are two BBL weakly $p$-families, or two BBL weakly inverse $p$-families when $R$ is $F$-finite and has perfect residue field, then so is $I_\bullet J_\bullet$, and
$$(\lim_{q \to \infty}\frac{\ell(R/I_q)}{q^d})^{1/d}+(\lim_{q \to \infty}\frac{\ell(R/J_q)}{q^d})^{1/d} \geqslant (\lim_{q \to \infty}\frac{\ell(R/I_qJ_q)}{q^d})^{1/d}.$$
\end{theorem}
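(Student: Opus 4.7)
The plan is to adapt the proof of \autoref{Minkowski_weakly_graded} to the two positive-characteristic settings, handling them in parallel. First I would verify that $I_\bullet J_\bullet$ inherits the relevant structure. For weakly $p$-families with constants $c_I,c_J \in R^\circ$, the identity
$$c_Ic_J\,(I_qJ_q)^{[p]}=c_Ic_J\, I_q^{[p]}J_q^{[p]} \subset I_{pq}J_{pq}$$
exhibits $I_\bullet J_\bullet$ as a weakly $p$-family with constant $c_Ic_J \in R^\circ$, while for weakly inverse $p$-families the symmetric computation $c_Ic_J\, I_{pq}J_{pq}\subset I_q^{[p]}J_q^{[p]}=(I_qJ_q)^{[p]}$ does the job. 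The BBL property is preserved because $\fm^{(c+c')q}\subset \fm^{cq}\fm^{c'q}\subset I_qJ_q$.

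The analytic heart of the proof is the containment $\Delta^{up\circ}_{I_\bullet}+\Delta^{up\circ}_{J_\bullet}\subset\overline{\Delta^{low}_{I_\bullet J_\bullet}}$. I would work with the dense subring $L=\ZZ[1/p]^d\subset\RR^d$ of tuples with $p$-power denominators, in place of the $\QQ^d$ used in the weakly graded case. For $\fu\in\Delta^{up\circ}_{I_\bullet}\cap L$ and $\fv\in\Delta^{up\circ}_{J_\bullet}\cap L$, pick $e_0$ large enough that $[\fu]_{p^e}=\fu$, $[\fv]_{p^e}=\fv$, and $[\fu+\fv]_{p^e}=\fu+\fv$ for every $e\geqslant e_0$. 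By \autoref{second definition of delta and nabla p-family}(5), $\fu\in \frac{1}{q}\nu(I_q)$ and $\fv\in \frac{1}{q}\nu(J_q)$ for all $q=p^e\gg 0$; hence $\fu+\fv\in \frac{1}{q}(\nu(I_q)+\nu(J_q))\subset \frac{1}{q}\nu(I_qJ_q)$ for such $q$, so $\fu+\fv\in\Delta^{up}_{I_\bullet J_\bullet}\subset\Delta^{low}_{I_\bullet J_\bullet}$. Openness of the two $\Delta^{up\circ}$'s together with the density of $L$ then upgrades this pointwise inclusion to the desired set-theoretic containment. Note that this step uses only the $p$-indexing of the family, not which of the two structural containments $I_\bullet$ satisfies, so both cases are handled simultaneously.

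With this containment in hand, I would apply a covolume Brunn--Minkowski inequality to obtain
$$\vol(C^\circ\ba\Delta^{up\circ}_{I_\bullet})^{1/d}+\vol(C^\circ\ba\Delta^{up\circ}_{J_\bullet})^{1/d}\geqslant \vol(C^\circ\ba\overline{\Delta^{low}_{I_\bullet J_\bullet}})^{1/d},$$
and then translate to colengths using \autoref{limit_existence_weakly_p_family_OK_domain} (resp.\ \autoref{limit_existence_weakly_inverse_p_in_OK_domain}) together with the equalities of the four associated $C^\circ$-ideals up to measure zero supplied by \autoref{delta_low_delta_up_same_ht_fn_for_weakly_p_family} (resp.\ \autoref{equality_of-ht_fn_for_nabla_up_and_down_for_weakly_inverse_setup}), yielding the stated inequality for the asymptotic colengths.

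The main obstacle will be justifying the covolume Brunn--Minkowski step, which in the weakly graded setting was enabled by a separate proof that $\Delta^{up\circ}$ is convex. No analogous convexity result is established in the preceding sections for weakly $p$-families or weakly inverse $p$-families, so I would first try to prove convexity by imitating the rational convex-combination argument: for $\fu,\fv\in\Delta^{up\circ}\cap L$ and $\lambda\in L\cap[0,1]$, build an element of valuation $q_1q_2(\lambda\fu+(1-\lambda)\fv)$ (up to a bounded correction in $C^\circ$) that lies in $I_{q_1q_2}$, using $c^{q_2}I_{q_1}^{[q_2]}\subset I_{q_1q_2}$ in the weakly $p$ case and the OK-basis control of \autoref{OK_basis_gives_containment_from_the_other_side2} in the weakly inverse $p$ case. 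The subtlety is that $a^{q_2\lambda}b^{q_2(1-\lambda)}$ lands in $I_{q_1}^{q_2}$ rather than $I_{q_1}^{[q_2]}$, so the Frobenius-power containment has to be combined with a pigeonhole-type argument to reach $I_{q_1q_2}$. Should convexity prove intractable, the fallback is to bypass covolume Brunn--Minkowski by combining the Volume=Multiplicity formula \autoref{volume_multiplicity_formula_for_weakly_p_weakly_inverse_p_for_OK_domain} with a classical Minkowski inequality for Hilbert--Kunz multiplicity applied ideal-by-ideal at each level $q$ before passing to the limit.
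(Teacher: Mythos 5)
The proposal follows essentially the same route as the paper's own proof, which is only a one-line remark: ``replace $n$ by $q$ and $\QQ^d$ by $\ZZ[1/p]^d$'' in the proof of \autoref{Minkowski_weakly_graded}. Your verification that $I_\bullet J_\bullet$ inherits the relevant structure, your containment $\Delta^{up\circ}_{I_\bullet}\cap L+\Delta^{up\circ}_{J_\bullet}\cap L\subset\Delta^{low}_{I_\bullet J_\bullet}$ (in fact you get the stronger $\subset\Delta^{up}_{I_\bullet J_\bullet}$) with $L=\ZZ[1/p]^d$ via \autoref{second definition of delta and nabla p-family}, and the density/closure upgrade all match the paper's intended argument.

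Your flagged concern about the covolume Brunn--Minkowski step is, however, well-placed and identifies a real soft spot in the paper. The inequality
$\vol(C^\circ\ba\Delta^{up\circ}_{I_\bu})^{1/d}+\vol(C^\circ\ba\Delta^{up\circ}_{J_\bu})^{1/d} \geqslant \vol(C^\circ\ba(\Delta^{up\circ}_{I_\bu}+\Delta^{up\circ}_{J_\bu}))^{1/d}$
invoked in the proof of \autoref{Minkowski_weakly_graded} is the Khovanskii--Timorin coconvex Brunn--Minkowski inequality, and it requires that $\Delta^{up\circ}_{I_\bu}$ and $\Delta^{up\circ}_{J_\bu}$ be \emph{convex}. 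The paper establishes this convexity for weakly graded families (the unnamed theorem closing \autoref{section_6}), where the rational-convex-combination argument goes through because $c^{N_2}I_{nN_1}^{N_2}\subset I_{nN_1N_2}$ handles arbitrary monomials $u^av^b$ with $a+b=N_2$. No analogue appears in \autoref{section_7} or \autoref{section_8} for weakly $p$-families or weakly inverse $p$-families, and the terse proof of the present theorem silently reuses the coconvex Brunn--Minkowski step without supplying it. Your diagnosis of why the direct imitation fails is also accurate: the containment $c^{q_2}I_{q_1}^{[q_2]}\subset I_{q_1q_2}$ only controls Frobenius powers $u^{q_2}$, and a pigeonhole substitution $I_{q_1}^{\mu q_2}\subset I_{q_1}^{[q_2]}$ rescales the target point by the minimal number of generators $\mu$ rather than producing a genuine convex combination, so it does not land at $\lambda\fx+(1-\lambda)\fy$. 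Your fallback --- invoking \autoref{volume_multiplicity_formula_for_weakly_p_weakly_inverse_p_for_OK_domain} together with the fixed-pair Minkowski inequality for Hilbert--Kunz multiplicity in an OK domain from \cite{hernandez2018local} at each level $q$ and passing to the limit --- is a legitimate repair and is not circular, although it trades the paper's self-contained analytic framework for an external black box. To make the paper's intended proof airtight, one would either need to prove convexity of $\Delta^{up\circ}$ for (weakly inverse) $p$-families, or to establish a coconvex Brunn--Minkowski inequality for arbitrary $C^\circ$-ideals without a convexity hypothesis; neither is currently in the text.
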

The proof is similar to  \autoref{Minkowski_weakly_graded} except that we replace $n$ by $q$ and $\QQ^d$ by $\ZZ[1/p]^d$.

\subsection{Positivity} \label{Positivity subsection}
In this subsection we determine when the limit
$$\lim_{n \to \infty}\frac{\ell(R/I_n)}{n^d}$$
or
$$\lim_{q \to \infty}\frac{\ell(R/I_q)}{q^d}$$
is positive. The following definition is essential in this part.
\begin{definition}
Assume $I_\bu$ is a family of $R$-ideals indexed either by $\NN$ or powers of $p$. We say $I_\bu$ is bounded above linearly, or BAL for short, if:
\begin{enumerate}
\item When $I_\bu$ is a family indexed by $\NN$, there exists $c \in \NN$ such that $I_{n} \subset \fm^{\lfloor n/c \rfloor}$;
\item When $I_\bu$ is a family indexed by powers of $p$, there exists $q_0 \in \NN$ such that $I_{qq_0} \subset \fm^{[q]}$.
\end{enumerate}
\end{definition}
For graded families and $p$-families we already have the following results, which says the limit of a BBL family is nonzero if and only if it is also BAL.

Positivity results for graded families in a regular local ring 
$R$ were studied in \cite{mustactǎ2002multiplicities}, and for 
$p$-families of ideals when 
$R$ is an OK domain in \cite{hernandez2018local}.

We now present a general statement that directly implies the results from these works.
\begin{theorem} \label{BAL_criteria}
Let $R$ be an OK domain. Assume $I_\bu$ is a family indexed by $\NN$ or powers of $p$, and associate the $C^\circ$-ideal $\Delta^{low}$ to $I_\bu$. Then $\Delta^{low} \neq C^\circ$ if and only if $I_\bu$ is BAL.   
\end{theorem}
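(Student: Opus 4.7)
My plan is to prove both implications by combining property~(5) of the OK valuation, the semigroup containment $\nu(a)+S\subset \nu(I_n)$ for $a\in I_n$, and the Kaveh--Khovanskii approximation of \autoref{K_K_appl2}.

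For the direction BAL$\Rightarrow \Delta^{low}\neq C^\circ$, I would first extract $\epsilon_0>0$ with $\nu(\fm^k)\subset H_{\fa,\geqslant k\epsilon_0}$ (and, in the $p$-family case, $\nu(\fm^{[q]})\subset H_{\fa,\geqslant q\epsilon_0}$); this uses only that $\fm$ is finitely generated and that $R$ is dominated by $\nu$. The BAL hypothesis then delivers a uniform bound $\tfrac{1}{n}\nu(I_n)\subset H_{\fa,\geqslant \delta}$ for some $\delta>0$ and all large indices. I would next pick $\fx\in C^\circ$ with $\langle \fx,\fa\rangle<\delta/2$, e.g.\ a small positive multiple of $\fb$; its nearest lattice approximation $\lfloor n\fx\rfloor/n$ still has $\fa$-component below $\delta$ for large $n$, so $\lfloor n\fx\rfloor\notin \nu(I_n)$. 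Combined with $\lfloor n\fx\rfloor\in \nu(R)$ eventually (\autoref{K_K_appl2}), this places $\fx$ in $\nabla^{low}$, forcing $\Delta^{low}\neq C^\circ$.

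For the converse, I would fix $\fy_0\in \nabla^{low}\cap C^\circ$, so that $\lfloor n\fy_0\rfloor\in \nu(R)\setminus\nu(I_n)$ for $n\gg 0$, and argue by contradiction. If $I_\bu$ is not BAL, then as $c\to\infty$ (respectively as $q_0=p^{e_0}\to\infty$) one can select arbitrarily large indices $n$ (respectively $Q=qq_0$) and $a\in I_n$ with $a\notin \fm^{\lfloor n/c\rfloor}$ (respectively $a\notin \fm^{[q]}$, and therefore $a\notin \fm^{q\mu}$, where $\mu$ denotes the minimal number of generators of $\fm$). Property~(5) then yields $\langle \nu(a),\fa\rangle<\lfloor n/c\rfloor\langle\fv,\fa\rangle$ (respectively $<q\mu\langle\fv,\fa\rangle$). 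Rescaling, the vector $\fu:=\nu(a)/n\in \tfrac{1}{n}S$ satisfies $\langle \fu,\fa\rangle\to 0$, and since $C$ is pointed at $\fa$ this forces $\|\fu\|\to 0$. Choosing a closed full-dimensional subcone $C'\subset C^\bu$ with $\fy_0\in C'^\circ$, the sequence $\lfloor n\fy_0\rfloor/n-\fu$ converges to $\fy_0\in C'^\circ$, so \autoref{K_K_appl2} gives $\lfloor n\fy_0\rfloor-\nu(a)\in S$ for the chosen large $n$. Combined with $\nu(a)+S\subset \nu(I_n)$ this produces $\lfloor n\fy_0\rfloor\in \nu(I_n)$, contradicting the choice of $\fy_0$.

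The delicate part will be this converse, where a single witness $\fy_0\in \nabla^{low}$ must constrain every element of $I_n$ simultaneously. The key idea is that the pointwise obstruction $\lfloor n\fy_0\rfloor\notin \nu(I_n)$ upgrades, via the ideal-theoretic inclusion $\nu(a)+S\subset \nu(I_n)$, to the cone obstruction $\lfloor n\fy_0\rfloor\notin \nu(a)+S$ for every $a\in I_n$; this is precisely the geometric content ruled out by \autoref{K_K_appl2} once $\nu(a)/n$ is sufficiently close to $\f0$. The only minor bookkeeping will be to verify that the index $n$ (or $Q=qq_0$) genuinely tends to $\infty$ under the "not BAL" hypothesis, which is forced by the containment $I_n\subset \fm$ since $\fm^0=R$.
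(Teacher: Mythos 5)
Your proposal is correct and takes essentially the same approach as the paper: both directions hinge on the OK valuation axiom $(5)$, the pointedness of $C=\Cone(\nu(R))$ (to convert a small $\fa$-component into a small norm), and the Kaveh--Khovanskii approximation \autoref{K_K_appl2}, combined with the ideal property $\nu(a)+S\subset\nu(I_n)$. The only cosmetic difference is in the converse: the paper argues directly, showing that a witness in $\nabla^{low}$ forces $\tfrac{1}{n}\nu(I_n)$ to stay outside a fixed ball $B(\f0,\delta_1)$ for large $n$ (and then converting that uniform bound into $I_n\subset R\cap\FF_{\geqslant n\fv_0}\subset \fm^{\lfloor n/c\rfloor}$), whereas you argue by contradiction from a single $a\in I_n\setminus\fm^{\lfloor n/c\rfloor}$; these are logically equivalent, and your version slightly sidesteps the paper's need to make the Kaveh--Khovanskii bound uniform in the small vector $\fv$.
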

We start with the following lemma.
\begin{lemma} \label{BAL_characterization_result}
Let $C$ be an upward closed cone, $\Delta$ be a $C^\circ$-ideal in $C^\circ$. Then the following properties are equivalent:
\begin{enumerate}
\item $\Delta=C^\circ$.
\item $\varphi_\Delta=\varphi_{C^\circ}$.
\item $\varphi_\Delta\leqslant\varphi_{C^\circ}$.
\item $\varphi_\Delta(\f0)\leqslant 0$.
\item $\varphi_\Delta(\f0)=0$.
\item $\vol(C^\circ\backslash\Delta)=0$.
\end{enumerate}
\end{lemma}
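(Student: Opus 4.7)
The plan is to prove the six conditions equivalent by one short cycle built on the height-function calculus of \autoref{section_3}. As a standing preliminary, observe that $\varphi_{C^\circ}(\f0)=0$: for $t>0$, $t\fb\in C^\circ$ because $\fb\in C^\circ$ and $C^\circ$ is a cone, whereas $\f0\notin C^\circ$; and $\varphi_\Delta(\f0)\ge 0$ is built into \autoref{ht_fun_existence}. Also, $\Delta\subset C^\circ$ forces $\varphi_\Delta\ge\varphi_{C^\circ}$ by \autoref{comp_two_C_circ_ideals}. The implications $(1)\Rightarrow(2)\Rightarrow(3)$ are immediate from the definition of $\varphi$. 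For $(3)\Rightarrow(4)$, evaluate at $\f0$. The non-negativity of $\varphi_\Delta$ then gives $(4)\Leftrightarrow(5)$.

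The heart of the argument is $(5)\Rightarrow(2)$. Apply the subadditivity $\varphi_\Delta(\fx_1)+\varphi_{C^\circ}(\fx_2)\ge\varphi_\Delta(\fx_1+\fx_2)$ from \autoref{ht_func_char} with $\fx_1=\f0$ and $\fx_2=\fx$ to get $\varphi_\Delta(\fx)\le\varphi_\Delta(\f0)+\varphi_{C^\circ}(\fx)=\varphi_{C^\circ}(\fx)$; combined with the reverse inequality the two height functions coincide. To close the loop, $(2)\Rightarrow(1)$ uses \autoref{ht_func_equality}: $\varphi_\Delta=\varphi_{C^\circ}$ forces $\Delta^\circ=(C^\circ)^\circ=C^\circ$, so $\Delta\supset C^\circ$, which together with $\Delta\subset C^\circ$ yields $\Delta=C^\circ$.

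Finally, $(1)\Rightarrow(6)$ is trivial. For $(6)\Rightarrow(2)$, decompose $\RR^d=H\oplus\RR\fb$ orthogonally and apply Fubini. By \autoref{ht_func_prop_for_closure_interior_boundary}, up to a set of measure zero, $C^\circ\setminus\Delta$ coincides with
\[
\{\fx+t\fb:\fx\in H,\ \varphi_{C^\circ}(\fx)<t<\varphi_\Delta(\fx)\},
\]
whose volume equals $\|\fb\|\int_H\bigl(\varphi_\Delta(\fx)-\varphi_{C^\circ}(\fx)\bigr)\,d\sigma(\fx)$. The integrand is non-negative and Lipschitz continuous by \autoref{ht_func_Lipschitz}, so vanishing of the integral forces pointwise equality of the two height functions on all of $H$, recovering (2). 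The only mild subtlety here is the measure-zero bookkeeping when replacing $\Delta$ by $\Delta^\circ$ and $C^\circ$ by its interior inside the cone $C$, but both corrections are contained in graphs of continuous functions and therefore negligible by \autoref{ht_func_prop_for_closure_interior_boundary}(3); this is the only place where any real care is needed.
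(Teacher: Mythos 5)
Your proof is correct and, for implications $(1)\Leftrightarrow(2)\Leftrightarrow(3)$, $(3)\Rightarrow(4)\Rightarrow(5)\Rightarrow(3)$ (you fold $(5)\Rightarrow(3)$ into $(5)\Rightarrow(2)$, but the key step---evaluating the subadditivity of \autoref{ht_func_char} at $\fx_1=\f0$---is identical), and $(1)\Rightarrow(6)$, follows the same cycle as the paper. The only genuine deviation is $(6)\Rightarrow(2)$: you compute $\vol(C^\circ\setminus\Delta)=\|\fb\|\int_H(\varphi_\Delta-\varphi_{C^\circ})\,d\sigma$ via Fubini--Tonelli and use continuity of the nonnegative integrand to conclude pointwise vanishing, whereas the paper argues by contrapositive through \autoref{comp_two_C_circ_ideals}(3): if $\varphi_\Delta\ne\varphi_{C^\circ}$ somewhere, then $(C^\circ)^\circ\nsubseteq\overline{\Delta}$, so $C^\circ\setminus\Delta$ contains a nonempty open set and has positive volume. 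Your integral computation is self-contained and more explicit (and would also quantify \emph{how large} the volume is in terms of the gap between the height functions), while the paper's is shorter because it reuses a lemma already proved; one small technical point you should note is that Tonelli (not Fubini) is what justifies the iterated integral, since a priori $\vol(C^\circ\setminus\Delta)$ could be infinite --- but with a nonnegative integrand this is fine.
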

\begin{proof}
We will prove (1) is equivalent to (2), (2) is equivalent to (3), (3) implies (4) implies (5) implies (3), and (1) implies (6) implies (2).

Note that (1) implies (2) is trivial and (2) implies $C^\circ=\Delta^{\circ} \subset \Delta$. But $\Delta\subset C^\circ$, so $\Delta=C^\circ$, this gives (2) implies (1). (2) implies (3) is trivial and (3) implies $\Delta \subset C^\circ$ implies $\varphi_\Delta\geqslant\varphi_{C^\circ}$, so (3) implies (2). Again (3) implies (4) is trivial and (4) implies $\Delta \subset C^\circ$ implies $\varphi_\Delta\geqslant\varphi_{C^\circ}$, so $\varphi_\Delta(\f0)\geqslant\varphi_{C^\circ}(\f0)=0$, so (5) is true. Now (5) implies (3) by applying \autoref{ht_func_char} to the case $\fx=0$. (1) implies (6) is trivial and we prove (6) implies (2) by the contra-positive statement. We always have $\Delta\subset C^\circ$, so if (2) fails, then by  \autoref{comp_two_C_circ_ideals}, $C^\circ\backslash\Delta$ has a nonempty interior, so its volume is nonzero, so (6) fails.
\end{proof}
\begin{proof}[Proof of \autoref{BAL_criteria}]
We will assume $I_\bu$ is indexed by $\NN$; the proof for families indexed by powers of $p$ is the same by substituting $n$ by $q$.

Assume $I_\bu$ is BAL and there exists $c \in \NN$ such that $I_{n} \subset \fm^{\lfloor n/c \rfloor}$ for any $n$. Choose $\fv$ be the minimal valuation of a set of generators of $\fm$, then $\fm \subset R \cap \FF_{\geqslant \fv}$. We have $\fv>\f0$ by definition of OK valuation. Thus for any $n$, $I_{n} \subset \fm^{\lfloor n/c \rfloor} \subset R \cap \FF_{\geqslant \lfloor n/c \rfloor\fv}$. Take a real number $c'$ with $0<c'<1/c$, then there exists a large $N$ such that $n \geqslant N$ implies $\lfloor n/c \rfloor>nc'$, so $\lfloor n/c \rfloor \fv>nc'\fv$, and $I_{n} \subset R \cap \FF_{\geqslant nc' \fv}$. So let $H=H_{<c'\fv}$ with $c'\fv>0$, we have $1/n\nu(I_n) \subset C\ba H$. Taking $n \to \infty$, this implies $\Delta^{low} \subset C^\circ \ba H$. But $C^\circ \cap H \neq \emptyset$, so $C^\circ \ba H \subsetneq C^\circ$, so $\Delta^{low} \subsetneq C^\circ$.

Now we assume $\Delta^{low} \subsetneq C^\circ$, that is, $\nabla^{low} \neq \emptyset$. By previous lemma, this means $\varphi_{\Delta^{low}}(\f0)>0$, so we can pick $\alpha>0$ such that $\alpha\fb \in \nabla^{low}$. We may assume $\alpha=1$ by rescaling $\fb$ because rescaling does not change its direction. For some fixed $N_1$ and $n \geq N_1$, $[\fb]_n \notin 1/n\nu(I_n)$. We choose an upward subcone $C' \subset C^\bu$ such that $\fb \in C'^\circ$. We fix a small $\delta_1>0$ satisfying $\overline{B(\fb,3\delta_1)} \subset C'^\circ$ and $\|\fb\|-3\delta_1=\delta_2>0$. For large $n$, we have $\|\fb-[\fb]_n\|<\delta_1$. We choose any $\fv \in C$ with $\|\fv\|<\delta_1$. For large $n$, $\|\fv-[\fv]_n\|<\delta_1$, so $\|[\fv]_n\|<2\delta_1$. This bound on $n$ only depends on $\delta$, but not on $\fv$. So we see $\|\fb-([\fb]_n-[\fv]_n)\| \leqslant \|\fb-[\fb]_n\|+\|[\fv]_n\|<3\delta_1$, so $[\fb]_n-[\fv]_n \in C'^\circ$ and $\|[\fb]_n-[\fv]_n\|>\|\fb\|-3\delta_1=\delta_2$. By \autoref{K_K_appl}, for large $n$ only depending on $\delta_2$, whenever $\|\fy\| \geq \delta_2$ and $\fy \in 1/n\reg(C')$, $\fy \in 1/nS$. Thus $[\fb]_n-[\fv]_n \in 1/nS$, and $[\fb]_n \notin 1/n\nu(I_n)$ implies 
$[\fv]_n \notin 1/n\nu(I_n)$. This is true for all $\fv \in C$ with $\|\fv\|<\delta_1$, so for large $n$, $1/n\nu(I_n) \subset C\ba B(\f0,\delta_1)$. By  \autoref{trunc_bound}, there exists $\fv_0>0$ such that $C\cap H_{<\fv} \subset C \cap B(\f0,\delta_1)$, so $C\ba B(\f0,\delta_1) \subset C\cap H_{\geqslant\fv}$. This means
$$I_n \subset R \cap \FF_{\geqslant n\fv}$$
And we can find some $c$ such that $R \cap \FF_{\geqslant n\fv} \subset \fm^{\lfloor n/c \rfloor}$. Thus $I_n \subset \fm^{\lfloor n/c \rfloor}$ for large $n$, and we may enlarge $c$ such that $I_n \subset \fm^{\lfloor n/c \rfloor}$ for all $n$. So $I_n$ is BAL.
\end{proof}
 
\begin{theorem} \label{positivity_for_BAL_in_OK_domain}
Let $R$ be an OK domain.
\begin{enumerate} 
\item Let $I_\bu$ be a BBL weakly graded family of ideals. Then $I_\bu$ is BAL if and only if
$$\lim_{n \to \infty}\frac{\ell(R/I_n)}{n^d}>0.$$
\item  

Assume $R$ has characteristic $p>0$. Assume either $I_\bu$ is a BBL weakly $p$-family of ideals, or a BBL weakly inverse $p$-family of ideals when $R$ is $F$-finite with perfect residue field. Then $I_\bu$ is BAL if and only if
$$\lim_{q \to \infty}\frac{\ell(R/I_q)}{q^d}>0.$$
\end{enumerate}
\end{theorem}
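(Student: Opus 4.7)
The plan is to combine the limit-existence theorems already established in Sections 6, 7, and 8 with the geometric characterization of BAL given by \autoref{BAL_criteria} and the measure-theoretic dichotomy of \autoref{BAL_characterization_result}. In each of the three cases (weakly graded, weakly $p$-family, weakly inverse $p$-family), the limit has already been identified with $[\kk_\nu:\kk]\vol(\nabla^{low}) = [\kk_\nu:\kk]\vol(C^\circ \setminus \Delta^{low})$, so positivity of the limit is equivalent to $\vol(C^\circ \setminus \Delta^{low})>0$.

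For part (a), I would invoke \autoref{limit_existence_weakly_graded_OK_domain} to rewrite
\[
\lim_{n \to \infty}\frac{\ell(R/I_n)}{n^d} = [\kk_\nu:\kk]\vol(C^\circ \setminus \Delta^{low}).
\]
Since $[\kk_\nu:\kk] < \infty$ is a positive integer (condition (4) of \autoref{OK_val_def}), positivity of the limit is equivalent to $\vol(C^\circ \setminus \Delta^{low}) > 0$. By \autoref{BAL_characterization_result} applied to the $C^\circ$-ideal $\Delta^{low}$ inside the upward cone $C$ (recall that $C$ is upward in the direction $\fb$ by \autoref{pointed_implies_upward}), $\vol(C^\circ \setminus \Delta^{low}) > 0$ if and only if $\Delta^{low} \neq C^\circ$. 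By \autoref{BAL_criteria}, this last condition is equivalent to $I_\bu$ being BAL, which finishes the proof of (a).

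For part (b), the argument is identical: replace the index $n$ by $q = p^e$ and apply \autoref{limit_existence_weakly_p_family_OK_domain} in the weakly $p$-family case, or \autoref{limit_existence_weakly_inverse_p_in_OK_domain} in the weakly inverse $p$-family case (the latter requires the extra hypothesis that $R$ is $F$-finite with perfect residue field, which is exactly what is needed for the OK basis constructed in \autoref{OK_basis_existence}). In both situations the limit is $[\kk_\nu:\kk]\vol(C^\circ \setminus \Delta^{low})$, and the same two inputs (\autoref{BAL_characterization_result} and \autoref{BAL_criteria}) convert positivity into the BAL condition.

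There is essentially no main obstacle, since the nontrivial work has already been carried out: the technical content of positivity is \autoref{BAL_criteria}, whose forward direction uses condition (5) of \autoref{OK_val_def} to convert an $\fm$-adic containment into a valuative containment, and whose reverse direction uses \autoref{K_K_appl} to propagate the hypothesis $\fb \in \nabla^{low\circ}$ to a uniform valuative lower bound on $I_n$ (resp.\ $I_q$). The only thing to verify carefully when citing \autoref{BAL_characterization_result} is that $C$ is indeed a full-dimensional upward closed cone, which was arranged in \autoref{general_setup} and \autoref{pointed_implies_upward}; once this is in place, the equivalence $\vol(C^\circ\setminus\Delta^{low})>0 \iff \Delta^{low}\neq C^\circ$ is immediate.
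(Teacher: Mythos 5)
Your proposal is correct and is exactly the paper's proof, just spelled out in more detail: the paper states tersely "Using \autoref{BAL_criteria} and \autoref{BAL_characterization_result} we get the result," and you have supplied the intermediate steps (identifying the limit with $[\kk_\nu:\kk]\vol(C^\circ\setminus\Delta^{low})$ via the three limit-existence theorems, using finiteness of $[\kk_\nu:\kk]$, and chaining the two cited lemmas). The only minor imprecision is that \autoref{limit_existence_weakly_p_family_OK_domain} and \autoref{limit_existence_weakly_inverse_p_in_OK_domain} express the limit as $[\kk_\nu:\kk]\vol(\nabla^{up})$, so one should add the observation that $\vol(\nabla^{up})=\vol(\nabla^{low})$ (from \autoref{delta_low_delta_up_same_ht_fn_for_weakly_p_family} and \autoref{equality_of-ht_fn_for_nabla_up_and_down_for_weakly_inverse_setup}), but this is a trivial gap.
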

\begin{proof}
Using \autoref{BAL_criteria} and \autoref{BAL_characterization_result} we get the result.
\end{proof}

\section{Reduction to OK domain case} \label{section_10}
In this section we want to extend the results from the OK domain case to the more general setting. In general, if $R$ is a Noetherian local ring with $\dim \left(\nil(\hat{R})\right)<\dim R$ then using the following sequence of lemmas we can pass from $R$ to its completion $\hat{R}$, then $\hat{R}$ to $\hat{R}/\nil(\hat{R})$, i.e., we can assume $R$ is a complete reduced local ring. Finally pass to $R'=R/P$ for  $P \in \Min(R)$, i.e., $\dim R/P=\dim R$.

\begin{lemma} \label{completion_reduction}
Let $(R,\fm,\kk)$ be a Noetherian local ring, $\hat{R}$ be its completion with restpect to the maximal ideal $\fm$. Then for any $\fm$-primary ideal $I$, $\ell(R/I)=l(\hat{R}/I\hat{R})$.    
\end{lemma}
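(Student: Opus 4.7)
The plan is to exploit the fact that an $\fm$-primary ideal $I$ makes $R/I$ an Artinian module, annihilated by some power of $\fm$, so that completion does essentially nothing.

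First I would recall the canonical isomorphism $\hat R/I\hat R \cong R/I \otimes_R \hat R$, which follows from right-exactness of $\bu \otimes_R \hat R$ applied to $0 \to I \to R \to R/I \to 0$. Next, since $I$ is $\fm$-primary, there exists $n$ with $\fm^n \subset I$, so $R/I$ is annihilated by $\fm^n$. For any $R$-module $M$ annihilated by $\fm^n$, the natural map $M \to M \otimes_R \hat R$ is an isomorphism: indeed, $M$ is a module over $R/\fm^n$, and $R/\fm^n \otimes_R \hat R = \hat R/\fm^n\hat R \cong R/\fm^n$ by the standard identification of the completion (since $R/\fm^n$ is already complete in the $\fm$-adic topology). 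Therefore $R/I \cong \hat R/I\hat R$ as abelian groups, compatibly with the $R$-action on the left and the induced $R$-action on the right through $R \to \hat R$.

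Finally I would compare lengths. The residue field at the maximal ideal is the same for both rings, namely $\kk = R/\fm = \hat R/\fm\hat R$. A composition series
$$0 = M_0 \subsetneq M_1 \subsetneq \cdots \subsetneq M_s = R/I$$
as $R$-modules, with each $M_i/M_{i-1} \cong \kk$, transports under the isomorphism $R/I \cong \hat R/I\hat R$ to a chain of $\hat R$-submodules (since every $R$-submodule of the $\fm^n$-annihilated module $R/I$ is automatically an $\hat R$-submodule: the $\hat R$-action factors through $\hat R/\fm^n\hat R = R/\fm^n$). The successive quotients remain $\kk$, which is simple as an $\hat R$-module, so this is also a composition series over $\hat R$. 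Hence $\ell_R(R/I) = s = \ell_{\hat R}(\hat R/I\hat R)$.

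There is no serious obstacle here; the only point to be careful about is the identification of $R$-submodules with $\hat R$-submodules under the isomorphism $R/I \cong \hat R/I\hat R$, which is needed to conclude equality of lengths (and not just an inequality). This is handled cleanly by the observation that both module structures factor through the common quotient $R/\fm^n = \hat R/\fm^n\hat R$.
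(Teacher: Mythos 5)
Your proof is correct. The paper states \autoref{completion_reduction} without proof, treating it as a standard fact about Artinian quotients and completion; your argument is precisely the standard one. The key observations are all in place: $\hat R/I\hat R \cong (R/I)\otimes_R \hat R$ by right-exactness, this tensor product is canonically $R/I$ itself since $R/I$ is killed by $\fm^n \subset I$ and $\hat R/\fm^n\hat R \cong R/\fm^n$, and the resulting identification carries $R$-submodule chains to $\hat R$-submodule chains (both module structures factoring through $R/\fm^n = \hat R/\fm^n\hat R$), so lengths agree on the nose. One small point that could be made more explicit is the step from ``$R/\fm^n \otimes_R \hat R \cong R/\fm^n$'' to ``$M \otimes_R \hat R \cong M$ for any $R/\fm^n$-module $M$''; this is the change-of-rings isomorphism $M \otimes_R N \cong M \otimes_{R/J}(N/JN)$ when $JM=0$, applied with $J=\fm^n$, $N=\hat R$. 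An alternative one-line route, if you prefer, is to invoke flatness of $R \to \hat R$ and the length formula under flat local base change, $\ell_{\hat R}(M\otimes_R\hat R) = \ell_R(M)\cdot \ell_{\hat R}(\hat R/\fm\hat R) = \ell_R(M)$, but your direct argument is equally valid and more self-contained.
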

\begin{lemma} \label{BBL_length_bound}
    Let $(R,\fm,\kk)$ be a $d$-dimensional Noetherian local ring of any characteristic, and $I_{\bu}$ be a BBL family of ideals indexed by $\NN$ or by powers of $p$ if the ring has positive characteristic. Let $M$ be an f.g. $R$-module, then there exists $\alpha > 0$ such that $$ \ell_{R} \left( M/I_{n}M \right) \leqslant \alpha \cdot n^{\dim M}$$ for every $n \in \NN$ if the family is indexed by $N$. Otherwise $$\ell_{R} \left( M/I_{q}M \right) \leqslant \alpha \cdot q^{\dim M}$$ for all $q=p^e$ if the family is indexed by powers of $p$.
\end{lemma}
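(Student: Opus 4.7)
The plan is to bound $\ell_R(M/I_n M)$ by $\ell_R(M/\fm^{cn} M)$ using the BBL hypothesis, and then apply classical Hilbert--Samuel asymptotics to the latter. Concretely, since $I_\bu$ is BBL, there exists a constant $c \in \NN$ such that $\fm^{cn} \subset I_n$ (respectively $\fm^{cq} \subset I_q$) for all $n$ (resp.\ all $q = p^e$). Multiplying by $M$ gives $\fm^{cn} M \subset I_n M$, and therefore the natural surjection $M/\fm^{cn} M \twoheadrightarrow M/I_n M$ yields
\[
\ell_R(M/I_n M) \leqslant \ell_R(M/\fm^{cn} M).
\]

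Next, I would invoke the standard fact that for a finitely generated $R$-module $M$ over the Noetherian local ring $(R, \fm)$, the Hilbert--Samuel function $k \mapsto \ell_R(M/\fm^k M)$ agrees for $k \gg 0$ with a polynomial in $k$ of degree exactly $\dim M$. In particular, there exists a constant $\beta > 0$ such that
\[
\ell_R(M/\fm^k M) \leqslant \beta \cdot k^{\dim M}
\]
for every $k \in \NN$; for small $k$ the finitely many values can be absorbed into $\beta$. Substituting $k = cn$ gives
\[
\ell_R(M/I_n M) \leqslant \beta (cn)^{\dim M} = \beta c^{\dim M} \cdot n^{\dim M},
\]
so taking $\alpha = \beta c^{\dim M}$ completes the proof in the $\NN$-indexed case. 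The $p$-power-indexed case is identical with $n$ replaced by $q$.

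There is no real obstacle here; the statement is a routine consequence of the BBL condition combined with classical Hilbert--Samuel theory. The only mild subtlety is ensuring the polynomial bound on $\ell_R(M/\fm^k M)$ is valid for \emph{every} $k$, not just asymptotically, but this is handled by enlarging $\beta$ to absorb the finitely many initial terms. The lemma will then be used in \autoref{section_10} to control lengths when reducing modulo a minimal prime $P$ with $\dim R/P < \dim R$, where the error term contributed by $M = R/P$ (or a similar module supported on a lower-dimensional component) is negligible compared to the leading $n^d$ growth.
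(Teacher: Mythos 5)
Your argument is correct, and it is essentially the expected proof. The paper itself simply defers to \cite[Lemma 5.17]{hernandez2018local} without reproducing an argument, so there is no competing proof in the text to compare against; your reduction $\ell_R(M/I_nM)\leqslant\ell_R(M/\fm^{cn}M)$ via the BBL containment, followed by the standard Hilbert--Samuel bound $\ell_R(M/\fm^kM)\leqslant\beta\,k^{\dim M}$ (valid for all $k$ after absorbing the finitely many initial values into $\beta$, and trivially at $k=0$ since $M/\fm^0M=0$), is precisely the natural content of that cited lemma. Substituting $k=cn$ and taking $\alpha=\beta c^{\dim M}$ then gives the claim, and the $q=p^e$ case is verbatim the same. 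So you have not taken a different route so much as written out the argument that the paper outsources to the reference.
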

\begin{proof}
    It follows from \cite[Lemma 5.17]{hernandez2018local}.
\end{proof}
\begin{corollary} \label{result_to_reduce_setup}
   Let $(R,\fm,\kk)$ and $I_{\bu}$ be as in \autoref{BBL_length_bound}. If $N$ is an ideal of $R$ and $A=R/N$, then there exists $\beta > 0$ such that $$0 \leqslant \ell_{R}(R/I_{n}) - \ell_{A}(A/I_{n}A) \leqslant \beta \cdot n^{\dim N}$$ for all $n$ if the family is indexed by $\NN$. Otherwise $$\leqslant \ell_{R}(R/I_{q}) - \ell_{A}(A/I_{q}A) \leqslant \beta \cdot q^{\dim N}$$ for all $q=p^e$ if the family is indexed by powers of $p$.
\end{corollary}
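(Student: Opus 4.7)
The plan is to reduce the statement to the preceding length estimate \autoref{BBL_length_bound} applied to the module $M=N$, via a standard short exact sequence argument. I will treat the $\NN$-indexed case; the case indexed by powers of $p$ is identical after replacing $n$ with $q$.

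First I would write down the short exact sequence of $R$-modules
\begin{equation*}
0 \to (I_n + N)/I_n \to R/I_n \to R/(I_n + N) \to 0.
\end{equation*}
Taking lengths over $R$ and noting that the length of $R/(I_n+N)$ over $R$ coincides with the length of $A/I_nA$ over $A$ (since the $R$-module structure on $R/(I_n+N)$ factors through $A=R/N$, and residue fields agree), we obtain
\begin{equation*}
\ell_R(R/I_n) - \ell_A(A/I_nA) = \ell_R\bigl((I_n+N)/I_n\bigr).
\end{equation*}
This identity immediately gives the left-hand inequality $0 \leqslant \ell_R(R/I_n) - \ell_A(A/I_nA)$.

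Next I would use the natural isomorphism $(I_n+N)/I_n \cong N/(N \cap I_n)$, which is a quotient of $N/I_nN$. Thus
\begin{equation*}
\ell_R\bigl((I_n+N)/I_n\bigr) = \ell_R\bigl(N/(N\cap I_n)\bigr) \leqslant \ell_R(N/I_nN).
\end{equation*}
Now I would invoke \autoref{BBL_length_bound} applied to the finitely generated $R$-module $M = N$: there exists $\alpha>0$ such that $\ell_R(N/I_nN) \leqslant \alpha \cdot n^{\dim N}$ for every $n$. Setting $\beta = \alpha$, we conclude
\begin{equation*}
\ell_R(R/I_n) - \ell_A(A/I_nA) \leqslant \beta \cdot n^{\dim N},
\end{equation*}
as required. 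The same argument, with $n$ replaced by $q = p^e$ and invoking the second conclusion of \autoref{BBL_length_bound}, handles the positive-characteristic case.

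No step appears to be a genuine obstacle: the short exact sequence is elementary, and the key uniform bound $\ell_R(N/I_nN) \leqslant \alpha n^{\dim N}$ is precisely what \autoref{BBL_length_bound} supplies. The only point worth a brief check is the identification $\ell_R(R/(I_n+N)) = \ell_A(A/I_nA)$, which holds because both sides count composition factors of the same $R$-module, all of whose simple subquotients are isomorphic to $\kk = R/\fm = A/\fm A$.
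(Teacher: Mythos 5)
Your proof is correct and is exactly the standard argument; the paper itself simply delegates to \cite[Corollary 5.18]{hernandez2018local} without reproducing the proof, and what you wrote is a faithful reconstruction of that argument (short exact sequence $0 \to N/(N\cap I_n) \to R/I_n \to A/I_nA \to 0$, the surjection $N/I_nN \twoheadrightarrow N/(N\cap I_n)$, and then \autoref{BBL_length_bound} applied to $M=N$). No gaps.
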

\begin{proof}

    It follows from \cite[Corollary 5.18]{hernandez2018local}.
\end{proof}
\begin{remark} \label{Why_R_reduced}
 Let   $\dim \left(\nil(R)\right)<\dim R$ then using  \autoref{result_to_reduce_setup} we have that the asymptotic length remains unchanged going modulo $\nil(R)$, i.e., we can assume that $R$ is reduced, and has only finitely many minimal primes $P_{i}$ such that $\dim R/P_{i}=\dim R$.
\end{remark}
\begin{lemma} \label{dimension_bound}
Let $(R,\fm,\kk)$ be a Noetherian reduced local ring, $\Min(R)=\{P \in \Spec(R),\dim R/P=\dim R\}$. Then:
\begin{enumerate}
\item Consider the natural map
$\iota:R \to \Pi_{P \in \Min(R)}R/P$. Then $K_1=Ker(\iota),K_2=Coker(\iota)$ are finitely generated $R$-modules with $\dim K_1,\dim K_2<\dim R$.
\item There is a map
$\iota': \Pi_{P \in \Min(R)}R/P \to R$ such that  $K_3=Ker(\iota'),K_4=Coker(\iota')$ are finitely generated $R$-modules with $\dim K_3,\dim K_4<\dim R$.
\end{enumerate}
\end{lemma}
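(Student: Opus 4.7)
The plan is to exploit the interaction between the top-dimensional minimal primes $\Min(R)=\{P_1,\ldots,P_s\}$ and the remaining minimal primes $Q_1,\ldots,Q_t$ of $R$, which by definition satisfy $\dim R/Q_j<\dim R$. Set $S=\prod_{i=1}^s R/P_i$, a finitely generated $R$-module, so that every kernel and cokernel in sight will automatically be finitely generated.

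For part (1), I would first identify $K_1=\ker(\iota)=\bigcap_{i=1}^s P_i$. Reducedness of $R$ gives $\bigcap_i P_i\cap \bigcap_j Q_j = 0$, so the ideal $N=\bigcap_j Q_j$ (with $N=R$ if $t=0$) annihilates $K_1$. Hence $\Supp(K_1)\subset V(N)=\bigcup_j V(Q_j)$, which has dimension $<\dim R$. For $K_2=\mathrm{coker}(\iota)$, localize at any $P_i$: incomparability of distinct minimal primes kills $(R/P_j)_{P_i}$ for $j\neq i$, while $R_{P_i}$ is a field equal to $(R/P_i)_{P_i}$, so $\iota_{P_i}$ is an isomorphism. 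Thus no $P_i$ lies in $\Supp(K_2)$, giving $\dim K_2<\dim R$.

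For part (2), the strategy is to build $\iota'$ from two prime-avoidance scalars. Since no top-dimensional $P_i$ lies in $\Supp(K_1)\cup\Supp(K_2)$ by part (1), the annihilators $\mathrm{Ann}(K_1)$ and $\mathrm{Ann}(K_2)$ are each not contained in any single $P_i$, so finite prime avoidance yields $c,c'\in R^\circ$ with $cK_2=0$ and $c'K_1=0$. The condition $cS\subset \iota(R)$ means every $x\in S$ has a lift $r_x\in R$ with $\iota(r_x)=cx$, determined modulo $K_1$; the condition $c'K_1=0$ then makes $\iota'(x):=c' r_x$ well defined and $R$-linear.

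A direct check yields $\iota\circ\iota'=cc'\cdot\mathrm{id}_S$ and $\iota'\circ\iota=cc'\cdot\mathrm{id}_R$. Because $cc'$ is nonzero in each domain $R/P_i$, multiplication by $cc'$ is injective on $S$, so $\iota'$ is injective and $K_3=0$. Moreover, $\iota'(\iota(R))=cc' R\subset \iota'(S)$, so $K_4$ is a quotient of $R/cc' R$; since $cc'\in R^\circ$ lies in no top-dimensional minimal prime, every minimal prime over $cc' R$ has coheight strictly less than $\dim R$ (either by Krull's principal ideal theorem together with the standard chain-length argument, or because it is some non-top-dimensional $Q_j$), hence $\dim R/cc' R<\dim R$ and $\dim K_4<\dim R$. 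The main technical hurdle is this simultaneous prime-avoidance choice of $c$ and $c'$, which is exactly what forces the constructed $\iota'$ to have trivial kernel and low-dimensional cokernel at the same time.
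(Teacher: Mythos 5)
Your argument is correct, and for part (1) it is close in spirit to the paper's (which localizes at each $P \in \Min(R)$ and observes $\iota_P$ is an isomorphism); your variant handles $K_1$ by the explicit identity $\bigcap_i P_i \cap \bigcap_j Q_j = 0$, which is fine.

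For part (2) you take a genuinely different route. The paper localizes at the multiplicative set $W = R \setminus \bigcup_{P\in\Min(R)} P$, notes $W^{-1}\iota$ is an isomorphism, invokes $\Hom_{W^{-1}R}(W^{-1}M, W^{-1}R) = W^{-1}\Hom_R(M,R)$ (which requires $R$ Noetherian and $M$ finitely generated) to pull back the inverse to some $\iota' \in \Hom_R(M,R)$, and then argues as in (1) that $K_3,K_4$ have support avoiding $\Min(R)$. You instead build $\iota'$ by hand from the two prime-avoidance scalars $c\in\mathrm{Ann}(K_2)\cap R^\circ$ and $c'\in\mathrm{Ann}(K_1)\cap R^\circ$; the well-definedness check ($r_x$ determined modulo $K_1$, killed by $c'$) and the identities $\iota\circ\iota' = cc'\cdot\mathrm{id}_S$, $\iota'\circ\iota = cc'\cdot\mathrm{id}_R$ are exactly the "elements-level" unwinding of the paper's $\Hom$-localization step. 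Your construction buys explicitness, sidesteps the $\Hom$-localization lemma, and gives the marginally stronger conclusion $K_3 = 0$; the paper's is shorter at the price of quoting the localization compatibility of $\Hom$. One cosmetic remark: in your cokernel bound you do not need Krull's principal ideal theorem — the only point is that any prime containing $cc'$ is not in $\Min(R)$, and any prime $P\notin\Min(R)$ has $\dim R/P < \dim R$ because a maximal chain descending from $P$ extends by prepending a minimal prime.
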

\begin{proof}
(1) By assumption for $P \in \Min(R)$, $\nil(R_P)= \nil(R)_P=0$. But $R_P$ is an Artinian ring, so $PR_P=0$ and $R_P=R_P/PR_P$ is a field. This means $\iota_P$ is an isomorphism. By definition of $K_i$ for $i=1,2$, if $P \in \Min(R)$, $(K_i)_P=0$. So $\Supp(K_i)$ does not contain any minimal prime of $R$, so $\dim K_i<\dim R$.

(2) Let $M=\Pi_{P \in \Min(R)}R/P$ be a finitely generated $R$-module, $S=R\ba\cup_{P \in \Min(R)}P$. $S$ is a multiplicative set with $\Spec(S^{-1}R)=\Min(R)$. If $P \notin \Min(R)$, then $S^{-1}R_P=M_P=0$. If $P \in \Min(R)$, then for $P \neq Q \in \Min(R)$, $P \nsubseteq Q$; thus $M_P=R_P$. This means $S^{-1}\iota: S^{-1}R \to S^{-1}M$ is an isomorphism, so it has an inverse $\iota_1$. Since $R$ is Noetherian and $M$ is finitely generated, $\Hom_{S^{-1}R}(S^{-1}M,S^{-1}R)=S^{-1}\Hom_R(M,R)$, thus there exists $\iota' \in \Hom_R(M,R)$ such that $\iota_1=\iota/s$ for some $s \in S$. Now $\iota'/1:S^{-1}M \to S^{-1}R$ is still an isomorpism with kernel $S^{-1}K_3$ and cokernel $S^{-1}K_4$, so $S^{-1}K_3=S^{-1}K_4=0$. For $i=3,4$, $\Supp(K_i)$ does not contain any minimal prime in $\Min(R)$, so $\dim K_i<\dim R$.
\end{proof} 
\begin{lemma} \label{reduction_to_domain_result}
Let $(R,\fm,\kk)$ be a Noetherian reduced local ring, $\Min(R)=\{P \in \Spec(R),\dim R/P=\dim R\}$.
\begin{enumerate}
\item Let $I_\bu$ be a BBL family of ideals indexed by $\NN$. Then there exists a constant $C$ such that
$$\left|\sum_{P \in \Min(R)}\ell_{R/P}((R/P)/I_n(R/P))-\ell_{R}(R/I_nR)\right| \leqslant Cn^{d-1}.$$
\item Let $R$ has characteristic $p>0$ and  $I_\bu$ be a BBL family of ideals indexed by powers of $p$. Then there exists a constant $C$ such that
$$ \left|\sum_{P \in \Min(R)}\ell_{R/P}((R/P)/I_q(R/P))-\ell_{R}(R/I_qR)\right| \leqslant Cq^{d-1}.$$
\end{enumerate}
\end{lemma}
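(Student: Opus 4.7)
The plan is to identify the sum $\sum_{P \in \Min(R)} \ell_{R/P}((R/P)/I_n(R/P))$ with a single length by setting $M = \prod_{P \in \Min(R)} R/P$. Since the product is finite, $I_n M = \prod_P I_n(R/P)$ and hence $\ell_R(M/I_n M) = \sum_{P \in \Min(R)} \ell_{R/P}((R/P)/I_n(R/P))$. Thus part~(1) reduces to proving $|\ell_R(R/I_n R) - \ell_R(M/I_n M)| \leqslant C n^{d-1}$, and~(2) reduces to the analogous estimate with $q^{d-1}$.

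I then invoke \autoref{dimension_bound}(1): the natural map $\iota: R \to M$ has kernel $K_1$ and cokernel $K_2$ with $\dim K_i < d$. Splitting this 4-term sequence through $M' = \iota(R)$, I tensor $0 \to K_1 \to R \to M' \to 0$ with $R/I_n$ and apply \autoref{BBL_length_bound} to $K_1$ to obtain
$$0 \leqslant \ell(R/I_n R) - \ell(M'/I_n M') \leqslant \ell(K_1/I_n K_1) = O(n^{d-1}).$$
Tensoring $0 \to M' \to M \to K_2 \to 0$ with $R/I_n$ is right-exact, so $\ell(M/I_n M) \leqslant \ell(M'/I_n M') + \ell(K_2/I_n K_2) \leqslant \ell(R/I_n R) + O(n^{d-1})$, which gives the upper bound direction.

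The main obstacle is the reverse inequality $\ell(M/I_n M) \geqslant \ell(R/I_n R) - O(n^{d-1})$: after tensoring the embedding $M' \hookrightarrow M$ with $R/I_n$, the induced map $M'/I_n M' \to M/I_n M$ need not be injective, so a direct lower bound for $\ell(M/I_n M)$ in terms of $\ell(M'/I_n M')$ is unavailable, and controlling the defect $(I_n M \cap M')/I_n M'$ or equivalently $\mathrm{Tor}_1^R(R/I_n, K_2)$ would require an Artin--Rees-type input tailored to BBL families. I will bypass this by invoking \autoref{dimension_bound}(2), which furnishes an $R$-linear map $\iota': M \to R$ with kernel $K_3$ and cokernel $K_4$ both of dimension less than $d$. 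Applying the same analysis to $0 \to K_3 \to M \to B \to 0$ with $B = \iota'(M)$ gives $\ell(M/I_n M) \geqslant \ell(B/I_n B)$ by right-exactness, while tensoring $0 \to B \to R \to K_4 \to 0$ yields $\ell(R/I_n R) \leqslant \ell(B/I_n B) + \ell(K_4/I_n K_4)$; combined with \autoref{BBL_length_bound} applied to $K_4$, this produces $\ell(M/I_n M) \geqslant \ell(B/I_n B) \geqslant \ell(R/I_n R) - O(n^{d-1})$.

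Chaining both sandwich estimates yields $|\ell(R/I_n R) - \ell(M/I_n M)| = O(n^{d-1})$, completing part~(1). Part~(2) follows from the identical argument with $n$ replaced by $q = p^e$ throughout: \autoref{BBL_length_bound} is stated separately for families indexed by powers of $p$ with the same $q^{\dim K}$ error bound, and \autoref{dimension_bound} is an assertion about $R$ alone, independent of the indexing. The essential conceptual move is to deploy \emph{both} halves of \autoref{dimension_bound} in tandem --- the first yields a one-sided bound via $R \to M$, and the second yields the matching one-sided bound via $M \to R$ --- so that the missing left-exactness of $- \otimes R/I_n$ is never needed.
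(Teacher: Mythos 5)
Your proposal is correct and follows essentially the same route as the paper: identify the sum with $\ell_R(M/I_n M)$ where $M = \prod_{P} R/P$, exploit right-exactness of $-\otimes R/I_n$ applied to both $\iota\colon R \to M$ and $\iota'\colon M \to R$ from \autoref{dimension_bound}, and absorb the error terms $\ell(K_i/I_n K_i)$ using \autoref{BBL_length_bound}. Your explicit detour through the images $M' = \iota(R)$ and $B = \iota'(M)$ is a mild elaboration the paper skips (it tensors the four-term right-exact sequences $R \to M \to K_2 \to 0$ and $M \to R \to K_4 \to 0$ directly, which already yields the same one-sided bounds in one step), and the bound on $\ell(K_1/I_n K_1)$ you record is never actually used in the chain of inequalities; but neither of these affects correctness.
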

\begin{proof}
We only prove (1), and the proof of (2) will be the same if we replace $n$ by $q=p^e$. Let $M=\Pi_{P \in \Min(R)}R/P$. For the rest of the proof we take length as an $R$-module.\\ Using the following exact sequence
$$R \xrightarrow[]{\iota} M \to K_2 \to 0.$$
 we get an exact sequence
$$R/I_n \xrightarrow[]{\iota} M/I_nM \to K_2/I_nK_2 \to 0,$$
which implies
$$\ell(M/I_nM)\leqslant \ell(R/I_n)+\ell(K_2/I_nK_2).$$
There exists another exact sequence
$$M \xrightarrow[]{\iota'} R \to K_4 \to 0.$$
So we have an exact sequence
$$M/I_nM \xrightarrow[]{\iota} R/I_n \to K_4/I_nK_4 \to 0,$$
which implies
$$\ell(R/I_nR)\leqslant \ell(M/I_nM)+\ell(K_4/I_nK_4).$$
So
$$|\ell(R/I_nR)- \ell(M/I_nM)|\leqslant \max\{\ell(K_2/I_nK_2),\ell(K_4/I_nK_4)\}.$$
Since $I_n$ is BBL, there exists an $\fm$-primary ideal $J$ such that $J^n \subset I_n$, so using \autoref{dimension_bound} we have
$$\ell(K_2/I_nK_2)\leqslant e(J,K_2)n^{\dim K_2}+O(n^{\dim K_2-1}) \leqslant C_2n^{d-1},$$
$$\ell(K_4/I_nK_4)\leqslant e(J,K_4)n^{\dim K_4}+O(n^{\dim K_4-1}) \leqslant C_4n^{d-1}.$$
Let $C=\max\{C_2,C_4\}$ we have
$\max\{\ell(K_2/I_nK_2),\ell(K_4/I_nK_4)\} \leqslant Cn^{d-1}$, so
$$|\ell(R/I_nR)- \ell(M/I_nM)|\leqslant Cn^{d-1}.$$
If $M=\Pi_{P \in \Min(R)}R/P$ then $\ell_{R}(M/I_nM)=\sum_{P \in \Min(R)}\ell_{R/P}((R/P)/I_n(R/P)).$
\end{proof}
The following setup will be used in the rest of this section.
\begin{setup} \label{setup_with_nilradical_assumption}
Let $R$ be a Noetherian local ring of dimension $d$ with $\dim \left(\nil(\hat{R})\right)<\dim R$. We define $\Min(R)=\{P_{i} \in \Spec(R), \dim R_{i}= \dim R/P_{i}= \dim R\}$. Moreover, we assume $R$ has positive characteristic when we work with families of ideals indexed by power of $p$.
\end{setup}
\begin{theorem} \label{main_result_for_OK_domain_reduction}
Adopt \autoref{setup_with_nilradical_assumption}. Then: 
\begin{enumerate}
\item Let $I_\bu$ be a BBL family of ideals indexed by $\NN$. Then
$$\lim_{n \to \infty}\sum_{P \in \Min(R)}\ell_{R/P}((R/P)/I_n(R/P))/n^d-\ell_{R}(R/I_nR)/n^d=0.$$
In particular,
$\displaystyle\lim_{n \to \infty}\ell_{R}(R/I_nR)/n^d$ exists if and only if $\displaystyle\lim_{n \to \infty} \ell_{R/P}\left((R/P)/I_n(R/P)\right)/n^d$ exists for every $P \in \Min(R)$.
\item Assume moreover $R$ has characteristic $p>0$. Let $I_\bu$ be a BBL family of ideals indexed by powers of $p$. Then
$$\lim_{q \to \infty}\sum_{P \in \Min(R)}\ell_{R/P}((R/P)/I_q(R/P))/q^d-\ell_{R}(R/I_qR)/q^d=0.$$
In particular,
$\displaystyle \lim_{q \to \infty}\ell_{R}(R/I_qR)/q^d$ exists if and only if $\displaystyle \lim_{q \to \infty}\ell_{R/P}((R/P)/I_q(R/P))/q^d$ exists for every $P \in \Min(R)$.
\end{enumerate}
\end{theorem}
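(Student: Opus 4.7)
The plan is to deduce the theorem directly from two quantitative tools already in hand: \autoref{result_to_reduce_setup}, which bounds $\ell(R/I_n)-\ell((R/N)/I_n(R/N))$ by a constant times $n^{\dim N}$, and \autoref{reduction_to_domain_result}, which, for a reduced local ring, bounds the discrepancy between $\ell(R/I_n)$ and $\sum_{P\in\Min(R)}\ell((R/P)/I_n(R/P))$ by $C\,n^{d-1}$. Both error terms have order strictly less than $n^d$ (resp.\ $q^d$), so dividing by $n^d$ sends them to zero, which is exactly what the statement requires.

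The only subtle preliminary is that the hypothesis on $\hat R$ must be transferred to $R$ itself. By faithful flatness of completion, $\dim_R\nil(R)=\dim_{\hat R}(\nil(R)\hat R)\leqslant \dim_{\hat R}\nil(\hat R)<d$, the containment coming from $\nil(R)\hat R\subset \nil(\hat R)$. With $\dim\nil(R)<d$ in hand, \autoref{result_to_reduce_setup} applied to $N=\nil(R)$ provides a constant $\beta$ with
$$0\leqslant \ell_R(R/I_n)-\ell_{R/\nil(R)}\bigl((R/\nil(R))/I_n(R/\nil(R))\bigr)\leqslant \beta\,n^{d-1}.$$

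Next, the ring $\bar R:=R/\nil(R)$ is Noetherian, reduced, local, of dimension $d$, and $I_\bu \bar R$ is still BBL, so \autoref{reduction_to_domain_result} provides a constant $C$ with
$$\left|\sum_{P'\in\Min(\bar R)}\ell_{\bar R/P'}\bigl((\bar R/P')/I_n(\bar R/P')\bigr)-\ell_{\bar R}(\bar R/I_n\bar R)\right|\leqslant C\,n^{d-1}.$$
Because $\nil(R)$ lies in every minimal prime of $R$, the assignment $P\mapsto P/\nil(R)$ is a bijection $\Min(R)\xrightarrow{\sim}\Min(\bar R)$ with $\bar R/(P/\nil(R))=R/P$, so the left-hand sum equals $\sum_{P\in\Min(R)}\ell_{R/P}((R/P)/I_n(R/P))$.

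Chaining the two bounds and dividing by $n^d$ yields
$$\left|\sum_{P\in\Min(R)}\frac{\ell_{R/P}((R/P)/I_n(R/P))}{n^d}-\frac{\ell_R(R/I_nR)}{n^d}\right|\leqslant \frac{\beta+C}{n}\xrightarrow[n\to\infty]{}0,$$
which proves (1); the iff statement is then immediate. Part (2) follows by the identical argument with $n$ replaced by $q=p^e$. I expect no genuine obstacle beyond the short dimension descent $\dim\nil(R)<d$; once that is secured, the rest is just a direct chaining of the two preceding bounds.
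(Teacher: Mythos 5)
Your proof is correct and follows the same underlying strategy as the paper — chain \autoref{result_to_reduce_setup} with \autoref{reduction_to_domain_result} and divide by $n^d$ — but you fill in steps that the paper's one-line citation of \autoref{reduction_to_domain_result} leaves implicit, and these are worth making explicit. In particular, \autoref{reduction_to_domain_result} requires $R$ reduced, and \autoref{Why_R_reduced} requires $\dim\nil(R)<d$ rather than the hypothesis $\dim\nil(\hat R)<d$ of \autoref{setup_with_nilradical_assumption}; your descent $\dim\nil(R)=\dim_{\hat R}(\nil(R)\hat R)\leqslant\dim\nil(\hat R)<d$, using flatness of completion and the inclusion $\nil(R)\hat R\subset\nil(\hat R)$, is exactly what is needed to legitimize applying \autoref{result_to_reduce_setup} with $N=\nil(R)$. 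You are also right to pass to $R/\nil(R)$ rather than to $\hat R/\nil(\hat R)$: the former preserves $\Min(R)$ via $P\mapsto P/\nil(R)$ and the quotients $R/P$, while the latter would replace $\Min(R)$ by $\Min(\hat R)$ and give a statement about the wrong ring. With those two points secured, the chaining of the two $O(n^{d-1})$ error terms and the division by $n^d$ is exactly as you describe, in both the $\NN$-indexed and the $p$-power-indexed cases.
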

\begin{proof}
The result follows using \autoref{reduction_to_domain_result}.
\end{proof}

\begin{corollary} \label{Multiplicity_sum_result}
    Adopt \autoref{setup_with_nilradical_assumption}. Then
    \begin{enumerate}
        \item If $I$ is an $\fm$-primary ideal in $R$, then we have 
        $$ e(I,R)= \sum_{P \in \Min(R)} e(I\cdot R/P,R/P)$$
        \item Moreover if $R$ has positive characteristic, then we have
        $$ e_{HK}(I,R)= \sum_{P \in \Min(R) } e_{HK}(I\cdot R/P,R/P) $$.
    \end{enumerate}
\end{corollary}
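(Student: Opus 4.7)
The plan is to derive both parts of this corollary as essentially direct consequences of \autoref{main_result_for_OK_domain_reduction} applied to the two standard families of ideals naturally associated with $I$: ordinary powers $\{I^n\}$ for part (1) and Frobenius powers $\{I^{[p^e]}\}$ for part (2). The point is that \autoref{main_result_for_OK_domain_reduction} has already established the key comparison between $\ell_R(R/I_n)$ (respectively $\ell_R(R/I_q)$) and the sum $\sum_{P \in \Min(R)}\ell_{R/P}((R/P)/I_n(R/P))$, and the classical existence theorems for Hilbert-Samuel and Hilbert-Kunz multiplicities guarantee that each individual limit on the right-hand side converges.

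For part (1), take $I_\bu=\{I^n\}_{n\in\NN}$, which is a graded family of ideals. Since $I$ is $\fm$-primary, \autoref{graded_p_all_BBL}(1) implies $I_\bu$ is BBL. For each $P \in \Min(R)$ the ring $R/P$ is Noetherian local of dimension $d$ with $I\cdot R/P$ still primary to the maximal ideal, so the classical Hilbert-Samuel theorem guarantees
\[
\lim_{n\to\infty} d!\cdot \frac{\ell_{R/P}((R/P)/I^n(R/P))}{n^d}=e(I\cdot R/P,R/P)
\]
exists for every $P\in\Min(R)$. By \autoref{main_result_for_OK_domain_reduction}(1), the limit $\lim_{n\to\infty} d!\cdot \ell_R(R/I^n)/n^d$ therefore exists and equals the sum of these multiplicities; but this limit also equals $e(I,R)$ by the classical Hilbert-Samuel theorem (applied directly to $R$, which does not require $R$ to be a domain). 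Comparing the two expressions yields (1).

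For part (2), take $I_\bu=\{I^{[p^e]}\}_{e\in\NN}$, which is a $p$-family, and is BBL by \autoref{graded_p_all_BBL}(2) since $I$ is $\fm$-primary. Monsky's theorem guarantees that each limit
\[
e_{HK}(I,R)=\lim_{q\to\infty}\frac{\ell_R(R/I^{[q]})}{q^d}, \qquad e_{HK}(I\cdot R/P,R/P)=\lim_{q\to\infty}\frac{\ell_{R/P}((R/P)/I^{[q]}(R/P))}{q^d}
\]
exists. Applying \autoref{main_result_for_OK_domain_reduction}(2) to this family produces exactly the additivity formula in (2).

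There is no real obstacle in this proof; the corollary is a packaging of \autoref{main_result_for_OK_domain_reduction} with the classical existence theorems. The only point requiring mild care is the verification that the specific families $\{I^n\}$ and $\{I^{[p^e]}\}$ satisfy the BBL hypothesis when $I$ is $\fm$-primary, which is precisely the content of \autoref{graded_p_all_BBL}, and that $\dim(R/P)=\dim R=d$ for every $P\in\Min(R)$, which holds by definition of $\Min(R)$ so that the normalizing factor $n^d$ (respectively $q^d$) is the correct one on both sides of the equation.
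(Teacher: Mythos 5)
Your proof is correct and takes essentially the same route as the paper: apply \autoref{main_result_for_OK_domain_reduction} to the families $\{I^n\}$ and $\{I^{[p^e]}\}$, using the classical Hilbert--Samuel and Monsky existence theorems to identify the limits. You spell out the BBL verification via \autoref{graded_p_all_BBL} and the existence of the individual limits more explicitly than the paper's one-line proof, and you omit the paper's explicit appeal to \autoref{Why_R_reduced} (harmless, since \autoref{main_result_for_OK_domain_reduction} is already stated under \autoref{setup_with_nilradical_assumption}), but the argument is the same.
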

\begin{proof}
    The result follows from \autoref{Why_R_reduced} and using \autoref{main_result_for_OK_domain_reduction} with $I_{n}=I^{n}$ and $I_{q}=I^{[q]}$.
\end{proof}
\begin{lemma} \cite[Corollary 3.7]{hernandez2018local} \label{Complete_local_domain_is_OK}
A complete local domain containing a field is an OK domain.
\end{lemma}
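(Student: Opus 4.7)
The plan is to construct an explicit OK valuation on $\FF = \mathrm{Frac}(R)$ by working through a well-chosen Noether-normalizing subring. First I would invoke Cohen's structure theorem: since $R$ is a complete equicharacteristic local domain of dimension $d$, there is a coefficient field $\kk_0 \subset R$ isomorphic to $\kk$, and a system of parameters $t_1, \ldots, t_d \in \fm$ giving a module-finite injection $A := \kk_0[[t_1, \ldots, t_d]] \hookrightarrow R$ from a regular local subring. The strategy is to build an OK valuation on $A$ first and then extend it.

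On $A$, define the monomial valuation $\nu_0$ as follows. Fix $\fa = (a_1, \ldots, a_d) \in \RR^d_{>0}$ whose components are $\QQ$-linearly independent; this yields a total order $\leqslant_\fa$ on $\ZZ^d$. For a nonzero element $\sum_{\fb} c_\fb t^\fb \in A$, set $\nu_0\bigl(\sum c_\fb t^\fb\bigr) = \min_{\leqslant_\fa}\{\fb : c_\fb \neq 0\}$. Standard arguments show $\nu_0$ extends to a valuation on $\mathrm{Frac}(A)$ with value group $\ZZ^d$, image $\NN^d$ on $A \ba \{0\}$, residue field $\kk_0$, and with $A \cap \mathrm{Frac}(A)_{\geqslant n\fv_0} \subset \fm_A^n$ for a suitable positive $\fv_0 \in \NN^d$ (chosen so that $\langle \fb, \fa\rangle \geqslant n\langle \fv_0, \fa\rangle$ forces the total degree $|\fb| \geqslant n$).

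Next I would extend $\nu_0$ to $\FF$. By standard valuation theory, since $\FF/\mathrm{Frac}(A)$ is a finite algebraic extension, there exists an extension $\nu$ of $\nu_0$ to $\FF$ with value group $G$ containing $\ZZ^d$ as a subgroup of finite index. Picking $m$ so that $G \subset (1/m)\ZZ^d$ and rescaling by $m$ produces a valuation $\nu : \FF^\times \to \ZZ^d$ whose value group is $\ZZ^d$; this equals $\ZZ\nu(R)$ because $\FF$ is the fraction field of $R$, giving condition (1). For condition (2), domination, observe that $R \cap \fm_\nu$ is a prime ideal containing $\fm_A = A \cap \fm_\nu$; since $A \subset R$ is integral, lying-over forces $R \cap \fm_\nu = \fm$. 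Condition (3), pointedness at direction $\fa$, follows because $\nu(R)$ lies in $\Cone(\nu_0(A)) \subset \RR^d_{\geqslant 0}$ up to rescaling (every $r \in R$ satisfies an integral equation over $A$, which bounds $\nu(r)$ below in the order). Condition (4) is automatic: the residue extension $\kk_\nu / \kk_0$ is finite because $\FF/\mathrm{Frac}(A)$ is finite, and $\kk_0 \cong \kk$.

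The main obstacle will be verifying condition (5): producing $\fv \in \ZZ^d$ with $R \cap \FF_{\geqslant n\fv} \subset \fm^n$ for every $n \in \NN$. My approach is to exploit that $R$ is module-finite over $A$ and that $R/\fm_A R$ is artinian, so there exists an integer $N$ with $\fm^N \subset \fm_A R$; iterating yields $\fm^{nN} \subset \fm_A^n R$. Combining this with the monomial bound $A \cap \mathrm{Frac}(A)_{\geqslant n\fv_0} \subset \fm_A^n$ from the previous step, one shows that any $x \in R$ with $\nu(x) \geqslant n\fv$ (for an appropriately enlarged $\fv$ depending on $\fv_0$, $N$, and the degree $[\FF:\mathrm{Frac}(A)]$) must lie in $\fm_A^{\lceil n/N\rceil}R \subset \fm^n$. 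The delicate point is ensuring a single $\fv$ works uniformly in $n$; this relies crucially on the fact that $N$ in $\fm^N \subset \fm_A R$ can be chosen independently of $n$, a consequence of the artinianness of $R/\fm_A R$, and that the correction factor coming from the integral extension is likewise uniform (an Artin-Rees style bound).
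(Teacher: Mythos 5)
Your overall strategy---Cohen's structure theorem to reduce to a module-finite extension $A = \kk_0[[t_1, \ldots, t_d]] \hookrightarrow R$, a monomial valuation on $A$, extension to $\FF$---is the right one and is essentially the construction used in \cite{hernandez2018local}. Two of the intermediate steps as written need repair, though both are fixable. Rescaling by $m$ with $G \subset (1/m)\ZZ^d$ does \emph{not} produce value group $\ZZ^d$: the value group becomes $mG$, a proper finite-index sublattice of $\ZZ^d$ in general, so you must instead compose with a lattice isomorphism $G \cong \ZZ^d$ and transport $\fa$ along it. And for pointedness in (3), the integral equation only gives $\langle \nu(r), \fa \rangle \geqslant 0$, which places $\nu(R)$ in a \emph{half-space}, not a pointed cone; the containment $\nu(R) \subset \Cone(\nu_0(A))$ is not justified by what you wrote, since $\nu(r)$ need not have nonnegative coordinates.

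The genuine gap is in condition (5), and it is not an Artin--Rees matter. The Artin--Rees part is fine and in fact easier than you make it: $R/\fm_A R$ artinian gives $\fm^N \subset \fm_A R$, and $\fm_A^n R \subset \fm^n$ holds trivially because $\fm_A \subset \fm$. (Note that your stated containment $\fm_A^{\lceil n/N \rceil}R \subset \fm^n$ is false; since $\fm_A \subset \fm$ it only gives $\fm^{\lceil n/N \rceil}$, which is weaker than $\fm^n$; the exponent you need on $\fm_A$ is $n$, not $\lceil n/N \rceil$.) What is missing is the crucial step: that $\nu(x) \geqslant n\fv$ for $x \in R$ forces $x \in \fm_A^n R$. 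Your monomial bound covers $x \in A$, but if you write $x = \sum a_i e_i$ over $A$-module generators $e_i$ of $R$, nothing controls the individual $\nu_0(a_i)$ in terms of $\nu(x)$---sums of low-valuation terms can have arbitrarily high valuation, so cancellation defeats the naive estimate. The missing ingredient is Izumi's theorem (equivalently, Rees's strong valuation theorem for analytically irreducible rings): for a local ring $R$ with $\hat R$ a domain, dominated by a valuation $\nu$ with $[\kk_\nu : \kk] < \infty$, the functions $\langle \nu(\cdot), \fa \rangle$ and $\textup{ord}_\fm(\cdot)$ are linearly comparable on $R$. This is precisely where the completeness hypothesis earns its keep, and it cannot be replaced by the elementary integral-extension and Artin--Rees considerations you sketch; \cite{hernandez2018local} invokes exactly this comparison.
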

\begin{theorem} \label{general_weakly_graded_limit_exist}
Adopt \autoref{setup_with_nilradical_assumption}. Then for any BBL weakly graded families $I_\bu$ of $R$-ideals,
$$\lim_{n \to \infty}\frac{\ell(R/I_n)}{n^d}$$
exists.
\end{theorem}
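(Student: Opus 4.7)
The plan is to reduce the general statement to the case where $R$ is a complete local domain, which by \autoref{Complete_local_domain_is_OK} is an OK domain, and then invoke \autoref{limit_existence_weakly_graded_OK_domain}. The reduction proceeds in three standard steps, using the tools developed in \autoref{completion_reduction}, \autoref{result_to_reduce_setup}, \autoref{reduction_to_domain_result} and \autoref{main_result_for_OK_domain_reduction}.

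First, I would replace $R$ by its completion $\hat R$. By \autoref{completion_reduction}, for each $\fm$-primary ideal $I_n$ we have $\ell_R(R/I_n)=\ell_{\hat R}(\hat R/I_n\hat R)$, so the existence of the limit is unchanged. The family $\{I_n\hat R\}$ is still BBL (since $\fm^{cn}\hat R=\hat\fm^{cn}\subset I_n\hat R$), and it is still weakly graded: the witnessing element $c\in R^\circ$ goes to $\hat R$, and one only needs to check that its image avoids the minimal primes of $\hat R$ of maximal dimension. This is guaranteed by the hypothesis $\dim(\nil(\hat R))<\dim R$ together with the fact that primes of maximal dimension in $\hat R$ lie over minimal primes of $R$ of maximal dimension, so the image of $c$ lies in $\hat R^\circ$. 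Hence we may assume $R$ is complete local with $\dim(\nil R)<\dim R$.

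Second, I would kill $N:=\nil R$. By \autoref{result_to_reduce_setup}, there exists $\beta>0$ with $0\leq \ell_R(R/I_n)-\ell_{R/N}((R/N)/I_n(R/N))\leq \beta n^{\dim N}$, and $\dim N<d$, so dividing by $n^d$ the difference tends to zero. Moreover by \autoref{weakly_p_weakly_graded_stays_same_under_projection} applied to $R\to R/N$ (the image of any $c\in R^\circ$ lies in $(R/N)^\circ$ because $\dim N<d$, so no minimal prime of $R$ of maximal dimension is contained in $N$), the induced family on $R/N$ is still BBL weakly graded. Thus we may further assume $R$ is a complete reduced local ring of dimension $d$.

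Third, set $\Min(R)=\{P_1,\ldots,P_s\}$. By \autoref{main_result_for_OK_domain_reduction}(1),
\[
\lim_{n\to\infty}\Bigl(\sum_{i=1}^{s}\frac{\ell_{R/P_i}((R/P_i)/I_n(R/P_i))}{n^d}-\frac{\ell_R(R/I_n)}{n^d}\Bigr)=0,
\]
so it suffices to show each summand on the left has a limit. Each $R/P_i$ is a complete local domain of dimension $d$, hence an OK domain by \autoref{Complete_local_domain_is_OK}. Applying \autoref{weakly_p_weakly_graded_stays_same_under_projection} once more to $R\to R/P_i$, the induced family is BBL weakly graded on $R/P_i$. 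Now invoking \autoref{limit_existence_weakly_graded_OK_domain} in each $R/P_i$ yields the existence of $\lim_{n\to\infty}\ell_{R/P_i}((R/P_i)/I_n(R/P_i))/n^d$, and summing gives the desired conclusion for $R$.

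The only potential subtlety in this chain of reductions is verifying at each step that the family remains weakly graded, i.e.\ that the witnessing element $c$ remains in $R^\circ$ (or $\hat R^\circ$, resp.\ $(R/P_i)^\circ$). This is where the assumption $\dim(\nil(\hat R))<d$ is essential: it guarantees that every minimal prime of $\hat R$ of maximal dimension avoids $\nil(\hat R)$, and that passing to quotients by minimal primes of maximal dimension does not send $R^\circ$ into a minimal prime of the quotient. The hypothesis $c\in R^\circ$, as recorded after \autoref{weakly_p_weakly_graded_stays_same_under_projection}, is precisely tailored to propagate through these quotient maps.
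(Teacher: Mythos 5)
Your proof follows essentially the same reduction chain as the paper: completion, kill the nilradical, pass to $R/P$ for each $P\in\Min(R)$, then invoke \autoref{Complete_local_domain_is_OK} and \autoref{limit_existence_weakly_graded_OK_domain}, using \autoref{main_result_for_OK_domain_reduction} to glue the limits. One small inaccuracy worth noting: the fact that $R^\circ$ maps into $\hat R^\circ$ is a general consequence of flatness of completion (minimal primes of $\hat R$ of maximal dimension contract to minimal primes of $R$ of maximal dimension) and does not rely on the hypothesis $\dim(\nil(\hat R))<\dim R$ — that hypothesis is used only to make the error term from \autoref{result_to_reduce_setup} vanish asymptotically.
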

\begin{proof}
First we may pass to completion to assume $R$ is complete and reduced (see \autoref{Why_R_reduced}). In this case $R^\circ$ maps to $(\hat{R})^\circ$, so weakly graded families extend to another weakly graded families. The colengths of ideals and the BBL property stay the same. By \autoref{main_result_for_OK_domain_reduction}, the limit $ \displaystyle \lim_{n \to \infty}\frac{\ell(R/I_n)}{n^d}$ exists if and only if for every $P \in \Min(R)$, set $\overline{R}=R/P$, then $ \displaystyle \lim_{n \to \infty}\dfrac{\ell(\overline{R}/I_n\overline{R})}{n^d}$ exists. Since $R^\circ \to \overline{R}^\circ$ under the natural projection, $I_n$ is weakly graded implies $I_n\overline{R}$ is weakly graded. Moreover $\overline{R}$ is a complete local domain, i.e., an OK domain, so the limit $ \displaystyle \lim_{n \to \infty}\dfrac{\ell(\overline{R}/I_n\overline{R})}{n^d}$ exists by \autoref{limit_existence_weakly_graded_OK_domain}. 
\end{proof}
\begin{theorem} \label{general_weakly_p_and_weakly_inverse_p_exists}
Adopt \autoref{setup_with_nilradical_assumption}.. Then for any BBL families $I_\bu$ of $R$-ideals which is either weakly $p$-family, or weakly inverse $p$-family when $R$ is $F$-finite with perfect residue field, then
$$\lim_{q \to \infty}\frac{\ell(R/I_q)}{q^d}$$
exists.
\end{theorem}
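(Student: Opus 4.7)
The plan is to mirror exactly the reduction strategy used in \autoref{general_weakly_graded_limit_exist}, replacing the index $n\in\NN$ by $q=p^e$ and invoking \autoref{limit_existence_weakly_p_family_OK_domain} or \autoref{limit_existence_weakly_inverse_p_in_OK_domain} at the bottom of the reduction in place of \autoref{limit_existence_weakly_graded_OK_domain}. Concretely, I would carry out four reductions in sequence: (i) pass from $R$ to $\hat R$ using \autoref{completion_reduction}, so $\ell_R(R/I_q)=\ell_{\hat R}(\hat R/I_q\hat R)$ for every $q$; (ii) pass from $\hat R$ to $\hat R/\nil(\hat R)$ using \autoref{result_to_reduce_setup} together with the assumption $\dim\nil(\hat R)<d$, which only alters $\ell(R/I_q)/q^d$ by a term of order $q^{\dim\nil(\hat R)-d}\to 0$; (iii) apply \autoref{main_result_for_OK_domain_reduction}(2) to reduce the existence of the limit for $R$ to the existence of the corresponding limit for $\bar R=R/P$ for each $P\in\Min(R)$; and (iv) apply the OK-domain result to each $\bar R$, which is a complete local domain, hence an OK domain by \autoref{Complete_local_domain_is_OK}.

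For step (i)-(iii) to work for the families in question, I need to verify that being a BBL weakly $p$-family (respectively a BBL weakly inverse $p$-family) is preserved under each of the ring maps $R\to\hat R$, $\hat R\to\hat R/\nil(\hat R)$, $\hat R/\nil(\hat R)\to(\hat R/\nil(\hat R))/P$ for $P\in\Min$. BBL is clearly preserved because $\fm$-powers map to powers of the maximal ideal of the target. For the defining relation $cI_q^{[p]}\subset I_{pq}$ (or $cI_{pq}\subset I_q^{[p]}$), the element $c\in R^\circ$ must map to $R^\circ$ of the target; this is exactly the content of \autoref{weakly_p_weakly_graded_stays_same_under_projection}, applied first to the completion $R\to\hat R$ (for which the statement that $R^\circ\to\hat R^\circ$ is used in the paper's proof of \autoref{general_weakly_graded_limit_exist}) and then to the natural surjections $\hat R\surjects\hat R/\nil(\hat R)$ and then to $\hat R/\nil(\hat R)\surjects\bar R$. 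Thus at the end of the reduction we have a BBL weakly $p$-family (resp.\ weakly inverse $p$-family) of ideals in a complete local domain $\bar R$, and \autoref{limit_existence_weakly_p_family_OK_domain} gives the existence of $\lim_{q\to\infty}\ell_{\bar R}(\bar R/I_q\bar R)/q^d$ in the weakly $p$-family case.

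The main obstacle is the weakly inverse $p$-family case, where \autoref{limit_existence_weakly_inverse_p_in_OK_domain} requires the ambient OK domain to be $F$-finite with perfect residue field. The plan is to verify that these two extra properties propagate through the entire reduction chain. $F$-finiteness is preserved under completion (since $\hat R$ is a localization-and-completion of the same module structure, and $F_*\hat R\cong\widehat{F_*R}$ is finitely generated over $\hat R$) and is trivially preserved under quotients. Perfectness of the residue field is also preserved because the residue field of $\hat R$, of $\hat R/\nil(\hat R)$, and of each $\bar R=(\hat R/\nil(\hat R))/P$ with $P$ contained in the maximal ideal coincides with $R/\fm$. Once these two properties are checked, \autoref{limit_existence_weakly_inverse_p_in_OK_domain} applies to each $\bar R$, and summing over $P\in\Min$ via \autoref{main_result_for_OK_domain_reduction}(2) yields the existence of $\lim_{q\to\infty}\ell_R(R/I_q)/q^d$ in the original ring, completing the proof.
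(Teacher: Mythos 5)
Your proposal matches the paper's own proof: the paper reduces via completion, passage modulo $\nil(\hat R)$, and then passage to $\hat R/P$ for $P \in \Min(\hat R)$, invokes \autoref{main_result_for_OK_domain_reduction}(2), \autoref{Complete_local_domain_is_OK}, and then applies \autoref{limit_existence_weakly_p_family_OK_domain} and \autoref{limit_existence_weakly_inverse_p_in_OK_domain}, noting (just as you do) that $F$-finiteness and perfectness of the residue field persist down the reduction chain. Your write-up simply spells out the bookkeeping (preservation of BBL, of the weakly $p$/weakly inverse $p$ conditions via \autoref{weakly_p_weakly_graded_stays_same_under_projection}, and of the side hypotheses) that the paper leaves implicit.
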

\begin{proof}
The result follows from \autoref{limit_existence_weakly_p_family_OK_domain} , \autoref{limit_existence_weakly_inverse_p_in_OK_domain} and the argument used in \autoref{general_weakly_graded_limit_exist}. Note that $F$-finiteness and the perfect property of the residue field remain unchanged from $R$ to $\hat{R}/P$.
\end{proof}
\begin{remark}
The assumption $\dim \left( \nil(\hat{R})\right)<\dim R$ is the most general assumption we can add. Counterexamples can be found in (\cite{cutkosky2014asymptotic},\cite{hernandez2018local}) for rings $R$ that violates this condition, i.e., we can always construct graded families and $p$-families whose corresponding limits do not exist.  
\end{remark}
\begin{theorem} \label{Minkowski_general_for_weakly_graded}
   Adopt \autoref{setup_with_nilradical_assumption}. If  $I_\bullet$ and $J_\bullet$ are two BBL weakly graded families, then so is $I_\bullet J_\bullet$, and
$$\left(\lim_{n \to \infty}\frac{\ell(R/I_n)}{n^d}\right)^{1/d}+\left(\lim_{n \to \infty}\frac{\ell(R/J_n)}{n^d}\right)^{1/d} \geqslant \left(\lim_{n \to \infty}\frac{\ell(R/I_nJ_n)}{n^d}\right)^{1/d}.$$
\end{theorem}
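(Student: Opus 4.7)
First I would verify that $I_\bullet J_\bullet$ is itself a BBL weakly graded family. If $c, c' \in R^\circ$ witness the weakly graded property for $I_\bullet$ and $J_\bullet$ respectively, then because $R^\circ = R \setminus \bigcup_{P \in \Min(R)} P$ is multiplicatively closed (each $P$ is prime), $cc' \in R^\circ$, and the containment $cc' \cdot (I_mJ_m)(I_nJ_n) \subset I_{m+n}J_{m+n}$ follows by reordering factors. The BBL condition is immediate: if $\fm^{an} \subset I_n$ and $\fm^{bn} \subset J_n$, then $\fm^{(a+b)n} \subset I_nJ_n$.

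The next step is to reduce to the OK domain case. Passing to $\hat{R}$ preserves colengths by \autoref{completion_reduction}, and passing from $\hat R$ to $\hat R / \nil(\hat R)$ changes each $\ell(R/I_n)/n^d$ by an error that is $O(n^{\dim \nil(\hat R)})/n^d \to 0$ by \autoref{result_to_reduce_setup} and the hypothesis $\dim \nil(\hat R) < d$. So I may assume $R$ is complete and reduced. Let $\Min(R) = \{P_1, \ldots, P_s\}$ and set $\overline{R}_i = R/P_i$. Each $\overline{R}_i$ is a complete local domain, hence an OK domain by \autoref{Complete_local_domain_is_OK}. By \autoref{weakly_p_weakly_graded_stays_same_under_projection}, the families $I_\bullet \overline{R}_i$, $J_\bullet \overline{R}_i$ and $I_\bullet J_\bullet \overline{R}_i$ are all BBL weakly graded. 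The key identity from \autoref{main_result_for_OK_domain_reduction} gives, for each family $K_\bullet \in \{I_\bullet, J_\bullet, I_\bullet J_\bullet\}$, that
\[
\lim_{n\to\infty} \frac{\ell_R(R/K_n)}{n^d} = \sum_{i=1}^s \lim_{n\to\infty} \frac{\ell_{\overline{R}_i}(\overline{R}_i/K_n\overline{R}_i)}{n^d},
\]
with each summand existing by \autoref{limit_existence_weakly_graded_OK_domain}.

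Now I would apply the Minkowski inequality in the OK domain case: \autoref{Minkowski_weakly_graded} applied to $\overline{R}_i$ yields, writing $a_i, b_i, c_i$ for the three limits over $\overline{R}_i$,
\[
a_i^{1/d} + b_i^{1/d} \geqslant c_i^{1/d}, \qquad i = 1, \ldots, s.
\]
To finish I invoke the classical Minkowski inequality for the $\ell^d$ norm on $\RR^s$: setting $x_i = a_i^{1/d}$, $y_i = b_i^{1/d}$,
\[
\Bigl(\sum_{i=1}^s c_i\Bigr)^{1/d} \leqslant \Bigl(\sum_{i=1}^s (x_i + y_i)^d\Bigr)^{1/d} \leqslant \Bigl(\sum_{i=1}^s x_i^d\Bigr)^{1/d} + \Bigl(\sum_{i=1}^s y_i^d\Bigr)^{1/d} = \Bigl(\sum_{i=1}^s a_i\Bigr)^{1/d} + \Bigl(\sum_{i=1}^s b_i\Bigr)^{1/d},
\]
which is exactly the desired inequality after substituting the sum-over-$P$ expressions for the three limits on $R$. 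I do not expect a real obstacle here: the analytic/geometric content is entirely in \autoref{Minkowski_weakly_graded}, the reduction machinery is already assembled in \autoref{section_10}, and the only additional ingredient is the elementary $\ell^d$-Minkowski inequality used to aggregate the per-component estimates. The mildest care point is ensuring that the weakly graded property descends through $R \to \hat R \to \hat R / \nil(\hat R) \to \overline{R}_i$, which follows from \autoref{weakly_p_weakly_graded_stays_same_under_projection} once one checks that each of these maps sends $R^\circ$ into the target's $\circ$-set; for the completion this uses that $\hat R$ is flat and preserves minimal primes of maximal dimension, and for the quotients it is immediate.
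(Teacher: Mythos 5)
Your proposal is correct and follows essentially the same route as the paper: reduce to the complete reduced case, decompose the limit as a sum over $\Min(R)$ via \autoref{main_result_for_OK_domain_reduction}, apply the OK-domain Minkowski inequality (\autoref{Minkowski_weakly_graded}) to each component, and aggregate with the classical $\ell^d$-Minkowski inequality. The only additions you make are the explicit verification that $I_\bullet J_\bullet$ is BBL weakly graded and the discussion of why $R^\circ$ is respected along the reduction chain, both of which the paper leaves implicit.
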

\begin{proof}
    Using \autoref{completion_reduction} and \autoref{Why_R_reduced} we can assume $R$ is complete reduced and has only finitely many minimal primes $P_{1},\dots,P_{k}$. If $R_{i}=R/P_{i}$ for $1\leqslant i \leqslant k$, then \autoref{main_result_for_OK_domain_reduction} gives us $$\lim_{n \to \infty}\frac{\ell(R/I_n)}{n^d} = \sum_{i=1}^{k} \lim_{n \to \infty} \ell_{R_{i}}(R_{i}/I_nR_{i})/n^d,$$ $$\lim_{n \to \infty}\frac{\ell(R/J_n)}{n^d} = \sum_{i=1}^{k} \lim_{n \to \infty} \ell_{R_{i}}(R_{i}/J_nR_{i})/n^d$$ and $$\lim_{n \to \infty}\frac{\ell(R/I_nJ_{n})}{n^d} = \sum_{i=1}^{k} \lim_{n \to \infty} \ell_{R_{i}}(R_{i}/I_nJ_{n}R_{i})/n^d$$
    Let $a_{i}= \lim_{n \to \infty}\ell_{R_{i}}(R_{i}/I_nR_{i})/n^d$, $b_{i}=\lim_{n \to \infty}\ell_{R_{i}}(R_{i}/J_nR_{i})/n^d$, and \phantom{$c_{i}=\lim_{n \to \infty}\ell_{R_{i}}(R_{i}/I_nJ_{n}R_{i})/n^d$}
    $c_{i}=\lim_{n \to \infty}\ell_{R_{i}}(R_{i}/I_nJ_{n}R_{i})/n^d$, then by \autoref{Minkowski_weakly_graded} we have $a_{i}^{1/d}+b_{i}^{1/d} \geqslant c_{i}^{1/d}$ for $1 \leqslant i \leqslant k$. Set $\overline{a_{i}} = a_{i}^{1/d}$, $\overline{b_{i}} = b_{i}^{1/d}$. Using Minkowski inequality we have
    \begin{flalign*}
    \left( \sum_{i=1}^{k} a_{i}\right)^{1/d}+\left( \sum_{i=1}^{k} b_{i}\right)^{1/d} &= \left( \sum_{i=1}^{k} \overline{a_{i}}^{d}\right)^{1/d}+\left( \sum_{i=1}^{k} \overline{b_{i}}^{d}\right)^{1/d} \\
    & \geqslant \left( \sum_{i=1}^{k} (\overline{a_{i}}+\overline{b_{i}})^{d}\right)^{1/d} \\
    & \geqslant \left(\sum_{i=1}^{k} c_{i}\right)^{1/d}
    \end{flalign*}
    finishing the proof.

\end{proof}

\begin{theorem} \label{Minkowski_general_weakly_p_weakly_inverse_p}
   Adopt \autoref{setup_with_nilradical_assumption}.
   
   \begin{enumerate}
       \item 
   If  $I_\bullet$ and $J_\bullet$ are two BBL weakly $p$-families, then so is $I_\bullet J_\bullet$, and
$$\left(\lim_{q \to \infty}\frac{\ell(R/I_q)}{q^d}\right)^{1/d}+\left(\lim_{q \to \infty}\frac{\ell(R/J_q)}{q^d}\right)^{1/d} \geqslant \left(\lim_{q \to \infty}\frac{\ell(R/I_qJ_q)}{q^d}\right)^{1/d}.$$

\item Moreover if $R$ is $F$-finite with perfect residue field, and $I_\bullet$ and $J_\bullet$ are two BBL weakly inverse $p$-families, then so is $I_\bullet J_\bullet$, and
$$\left(\lim_{q \to \infty}\frac{\ell(R/I_q)}{q^d}\right)^{1/d}+\left(\lim_{q \to \infty}\frac{\ell(R/J_q)}{q^d}\right)^{1/d} \geqslant \left(\lim_{q \to \infty}\frac{\ell(R/I_qJ_q)}{q^d}\right)^{1/d}.$$
\end{enumerate}
\end{theorem}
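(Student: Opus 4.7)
\textbf{Proof plan for \autoref{Minkowski_general_weakly_p_weakly_inverse_p}.} The plan is to mimic step-by-step the proof of \autoref{Minkowski_general_for_weakly_graded}, only replacing the OK-domain Minkowski inequality used there (which was for weakly graded families) by its $p$-family analogue, namely \autoref{Minkowski_inequality_for_ weakly_p-families_and_weakly_inverse_p-families}. The product $I_\bu J_\bu$ is itself a BBL weakly $p$-family (resp.\ weakly inverse $p$-family) of the same type: indeed, if $cI_q^{[p]}\subset I_{pq}$ and $c'J_q^{[p]}\subset J_{pq}$ then $cc'(I_qJ_q)^{[p]}\subset I_{pq}J_{pq}$, and symmetrically in the inverse case, and the BBL condition is stable under products of ideals contained in powers of $\fm$.

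First I would reduce to the complete, reduced, domain situation. By \autoref{completion_reduction} colengths are unchanged after completion, so we may replace $R$ by $\hat R$; by \autoref{Why_R_reduced}, which uses the hypothesis $\dim\nil(\hat R)<\dim R$, modding out $\nil(\hat R)$ only changes colengths by a term of order $q^{d-1}$, hence the normalized limits are unchanged. Under both moves the distinguished constant $c\in R^\circ$ has nonzero image in the appropriate $(\cdot)^\circ$, so by \autoref{weakly_p_weakly_graded_stays_same_under_projection} the families remain weakly $p$ or weakly inverse $p$; the BBL condition is also preserved. In the weakly inverse $p$ case one further needs to note that $F$-finiteness and perfectness of the residue field both persist under completion and under modding out the nilradical (the residue field itself does not change).

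Next, with $R$ complete and reduced, \autoref{main_result_for_OK_domain_reduction} splits the three limits over $\Min(R)$ up to an error $O(q^{d-1})$. Writing
$$a_P=\lim_{q\to\infty}\frac{\ell_{R/P}((R/P)/I_q(R/P))}{q^d},\quad b_P=\lim_{q\to\infty}\frac{\ell_{R/P}((R/P)/J_q(R/P))}{q^d},\quad c_P=\lim_{q\to\infty}\frac{\ell_{R/P}((R/P)/I_qJ_q(R/P))}{q^d},$$
the theorem reduces to
$$\Bigl(\sum_{P\in\Min(R)}a_P\Bigr)^{1/d}+\Bigl(\sum_{P\in\Min(R)}b_P\Bigr)^{1/d}\geqslant\Bigl(\sum_{P\in\Min(R)}c_P\Bigr)^{1/d}.$$
Each $R/P$ is a complete local domain, hence an OK domain by \autoref{Complete_local_domain_is_OK}. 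For weakly inverse $p$-families, $F$-finiteness and perfectness of the residue field descend to each $R/P$, so the OK-domain Minkowski inequality \autoref{Minkowski_inequality_for_ weakly_p-families_and_weakly_inverse_p-families} applies and gives $a_P^{1/d}+b_P^{1/d}\geqslant c_P^{1/d}$ for every $P$. Then, exactly as in the closing computation of \autoref{Minkowski_general_for_weakly_graded}, subadditivity of the $\ell_d$-norm on $\RR^{\Min(R)}$ yields
$$\Bigl(\sum_P a_P\Bigr)^{1/d}+\Bigl(\sum_P b_P\Bigr)^{1/d}\geqslant\Bigl(\sum_P(a_P^{1/d}+b_P^{1/d})^d\Bigr)^{1/d}\geqslant\Bigl(\sum_P c_P\Bigr)^{1/d},$$
finishing the proof.

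The only real obstacle is the bookkeeping in the weakly inverse $p$ case: one must verify along the whole reduction chain (completion, killing the nilradical, projecting to $R/P$ for $P\in\Min(\hat R/\nil\hat R)$) that the hypotheses $F$-finite and perfect residue field are preserved, and that the defining constant $c$ of the weakly inverse $p$ condition survives with nonzero image. Both assertions are routine — $F$-finiteness passes to completions, quotients, and residue fields, which are unchanged — but they are the only place the hypotheses of part (b) get used, so they deserve to be noted explicitly rather than left implicit.
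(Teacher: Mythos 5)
Your proof is correct and follows essentially the same route the paper takes: reduce to the complete reduced case, split over $\Min(R)$ via \autoref{main_result_for_OK_domain_reduction}, apply the OK-domain Minkowski inequality \autoref{Minkowski_inequality_for_ weakly_p-families_and_weakly_inverse_p-families} on each $R/P$, and finish with the $\ell_d$-norm subadditivity argument from \autoref{Minkowski_general_for_weakly_graded}. The extra care you devote to verifying that $F$-finiteness and perfectness of the residue field, and the defining constant $c \in R^\circ$, survive the reduction chain is exactly the bookkeeping the paper leaves implicit.
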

\begin{proof}
   The result follows from  \autoref{completion_reduction}, \autoref{Why_R_reduced}, \autoref{main_result_for_OK_domain_reduction}, \autoref{Minkowski_inequality_for_ weakly_p-families_and_weakly_inverse_p-families}, and the steps used in \autoref{Minkowski_general_for_weakly_graded}.
\end{proof}

\begin{theorem} \label{general_volume_multiplicity_formula_for_weakly_graded_family}
    Let $R$ be a Noetherian local ring with $\dim \left(\nil(\hat{R})\right)<\dim R$. Let $I_\bullet$ be a BBL weakly graded family, then volume =  multiplicity formula holds. That is,
$$\lim_{n \to \infty} d!\frac{\ell(R/I_n)}{n^d}=\lim_{n \to \infty}\frac{e(I_n,R)}{n^d}.$$
\end{theorem}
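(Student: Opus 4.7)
The plan is to reduce the statement to the OK domain case already established in \autoref{volume_multiplicity_formula_for_OK_domain}, through the standard three-step chain completion $\to$ reduction modulo nilradical $\to$ splitting over minimal primes, and then to reassemble the pieces using the sum-over-minimal-primes formulas \autoref{main_result_for_OK_domain_reduction} and \autoref{Multiplicity_sum_result} that have already been proved in \autoref{section_10}.

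First I would pass to the completion. For every $\fm$-primary ideal $I$ one has $\ell_R(R/I) = \ell_{\hat R}(\hat R / I\hat R)$ by \autoref{completion_reduction}, and $e(I,R) = e(I\hat R, \hat R)$ by faithful flatness of completion; the weakly graded and BBL properties pass to $I_\bullet \hat R$ exactly as in the proof of \autoref{general_weakly_graded_limit_exist}, using \autoref{weakly_p_weakly_graded_stays_same_under_projection}. Both sides of the desired equality are therefore unaffected, so I may assume $R$ is complete. Next I would kill $\nil(R)$: by \autoref{result_to_reduce_setup} applied with $N = \nil(R)$ (of dimension less than $d$), the differences $|\ell_R(R/I_n) - \ell_{R/N}((R/N)/I_n(R/N))|$ and, applied with $I_n^k$ in place of $I_n$, the corresponding differences for $e(I_n,R)$, are all $o(n^d)$. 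After dividing by $n^d$ and letting $n \to \infty$, I may therefore replace $R$ by $R/\nil(R)$ and assume $R$ is a complete reduced local ring with minimal primes $P_1,\ldots,P_k$, each satisfying $\dim R/P_i = d$. Because the natural projections send $R^\circ$ into $(R/P_i)^\circ$, each family $I_\bullet(R/P_i)$ remains a BBL weakly graded family.

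Now I would combine the splitting formulas. \autoref{main_result_for_OK_domain_reduction}(1) gives
\[
\lim_{n \to \infty}\frac{\ell(R/I_n)}{n^d} = \sum_{i=1}^{k}\lim_{n \to \infty}\frac{\ell_{R/P_i}((R/P_i)/I_n(R/P_i))}{n^d},
\]
while applying \autoref{Multiplicity_sum_result}(1) to the $\fm$-primary ideal $I_n$ yields, for each $n$,
\[
\frac{e(I_n,R)}{n^d} = \sum_{i=1}^{k}\frac{e(I_n(R/P_i),\,R/P_i)}{n^d}.
\]
Each $R/P_i$ is a complete local domain, hence an OK domain by \autoref{Complete_local_domain_is_OK}, so the OK-domain version of the theorem, \autoref{volume_multiplicity_formula_for_OK_domain}, applies and gives $d!\lim_n \ell_{R/P_i}((R/P_i)/I_n(R/P_i))/n^d = \lim_n e(I_n(R/P_i), R/P_i)/n^d$ for each $i$. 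Summing over $i$ and comparing the two displays finishes the proof and simultaneously establishes existence of $\lim_n e(I_n,R)/n^d$.

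The main subtlety I anticipate is purely bookkeeping: one must check that the defining element $c \in R^\circ$ of the weakly graded property has image in $\hat R^\circ$ (and subsequently in $(R/P_i)^\circ$), i.e.\ avoids every minimal prime of the relevant quotient of maximum dimension. The hypothesis $\dim\nil(\hat R) < d$ is precisely what guarantees this: every minimal prime of $\hat R$ of dimension $d$ contracts to an element of $\Min(R)$, so $R^\circ$ does map into $\hat R^\circ$, and the property is then preserved at each further reduction step. Once this point is verified, the argument is essentially a matter of threading together results already proved in the paper, with no new analytic input needed beyond \autoref{volume_multiplicity_formula_for_OK_domain}.
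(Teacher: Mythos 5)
Your proposal is correct and follows essentially the same route as the paper's own proof: pass to the completion, kill $\nil(\hat R)$ using \autoref{result_to_reduce_setup}/\autoref{Why_R_reduced}, split colengths and multiplicities over $\Min(R)$ via \autoref{main_result_for_OK_domain_reduction} and \autoref{Multiplicity_sum_result}, and apply the OK-domain case \autoref{volume_multiplicity_formula_for_OK_domain} to each complete local domain $R/P_i$. One small imprecision in your last paragraph: the fact that $R^\circ$ maps into $\hat R^\circ$ is a consequence of faithful flatness of $R \to \hat R$ (going-down forces a $d$-dimensional minimal prime of $\hat R$ to contract to one of $R$), and does not itself need the hypothesis $\dim\nil(\hat R)<d$; that hypothesis is instead what makes the passage to $\hat R/\nil(\hat R)$ harmless for the $n^d$-normalized limits.
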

\begin{proof}
    Using \autoref{completion_reduction} and \autoref{Why_R_reduced} we can assume $R$ is complete reduced and has only finitely many minimal primes $P_{1},\dots,P_{k}$ with $R_{i}=R/P_{i}$ and $\dim R/P_{i}=d$.
    Since each $R_{i}$ is a complete local domain and therefore an OK domain by \autoref{Complete_local_domain_is_OK}, using \autoref{main_result_for_OK_domain_reduction}, \autoref{volume_multiplicity_formula_for_OK_domain}, and \autoref{Multiplicity_sum_result}, we have
\begin{flalign*}
    \lim_{n \to \infty} d! \dfrac{\ell_{R}({R/I_{n}})}{n^{d}}  & =\sum_{i=1}^{n} \lim_{n \to \infty} d! \dfrac{\ell_{R_{i}}(R_{i}/I_{n}R_{i})}{n^{d}}\\
    & =\sum_{i=1}^{n} \lim_{n \to \infty}\dfrac{e(I_{n}R_{i},R_{i})}{n^{d}}\\
    & =\lim_{n\to \infty}\dfrac{e(I_{n},R)}{n^{d}}.
    \end{flalign*}
\end{proof}
\begin{theorem} \label{general_volume_multiplicity_formula_for_weakly_p_weakly_inverse_p_family}
    Adopt \autoref{setup_with_nilradical_assumption}. For a BBL family $I_\bullet$, assume either $I_\bu$ is weakly $p$-family, or $I_\bu$ is weakly inverse $p$-family and $R$ is $F$-finite with perfect residue field, then  volume = multiplicity formula holds. That is,
$$\lim_{q \to \infty}\frac{\ell(R/I_q)}{q^d}=\lim_{q \to \infty}\frac{e_{HK}(I_q,R)}{q^d}.$$
\end{theorem}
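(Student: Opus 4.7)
The plan is to mirror the reduction carried out in \autoref{general_volume_multiplicity_formula_for_weakly_graded_family}, replacing the weakly graded input with the weakly $p$ (or weakly inverse $p$) input and replacing the OK-domain volume-multiplicity theorem \autoref{volume_multiplicity_formula_for_OK_domain} with its $p$-family counterpart \autoref{volume_multiplicity_formula_for_weakly_p_weakly_inverse_p_for_OK_domain}. Both sides of the desired equality behave well under completion (colengths by \autoref{completion_reduction}, Hilbert--Kunz multiplicities because $\hat R$ is faithfully flat over $R$ and both $I_q$ and $\fm$ extend to $\hat R$), so we may assume $R$ is complete. Then \autoref{Why_R_reduced} lets us reduce modulo the nilradical, which by hypothesis has dimension strictly less than $d$, so both $\ell(R/I_q)/q^d$ and $e_{HK}(I_q,R)/q^d$ are unaffected in the limit; this puts us in the case where $R$ is a complete reduced local ring whose minimal primes $P_1,\dots,P_k$ all satisfy $\dim R/P_i = d$.

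With $R$ complete reduced, set $R_i = R/P_i$. By \autoref{weakly_p_weakly_graded_stays_same_under_projection}, since the natural projection $R\to R_i$ carries $R^\circ$ into $R_i^\circ$, the family $I_\bullet R_i$ inherits the weakly $p$-family (respectively weakly inverse $p$-family) property, and it remains BBL. By \autoref{Complete_local_domain_is_OK} each $R_i$ is an OK domain. In the weakly inverse $p$ case we moreover need each $R_i$ to be $F$-finite with perfect residue field; this is preserved because $F$-finiteness of $R$ descends to any homomorphic image and the residue field does not change when passing to $R/P_i$. Now apply \autoref{main_result_for_OK_domain_reduction} to write
\[
\lim_{q\to\infty}\frac{\ell(R/I_q)}{q^d} \;=\; \sum_{i=1}^k \lim_{q\to\infty}\frac{\ell_{R_i}(R_i/I_qR_i)}{q^d},
\]
then \autoref{volume_multiplicity_formula_for_weakly_p_weakly_inverse_p_for_OK_domain} on each OK domain $R_i$ gives
\[
\lim_{q\to\infty}\frac{\ell_{R_i}(R_i/I_qR_i)}{q^d} \;=\; \lim_{q\to\infty}\frac{e_{HK}(I_qR_i,R_i)}{q^d},
\]
and finally \autoref{Multiplicity_sum_result}(2) reassembles $\sum_i e_{HK}(I_qR_i,R_i) = e_{HK}(I_q,R)$ for each fixed $q$, so summing and taking limits yields the claimed equality.

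The only point requiring real care -- and the place where the argument is less automatic than for weakly graded families -- is verifying that every assumption in the hypothesis of \autoref{volume_multiplicity_formula_for_weakly_p_weakly_inverse_p_for_OK_domain} is preserved under the two reductions (completion, then quotient by a minimal prime). For the weakly $p$ case the only issue is the weakly $p$ property itself, which passes through by \autoref{weakly_p_weakly_graded_stays_same_under_projection}. For the weakly inverse $p$ case one must additionally track that $F$-finiteness and perfectness of the residue field survive; this is routine but worth recording explicitly, since it is precisely what makes the OK-basis machinery of \autoref{section_8} applicable to each $R_i$. Everything else (the BBL property, the characteristic $p$ hypothesis, and the dimension equality $\dim R_i = d$) is immediate from the setup.
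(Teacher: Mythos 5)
Your proposal is correct and follows exactly the same route as the paper: reduce via \autoref{completion_reduction} and \autoref{Why_R_reduced} to the complete reduced case, pass to each $R/P_i$ using \autoref{weakly_p_weakly_graded_stays_same_under_projection} and \autoref{Complete_local_domain_is_OK}, apply \autoref{volume_multiplicity_formula_for_weakly_p_weakly_inverse_p_for_OK_domain} on each quotient domain, and reassemble with \autoref{main_result_for_OK_domain_reduction} and \autoref{Multiplicity_sum_result}. The points you flag as needing care (preservation of $F$-finiteness and the perfect residue field under completion and quotient by $P_i$) are indeed the ones the paper notes explicitly in the proof of \autoref{general_weakly_p_and_weakly_inverse_p_exists}, and your treatment of them is correct.
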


\begin{proof}
  The result follows from \autoref{completion_reduction}, \autoref{Why_R_reduced}, \autoref{Complete_local_domain_is_OK}, \autoref{main_result_for_OK_domain_reduction}, \autoref{Multiplicity_sum_result}, \autoref{volume_multiplicity_formula_for_weakly_p_weakly_inverse_p_for_OK_domain}, and the steps mentioned in \autoref{general_volume_multiplicity_formula_for_weakly_graded_family}.
\end{proof}

\vspace{-.9em}

\bibliographystyle{acm}
\bibliography{reference}

\end{document}